\title{Nonunital spectral triples and metric completeness in unbounded $KK$-theory}
\author{Bram Mesland\S$^*$, Adam Rennie\ddag
\thanks{email: 
\texttt{b.mesland@warwick.ac.uk}, \texttt{renniea@uow.edu.au}
}
\\[3pt]
\S Mathematics Institute,
Zeeman Building,
University of Warwick\\
Coventry CV4 7AL, UK
\\[3pt]
\ddag School of Mathematics and Applied Statistics, University of Wollongong\\
Wollongong, Australia\\
}
\def\section{\@startsection{section}{1}{\z@}{-3.5ex plus -1ex minus
  -.2ex}{2.3ex plus .2ex}{\large\bf}}
\def\subsection{\@startsection{subsection}{2}{\z@}{-3.25ex plus -1ex
  minus -.2ex}{1.5ex plus .2ex}{\normalsize\bf}}
\numberwithin{equation}{section} 
\theoremstyle{plain} 
\newtheorem{thm}{Theorem}[section]
\newtheorem{lemma}[thm]{Lemma}
\newtheorem{prop}[thm]{Proposition}
\newtheorem{corl}[thm]{Corollary}
\theoremstyle{definition} 
\newtheorem{defn}[thm]{Definition}
\theoremstyle{remark} 
\newtheorem{rmk}[thm]{Remark}
\DeclareMathOperator{\Dom}{Dom}   
\DeclareMathOperator{\End}{End}   
\DeclareMathOperator{\Id}{Id}     
\newcommand{\flip}{U}  
\newcommand{\Lip}{\textnormal{Lip}}
\newcommand{\A}{\mathcal{A}}  
\newcommand{\B}{\mathcal{B}}  
\newcommand{\C}{\mathbb{C}}   
\newcommand{\D}{\mathcal{D}}  
\renewcommand{\d}{\mathrm{d}} 
\newcommand{\E}{\mathcal{E}}  
\renewcommand{\H}{\mathcal{H}}  
\newcommand{\J}{\mathcal{J}}  
\newcommand{\K}{\mathcal{K}}  
\newcommand{\N}{\mathbb{N}}   
\newcommand{\ox}{\otimes}     
\newcommand{\R}{\mathbb{R}}   
\newcommand{\Z}{\hat{\mathbb{Z}}}   
\newcommand{\im}{\textnormal{im }}
\newcommand{\Fin}{\textnormal{Fin}}
\newcommand{\stroke}{\mathbin|}     
\def\pairL_#1(#2|#3){{}_{#1}(#2\stroke#3)} 
\def\pairR(#1|#2)_#3{(#1\stroke#2)_{#3}} 
\def\scal<#1|#2>{\langle#1\stroke#2\rangle} 
\newbox\ncintdbox \newbox\ncinttbox 
\newcommand{\Nil}{\textnormal{Nil }}
\newcommand{\bB}{\mathbb{B}}
\newcommand{\hotimes}{\tilde{\otimes}}
\newcommand{\kK}{\mathbb{K}}
\newcommand{\M}{\mathbb{M}}
\renewcommand{\epsilon}{\varepsilon}
\newcommand{\diag}{\textnormal{diag}}
\newcommand{\G}{\textnormal{G}}
\newcommand{\cb}{\textnormal{cb}}
\newcommand{\e}{\varepsilon}
\begin{document}

\maketitle

\vspace{-2pc}

\begin{abstract}
By considering the general properties of approximate units in differentiable algebras,
we are able to present a unified approach to characterising completeness of spectral metric spaces, 
existence of connections on modules, and the lifting of Kasparov products to the
unbounded category. In particular, by strengthening Kasparov's technical theorem, 
we show that given any two composable $KK$-classes,
we can find unbounded representatives whose product can be constructed to
yield an unbounded representative of the Kasparov product.
\end{abstract}

\tableofcontents

\parskip=5pt
\parindent=0pt

\addtocontents{toc}{\vspace{-1pc}}

\section*{Introduction}
\label{sec:intro}
In this paper we analyse the completeness of metric spaces associated to (nonunital) spectral
triples, the existence of 
differentiable structures and connections on modules over algebras associated to spectral triples,
and we prove that Kasparov products can be lifted to the unbounded setting in a very strong sense. 
The precise conditions under which such liftings exist have become important due to recent applications
of the unbounded Kasparov product, \cite{BCR,BMS,KaLe,LRV}. 

These seemingly disparate topics are in fact related by the systematic use of approximate units
for differentiable algebras, introduced below.
Technical advances in approximate units have often heralded  conceptual advances in 
operator algebras and noncommutative geometry. Pertinent examples include
quasicentrality \cite{AP},
Higson's proof of Kasparov's technical theorem, \cite{H}, and the early approaches to 
summability for nonunital spectral triples, \cite{GGISV,RenSum}.

In this paper we refine the notion of approximate unit further by looking at 
differentiable algebras of  spectral triples (or unbounded Kasparov modules). Given a separable
$C^*$-algebra $A$ with spectral triple $(A,\H,\D)$, we define a \emph{differentiable algebra} to be a 
separable $*$-subalgebra $\A$ with
$$
\A\subset\A_{\D}=\left\{a\in A:\,[\D,a]\ \mbox{is defined on }\Dom\D,\quad \Vert a\Vert_{\D}:=\left\Vert \begin{pmatrix}a & 0\\ [\D,a] & a\end{pmatrix}\right\Vert_\infty<\infty\right\},
$$
which is closed in the norm $\Vert\cdot\Vert_\D$ and dense in the $C^{*}$-algebra $A$. Here
$\Vert\cdot\Vert_\infty$ is the usual norm of operators on $\H\oplus\H$.
While we can always choose an approximate identity $(u_n)$ 
for $A$ consisting of elements of the prescribed dense subalgebra $\A$, the requirement
that $(u_n)$ be an approximate unit for $\A$ is much stronger, and
yields finer information.

For spectral metric spaces associated to spectral triples we obtain a characterisation of 
metric completeness in terms of the existence of an 
approximate unit $(u_{n})$ for $A$ whose Lipschitz norm is uniformly bounded in the sense that
$\sup_{n}\|[\D,u_{n}]\|_{\infty}<\infty.$ This extends
previous results of Latr\'emoli\`ere \cite{Lat} to unbounded metrics. By addressing 
completeness in
a way compatible  with \cite{Lat2}, we complement Latr\'emoli\`ere's
more refined
picture of unbounded spectral metric spaces.

In addition, we obtain stronger forms of metric completeness, characterised by 
the requirement  
that the `derivatives' $[\D,u_n]$ of the approximate unit converge to zero in norm. This property
corresponds to `topological infinity' being at infinite distance, and
reflects the behaviour of geodesically complete manifolds. 
We present examples illustrating this analogy in
Section 2. 

Beyond metric properties, we use completeness and approximate units  to 
describe a refinement of unbounded Kasparov modules and correspondences
for which the Kasparov product
can be explicitly constructed. Our main results then show 
that any pair of composable $KK$-classes 
have representatives which can be lifted to such modules. These results rely in an essential way
on the two notions of completeness we introduce in Sections 1 and 2, and 
generalise all previous such constructions. We now describe these results in more detail.

We begin Section 1 by establishing the basic concepts and notation that we will use 
throughout the paper, in particular (non-self-adjoint) operator algebras and their
approximate units. Then we prove a range of results
about bounded approximate units in operator algebras, which greatly extend known results about
contractive approximate units. A key result is Theorem~\ref{thm: idempotent} which says

{\em Let $\A$ be an operator algebra with bounded approximate unit $(u_{\lambda})$, and 
$\pi:\A\rightarrow \bB(\H)$ a cb-representation. Then $\pi(u_{\lambda})$ 
converges strongly, and hence weakly, to an idempotent $q\in \bB(\H)$ with the following properties:\\
1) for all $a\in\A$, $q\pi(a)=\pi(a)q=\pi(a)$;\\
2) $q\H=[\pi(\A)\H]$;\\
3) $(1-q)\H=\Nil\pi(\A)$;\\
4) $\|q\|\leq \|\pi\|\sup_{\lambda}\|u_{\lambda}\|$.}

The close relationships 
between the notions of approximate unit, unbounded multiplier and strictly positive element for 
differentiable algebras which one
would expect from the corresponding $C^*$-theory only hold when we have the strong form of
completeness, namely $[\D,u_n]\to 0$ in norm. Such approximate units may be normalised by 
replacing them by $\tilde{u}_{n}:=\frac{u_{n}}{\|u_{n}\|_{\D}}$ to obtain an approximate 
unit that is contractive, i.e. $ \|\tilde{u}_{n}\|_{\D}\leq 1$. The result
on which the rest of the paper relies is Theorem \ref{thm:equivs}
which says

\emph{Let $\D:\Dom \D\subset E_{B}\rightarrow E_{B}$ be self-adjoint and regular and $\A\subset\Lip(\D)$ 
a differentiable algebra such that $[AE_{B}]= E_{B}$. Then the following are equivalent:\\
1) there exists an increasing commutative approximate unit $(u_n)\subset\A$ 
with $\Vert[\D,u_n]\Vert_\infty\to 0$;\\
2) there exists a positive self-adjoint complete multiplier $c$ for $\A$;\\
3) there is a strictly positive element $h\in\A$ with 
$\im (\D\pm i)^{-1}h=\im h(\D\pm i)^{-1}$, and constant $C>0$ with $i[\D,h]\leq Ch^{2}$.}

Our characterisations of completeness are also essential ingredients in constructing useful
modules over differentiable algebras. By considering the behaviour of approximate units for
the finite rank operators on such modules, we are led to two classes of modules: projective
modules, and complete projective modules. These modules are characterised by the existence
of certain types of approximate units for their compact endomorphisms, and it is in this setting that we can systematically relate frames, splittings
of the Cuntz-Quillen sequence, and existence of connections. This is discussed in Section 3. In 
particular we gain tools for studying self-adjointness of operators
arising when we take Kasparov products, and Theorem \ref{selfreg} proves

\emph{Let $\E_\B$ be a complete projective module for the unbounded Kasparov module
$(\B, F_{C},\D)$. 
Then the densely defined symmetric 
operator $1\otimes_{\nabla}\D$ on $\E\hotimes_\B F_C$ is self-adjoint and regular.}

The
proof of the self-adjointness of the operator $1\otimes_\nabla\D$
relies on the local-global principle of Kaad-Lesch, \cite{KaLe2}, and
on the completeness of the module $\E_\B$. The resulting argument 
is in the spirit of self-adjointness proofs
for Dirac operators on complete manifolds.

To prove our results about representing Kasparov classes 
as composable unbounded Kasparov modules, we
extend the notion of quasi-central approximate units to 
differentiable algebras. 

We obtain a 
novel, strong form of quasicentrality in the general context of 
non-self-adjoint operator algebras in Theorem \ref{Ped}. Our results concerning existence of 
such approximate units are new even for $C^*$-algebras. 
This study culminates in a refinement of 
Kasparov's technical theorem for differentiable algebras in Theorem \ref{thm: technical}. 
Both the statement and the proof are in the same spirit as 
Higson's version, \cite{H}.

With this tool in hand, we show that given an {\em arbitrary} pair of composable Kasparov classes,
we can find unbounded Kasparov modules which represent these classes and whose 
product can be {\em constructed} in the unbounded setting. This is done by 
associating to a  bounded Kasparov module a correspondence in a slightly broader
sense than was used in \cite{BMS, KaLe, Mes}. Earlier forms of this lifting 
construction were first considered in \cite{BJ} to handle external products, 
and later in \cite{Kucerovsky2}, in the context of internal products. 
We prove successively stronger lifting results 
in Theorem \ref{thm:firstlift}, Proposition \ref{prop: Kasparovlift}, Theorem \ref{Liplift} and
Proposition \ref{singleliplift}, culminating in Theorem
\ref{thm:best-lift} which says

\emph{Let $A,B,C$ be separable $C^{*}$-algebras, $x\in KK(A,B)$ and 
$y\in KK(B,C)$. There exists an unbounded Kasparov module $(\B,E_{C},T)$ 
representing $y$ and a correspondence $(\A,\E_\B,S,\nabla)$ for 
$(\B,E_{C},T)$ representing $x$. Consequently
$(\A,E_{B}\hotimes_{B}E_{C}, \mathcal{S}\otimes 1 + 1\otimes_{\nabla}T)$ 
represents the Kasparov product $x\otimes_B y$.}

This result can also be interpreted as an alternative proof of the existence of the Kasparov product.
By proving the existence of unbounded representatives of a very special form, the product can be 
constructed via an explicit algebraic formula. To lift Kasparov modules to unbounded
representatives, we prove the
existence of, equivalently, either a frame, an approximate unit or a strictly positive element
possessing certain properties.  Given such a frame, 
connections and so products become explicitly computable.

The unbounded Kasparov modules we construct are `complete' in the strong metric sense, so every
$KK$-class has such a representative. On the other hand, not every unbounded representative
of a Kasparov class is `complete'. For instance Kaad's example of the half-line, \cite{Kaadabsorption}, 
or 
Baum, Douglas and Taylor's examples, \cite{BDT}, 
from manifolds with boundary will not satisfy our completeness
requirements. If we take the associated $KK$-class of the Dirac operator on a
manifold with boundary, and then lift a bounded representative  
to a complete unbounded module, we will have seriously
altered the geometry.

{\bf Acknowledgements} This work has profited from 
discussions with S. Brain, I. Forsyth, V. Gayral, M. Goffeng, N. Higson, J. Kaad, 
M. Lesch, A. Sims and W. D. van Suijlekom. 
AR was supported by the Australian Research Council. 
BM was supported by EPSRC grant EP/J006580/2. 
The authors thank the Hausdorff Institute for Mathematics 
for its hospitality and support during the (northern) fall of 2014, as 
well as the University of Wollongong for hosting BM on several occasions in 2012-2014.

\section{Approximate units and unbounded multipliers for operator algebras}

This section begins by recalling some of the basic elements of operator algebras we require. Then
we address the definitions of, and relationships between, approximate units, strictly positive
elements and unbounded multipliers. For $C^*$-algebras these notions are closely related
due to the connection between the norm and the spectrum. Here we must work somewhat
harder, but the outcome is a systematic way of capturing the notion of metric
completeness, and this plays a significant r\^ole throughout the rest of the paper.

\subsection{Operator algebras and differentiable algebras}
By an \emph{operator algebra} we will mean a concrete operator algebra, 
that is, a closed subalgebra $\mathcal{A}\subset B$ of some $C^{*}$-algebra $B$. 
By representing $B$ isometrically on a Hilbert space $\H$, we can always assume that
$B=\mathbb{B}(\H)$. An \emph{operator $*$-algebra} \cite{KaLe, Mes} is an operator algebra 
$\A\subset \mathbb{B}(\H)$ with a completely bounded involution $*:\A\rightarrow \A$. 
This involution will in general not coincide with the involution of the ambient 
$C^{*}$-algebra $\mathbb{B}(\H)$, unless $\A$ itself is actually a $C^{*}$-algebra. 

There are two $C^{*}$-algebras canonically associated to a 
concrete operator $*$-algebra $\A\subset B$. The first is the 
\emph{enveloping $C^{*}$-algebra} $C^{*}(\A)$, defined to be 
the smallest $C^{*}$-subalgebra of $B$ containing $\A$. 
In fact this $C^{*}$-algebra depends only on the inclusion 
$\A\subset B$, i.e. on the structure of $\A$ as a concrete operator algebra. 
The second $C^{*}$-algebra is the $C^{*}$-\emph{closure}  $A$, 
constructed from viewing $\A$ as a Banach $*$-algebra, and completing in 
the $C^{*}$-norm coming from the square root of the spectral radius of 
$a^{*}a$. The two $C^{*}$-algebras are almost always different, 
as $\A$ is always dense in $A$, but is usually not dense in $C^{*}(\A)$.

The main examples of operator algebras that we consider arise in the following setting.
Given an unbounded (even) $(A,B)$ Kasparov module 
$(A,E_{B},\D)$, we denote the algebra of adjointable 
operators on the $C^{*}$-module $E_{B}$ by $\End^{*}_{B}(E)$. The algebras $A$ and $B$
are (possibly trivially) $\mathbb{Z}_2$-graded, as is the module $E$, and all
commutators are $\mathbb{Z}_2$-graded. The grading operator 
on $E$ will
be denoted by $\gamma$ or $\gamma_E$ when needed, 
and we observe that if $B$ is non-trivially graded, 
$\gamma_E$ is {\em not} an adjointable operator, and 
$\gamma_E(eb)=\gamma_E(e)\gamma_B(b)$, where $\gamma_B$ is the grading on $B$. 
In addition we have the identities
\[
[\D,\pi(a)]=\D\pi(a)-\pi(\gamma_A(a))\D,\quad [\D,\pi(a)]^{*}
=-[\D,\pi(\gamma_A(a^{*}))], \quad \pi(\gamma_A(a))=\gamma_{E}\pi(a)\gamma_{E}.
\] 
See \cite{Blackadar} for more information. 
When no confusion can occur, we will just write $\gamma$ in all cases.

We realise the full Lipschitz algebra 
$$
\A_{\D}=\{a\in A:\,a\Dom\D\subset\Dom\D,\ [\D,a]\in\End^{*}_{B}(E)\}
$$ 
as an operator $*$-algebra via
\begin{equation}\label{eq:lip-norm}
\pi_\D:\A_{\D}\ni a\mapsto \pi_\D(a)
:=\begin{pmatrix} \pi(a) & 0\\ [\D,\pi(a)] & \pi(\gamma(a))\end{pmatrix}\in \End^{*}_{B}(E).
\end{equation}
Here $\pi:A\rightarrow \End^{*}_{B}(E)$ is the 
representation implicit in the Kasparov module 
$(A,E_{B},\D)$ and it will often be suppressed in 
the notation (as in the Introduction).  Throughout the paper we will 
denote $\flip=\big(\begin{smallmatrix} 0& -1 \\ 1 & 0\end{smallmatrix}\big)$. 
The involution is completely isometric in this case because
\[
\pi_{\D}(a^{*})=\flip^{*}
\begin{pmatrix} \pi(\gamma(a)) & 0\\ [\D,\pi(\gamma(a))] & \pi(a)\end{pmatrix}^{*}
\flip=\flip^{*} \pi_{\D}(\gamma(a))^{*}\flip, 
\]
cf. \cite[cf. Proposition 4.1.3]{Mes}.

We will call 
$\pi_{\D}$ the standard Lipschitz representation 
of $\A_{\D}$, and always consider $\A_{\D}$
to be topologised by the operator norm 
$\Vert \pi_\D(a)\Vert_\infty$ in this representation. 
If $A$ is not represented faithfully on $E_{B}$, 
the standard Lipschitz representation should be modified to
\[
a\mapsto a\oplus\pi_{\D}(a)\in A\oplus \End^{*}_{B}(E\oplus E).
\] 
We will always suppress this modification as it is inconsequential 
for our computations (in fact it only obscures them). 

A more general setting where this operator algebra structure can be 
discussed is symmetric spectral
triples or symmetric Kasparov modules, \cite[Section 3]{Hilsum}. 
Here we have a triple $(A,E_{B},\D)$ with $\D$ a (closed) symmetric regular operator such that the 
subalgebra $\A$ of $a\in A$
for which $a\cdot\Dom\D^*\subset \Dom\D$ and $[\D^*,a]$ is bounded is dense in $A$ and also
$a(1+\D^*\D)^{-1/2}$ is compact for all $a\in A$. 
\begin{rmk} 
\label{rmk:comm-adj}
In this case $[\D^*,a]^*=-[\D^*,a^*]$, initially defined
on $\Dom\D^*$, \cite[cf. Proposition 2.1]{FMR}. 
The associated quadratic forms coincide on $\Dom\D^*$ and the boundedness of $[\D^*,a]$
ensures then that the continuous extensions to $E_B$ coincide.
\end{rmk}
We can again use the representation
\begin{equation}
\pi_{\D}:\A_{\D}\ni a\mapsto \pi_{\D}(a)
:=\begin{pmatrix} a & 0\\ [\D^*,a] & \gamma(a)\end{pmatrix}\in \End_{B}^{*}(E\oplus E)
\label{eq:sym-lip-norm}
\end{equation}
to give $\A_{\D}$ the structure of an operator space.
More generally still, associated to a closed symmetric regular operator 
$\D$ on a $C^{*}$-module $E_{B}$ is the operator algebra
\begin{equation}
\Lip(\D):=\{T\in \End^{*}_{B}(E): T\Dom \D^*\subset \Dom \D,\  [\D^*,T]\in \End^{*}_{B}(E)\},
\label{eq:sob}
\end{equation}
the \emph{Lipschitz algebra} of $\D$, which is an operator algebra 
in the representation $\pi_{\D}$ (cf. \cite[Def. 4.1.1]{Mes}). By \cite[Sec. 4.2]{Mes}, 
$\Lip(\D)$ is spectral invariant in its $C^{*}$-closure, and hence stable 
under the holomorphic functional calculus. The same holds for any closed subalgebra of $\Lip(\D)$.
\begin{defn} 
\label{def:diff-alg}
Let $\D:\Dom \D\rightarrow E_{B}$ be a closed symmetric operator. 
A \emph{differentiable algebra} is a separable operator $*$-subalgebra 
$\A\subset \Lip(\D)$ which is closed in the operator space topology given by $\pi_{\D}$. 
By projecting onto the first component of $\pi_\D(\A)$,
the $C^{*}$-closure of a differentiable algebra $\A$ coincides with the closure of $\A$ 
viewed as a subalgebra of $\End_{B}^{*}(E)$, and is thus a $C^{*}$-algebra.
\end{defn}

\begin{rmk}
We will present examples at the end of Section \ref{sec:met} showing that 
for an unbounded Kasparov module $(A,E_B,\D)$, in general one
needs to choose algebras smaller than $\A_{\D}$ in order to apply the methods
that we develop in the remainder of the paper. Therefore we employ the 
notation $(\A,E_{B},\D)$ for unbounded Kasparov modules, where 
$\A\subset\A_{\D}$ is a closed subalgebra in the Lipschitz topology whose $C^*$-closure is $A$.
\end{rmk}
Operator spaces and modules play a central r\^{o}le in this paper 
and we now introduce some concepts of operator space theory 
that we will need. All operator spaces we encounter are 
\emph{concrete} operator spaces, that is, given explicitly 
as closed subspaces of $\mathbb{B}(\H)$ for some Hilbert 
space $\H$. For a comprehensive treatment of operator 
algebras and modules, see \cite{Blecherbook}.

The main feature of an operator space $X$ is the 
existence of canonical \emph{matrix norms}, that is, 
a norm $\|\cdot \|_{n}$ on $M_{n}(X)$ for each $n\in \N$. 
For a concrete operator space $X\subset \mathbb{B}(\H)$ 
these norms are obtained from the natural representation 
of $M_{n}(X)$ on $\mathbb{B}(\H^{n})$.

A linear map $\phi:X\rightarrow Y$ between operator spaces 
$X$ and $Y$ is \emph{completely bounded} if
\[
\|\phi\|_{\cb}:=\sup_{n}\{\sup \|\phi(x_{ij})\|_{M_{n}(Y)}:\|(x_{ij})\|_{M_{n}(X)}\leq 1\}<\infty,
\]
and we refer to $\|\phi\|_{\cb}$ as the \emph{cb-norm} of $\phi$. The map 
$\phi$ is \emph{completely contractive} if $\|\phi\|_{\cb}\leq 1$.

If we are given an operator algebra $\A\subset \mathbb{B}(\H)$ 
and an operator space $X\subset \mathbb{B}(\H)$, we say that $X$ is a 
\emph{concrete left  operator $\A$-module} if $\A\cdot X\subset X$. 
Here $\cdot$ denotes the usual operator multiplication in $\bB(\H)$. 
Right modules are defined similarly.

The \emph{Haagerup tensor product} of operator spaces 
$X$ and $Y$ is defined to be the completion of the 
algebraic tensor product $X\otimes Y$ over the complex numbers, in the \emph{Haagerup norm}
\begin{equation}
\|z\|^{2}_{h}:=\inf\{\|\sum x_{i}x_{i}^{*}\|\|\sum y_{i}^{*}y_{i}\|: z=\sum x_{i}\otimes y_{i}\},
\label{eq:Haag-tens}
\end{equation}
and the completion is denoted $X\hotimes Y$. In case $X$ is a left and 
$Y$ is a right  operator $\A$-module, the \emph{Haagerup module tensor product} 
$X\hotimes_{\A} Y$ is the quotient of $X\hotimes Y$ by the closed linear 
span of the expressions $x\otimes ay-xa\otimes y$. The norm on 
$X\hotimes_{\A} Y$ is the quotient norm.

The main feature of the Haagerup tensor product is that it 
makes operator multiplication $X\hotimes \A\rightarrow X$ 
continuous for operator modules. An equivalent way of 
defining operator modules is by requiring the multiplication 
to be contractive on the Haagerup tensor product, cf. \cite[Cor. 3.3]{CES}. See also \cite[Thm. 3.3.1]{Blecherbook} and \cite{BMP}.

By an \emph{inner product operator module} \cite{KaLe2, Mes} we 
mean a right operator module $\E$ over an operator 
$*$-algebra $\B$, that comes equipped with a sesquilinear pairing
\[
\E\times \E \to \B, \quad (e_{1},e_{2})\mapsto \langle e_{1},e_{2}\rangle,
\]
satsisfying the usual inner product axioms,
\[
\langle e_{1,}e_{2}b\rangle=\langle e_{1},e_{2}\rangle b,\quad 
\langle e_{1},e_{2}\rangle^{*}=\langle e_{2},e_{1}\rangle, \quad 
\langle e,e\rangle \geq 0\ \mbox{in}\ B,\quad \langle e,e\rangle =0\Leftrightarrow e=0,
\]
and the weak Cauchy-Schwarz inequality 
$\|\langle e_{1},e_{2}\rangle\|_\B\leq C\|e_{1}\|_\E\|e_{2}\|_\E$ 
on the level of matrix norms, for some $C>0$. 
Notice that we do not require $\E$ to be complete 
in the norm $\|e\|^{2}_\E:=\|\langle e,e\rangle\|_\B$. 
Completion in this norm will give a $C^{*}$-module 
over the $C^{*}$-closure of $\B$. The norm on 
$\E$ is additional data that is in general not constructible from the 
inner product alone, as demonstrated by the following example. 
Consider the operator $*$-algebra $\A_{\D}$ in the representation 
\eqref{eq:lip-norm}, viewed as an inner product module over itself via 
$\langle a,b\rangle:=a^{*}b$. The norm of $a\in \A$ is given by 
$\|a\|_{\D}^{2}=\|\pi_{\D}(a)^{*}\pi_{\D}(a)\|\neq \|\pi_{\D}(a^{*}a)\|=\|\pi_{\D}(\langle a,a\rangle)\|$.

\subsection{Bounded approximate units}
In this section 
we gather some useful facts about bounded approximate units in 
the general setting of operator algebras. In everything that follows, $\Vert\cdot\Vert$ denotes
the norm on an operator algebra, while $\Vert\cdot\Vert_\infty$ will denote the usual norm
on operators on a Hilbert space. For the example of a (symmetric) spectral triple $(\A,\H,\D)$ we have
$\Vert a\Vert=\Vert\pi_\D(a)\Vert_\infty$.
\begin{defn} 
Let $\A$ be an operator algebra. A \emph{bounded approximate unit}
is a net $(u_{\lambda})_{\lambda\in\Lambda}\in\A$ such that:\\
1) $\sup_{\lambda}\|u_{\lambda}\|<\infty$;\\
2) for all $a\in\A$, $\lim_{\lambda\rightarrow\infty}\|u_{\lambda}a-a\|
=\lim_{\lambda\rightarrow\infty}\|au_{\lambda}-a\|=0$.

The bounded approximate unit is \emph{commutative} if $u_{\lambda}u_{\mu}=u_{\lambda}u_{\mu}$ 
for all $\lambda,\mu\in\Lambda$ and \emph{sequential} in case $\Lambda=\N$.
\end{defn}
By a \emph{cb-representation} of an operator algebra $\A$ we 
mean a completely bounded algebra homomorphism 
$\pi:\A\rightarrow \mathbb{B}(\H)$, where $\H$ is a Hilbert space. 
A representation $\pi$ is \emph{essential} (also called \emph{nondegenerate} 
in the literature) if $\pi(\A)\H$ is dense in $\H$. Our first aim is 
to show that in any cb-representation of an operator algebra $\A$, a bounded approximate unit 
converges strongly to an idempotent.

Let $\pi:\A\rightarrow\mathbb{B}(\H)$ be a cb-representation. 
For $W\subset\H$, denote by $W^{\perp}$ its orthogonal complement, 
by $[W]$ the closed linear span of $W$, and set
\[
\pi(\A)\H:=\{\pi(a)h: a\in\A,\, h\in\H\}.
\]
 The Hilbert space $\mathcal{H}$ splits as a direct sum of closed 
 orthogonal subspaces in two ways:
\[ 
\H=[\pi(\A)\H]\oplus [\pi(\A)\H]^{\perp}= [\pi(\A)^{*}\H]\oplus [\pi(\A)^{*}\H]^{\perp}.
\]
Another important subspace of $\H$ associated to $\pi$ is 
\[
\textnormal{Nil }\pi(\A):=\{h\in\H:\pi(a)h=0 \textnormal{  for all   } a\in\A\}.
\] 
\begin{lemma} 
Let $\A$ be an operator algebra and 
$\pi:\A\rightarrow \mathbb{B}(\H)$ a cb-representation. Then $\Nil\pi(\A)=[\pi(\A)^{*}\H]^{\perp}$.
\end{lemma}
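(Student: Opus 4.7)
The plan is to show both inclusions by a direct adjoint/orthogonality computation. The key observation is that, for any set $S\subset\H$, the orthogonal complement $S^{\perp}$ coincides with $[S]^{\perp}$; so to identify $[\pi(\A)^{*}\H]^{\perp}$ it suffices to test orthogonality against vectors of the form $\pi(a)^{*}k$ with $a\in\A$ and $k\in\H$, without worrying about closed linear spans.

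First I would simply unpack the definition: for $h\in\H$,
\[
h\in[\pi(\A)^{*}\H]^{\perp}\iff \langle h,\pi(a)^{*}k\rangle=0\ \text{for all }a\in\A,\ k\in\H.
\]
Using the defining adjoint relation in $\mathbb{B}(\H)$ (here the $*$ is the $C^{*}$-involution on $\mathbb{B}(\H)$, which is what the notation $\pi(\A)^{*}$ refers to, irrespective of whether $\A$ carries its own involution), rewrite
\[
\langle h,\pi(a)^{*}k\rangle=\langle \pi(a)h,k\rangle.
\]
Then the condition on $h$ becomes: $\langle \pi(a)h,k\rangle=0$ for all $a\in\A$ and all $k\in\H$. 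Taking $k=\pi(a)h$ (or invoking nondegeneracy of the inner product) this is equivalent to $\pi(a)h=0$ for every $a\in\A$, i.e.\ $h\in\Nil\pi(\A)$.

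Both implications are reversible at every step, so this gives the equality $\Nil\pi(\A)=[\pi(\A)^{*}\H]^{\perp}$. There is no serious obstacle: the statement is a purely formal consequence of the adjoint relation in $\mathbb{B}(\H)$, and the hypotheses that $\A$ be an operator algebra and $\pi$ be cb are not used beyond ensuring that $\pi(a)\in\mathbb{B}(\H)$ so that $\pi(a)^{*}$ makes sense.
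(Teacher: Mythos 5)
Your proof is correct and is essentially identical to the paper's: both directions follow from the single identity $\langle h,\pi(a)^{*}v\rangle=\langle\pi(a)h,v\rangle$ together with nondegeneracy of the inner product. Your preliminary remark that $S^{\perp}=[S]^{\perp}$ and your clarification that $*$ denotes the Hilbert-space adjoint are correct and harmless additions.
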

\begin{proof} 
For $h\in\Nil\pi(\A)$ and $v\in\H, a\in\A$ we have
$\langle h,\pi(a)^{*}v\rangle=\langle \pi(a)h,v\rangle=0$,
so $\Nil\pi(\A)\subset[\pi(\A)^{*}\H]^{\perp}$. Now let $h\in[\pi(\A)^{*}\H]^{\perp}$, 
$v\in\H$ and $a\in\A$. Then
$
\langle\pi(a)h,v\rangle=\langle h,\pi(a)^{*}v\rangle=0$,
so $\pi(a)h=0$ and $h\in\Nil \pi(\A)$.
\end{proof}
\begin{lemma}
\label{Nil} 
Let $\A$ be an operator algebra with 
bounded approximate unit $(u_{\lambda})$ and also let $\pi:\A\rightarrow\bB(\H)$ be a cb-representation. 
Then: \\
1) $\pi(u_{\lambda})h\rightarrow h$ for all $h\in [\pi(\A)\H]$;\\
2) $\textnormal{Nil }\pi(\A)\cap [\pi(\A)\H]=\{0\}=\textnormal{Nil }\pi(\A)^{*}\cap [\pi(\A)^{*}\H].$
\end{lemma}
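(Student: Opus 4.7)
The plan is to establish (1) directly from the defining properties of bounded approximate units together with complete boundedness of $\pi$, and then to deduce both equalities in (2) as immediate consequences: once we know $\pi(u_\lambda)h\to h$ on the relevant subspaces, any vector that is also killed by all $\pi(a)$ (resp.\ all $\pi(a)^*$) is forced to vanish.

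For (1), the key observation is that $\sup_\lambda\|\pi(u_\lambda)\|_\infty\leq \|\pi\|_{\cb}\sup_\lambda\|u_\lambda\|<\infty$, so the family $\{\pi(u_\lambda)\}$ is uniformly bounded. First I would verify the claim on the algebraic subspace $\pi(\A)\H$: for $h=\pi(a)v$,
\[
\pi(u_\lambda)h-h=\pi(u_\lambda a-a)v,
\]
and $\|\pi(u_\lambda a-a)v\|\leq\|\pi\|_{\cb}\|u_\lambda a-a\|\,\|v\|\to 0$ by the left approximate unit property. Linearity extends this to all of $\pi(\A)\H$. A standard $3\varepsilon$ argument, using the uniform bound on $\|\pi(u_\lambda)\|_\infty$, then extends the convergence to every $h\in[\pi(\A)\H]$.

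For the first equality in (2), take $h\in\Nil\pi(\A)\cap [\pi(\A)\H]$. Then $\pi(u_\lambda)h=0$ for every $\lambda$ because $u_\lambda\in\A$, while (1) gives $\pi(u_\lambda)h\to h$. Hence $h=0$.

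For the second equality, I would run the analogue of (1) for the adjoints: since adjunction is a complete isometry on $\bB(\H)$ and $\|au_\lambda-a\|\to 0$ by the right approximate unit property, for $h=\pi(a)^*v$ we have
\[
\pi(u_\lambda)^*h-h=\bigl(\pi(au_\lambda-a)\bigr)^{*}v,
\]
which tends to zero in norm, and again the uniform bound $\sup_\lambda\|\pi(u_\lambda)^*\|_\infty<\infty$ extends convergence to all of $[\pi(\A)^*\H]$. If $h$ lies in $\Nil\pi(\A)^*\cap [\pi(\A)^*\H]$ then $\pi(u_\lambda)^*h=0$ for every $\lambda$, while $\pi(u_\lambda)^*h\to h$, forcing $h=0$. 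There is no substantive obstacle here; the only point requiring minor care is making sure the cb-bound on $\pi$ is used to transport the norm convergence of $u_\lambda a-a$ (resp.\ $au_\lambda-a$) in $\A$ to norm convergence of the corresponding operators on $\H$.
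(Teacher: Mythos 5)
Your proposal is correct and follows essentially the same route as the paper: prove convergence on the algebraic span $\pi(\A)\H$ using the approximate unit property and the cb-bound, extend to the closure by uniform boundedness, and then derive both intersections in (2) by playing $\pi(u_\lambda)h\to h$ (resp.\ $\pi(u_\lambda)^*h\to h$) against $\pi(u_\lambda)h=0$ (resp.\ $\pi(u_\lambda)^*h=0$). The paper disposes of the adjoint case with the word ``similarly''; your explicit spelling-out of that case via $\|au_\lambda-a\|\to 0$ is exactly the intended argument.
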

\begin{proof} 
To prove 1), let $h=\pi(a)v$ and observe that $\pi(u_{\lambda})h\rightarrow h$ 
since $(u_{\lambda})$ is an approximate unit. So the convergence property 
is satisfied by all $h$ in a dense subset of $ [\pi(\A)\H]$. Uniform boundedness 
of $(u_{\lambda})$ now gives the result for all $h\in [\pi(\A)\H]$.
\newline
For 2), let $h$ be a vector in the intersection $\textnormal{Nil }\pi(\A)\cap [\pi(\A)\H]$ 
so that $\pi(u_{\lambda})h\rightarrow h$ as above, since $h\in [\pi(\A)\H]$. 
On the other hand, since $h\in\textnormal{Nil }\pi(\A)$, we have $\pi(a)h=0$ 
for all $a\in\A$, so in particular $\pi(u_{\lambda})h=0$ for all $\lambda$, 
so $h=0$. Similarly $\textnormal{Nil }\pi(\A)^{*}\cap [\pi(\A)^{*}\H]=\{0\}.$
\end{proof}
The following theorem generalises the observations in the 
appendix to \cite{Mes}. The result has been known for 
contractive approximate units for a long time: 
see for example \cite[Lemma 2.1.9]{Blecherbook} and its proof.
\begin{thm}
\label{thm: idempotent} 
Let $\A$ be an operator algebra with bounded approximate unit $(u_{\lambda})$, and 
$\pi:\A\rightarrow \bB(\H)$ a cb-representation. Then the net $(\pi(u_{\lambda}))$ 
converges strongly, and hence weakly, to an idempotent $q\in \bB(\H)$ with the following properties:\\
1) for all $a\in\A$, $q\pi(a)=\pi(a)q=\pi(a)$;\\
2) $q\H=[\pi(\A)\H]$;\\
3) $(1-q)\H=\Nil\pi(\A)$;\\
4) $\|q\|\leq \|\pi\|\sup_{\lambda}\|u_{\lambda}\|$.
\end{thm}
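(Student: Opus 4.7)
The plan is to first establish the algebraic direct sum decomposition $\H = [\pi(\A)\H] \oplus \Nil\pi(\A)$ (generally not orthogonal) via a weak-compactness argument, then define $q$ as the idempotent projection onto the first summand along the second, and finally verify strong convergence and the four stated properties. A key point is that the orthogonal decomposition of $\H$ involves $[\pi(\A)\H]^{\perp} = \Nil\pi(\A)^{*}$, not $\Nil\pi(\A)$, so the decomposition we need is genuinely algebraic; on the complement $\Nil\pi(\A)^{*}$ the net $\pi(u_{\lambda})$ in general does not converge to zero.

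Set $M := \|\pi\|\sup_{\lambda}\|u_{\lambda}\|$ and $P_{\lambda} := \pi(u_{\lambda})$, so $\sup_{\lambda}\|P_{\lambda}\| \leq M < \infty$. Fix $h \in \H$. The bounded net $(P_{\lambda}h)$ has weak cluster points by Banach--Alaoglu, and any such cluster point $k$ lies in the weak closure of the convex set $\pi(\A)\H$, which coincides with its norm closure $[\pi(\A)\H]$ by Mazur. For any $a \in \A$, the norm convergence $\pi(a)P_{\lambda}h = \pi(au_{\lambda})h \to \pi(a)h$, combined with the weak continuity of $\pi(a)$, forces $\pi(a)k = \pi(a)h$, so $h - k \in \Nil\pi(\A)$. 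Any two cluster points then differ by an element of $[\pi(\A)\H] \cap \Nil\pi(\A) = \{0\}$ by Lemma \ref{Nil}(2), so the cluster point is unique, and a bounded net in a Hilbert space with a unique weak cluster point is weakly convergent. Define $qh$ to be this weak limit; then $h = qh + (h - qh)$ exhibits simultaneously the decomposition $\H = [\pi(\A)\H] \oplus \Nil\pi(\A)$ and the fact that $q$ is a well-defined idempotent respecting this decomposition.

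Strong convergence is then straightforward: writing $h = h_{1} + h_{2}$ with $h_{1} = qh \in [\pi(\A)\H]$ and $h_{2} = h - qh \in \Nil\pi(\A)$, Lemma \ref{Nil}(1) gives $P_{\lambda}h_{1} \to h_{1}$ in norm, while $P_{\lambda}h_{2} = 0$, so $P_{\lambda}h \to qh$ in norm. Properties 1)--3) then drop out of the decomposition: $\pi(a)h \in [\pi(\A)\H]$ decomposes trivially as $\pi(a)h + 0$, giving $q\pi(a) = \pi(a)$, and $\pi(a)q = \pi(a)$ because the $\Nil\pi(\A)$-component is annihilated by $\pi(a)$; properties 2) and 3) are the identification of range and kernel of $q$. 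The bound 4) follows from lower semicontinuity, $\|qh\| \leq \liminf_{\lambda}\|P_{\lambda}h\| \leq M\|h\|$. The main obstacle is the weak-compactness step producing uniqueness of cluster points, which crucially uses Lemma \ref{Nil}(2) to separate $\Nil\pi(\A)$ from $[\pi(\A)\H]$; once the non-orthogonal direct sum is in hand, all four properties are immediate.
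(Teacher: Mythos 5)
Your argument is correct, but it takes a genuinely different route from the paper's. The paper first shows that the subspace $[\pi(\A)\H]+\Nil\pi(\A)$ is \emph{dense} in $\H$, by proving that the self-adjoint operator $p+(1-p_{*})$ (with $p,p_{*}$ the orthogonal projections onto $[\pi(\A)\H]$ and $[\pi(\A)^{*}\H]$) is injective and hence has dense range; it then runs an $\varepsilon/2$-argument using uniform boundedness to show $(\pi(u_{\lambda}))$ is a strong Cauchy net, and only \emph{afterwards} extracts the idempotent and the decomposition. You instead go straight for the exact algebraic decomposition $\H=[\pi(\A)\H]\oplus\Nil\pi(\A)$: weak compactness of balls gives cluster points of $(\pi(u_{\lambda})h)$, the identity $\pi(a)k=\pi(a)h$ pins down the $\Nil\pi(\A)$-component, and Lemma \ref{Nil}(2) gives uniqueness, whence weak convergence and then (via Lemma \ref{Nil}(1)) strong convergence. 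Your route buys the full (non-dense) decomposition up front and avoids the Cauchy estimate entirely, at the price of invoking weak compactness and the ``unique cluster point in a compact Hausdorff space implies convergence'' principle; the paper's route is more elementary (no weak topology) but only reaches the exact decomposition a posteriori, as Corollary \ref{sumdecomp}. Two cosmetic points: the set $\pi(\A)\H$ need not be convex, so the appeal to Mazur is misplaced as stated --- but it is also unnecessary, since $[\pi(\A)\H]$ is a norm-closed subspace and hence weakly closed, which already forces every weak cluster point of $(\pi(u_{\lambda})h)\subset\pi(\A)\H$ to lie in it; and you should say a word about why $h\mapsto qh$ is linear (uniqueness of the decomposition, or linearity of weak limits), though this is immediate.
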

\begin{proof} 
Denote by $p$ the projection onto $[\pi(\A)\H]$ and by 
$p_{*}$ the projection onto $[\pi(\A)^{*}\H]$.
The bounded and self-adjoint operator
\[
x\mapsto (p+(1-p_{*}))x,
\]
is injective, for if $(p+(1-p_{*}))x=0$ then $px=-(1-p_{*})x$ so 
\[
px\in [\pi(\A)\H]\cap [\pi(\A)^{*}\H]^{\perp}=[\pi(\A)\H]\cap \textnormal{Nil }\pi(\A)=\{0\}.
\]
Therefore $px=(1-p_{*})x=0$ and $x=(1-p)x=p_{*}x$, so
\[
x\in [\pi(A)\H]^{\perp}\cap  [\pi(\A)^{*}\H]=\textnormal{Nil }\pi(\A)^{*}\cap [\pi(\A)^{*}\H]=\{0\}.
\]
Since $p+(1-p_{*})$ is self-adjoint,
\[
[{\rm Im}(p+(1-p_{*}))]=\ker (p+(1-p_{*}))^{\perp}=\H,
\]
and therefore ${\rm Im}(p+(1-p_{*}))$ is dense in $\H$. In particular, the subspace
$[\pi(A)\H]+\textnormal{Nil }\pi(\A)$
is dense in $\H$. Now let $\xi\in\H$ and $\varepsilon >0$. 
Choose $x\in [\pi(\A)\H]$ and $y\in \textnormal{Nil }\pi(\A)$ 
such that $\|\xi-x-y\|<\frac{\varepsilon}{4C}$, with $C:=\sup\|\pi(u_{\lambda})\|$. 
Now choose $\lambda<\mu$ large enough such that 
$\|\pi(u_{\lambda}-u_{\mu})x\| <\frac{\varepsilon}{2}$. Then
\[
\begin{split}\|\pi(u_{\lambda}-u_{\mu})\xi\| 
&\leq\|\pi(u_{\lambda}-u_{\mu})(x+y)\|+\| \pi(u_{\lambda}-u_{\mu})(\xi-x-y)\| \\ 
&\leq \|\pi(u_{\lambda}-u_{\mu})x\|+\| \pi(u_{\lambda}-u_{\mu})\|\|(\xi-x-y)\| 
\leq \frac{\varepsilon}{2}+\frac{\varepsilon}{2}=\varepsilon,
\end{split}
\]
which shows that $(\pi(u_{\lambda}))$ is a strong Cauchy net. Since the 
strong operator topology is complete on bounded sets, the sequence 
has a limit $q$. By definition of $q$ 
\begin{equation}\label{comm}
q\pi(a)=\pi(a)=\pi(a)q,
\end{equation}
which proves $1)$. From this it follows that
\[
q^{2}\xi=\lim_{\lambda}\pi(u_{\lambda})q\xi=\lim_{\lambda}\pi(u_{\lambda})\xi=q\xi,
\]
so $q$ is idempotent and in particular has closed range. 
It is immediate from the definiton of $q$ and Equation \eqref{comm} that
\[
\pi(\A)\H\subset{\rm Im}\, q\subset [\pi(\A)\H],
\]
and so ${\rm Im }\,q=[\pi(A)\H]$, proving $2)$. For $3)$, 
observe that 
\[
\pi(a)(1-q)=(1-q)\pi(a)=0,
\]
so  we have ${\rm Im }(1-q)\subset\Nil\pi(\A)$. 
If $h\in\Nil \pi(\A)$, then $qh=0$, so $h=(1-q)h\in{\rm Im }(1-q)$. Finally, 4) follows from
\[
\|q\|=\sup_{\|h\|\leq 1}\|qh\|\leq\sup_{\|h\|\leq 1}\|\lim_{\lambda}\pi(u_{\lambda})h\|
\leq\|\pi\|\sup_{\lambda}\|u_{\lambda}\|\qedhere
\]
\end{proof}
\begin{corl} \label{sumdecomp}
The Hilbert space $\H$ splits as a non-orthogonal direct sum 
$\H\cong[\pi(A)\H]\oplus [\pi(A)^{*}\H]^{\perp}$.
\end{corl}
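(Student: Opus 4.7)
The plan is to read the decomposition directly off the idempotent $q\in\bB(\H)$ produced by Theorem \ref{thm: idempotent}. Since $q$ is idempotent, the Hilbert space decomposes as a (generally non-orthogonal) algebraic direct sum $\H = q\H \oplus (1-q)\H$, with the projections onto the two summands being $q$ and $1-q$ respectively.

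Next I would identify each summand. By part 2 of Theorem \ref{thm: idempotent}, $q\H = [\pi(\A)\H]$, which gives the first summand. By part 3, $(1-q)\H = \Nil\pi(\A)$, and by the lemma preceding the theorem, $\Nil\pi(\A) = [\pi(\A)^{*}\H]^{\perp}$. Substituting yields
\[
\H \;=\; q\H \oplus (1-q)\H \;=\; [\pi(\A)\H] \oplus [\pi(\A)^{*}\H]^{\perp},
\]
which is the stated decomposition.

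There is no real obstacle: the corollary is simply a repackaging of parts 2 and 3 of the theorem together with the identification of $\Nil\pi(\A)$ via the earlier lemma. The only point worth underlining in the write-up is that the direct sum is generally non-orthogonal, reflecting the fact that $q$ need not be self-adjoint when the approximate unit is merely bounded (rather than contractive and arising from a $C^{*}$-algebra). The closedness of both summands is automatic, since $q$ is a bounded idempotent and hence its range and the range of $1-q$ are closed.
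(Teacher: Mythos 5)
Your argument is correct and is exactly the intended one: the corollary is an immediate consequence of Theorem \ref{thm: idempotent}, reading the splitting $\H=q\H\oplus(1-q)\H$ off the bounded idempotent $q$ and identifying the summands via parts 2) and 3) together with the lemma $\Nil\pi(\A)=[\pi(\A)^{*}\H]^{\perp}$. The paper offers no separate proof precisely because this is the whole content, and your remarks on closedness of the summands and non-orthogonality of $q$ are accurate.
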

Such splittings for  $C^{*}$-modules need to be handled with more care, 
and we only treat the case of symmetric Kasparov modules with 
some additional convergence hypotheses. Recall that the 
\emph{strict topology} on $\End^{*}_{B}(E)$ is defined by the 
seminorms $\|T\|_{e}:=\max\{\|Te\|,\|T^{*}e\|\}$, 
and thus models pointwise convergence on $E_{B}$.
\begin{prop}
\label{cor:bdd-strong}
Let $(\A, E_{B},\D)$ be a symmetric Kasparov module 
for which $[\pi(A)E_{B}]$ is a complemented submodule 
of $E_{B}$ and $p\in \End^{*}_{B}(E_{B})$ the corresponding 
projection. Let $(u_n)$ be a sequential bounded 
approximate unit for the differentiable algebra $\A$. Then: \\
1) $p$ is the strict limit of  $(u_n)$;\\
2) $p[\D^*,u_n]p\to 0$ strictly;\\
3) if $(\D u_{n}e)$ converges for all $e\in \Dom \D^{*}$ then 
$p\in\Lip(\D^{*})$ and $[\D^{*},p]$ is the strict limit of the sequence $([\D,u_{n}])$.
\end{prop}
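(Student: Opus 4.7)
The plan is to handle the three assertions in order, relying on three ingredients: (a) the identifications forced by $p$ being the projection onto $[\pi(A)E_B]$; (b) the $\|\cdot\|_\D$--norm control of commutators from the differentiable-algebra structure; and (c) the Leibniz rule for $[\D^*, \cdot]$. For (1), I first record that $\pi(A)E_B \subset pE_B$ gives $p\pi(a) = \pi(a)$, and taking $C^*$--adjoints in $\End^{*}_{B}(E)$ yields $\pi(a) p = \pi(a)$; hence each $u_n$ satisfies $u_n = p u_n p$, so both $u_n$ and $u_n^*$ annihilate $(1-p) E_B$ and agree with $p$ there. On $pE_B = [\pi(A)E_B]$, for $e = \pi(a) f$ with $a \in \A$ the approximate-unit relation $u_n a \to a$ in $\|\cdot\|_\D$ gives $u_n \pi(a) f = \pi(u_n a) f \to \pi(a) f$ in norm; by density together with the bound $\sup_n \|u_n\|_\infty \leq \sup_n \|u_n\|_\D < \infty$, this yields $u_n e \to pe$ for all $e \in E_B$. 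The identical argument applied to $\pi(a) u_n \to \pi(a)$ gives the adjoint convergence $u_n^* e \to pe$, completing (1).

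For (2), the key identity is the Leibniz rule
\[
[\D^*, u_n \pi(a)] = [\D^*, u_n]\,\pi(a) + u_n\,[\D^*, \pi(a)],
\]
which holds as a bounded operator identity on $E_B$ since $u_n\pi(a)\in \A\subset \Lip(\D)$. Because $u_n\pi(a)\to\pi(a)$ in $\|\cdot\|_\D$, the left-hand side converges to $[\D^*,\pi(a)]$ in operator norm, and by (1) the term $u_n[\D^*,\pi(a)]v$ converges strongly to $p[\D^*,\pi(a)]v$. Hence $[\D^*,u_n]\pi(a)v\to (1-p)[\D^*,\pi(a)]v$ strongly, and left-multiplying by $p$ annihilates the limit. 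Approximating $pe$ in $E_B$--norm by finite sums $\sum \pi(a_i)f_i$ with $a_i\in\A$, together with the uniform bound $\|p[\D^*,u_n]\|_\infty \leq \|p\|\sup_n\|u_n\|_\D$, delivers $p[\D^*,u_n]pe\to 0$ for every $e\in E_B$. For strict (as opposed to merely strong) convergence, Remark \ref{rmk:comm-adj} rewrites $(p[\D^*,u_n]p)^* = -p[\D^*,u_n^*]p$, and running the same commutator computation with $u_n^*$ in place of $u_n$ (whose required approximate-unit behaviour comes from taking adjoints of $u_n\pi(a)\to \pi(a)$ and $\pi(a)u_n\to \pi(a)$ in $\|\cdot\|_\D$) yields the adjoint half.

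For (3), the extra hypothesis allows me to use closedness of $\D$: from $u_n e \to pe$ and $\D u_n e \to L e$, closedness gives $pe\in \Dom\D$ and $\D pe = Le$, so $p\,\Dom\D^*\subset \Dom\D\subset \Dom\D^*$. Since $u_n e\in\Dom\D$ and $\D = \D^*$ on $\Dom\D$, I rewrite
\[
[\D^*, u_n] e = \D u_n e - u_n \D^* e \longrightarrow \D pe - p\D^* e = [\D^*, p]\,e
\]
on the dense subspace $\Dom\D^*$. The uniform bound $\|[\D^*,u_n]\|_\infty \leq \sup_n \|u_n\|_\D$ ensures that $[\D^*,p]$ extends to a bounded operator on $E_B$, placing $p\in\Lip(\D^*)$, and simultaneously propagates convergence to all of $E_B$. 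The strict-topology upgrade runs via Remark \ref{rmk:comm-adj} and $u_n^*\to p$, exactly as in (2). The main obstacle across (2) and (3) is precisely this promotion from strong to strict convergence, which forces a mirror argument on adjoints; importantly, it never requires $u_n^*$ to lie in $\A$, only that the identity $[\D^*,T]^* = -[\D^*,T^*]$ of Remark \ref{rmk:comm-adj} moves everything through the adjoint.
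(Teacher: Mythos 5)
Your proposal follows essentially the same route as the paper's proof: part 1 via $u_n=pu_np$ together with the approximate-unit property on $[\pi(A)E_B]$ and uniform boundedness; part 2 via the Leibniz rule $[\D^*,u_n\pi(a)]=[\D^*,u_n]\pi(a)+u_n[\D^*,\pi(a)]$ and the norm convergence $u_na\to a$ in $\Vert\cdot\Vert_\D$; part 3 via closedness of the graph of $\D$ and the uniform bound on $\Vert[\D^*,u_n]\Vert_\infty$. You are in fact more explicit than the paper about the adjoint ($*$-strong) half of the strict convergence in parts 1 and 2, where the mirror argument with $u_n^*$ genuinely does go through because the only inputs are the approximate-unit property (transported through the completely bounded involution) and $u_n^*\to p$ strongly.

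The one soft spot is your closing claim that the strict upgrade in part 3 runs ``exactly as in (2)''. It does not quite: the mirror computation for $[\D^*,u_n^*]e=\D u_n^*e-u_n^*\D^*e$ needs $(\D u_n^*e)$ to converge for $e\in\Dom\D^*$, and the hypothesis of 3) only provides this for $u_n$, not for $u_n^*$; nor can you extract it from what you have already proved, since strong convergence of the bounded sequence $[\D,u_n]$ gives only weak convergence of the adjoints. This gap is, however, equally present in the paper's own proof, which simply asserts that boundedness of $([\D,u_n])$ promotes the convergence on $\Dom\D^*$ to strict convergence on $E_B$; and in the paper's subsequent applications (e.g. Corollary \ref{cor:symmetric-spec}) only the strong half is actually used.
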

\begin{proof} 
Let $p$ be the projection onto $[\pi(A)E_{B}]$, 
which exists because this submodule is complemented. 
For $e\in [\pi(A)E_{B}]$ we have $u_{n}e\rightarrow e$, 
since $u_{n}a\rightarrow a$ in the $C^{*}$-norm. 
Moreover $pa=ap=a$ for all $a\in A$, and thus $(1-p)a=a(1-p)=0$. Therefore
\[
\lim_{n}u_{n}e=\lim_{n}u_{n}pe+u_{n}(1-p)e=\lim_{n} u_{n}pe=pe,
\]
and $u_{n}\rightarrow p$ strictly, proving 1).
Since $(u_n)$ is a bounded approximate unit for $\A$,
the sequence of operators $[\D^*,u_n]$ is uniformly bounded. 
For $a\in\A$ and $e\in \Dom \D^{*}$ we have
$$
[\D^*,u_n]ae=[\D^*,u_na]e-u_n[\D^*,a]e.
$$
Since $ae=pae=ape$, multiplying on the left by $p$ yields
$$
p[\D^*,u_n]pae=p[\D^*,u_na]pe-pu_n[\D^*,a]pe.
$$
Both terms on the right hand side converge to $p[\D^*,a]pe$, and so the right hand side converges
to zero. Hence the left hand side also converges to zero. As vectors of the form $ae$ are dense 
in $pE_{B}$, we see that $p[\D^*,u_n]p$ converges strictly to zero, which proves 2).

To prove 3) we first show that $p$ maps $\Dom \D^{*}$ into $\Dom \D$.
As $(\A,E_B,\D)$ is a symmetric Kasparov module, each 
$u_n$ maps the domain of $\D^*$ into the domain of $\D$. Since, by assumption, 
\[
\pi_{\D}(u_{n})\begin{pmatrix} e \\ \D^{*} e\end{pmatrix}
=\begin{pmatrix}u_{n} e \\ \D u_{n} e\end{pmatrix} ,
\]
is convergent and by 1) the projection $p$ is the strict limit of the 
$u_{n}$, we find that
\[
\lim_{n\rightarrow \infty} u_{n}\begin{pmatrix} e \\ \D e\end{pmatrix}
=\begin{pmatrix} pe \\ x \end{pmatrix}.
\]
Now the graph of $\D$ is closed so it follows that $pe\in \Dom \D$ and
\begin{equation}
\label{conv}
x=\lim_{n \rightarrow \infty} \D u_{n} e=\D pe.
\end{equation} 
Now observe that, since $p \Dom \D^{*}\subset \Dom \D$ we can write for $e\in \Dom \D^{*}$ 
\[
\begin{split}  
[\D,u_{n}]e & =[\D,u_{n}]pe+[\D,u_{n}](1-p)e\\ 
&=\D u_{n}pe-u_{n}\D pe+\D u_{n}(1-p)e - u_{n}\D^*(1-p)e \\
&=\D u_{n}e-u_{n}\D pe - u_{n}\D^*(1-p)e \\
&\to \D pe -p\D pe -p\D^*(1-p)e\qquad\qquad\qquad\qquad\textnormal{by Equation \eqref{conv}}\\
&=[\D^*,p] e,
\end{split}
\]
which tells us that $[\D,u_{n}]$ converges to $[\D^*,p]$ on $\Dom \D^{*}$. Since 
the sequence $[\D,u_{n}]$ is bounded, it converges strictly on all of 
$E_{B}$, and the operator $[\D^*, p]$ is thus bounded on $\Dom \D^{*}$. This proves 3).\end{proof}

\begin{rmk} 
It would be desirable to remove the convergence 
hypothesis in 3). At present it seems unlikely to be possible without further  assumptions.
\end{rmk}
In fact our seeming flexibility in allowing symmetric operators is redundant in the presence
of a bounded approximate unit.
\begin{corl}
\label{cor:symmetric-spec}
Let $(\A,E_{B},\D)$ be a symmetric Kasparov module with $A\cdot E_{B}$ dense in $E_{B}$. 
If $\A$ has a sequential bounded approximate unit then $\D$ is self-adjoint.
\end{corl}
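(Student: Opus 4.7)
The plan is to deduce the corollary as an immediate consequence of Proposition \ref{cor:bdd-strong}, once we observe that density of $A\cdot E_{B}$ forces the projection $p$ appearing there to be the identity. Since $[\pi(A)E_{B}]=E_{B}$, this submodule is trivially complemented (the complement is $\{0\}$), so Proposition \ref{cor:bdd-strong} applies with $p=1$. Part 1 then says $u_{n}\to 1$ strictly, and part 2 says $[\D^{*},u_{n}]\to 0$ strictly.

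Next, I would fix $e\in\Dom\D^{*}$ and verify that $e\in\Dom\D$ with $\D e=\D^{*}e$. Since $\A\subset\Lip(\D)$, each $u_{n}$ maps $\Dom\D^{*}$ into $\Dom\D$; in particular $u_{n}e\in\Dom\D$. Because $\D\subset\D^{*}$ and $[\D^{*},u_{n}]$ is bounded (Remark \ref{rmk:comm-adj}),
\[
\D u_{n}e\;=\;\D^{*}u_{n}e\;=\;u_{n}\D^{*}e+[\D^{*},u_{n}]e.
\]
As $n\to\infty$ the first term converges to $\D^{*}e$ (strict convergence of $u_{n}$ to $1$), and the second to $0$ by the observation above. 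Simultaneously $u_{n}e\to e$. Since $\D$ is closed, this forces $e\in\Dom\D$ and $\D e=\D^{*}e$. Hence $\Dom\D^{*}\subset\Dom\D$, and combined with the standing symmetry $\D\subset\D^{*}$ this gives $\D=\D^{*}$.

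The only subtle point is that part 3 of Proposition \ref{cor:bdd-strong} has a convergence hypothesis one cannot assume here. However, we do not need part 3: the strict vanishing of $[\D^{*},u_{n}]$ from part 2 already provides the convergence of $\D u_{n}e$ via the identity above. In effect, the approximate unit transfers elements of $\Dom\D^{*}$ into $\Dom\D$ while controlling the graph norm, which is precisely the mechanism one would expect from the manifold analogy of cutoff functions on a complete space. I expect no further obstacle; the proof is essentially a two line consequence of the preceding proposition.
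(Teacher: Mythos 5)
Your proposal is correct and follows essentially the same route as the paper: both reduce to Proposition \ref{cor:bdd-strong} with $p=1$ to get strict vanishing of $[\D^*,u_n]$, then use the identity $\D^* u_n e = u_n\D^* e + [\D^*,u_n]e$ together with closedness of $\D$ to conclude $\Dom\D^*\subset\Dom\D$. Your explicit remark that part 3 of that proposition (with its extra convergence hypothesis) is not needed is accurate and matches what the paper implicitly does.
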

\begin{proof}
Suppose we have an approximate identity $(u_n)$ with $[\D^*,u_n]$ uniformly bounded
in $n$. Then 
by Lemma \ref{cor:bdd-strong}, $[\D^*,u_n]\to 0$ strictly,  
since $p=1$. Thus for $e\in\Dom\D^*$
we find
$$
\D^*u_ne=[\D^*,u_n]e+u_n\D^*e\to \D^*e.
$$
As we also have $u_ne\to e$, and $u_ne\in \Dom\D$,
we see that $e$ is in the graph norm completion of $\Dom\D$. As
$e\in\Dom\D^*$ was arbitrary,  $\Dom\D^*\subset\Dom\D$ and so $\D$ is self-adjoint.
\end{proof}
\subsection{Bounded and unbounded multipliers of differentiable algebras}
It is a well-known fact that for a Banach algebra $\A$ with a 
bounded approximate unit, the multiplier algebra $\mathbb{M}(\A)$ 
is isomorphic to the strict closure of $\A$, and contains $\A$ as an 
essential ideal \cite[Ch 5]{Palmer}. 
Similarly, a representation 
$A\rightarrow \mathbb{B}(\H)$ of a $C^{*}$-algebra 
$A$ on a Hilbert space $\H$ extends to a representation 
of the multiplier algebra $\mathbb{M}(A)$ on that same Hilbert space. We discuss these notions here for operator algebras with bounded approximate unit.

For an operator algebra $\A$, $\mathbb{M}(\A)$ 
inherits matrix norms by viewing elements of $\mathbb{M}(\A)$ 
as operators on $\A$. The next lemma shows that the presence of a bounded approximate 
unit ensures that this norm, when restricted to $\A$, is cb-equivalent 
to the original norm on $\A$, ensuring that the inclusion is a cb-equivalence. 
\begin{lemma}[cf. Chapter 5 of \cite{Palmer}]
\label{opnormlemma}
Let $\A$ be an operator algebra with bounded approximate unit. 
Then the norm on $M_{n}(\A)$ is equivalent to the norm
\begin{equation}
\label{opnorm} 
\|a\|_{\textnormal{op},n}:=\sup_{\|b\|_{n}\leq 1}\|ab\|_{n},\quad \|a\|_{\textnormal{op},n}\leq \|a\|_{n}\leq C\|a\|_{\textnormal{op},n},
\end{equation}
with $C$ a constant independent of $n$. 
\end{lemma}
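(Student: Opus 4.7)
The plan is to bootstrap from the given approximate unit $(u_\lambda)$ of $\A$ to a uniformly bounded approximate unit in every matrix level $M_n(\A)$ by block-diagonal amplification. Once we have this, the estimate $\|a\|_n \leq C \|a\|_{\textnormal{op},n}$ comes from approximating $a$ on the right by a norm-one multiple of the amplified unit.

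The left-hand inequality $\|a\|_{\textnormal{op},n} \leq \|a\|_n$ is immediate from submultiplicativity of the operator norm on $M_n(\mathbb{B}(\H)) = \mathbb{B}(\H^n)$: for any $b \in M_n(\A)$ with $\|b\|_n \leq 1$ we have $\|ab\|_n \leq \|a\|_n \|b\|_n \leq \|a\|_n$, and taking the supremum over such $b$ yields the claim. For the right-hand inequality, set $M := \sup_\lambda \|u_\lambda\|$ and define
\[
U_\lambda := \diag(u_\lambda,\dots,u_\lambda) \in M_n(\A).
\]
Since $U_\lambda$ acts on $\H^n$ as $u_\lambda$ on each of the $n$ summands independently, $\|U_\lambda\|_n = \|u_\lambda\| \leq M$, and this bound is crucially independent of $n$. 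By the very definition of $\|a\|_{\textnormal{op},n}$,
\[
\|aU_\lambda\|_n \leq \|a\|_{\textnormal{op},n}\,\|U_\lambda\|_n \leq M\,\|a\|_{\textnormal{op},n}.
\]

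It remains to check that $aU_\lambda \to a$ in the $M_n(\A)$-norm. The $(i,j)$-entry of $aU_\lambda - a$ is $a_{ij} u_\lambda - a_{ij}$, which converges to zero in $\A$ because $(u_\lambda)$ is a right approximate unit. For any $c \in M_n(\mathbb{B}(\H))$ we have the crude bound $\|c\|_n \leq \sum_{i,j} \|c_{ij}\|$ (obtained by writing $c = \sum_{i,j} c_{ij} \otimes e_{ij}$), so entrywise norm convergence forces $\|aU_\lambda - a\|_n \to 0$. Hence $\|a\|_n = \lim_\lambda \|aU_\lambda\|_n \leq M\,\|a\|_{\textnormal{op},n}$, and $C := M$ is the desired constant, independent of $n$.

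There is no serious obstacle here; the argument is a routine amplification of the classical Banach-algebra statement recalled from \cite{Palmer}. The one point that must be verified carefully is that the amplified approximate unit $U_\lambda$ retains operator norm $\|u_\lambda\|$ rather than growing with $n$, which is automatic from its diagonal form and is precisely what forces the constant $C$ to be uniform in $n$.
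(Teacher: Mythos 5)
Your proof is correct and follows essentially the same route as the paper: the left inequality by submultiplicativity, and the right one by approximating $a$ on the right with the diagonally amplified approximate unit $\diag(u_\lambda,\dots,u_\lambda)$, whose norm equals $\|u_\lambda\|$ and is therefore independent of $n$. The paper compresses the matrix-level step into the remark that the scalar argument applies verbatim to $(u_\lambda\cdot{\rm Id}_n)$; you have simply spelled out the entrywise convergence $aU_\lambda\to a$ that this relies on.
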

\begin{proof} 
Obviously, it holds that $\|a\|_{\textnormal{op}}\leq \|a\|$. 
If $u_{\lambda}$ is a bounded approximate unit, then 
$\frac{1}{C}\|u_{\lambda}\|\leq 1$ for some fixed constant $C$ and all $\lambda$. 
For any $\epsilon>0$ there exists $\lambda$ such that $\|b-bu_{\lambda}\|<\epsilon$ and thus
\[
\frac{1}{C}(\|b\|-\epsilon)<\frac{1}{C}(\|b\|-\|b-bu_{\lambda}\|)
\leq\frac{1}{C}\|bu_{\lambda}\|\leq \|b\|_{\textnormal{op}},
\]
which proves the assertion. The argument for the matrix norms $\|\cdot \|_{n}$ is verbatim the same using the bounded approximate unit $ (u_{\lambda}\cdot{\rm Id}_n)$. 
\end{proof}
\begin{defn}\label{strictmult} Let $\A$ be an operator algebra with bounded approximate unit. We define the \emph{multiplier algebra} $\M(\A)$ to be the \emph{strict closure} of $\A$. That is
\[
\M(\A)\!:=\{T:\A\to \A: \exists \ {\rm a\ net}\,(b_{\lambda})\subset\A\ {\rm such\ that}\ 
\forall a\in\A\  \lim \|b_{\lambda}a-Ta\|=\lim \|ab_{\lambda}-aT\|=0\},\]
with norm $\|T\|:=\|T\|_{\textnormal{op}}$ cf. Lemma \ref{opnormlemma}.
\end{defn}
It is worth noting that the strict topology on $\End_{B}^{*}(E)$ as defined before Proposition \ref{cor:bdd-strong} coincides with the strict topology in the sense of Definition \ref{strictmult} defined by the ideal $\kK(E_{B})$.
\begin{lemma}
\label{lem: essext} Let 
$\A$ be an operator algebra with bounded approximate unit $(u_{\lambda})$ and 
$\pi:\A\rightarrow \bB(\H)$ be an essential cb-representation. 
Then $\pi$ extends uniquely to a cb-representation 
$\pi:\mathbb{M}(\A)\rightarrow \bB(\H)$ such that $\pi(1)=1$.
\end{lemma}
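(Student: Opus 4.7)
The plan is to build the extension by combining the strict-limit description of $\M(\A)$ from Definition \ref{strictmult} with the idempotent $q$ produced by Theorem \ref{thm: idempotent}, noting that when $\pi$ is essential we have $q\H = [\pi(\A)\H] = \H$ and hence $\pi(u_\lambda) \to 1$ strongly in $\bB(\H)$. This is the crucial starting observation: it says that every $h \in \H$ is approximated by vectors of the form $\pi(u_\lambda)h$, which live in $\pi(\A)\H$.

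First I would define a candidate extension on the dense subspace $\pi(\A)\H$ by the formula $\pi(T)(\pi(a)\xi) := \pi(Ta)\xi$ for $T \in \M(\A)$, $a \in \A$, $\xi \in \H$. Well-definedness is established by the following trick: if $(b_\mu) \subset \A$ is any net with $b_\mu \to T$ strictly, then $b_\mu a \to Ta$ in norm, so $\pi(b_\mu a)\xi \to \pi(Ta)\xi$; but $\pi(b_\mu a)\xi = \pi(b_\mu)(\pi(a)\xi)$ depends only on the vector $\pi(a)\xi$, so the same limit is obtained independently of the decomposition. To get a uniform bound, I use Lemma \ref{opnormlemma}: the norm on $\A$ is $C$-equivalent to the operator norm $\|\cdot\|_{\textnormal{op}}$, and strict convergence $b_\mu \to T$ forces $\|b_\mu\|_{\textnormal{op}} \to \|T\|_{\textnormal{op}}$, so $(b_\mu)$ is norm-bounded. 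Consequently $\|\pi(b_\mu)h\| \leq \|\pi\|_{\cb}\sup_\mu \|b_\mu\|\,\|h\|$ for all $h \in \H$, and the net $(\pi(b_\mu)h)$ is uniformly bounded and convergent on the dense subspace $\pi(\A)\H$, hence convergent on all of $\H$ to a bounded operator I call $\pi(T)$.

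The multiplicativity $\pi(TS) = \pi(T)\pi(S)$ is checked on the dense subspace $\pi(\A)\H$ using the definition, and the identity $\pi(1_{\M(\A)}) = 1$ follows immediately because the identity in $\M(\A)$ is represented by the net $(u_\lambda)$ itself and $\pi(u_\lambda) \to 1$ strongly. Complete boundedness is inherited via the matricial version of the same construction: the extension of $\pi$ to $M_n(\M(\A))$ is the strong-operator limit of the uniformly bounded net $\pi \otimes \mathrm{id}_n$ applied to strictly approximating elements, and strong-operator limits do not increase the cb-norm, giving $\|\pi\|_{\cb,\M(\A)} \leq \|\pi\|_{\cb,\A} \cdot C \cdot \sup_\lambda\|u_\lambda\|$ with $C$ the constant from Lemma \ref{opnormlemma}. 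Uniqueness is then immediate: any cb-extension $\tilde\pi$ with $\tilde\pi(1)=1$ must satisfy $\tilde\pi(T) = \tilde\pi(T)\lim\pi(u_\lambda) = \lim \pi(Tu_\lambda)$ strongly, since $Tu_\lambda \in \A$, so $\tilde\pi$ is forced to agree with our construction.

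The main obstacle is the boundedness estimate for the densely defined operator $\pi(T)$. The naive bound $\|\pi(Ta)\xi\| \leq \|\pi\|_{\cb}\|T\|\|a\|\|\xi\|$ bounds in terms of $\|a\|\|\xi\|$ rather than $\|\pi(a)\xi\|$, so it does not immediately yield a bounded extension. The fix, and the place where the bounded approximate unit is genuinely used twice, is to re-express $\pi(T)h$ as the strong-operator limit $\lim_\mu \pi(b_\mu)h$ defined on all of $\H$ (not just $\pi(\A)\H$), and to use Lemma \ref{opnormlemma} to ensure that the approximating net $(b_\mu)$ is norm-bounded. Once this is in place, the remaining algebraic verifications are routine.
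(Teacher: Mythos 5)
Your overall strategy is the paper's: extend $\pi$ to $\M(\A)$ by strong limits of strictly approximating nets, using essentiality to get $\pi(u_{\lambda})\to 1$ strongly, then verify multiplicativity and uniqueness on the dense subspace $\pi(\A)\H$. However, the step you yourself identify as the main obstacle --- norm-boundedness of the approximating net --- is justified incorrectly. Strict convergence $b_{\mu}\to T$ does \emph{not} force $\|b_{\mu}\|_{\textnormal{op}}\to\|T\|_{\textnormal{op}}$: the operator norm is only lower semicontinuous for the strict topology, so one gets $\|T\|_{\textnormal{op}}\leq\liminf_{\mu}\|b_{\mu}\|_{\textnormal{op}}$ and nothing more, and for a general net (the index sets here are genuinely nets, not sequences, so no uniform boundedness principle is available) a strictly convergent net can be norm-unbounded. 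As written, the estimate $\|\pi(b_{\mu})h\|\leq\|\pi\|_{\cb}\sup_{\mu}\|b_{\mu}\|\,\|h\|$ has no content for an arbitrary approximating net, so the bounded extension of $\pi(T)$ to all of $\H$ is not yet constructed.

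The repair is exactly what the paper does: do not take an arbitrary net, but the canonical one $b_{\lambda}:=Tu_{\lambda}\in\A$. This net converges strictly to $T$ and is automatically bounded, since by Lemma \ref{opnormlemma} one has $\|Tu_{\lambda}\|\leq C\|Tu_{\lambda}\|_{\textnormal{op}}\leq C\|T\|_{\textnormal{op}}\sup_{\lambda}\|u_{\lambda}\|$; this is also recorded as part 2) of Lemma \ref{multiplierproperties}. (Your well-definedness argument is unaffected, since independence of the limit on $\pi(\A)\H$ from the choice of approximating net only uses norm convergence of $b_{\mu}a\to Ta$ for fixed $a$.) With this choice of net the rest of your argument --- convergence on the dense subspace $\pi(\A)\H$ plus uniform boundedness giving strong convergence on all of $\H$, multiplicativity, $\pi(1)=1$, the matricial estimate for complete boundedness, and uniqueness via $\tilde\pi(T)h=\lim_{\lambda}\pi(Tu_{\lambda})h$ --- goes through and coincides with the paper's proof.
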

\begin{proof} 
By assumption $\H=[\pi(\A)\H]$, so for all 
$h\in\H$ we have $\pi(u_{\lambda})h\rightarrow h$ by 
Lemma \ref{Nil}. Since $\mathbb{M}(\A)$ is the strict closure of $\A$, 
for all $b\in\mathbb{M}(\A)$ it holds that $\sup_{\lambda}\|bu_{\lambda}\|<\infty$ and for all $a\in \A$, $(bu_{\lambda}a)$ is a Cauchy net in $\A$. Therefore
\[
\pi(b)\pi(a)h:=\lim_{\lambda}\pi(bu_{\lambda}a)h,
\]
is a Cauchy net for all $a\in\A$ and $h\in \H$. Thus 
$\pi(bu_{\lambda})h$ converges for $h\in \pi(\A)\H$. 
Since this subspace is dense in $\H$ and the net 
$(\pi(bu_{\lambda}))$ is uniformly bounded, the net is 
strongly Cauchy on $\H$. This proves that the assigment 
$h\mapsto \lim_{\lambda}\pi(bu_{\lambda})h$ defines a 
bounded operator on $\H$. For $a\in\A$ and $b\in \M(\A)$, 
it is immediate from the definition that $\pi(ab)=\pi(a)\pi(b)$. Then for $a,b\in\M(\A)$ we have
\[
\pi(a)\pi(b)h=\pi(a)(\lim_{\lambda} \pi(bu_{\lambda})h)=\lim_{\lambda}\pi(abu_{\lambda})h=\pi(ab)h,
\]
proving that the extension of $\pi$ is a homomorphism. 
Since for all $a\in \A$ and $b\in\M(\A)$  we have  
$bu_{\lambda}a\rightarrow ba$ in $\A$, it is immediate that any other cb-extension of $\pi$ 
must coincide with the one given, proving uniqueness.
\end{proof}
\begin{lemma}
\label{multiplierproperties} Let $\A$ be an operator algebra with bounded approximate unit 
$(u_\lambda)\subset\A$ and 
$\pi:\A\rightarrow \bB(\H)$ an essential  cb-isomorphic representation. Then:\\
1) the strict closure $\M(\A)$ of $\pi(\A)$ is cb-isomorphic to the idealiser  of 
$\pi(\A)\in \bB(\H)$;\\
2) every element $T\in \M(\A)$ is the strict limit of a bounded net in $\A$;\\
3) if $\J\subset\A$ is a closed ideal, then $\J$ is a closed ideal in $\M(\A)$.
\end{lemma}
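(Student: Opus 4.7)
The plan is to promote the extended representation $\pi:\M(\A)\to\bB(\H)$ of Lemma \ref{lem: essext} to a cb-isomorphism onto the idealiser of $\pi(\A)$, and then to use the fact that the bounded approximate unit yields a ``smoothing'' $T\mapsto Tu_\lambda\in\A$ for every $T\in\M(\A)$. All three assertions should follow from this single construction.

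For 1), I would first note that for $T\in\M(\A)$ the extension satisfies $\pi(T)\pi(a)=\pi(Ta)\in\pi(\A)$ and $\pi(a)\pi(T)=\pi(aT)\in\pi(\A)$, so $\pi(T)$ lies in the idealiser. Conversely, given $x$ in the idealiser, the relations $x\pi(a),\pi(a)x\in\pi(\A)$ permit one to define $Ta:=\pi^{-1}(x\pi(a))\in\A$ and $aT:=\pi^{-1}(\pi(a)x)\in\A$, since $\pi$ is a cb-isomorphism onto $\pi(\A)$. The bounded net $b_\lambda:=\pi^{-1}(x\pi(u_\lambda))\in\A$ then satisfies $\pi(b_\lambda a)=x\pi(u_\lambda a)\to x\pi(a)=\pi(Ta)$ and symmetrically on the right, so $b_\lambda a\to Ta$ and $ab_\lambda\to aT$ in $\A$. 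Hence $T\in\M(\A)$ with $\pi(T)=x$, producing a bijection. The norm equivalence $\|T\|_{\textnormal{op}}\asymp \|\pi(T)\|$ follows from Lemma \ref{opnormlemma} together with the estimate $\|\pi(T)\|\leq \sup_\lambda\|\pi(Tu_\lambda)\|\leq\|\pi\|\sup_\lambda\|Tu_\lambda\|$, and the whole argument amplifies verbatim to $M_n(\A)$ using the approximate unit $(u_\lambda\cdot \Id_n)$, yielding cb-equivalence.

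For 2), the bounded net is $c_\lambda:=Tu_\lambda\in\A$: boundedness is $\|c_\lambda\|\leq C\|T\|_{\textnormal{op}}\|u_\lambda\|$ via Lemma \ref{opnormlemma}, and strict convergence follows from $c_\lambda a=T(u_\lambda a)\to Ta$ by continuity of $T$ on $\A$, together with $ac_\lambda=(aT)u_\lambda\to aT$ using that $aT\in\A$ admits $(u_\lambda)$ as an approximate unit. For 3), given a closed ideal $\J\subset\A$, $T\in\M(\A)$ and $j\in\J$, continuity of $T$ combined with $u_\lambda j\to j$ in $\A$ gives $T(u_\lambda j)\to Tj$; the multiplier identity $T(u_\lambda j)=(Tu_\lambda)j$ combined with $Tu_\lambda\in\A$, $j\in\J$ and the ideal property places each $(Tu_\lambda)j\in\J$, and closedness of $\J$ then forces $Tj\in\J$, with the right-sided statement symmetric.

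The principal subtlety is the cb-isomorphism in 1): all constants in the norm equivalences must be dimension-independent, which is ensured by the fact that $(u_\lambda\cdot\Id_n)$ is a bounded approximate unit for $M_n(\A)$ with the same bound as $(u_\lambda)$, so that every estimate above carries over uniformly in $n$.
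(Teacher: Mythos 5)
Your proposal is correct and follows essentially the same route as the paper: both directions of the idealiser correspondence are established via the extension of Lemma \ref{lem: essext} and the net $T\pi(u_\lambda)$ (resp. $Tu_\lambda$), part 2) is the same bounded net $(Tu_\lambda)$ with the same appeal to Lemma \ref{opnormlemma}, and part 3) is the same closedness argument (you multiply by $u_\lambda$ on the other side, which is immaterial). No gaps.
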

\begin{proof} By Lemma \ref{lem: essext}, 
$\pi$ extends to a representation of $\M(\A)$.
Let $T$ be an element of $\pi(\M(\mathcal{A}))$, so that there is a net
$(b_{\lambda})\subset \A$ with the property that for all $a\in \A$
\[
\|b_{\lambda}a-Ta\|,\ \|ab_{\lambda}-aT\|\rightarrow 0.
\]
Since $\A$ is norm closed and $\pi$ is cb-isomorphic, 
it follows that $\pi(Ta)$, $\pi(aT)\in\pi(\A)$ for all 
$a\in \A$, so $\pi(T)$ idealises $\pi(\A)$. 
Now let $T\in \bB(\H)$ be such that $T\pi(a)$, $\pi(a)T\in \pi(\A)$ for all $a\in \A$. Consider 
the net $T\pi(u_{\lambda})\in \pi(\A)$. For $a\in \A$ we have
\[
\|T\pi(u_{\lambda}a)-T\pi(a)\|\leq \|T\|\|\pi(u_{\lambda}a-a)\|\rightarrow 0,\quad 
\|\pi(a)T\pi(u_{\lambda})-\pi(a)T\|\rightarrow 0,
\]
so since $\pi$ is cb-isomorphic and essential, $T$ is the image of an element in $\M(\A)$. 
For the second statement, 
observe that $T$ is the strict limit of the bounded net 
$(Tu_{\lambda})$, as in Lemma \ref{opnormlemma}. For the 
third assertion, let $T\in \M(\pi(\A))$ and $j\in\J$. 
Since $Tj\in\A$, the net $(u_{\lambda}Tj)$ converges to 
$Tj$ in norm. But $u_{\lambda}T\in \A$ so this net actually 
lies in $\J$. Since $\J$ is closed, $Tj\in \J$, and similarly for $jT$.
\end{proof} 

\begin{thm} 
\label{thm: ext} 
Any cb-representation $\pi:\A\rightarrow \bB(\H)$ 
of an operator algebra with bounded approximate unit extends uniquely to a representation 
$\pi:\mathbb{M}(\A)\rightarrow \bB(\H)$ of the multiplier algebra 
$\mathbb{M}(\A)$, such that $\pi(1)$ is an idempotent satisfying 
$\pi(1)\H=[\pi(\A)\H]$ and $(1-\pi(1))\H=\Nil\pi(\A)$. 
\end{thm}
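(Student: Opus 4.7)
The plan is to reduce to the essential-representation case handled by Lemma \ref{lem: essext}, using the idempotent splitting of $\H$ produced by Theorem \ref{thm: idempotent}. Applying that theorem to $\pi$ gives a strong limit $q = \lim_\lambda \pi(u_\lambda)$, an idempotent satisfying $\H_1 := q\H = [\pi(\A)\H]$ and $\H_0 := (1-q)\H = \Nil\pi(\A)$. The identities $q\pi(a) = \pi(a)q = \pi(a)$ coming from Theorem \ref{thm: idempotent} show that each $\pi(a)$ annihilates $\H_0$ and preserves $\H_1$, so $\pi$ restricts to a cb-representation $\pi_1 : \A \to \bB(\H_1)$. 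This restriction is essential, because $\pi_1(\A)\H_1 \supset \pi(\A)\H$ is dense in $\H_1$ by construction.

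Next, I would apply Lemma \ref{lem: essext} to $\pi_1$ to obtain a unique unital cb-extension $\tilde\pi_1 : \M(\A) \to \bB(\H_1)$, and then define the extension of $\pi$ to all of $\H$ block-diagonally with respect to the (non-orthogonal) direct sum $\H = \H_1 \oplus \H_0$ of Corollary \ref{sumdecomp}, namely $\pi(T) := \tilde\pi_1(T) \oplus 0$. Checking that this is a cb-homomorphism agreeing with the original $\pi$ on $\A$, and that $\pi(1) = q$ has the prescribed range and nullspace, follows at once from the corresponding properties of $\tilde\pi_1$ and $q$; the cb-norm of the extension is controlled by $\|\tilde\pi_1\|_{\cb}$ together with $\|q\|$, the latter bounded via Theorem \ref{thm: idempotent}.

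For uniqueness, suppose $\rho : \M(\A) \to \bB(\H)$ is any cb-extension of $\pi$ for which $\rho(1)$ is an idempotent with range $[\pi(\A)\H]$ and complementary range $\Nil\pi(\A)$. Two idempotents on $\H$ with the same range and the same complementary range must agree, so $\rho(1) = q$. Applying $\rho$ to the identities $1 \cdot T = T = T \cdot 1$ in $\M(\A)$ then gives $q\rho(T) = \rho(T) = \rho(T)q$, which forces $\rho(T)$ to vanish on $\H_0$ and to restrict to a unital cb-extension $\H_1 \to \H_1$ of $\pi_1$; by the uniqueness clause of Lemma \ref{lem: essext} this restriction coincides with $\tilde\pi_1(T)$, and hence $\rho = \pi$. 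I expect the main substantive ingredient to be the availability of the strong-limit idempotent $q$ itself: once Theorem \ref{thm: idempotent} and Lemma \ref{lem: essext} are in hand, the rest of the argument is essentially formal bookkeeping on the two summands $\H_1$ and $\H_0$.
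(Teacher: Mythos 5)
Your argument is essentially the paper's own proof: both decompose $\H$ as the non-orthogonal sum $q\H\oplus(1-q)\H$ using the idempotent from Theorem \ref{thm: idempotent}, observe that $\pi$ is essential on $q\H$ and zero on $(1-q)\H$, apply Lemma \ref{lem: essext} on the essential part, and extend by zero. Your uniqueness paragraph fills in a detail the paper leaves implicit, and it is correct.
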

\begin{proof} 
The Hilbert space $\H$ is cb-isomorphic to the nonorthogonal direct 
sum $q\H\oplus(1-q)\H$, with $q$ as in Proposition \ref{thm: idempotent}. The 
representation $\pi$ is essential on $q\H$ and $0$ on $(1-q)\H$. Thus, 
Lemma \ref{lem: essext} gives a representation $\mathbb{M}(\A)\rightarrow \bB(q\H)$, 
which extends to $0$ on $(1-q)\H$, thus giving the desired representation 
$\pi:\mathbb{M}(\A)\rightarrow \bB(\H)$. By construction $\pi(1)=q$.
\end{proof}
We now consider multiplier algebras for closed subalgebras of $\Lip(\D)$, and in 
particular for differentiable algebras of spectral triples.
\begin{prop} 
\label{lipmult}
Let $\D:\Dom \D\rightarrow E_{B}$ be a self-adjoint regular operator and  
$\A\subset\Lip(\D)$ a closed subalgebra with bounded 
approximate unit and assume $[AE_{B}]=E_{B}$.
The multiplier algebra $\M(\A)$ is cb-isomorphic to the algebra
\begin{equation}
\label{mult}
\big\{T\in\mathbb{M}(A):T\Dom \D\subset\Dom\D,\ \  T\A,\,\A T\subset\A,\ \  [\D,T]\in\End^{*}_{B}(E_{B})\big\},
\end{equation}
topologised by the representation given in 
Equation \eqref{eq:sym-lip-norm}. The inclusion $\M(\A)\to \M(A)$ is spectral invariant.
\end{prop}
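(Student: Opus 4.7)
The plan is to identify both $\M(\A)$ and the algebra defined by \eqref{mult}, which I will denote $\widetilde{\M}$, with the same subalgebra of $\End^*_B(E\oplus E)$ via the Lipschitz representation $\pi_\D$. By Lemma \ref{multiplierproperties}(1), $\M(\A)$ is cb-isomorphic to the idealiser $\mathcal{I}$ of $\pi_\D(\A)$ in $\End^*_B(E\oplus E)$, so it suffices to prove $\mathcal{I} = \pi_\D(\widetilde{\M})$. The forward inclusion $\pi_\D(\widetilde{\M})\subset \mathcal{I}$ is a direct $2\times 2$ matrix calculation: for $T\in \widetilde{\M}$ and $a\in \A$ the graded Leibniz rule $[\D,Ta] = [\D,T]a + \gamma(T)[\D,a]$ gives $\pi_\D(T)\pi_\D(a) = \pi_\D(Ta)\in \pi_\D(\A)$, and symmetrically for right multiplication.

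For the reverse inclusion, take $S = (S_{ij})_{1\le i,j\le 2}\in\mathcal{I}$ and expand the relations $S\pi_\D(a) = \pi_\D(b_a)$ and $\pi_\D(a)S = \pi_\D(c_a)$ for $a\in\A$ entrywise. The top-right entries force $S_{12}\gamma(a) = aS_{12} = 0$ for all $a\in\A$; combined with $[AE_B] = E_B$ this gives $S_{12} = 0$. The bottom-right entries then yield $S_{22} = \gamma(S_{11})$, the top-left gives $T := S_{11}\in \mathbb{M}(A)$ with $T\A,\A T\subset\A$, and the bottom-left leaves the identity $[\D,Ta] = S_{21}a + \gamma(T)[\D,a]$. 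To upgrade this to actual $\Dom\D$-preservation I pick a bounded approximate unit $(u_\lambda)\subset\A$ and invoke the argument of Proposition \ref{cor:bdd-strong}(2) — whose proof uses only $[AE_B] = E_B$ and the existence of a bounded approximate unit in $\A$, not any Kasparov compactness condition — to deduce $[\D,u_\lambda]\to 0$ strictly on $E_B$. For $y\in\Dom\D$, specialising the bottom-left identity to $a = u_\lambda$ yields
$$\D(Tu_\lambda y) = S_{21}u_\lambda y + \gamma(T)[\D, u_\lambda]y + \gamma(Tu_\lambda)\D y \longrightarrow S_{21}y + \gamma(T)\D y,$$
while $Tu_\lambda y\to Ty$ in $E_B$. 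Closedness of $\D$ forces $Ty\in\Dom\D$ and $[\D,T]y = S_{21}y$, proving $T\in\widetilde{\M}$ and $S = \pi_\D(T)$.

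For the spectral invariance of $\M(\A)\to\M(A)$, suppose $T\in \widetilde{\M}$ is invertible in $\M(A)$. Formal differentiation of $TT^{-1} = 1$ identifies $[\D, T^{-1}] = -\gamma(T)^{-1}[\D,T]T^{-1}\in \End^*_B(E)$, whence the matrix
$$\begin{pmatrix} T^{-1} & 0 \\ -\gamma(T)^{-1}[\D,T]T^{-1} & \gamma(T)^{-1}\end{pmatrix}$$
is an explicit adjointable inverse of $\pi_\D(T)$ in $\End^*_B(E\oplus E)$. Combining this with the spectral invariance of $\A$ in $A$ stated before Definition \ref{def:diff-alg} — via a Neumann-series argument valid for $|\lambda|$ large in the $\pi_\D$-norm, propagated along the connected component of invertibles using analyticity of the resolvent and closedness of $\widetilde{\M}$ in the norm inherited from $\pi_\D$ — places $T^{-1}$ in $\widetilde{\M}$. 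The main obstacle is the closed-graph passage in the reverse inclusion, which requires both the strict vanishing $[\D, u_\lambda]\to 0$ and the closedness of $\D$ to convert the algebraic idealiser relation into genuine $\Dom\D$-invariance; once this is in hand, the spectral invariance is a comparatively soft argument driven by the explicit matrix formula for $\pi_\D(T)^{-1}$.
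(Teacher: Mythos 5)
Your identification of $\M(\A)$ with the idealiser of $\pi_\D(\A)$ and the entrywise $2\times 2$ analysis is exactly the paper's argument, and your closed-graph step for domain preservation — using that $[\D,u_\lambda]\to 0$ strictly (Proposition~\ref{cor:bdd-strong} with $p=1$, whose proof indeed needs only the bounded approximate unit and essentiality) together with closedness of $\D$ — is a correct and welcome elaboration of the paper's terse ``one again derives from essentiality of $\A$ that $T$ must preserve the domain of $\D$.''

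The one place the argument as written does not close is spectral invariance. Writing $[\D,T^{-1}]=-\gamma(T)^{-1}[\D,T]T^{-1}$ by ``formal differentiation'' presupposes that $T^{-1}$ preserves $\Dom\D$, and exhibiting the lower-triangular matrix as an inverse of $\pi_\D(T)$ in $\End^*_B(E\oplus E)$ only shows that $\pi_\D(T)$ is invertible as an adjointable operator — it does not by itself show that this inverse is $\pi_\D$ of anything, i.e.\ that $T$ maps $\Dom\D$ \emph{onto} $\Dom\D$. That surjectivity is the actual content to be proved (the standard route is a Neumann series for $(\D+i\mu)T(\D+i\mu)^{-1}=T+[\D,T](\D+i\mu)^{-1}$ with $\mu$ large, in the resolvent of $\D$ rather than of $T$), and it is precisely what the paper outsources to the spectral invariance of $\Lip(\D)$ in $\End^*_B(E_B)$ (\cite[Thm~B.3]{Mes}). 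Since you do also invoke the paper's stated fact that closed subalgebras of $\Lip(\D)$ are spectrally invariant, the conclusion is salvageable; but you should route the argument through that citation (or the $(\D+i\mu)$-conjugation trick) rather than through the formal-differentiation formula, which is circular as presented.
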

\begin{proof}
The algebra defined in \eqref{mult} is clearly a subalgebra of  $\mathbb{M}(A)$ contained
in the idealiser of $\pi_{\D}(\A)$  inside $\End_{B}(E\oplus E)$. 
The other inclusion can be seen by writing
\[
\begin{pmatrix} T_{11} & T_{12}\\ T_{21} & T_{22} \end{pmatrix}
\begin{pmatrix} a & 0 \\ [\D,a] & a\end{pmatrix}
=\begin{pmatrix} T_{11}a+T_{12}[\D,a] & T_{12}a \\ T_{21}a+T_{22}[\D,a] & T_{22}a\end{pmatrix},
\]
and observing that for this to be an element of $\A$ for all $a\in \A$, 
$T_{12}a=0$ for all $a\in \A$ and hence $T_{12}=0$ since $A$ is essential. 
It then follows that $T_{11}a=T_{22}a$ for all $a\in \A$ which implies 
$T_{11}=T_{22}$, again because $A$ is essential. Writing $T_{11}=T$, 
one again derives from essentiality of $\A$ that $T$ must preserve the 
domain of $\D$. Finally we get the equation $[\D,Ta]=T_{21}a+T[\D,a]$, 
which implies that $T_{21}=[\D,T]$ which is therefore bounded. Thus the algebra
\eqref{mult} contains the idealiser of $\pi_{\D}(\A)$, and is therefore equal to it.

Since $[AE_{B}]=E_{B}$, we have $\pi_{\D}(1)=1$, using Lemma \ref{lem: essext}, and 
the representation $\pi_{\D}$ is essential. An argument similar to that 
given in the proof of Lemma \ref {lem: essext} shows that the strict 
closure $\M(\A)$ maps into $\End^{*}_{B}(E\oplus E)$, whereas the argument given in Lemma \ref{multiplierproperties} shows that the idealiser of $\pi_{\D}(\A)$ in $\End_{B}^{*}(E\oplus E)$  
coincides with the image of this strict closure. 
The norm on the strict 
closure is given by Equation \eqref{opnorm}, and 
the equivalence of norms given there proves that $\M(\A)$ is cb-isomorphic to 
the idealiser \eqref{mult}. 
Spectral invariance of the inclusion $\M(\A)\subset \M(A)$ 
now follows from spectral invariance of 
$\Lip(\D)\subset \End^{*}_{B}(E_{B})$, cf. \cite[Thm B.3]{Mes}.
\end{proof}
In \cite{Ralf} it was shown that any operator algebra 
$\A$ admits a canonical unitisation. In this paper, 
our main examples are closed subalgebras 
$\A\subset\Lip(\D)$, where $\D$ is a self-adjoint 
regular operator on a $C^{*}$-module $E_{B}$ with essential $A$ representation. 
In this setting we can construct unitisations concretely.
\begin{defn}
\label{unitise}
Let $\D:\Dom\D\rightarrow E_{B}$ be a self-adjoint regular operator 
and $\A\subset \Lip(\D)$ a differentiable algebra with 
bounded approximate unit and $C^{*}$-closure $A$. 
If $[AE_{B}]=E_{B}$, the \emph{unitisation} 
$\A^{+}\subset\M(\A)\subset \Lip(\D)$ is the algebra generated by $\A$ and $\pi_{\D}(1)=1$.
\end{defn}
\begin{rmk}\label{essrmk}
The requirement that $[AE_{B}]=E_{B}$ is not a severe restriction.
By \cite[Lemma 2.8]{Kas2} every class in $KK(A,B)$ can be represented 
by a bounded Kasparov module $(A,E_{B}, F)$ with $[AE_{B}]=E_{B}$. 
Combining this with Kucerovsky's lifting results 
\cite[Lemma 1.4, Lemma 2.2]{Kucerovsky2}, every class in 
$KK(A,B)$ can be represented by an unbounded module 
$(A, E_{B}, \D)$ with $[AE_{B}]=E_{B}$. Thus, the only 
serious hypothesis in Definition \ref{unitise} is that $\A$ 
have a bounded approximate unit. Unless otherwise stated, 

{\bf from now on we assume that all
unbounded Kasparov modules are essential}.
\end{rmk}
Unbounded multipliers on $C^{*}$-algebras were introduced by Baaj (\cite{Baajthesis}) 
and Woronowicz (\cite{Wor}). In the differentiable setting,  the definition of unbounded 
multiplier requires a bit more care, because of the absence of the 
strong relation between norm and spectrum.
\begin{defn} 
\label{def:siamese}
Let $\A$ be a differentiable algebra.
A linear map $c:\Dom c\subset \A\rightarrow \A$, 
defined on the dense right ideal $\Dom c\subset\A$ is 
a \emph{multiplier} if $c(ab)=(ca)b$ for all $a\in\Dom c$ and $b\in \A$. The
operator $c$ is a \emph{symmetric unbounded multiplier} if:\\
1) $c$ is closed; \\
2) for all $a,b\in\Dom c$ we have $(ca)^{*}b=a^{*}(cb)$; \\
and $c$ is \emph{self-adjoint} if \\
3) $c\pm i$ are surjective and $(c\pm i)^{-1}\in \mathbb{M}(\A)$.\\
The multiplier $c$ is \emph{positive} if for all $a\in \Dom c$ we have $(ca)^{*}a\geq 0$ in $C^*(\A)$.
\end{defn}
The spectral invariance of the inclusion $\M(\A)\rightarrow \M(A)$ (cf. Proposition \ref{lipmult})
ensures that some of the usual properties of 
positive and self-adjoint multipliers remain valid in the differentiable context. 
As a first consequence of the inclusion $\M(\A)\to \M(A)$, 
the resolvents $(c\pm i)^{-1}\in \M(\A)$ define elements in 
the $C^{*}$-multiplier algebra $\M(A)$. Hence $c$ 
defines a self-adjoint multiplier on the $C^{*}$-algebra 
$A$ in the usual sense, with $\Dom c=\textnormal{im} (c\pm i)^{-1}\subset A$.
\begin{lemma} 
\label{spectrum}
Let $c:\Dom c\rightarrow \A$ be a self-adjoint multiplier.
For all $\lambda\in\C\setminus\R$, the operators 
$(c\pm \lambda): \Dom c\rightarrow \A$ are bijective and $(c\pm \lambda)^{-1}\in \M(\A)$. Moreover
if $c$ is positive then for all $\lambda\in \C\setminus[0,\infty)$, the operators 
$c-\lambda:\Dom c\rightarrow\A$ are bijective and $(c-\lambda)^{-1}\in\M(\A)$.
\end{lemma}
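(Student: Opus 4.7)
The strategy is to reduce the assertion to the $C^{*}$-algebraic theory of (positive) self-adjoint multipliers of $A$ by exploiting the spectral invariance of the inclusion $\M(\A)\subset \M(A)$ established in Proposition \ref{lipmult}, together with an elementary resolvent identity.

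\textbf{Step 1 (Descent to the $C^{*}$-closure).} By hypothesis $(c\pm i)^{-1}\in \M(\A)\subset \M(A)$, so $c$ extends uniquely to a densely defined closed operator $\bar c$ on $A$ with $\Dom \bar c=\textnormal{im}(\bar c\pm i)^{-1}\subset A$. The Baaj--Woronowicz characterisation of self-adjoint multipliers of $C^{*}$-algebras then applies directly: $\bar c$ is a self-adjoint multiplier of $A$, its spectrum is contained in $\R$, and consequently $(\bar c-\lambda)^{-1}\in\M(A)$ for every $\lambda\in\C\setminus\R$.

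\textbf{Step 2 (Lifting resolvents back to $\M(\A)$).} For $\lambda\in\C\setminus\R$ I would use the algebraic factorisation
\[
c-\lambda=(c-i)\bigl(1+(i-\lambda)(c-i)^{-1}\bigr)
\]
which holds as an identity of operators $\Dom c\to \A$, with the second factor already living in $\M(\A)$. By Step~1 this second factor is invertible in $\M(A)$, so spectral invariance of $\M(\A)\subset \M(A)$ forces its inverse back into $\M(\A)$. Composing with $(c-i)^{-1}\in\M(\A)$ gives
\[
(c-\lambda)^{-1}=\bigl(1+(i-\lambda)(c-i)^{-1}\bigr)^{-1}(c-i)^{-1}\in\M(\A),
\]
and because $(c-i)^{-1}$ maps $\A$ into $\Dom c$, a routine check shows this element is a two-sided inverse of $c-\lambda:\Dom c\to\A$. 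The argument for $c+\lambda$ is identical, finishing the first assertion.

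\textbf{Step 3 (Positive case).} Suppose $c$ is positive, so $(ca)^{*}a\geq 0$ in $C^{*}(\A)$ for every $a\in\Dom c$. I would first transfer this condition to positivity of $\bar c$ as a $C^{*}$-multiplier of $A$: the usual computation $\bar c(1+\bar c^{2})^{-1}=\tfrac{1}{2i}((\bar c-i)^{-1}-(\bar c+i)^{-1})$ expresses the bounded transform entirely in terms of resolvents, and the inequality $(ca)^{*}a\geq 0$ shows that this bounded transform is a positive contraction in $\M(A)$. Standard $C^{*}$-theory then gives $\spec(\bar c)\subset [0,\infty)$, so $(\bar c-\lambda)^{-1}\in\M(A)$ for every $\lambda\in\C\setminus[0,\infty)$. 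Rerunning the factorisation of Step~2 with the base point $i$ replaced by any $\lambda_{0}\in\C\setminus[0,\infty)$ for which $(c-\lambda_{0})^{-1}$ is already known to be in $\M(\A)$ (for example $\lambda_{0}=-1$, obtained by another application of Step~2 starting from $\pm i$), and once more invoking spectral invariance, places $(c-\lambda)^{-1}$ in $\M(\A)$ for every $\lambda\in\C\setminus[0,\infty)$.

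\textbf{Main obstacle.} The only real subtlety lies in Step~3: translating the algebraic positivity condition formulated in the enveloping $C^{*}$-algebra $C^{*}(\A)$, whose involution comes from the ambient $\mathbb{B}(\H)$, into positivity of $\bar c$ as a multiplier of the $C^{*}$-closure $A$, whose involution is inherited from the operator $*$-algebra structure of $\A$. Since $C^{*}(\A)$ and $A$ are in general distinct, one must verify that the positivity of $(ca)^{*}a$ survives the passage to the $C^{*}$-completion; everything else is a formal consequence of spectral invariance and the resolvent identity.
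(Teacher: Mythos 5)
Your proposal is correct and follows essentially the same route as the paper: descend to the $C^{*}$-closure $A$ where the resolvent $(c-\lambda)^{-1}$ exists, observe that $g=(c-\lambda)(c- i)^{-1}=1+(i-\lambda)(c-i)^{-1}$ lies in $\M(\A)$ and is invertible in $\M(A)$, invoke spectral invariance of $\M(\A)\subset\M(A)$ to invert it in $\M(\A)$, and recover $(c-\lambda)^{-1}$ from the resulting algebraic identity. Your Step~3 fleshes out the positive case (which the paper dismisses with ``proved similarly''), and your flagged concern about transferring positivity from $C^{*}(\A)$ to $A$ is a fair observation, but it does not change the substance of the argument.
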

\begin{proof} 
The operators $c\pm \lambda $ are bijective in the $C^{*}$-closure $A$,
and thus $(c\pm \lambda )(c\pm i)^{-1}\in \M(A)$ are invertible. 
Spectral invariance then tells us that $g=(c\pm \lambda )(c\pm i)^{-1}$ is 
invertible in $\M(\A)$, whence $c\pm\lambda :\Dom c\rightarrow \A$ is bijective. 
The inverse satisfies the equation
\[
g^{-1}=(c\pm i)(c\pm \lambda )^{-1}=1\mp(\lambda -i)(c\pm \lambda )^{-1},
\]
in $\M(A)$, and since both $1,g^{-1}\in\M(\A)$, it follows that 
$(c\pm \lambda )^{-1}\in \M(\A)$. The positive case is proved similarly.
\end{proof}
If there is an orthogonal decomposition 
$E_{B}=[\pi(A)E_{B}]\oplus [\pi (A)E_{B}]^{\perp}$ 
(which is always the case for Hilbert spaces) we can extend 
the self-adjoint multiplier to a self-adjoint operator on 
$E_{B}$ by defining $c([\pi (A)E_{B}]^{\perp})=0$. 
We denote this extension to $E_{B}$ by $c$ as well. 
It is the \emph{affiliated operator} from \cite{Baajthesis, Wor}.
In case $\A$ has a bounded approximate unit, condition 3) of Definition \ref{def:siamese}
can be weakened to the requirement that $c\pm i$ have 
dense range and $(c\pm i)^{-1}$ are norm bounded, as we now show. 

\begin{lemma} 
Let $\A$ be a differentiable algebra with bounded approximate unit. 
Then every symmetric multiplier is closable.
\end{lemma}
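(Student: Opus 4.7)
The plan is to show that if $(a_n)\subset \Dom c$ with $a_n\to 0$ and $ca_n\to y$ in $\A$, then $y=0$. The symmetric property together with the bounded approximate unit and the density of $\Dom c$ will force $y$ to annihilate all of $\A$, hence $y=0$.

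First I would fix an arbitrary $b\in\Dom c$ and exploit the identity $(ca_n)^*b=a_n^*(cb)$ which holds by condition 2) of Definition \ref{def:siamese}. Since the involution on the operator $*$-algebra $\A$ is completely bounded, it is in particular norm continuous, so $(ca_n)^*\to y^*$ and $a_n^*\to 0$. Multiplication on $\A$ is jointly continuous (it is a Banach algebra), hence passing to the limit yields $y^*b=0$ for every $b\in\Dom c$.

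Next I would extend this annihilation from $\Dom c$ to all of $\A$ by density. Since $\Dom c$ is, by definition, a dense right ideal in $\A$, for each $b'\in\A$ we can choose a sequence $(b_k)\subset\Dom c$ converging to $b'$; continuity of multiplication then gives $y^*b'=\lim_k y^*b_k=0$. In particular, taking $b'=u_\lambda$ from a bounded approximate unit $(u_\lambda)$ of $\A$, one has $y^*u_\lambda=0$ for every $\lambda$. But the approximate unit property forces $y^*u_\lambda\to y^*$ in norm, so $y^*=0$, and applying the involution once more yields $y=0$.

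The only genuinely delicate point is the continuity of the involution when passing from $(ca_n)$ to $(ca_n)^*$; this is guaranteed by the definition of an operator $*$-algebra (the involution is completely bounded, hence bounded). Beyond this, the argument is a straightforward exercise combining the symmetric condition, density of $\Dom c$, and the defining property of an approximate unit, and does not rely on spectral invariance or any deeper structure of $\Lip(\D)$.
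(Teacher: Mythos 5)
Your proof is correct and follows essentially the same route as the paper: use the symmetry identity $(ca_n)^*b=a_n^*(cb)$ together with continuity of the involution and multiplication to show the limit annihilates the dense ideal $\Dom c$, hence all of $\A$. The only cosmetic difference is that you conclude directly from $y^*u_\lambda=0$ and $y^*u_\lambda\to y^*$, whereas the paper packages the same argument into the statement that $\|\cdot\|_{\textnormal{op}}$ is equivalent to the norm on $\A$ (Lemma \ref{opnormlemma}).
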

\begin{proof} 
Since $\A$ has a bounded approximate unit, the norm on 
$\A$ is equivalent to the norm $\|a\|_{\textnormal{op}}:=\sup_{\|a\|\leq 1}\|ab\|$ by Lemma \ref{opnormlemma} . 
Let $(a_{n})$ be a sequence in $\Dom c$ with $a_{n}\rightarrow 0$ and 
$ca_{n}\rightarrow b$. Since $\|(ca_{n})\|=\|(ca_{n})^{*}\|$ and $c$ is 
symmetric, for arbitrary $a\in\Dom c$, we get 
\[
b^{*}a=\lim_{n\rightarrow \infty} (ca_{n})^{*}a=\lim_{n\rightarrow\infty}a_{n}^{*}(ca)=0.
\]
From this it follows that $\|b^{*}\|_{\textnormal{op}}=0$, and therefore $\|b^{*}\|=0$ so $\|b\|=0$.
\end{proof}
\begin{corl} \label{symmul}
Let $\A$ be a differentiable algebra with bounded approximate unit and let $c$ be a 
symmetric multiplier such that $(c\pm i)^{-1}$ are densely defined 
and bounded. Then the closure of $c$ is a self-adjoint unbounded 
multiplier with $\Dom c=\textnormal{Im} (c\pm i)^{-1}$.
\end{corl}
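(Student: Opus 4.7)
The plan is to verify the three conditions of Definition \ref{def:siamese} for the closure $\bar c$: closedness is automatic, symmetry passes to the closure, and the substantive work is to establish that $\bar c\pm i$ are surjective onto $\A$ with inverses in $\M(\A)$.

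For surjectivity, I would first extend the bounded, densely defined operators $(c\pm i)^{-1}$ by continuity to bounded maps $R_\pm\colon\A\to\A$. A standard closure argument then identifies them with the resolvents: for $a\in\A$, approximating by $a_n\in\mathrm{Im}(c\pm i)$ gives $R_\pm a_n\in\Dom c$ and $cR_\pm a_n = a_n\mp iR_\pm a_n\to a\mp iR_\pm a$, so closedness of $\bar c$ yields $R_\pm a\in\Dom\bar c$ with $(\bar c\pm i)R_\pm a = a$. The reverse identity $R_\pm(\bar c\pm i)x = x$ on $\Dom\bar c$ follows by approximating $x$ in graph norm by elements of $\Dom c$, thereby proving $\bar c\pm i$ is bijective with inverse $R_\pm$ and $\Dom\bar c = R_\pm(\A)$.

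The main obstacle is showing $R_\pm\in\M(\A)$, because Definition \ref{strictmult} demands both left and right multiplication by the limit, whereas the multiplier structure of Definition \ref{def:siamese} is one-sided. The right-multiplier identity $R_\pm(ab) = R_\pm(a)b$ follows directly from $c(ab) = (ca)b$: for $a = (c\pm i)u\in\mathrm{Im}(c\pm i)$ and $b\in\A$, the right-ideal property $ub\in\Dom c$ combined with $(c\pm i)(ub) = ab$ gives $R_\pm(ab) = ub = R_\pm(a)b$, which extends by continuity. To obtain the compatible left action, I would exploit symmetry: writing $a = (c+i)^{-1}x$ and $b = (c-i)^{-1}y$ in $(ca)^{*}b = a^{*}(cb)$ with $x\in\mathrm{Im}(c+i)$, $y\in\mathrm{Im}(c-i)$, a short computation cancels cross terms and yields $x^{*}R_-(y) = (R_+(x))^{*}y$. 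Continuous extension to $\A\times\A$ then lets me define the right action by $aR_\pm := (R_\mp(a^{*}))^{*}$, which lies in $\A$ since $\A$ is $*$-closed and $R_\mp$ preserves $\A$, and is bounded in $\A$-norm because the involution on the operator $*$-algebra $\A$ is completely bounded. The resulting pair $(R_\pm,\,a\mapsto aR_\pm)$ is a double centralizer satisfying $a R_\pm(b) = (aR_\pm)b$.

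With both actions in place, I would conclude using any bounded approximate unit $(u_\lambda)\subset\A$. Setting $b_\lambda := R_\pm u_\lambda\in\A$, the right-multiplier property gives $b_\lambda a = R_\pm(u_\lambda a)\to R_\pm a$ in $\A$-norm, while the double-centralizer identity gives $a b_\lambda = (aR_\pm)u_\lambda\to aR_\pm$, since $aR_\pm\in\A$ and $(u_\lambda)$ is an approximate unit. This exhibits $R_\pm$ as a strict limit of elements of $\A$, so $R_\pm\in\M(\A)$ by Definition \ref{strictmult}, completing the verification that $\bar c$ is a self-adjoint unbounded multiplier with $\Dom\bar c = \mathrm{Im}(c\pm i)^{-1}$.
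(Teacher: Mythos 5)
Your argument is correct and supplies in full the details that the paper leaves implicit (the corollary is stated without proof, as a consequence of the closability lemma). The essential point — that the one-sided multiplier axioms plus symmetry force the adjoint relation $x^{*}R_{-}(y)=(R_{+}(x))^{*}y$, which manufactures the right action $a\mapsto (R_{\mp}(a^{*}))^{*}$ needed for membership in $\M(\A)$ as defined via two-sided strict limits — is exactly the right mechanism, and your cancellation of the cross terms checks out. The only item worth adding for completeness is the observation that $\Dom\bar c=R_{\pm}(\A)$ is again a dense right ideal on which $\bar c(ab)=(\bar c a)b$ holds; both facts follow immediately from the identity $R_{\pm}(ab)=R_{\pm}(a)b$ you have already established, together with continuity, so this is routine rather than a gap.
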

In the context of separable $C^{*}$-algebras, the notion of unbounded 
multiplier, approximate unit, and strictly positive element are closely related. 
For a differentiable algebra $\A$, an element $h\in\A$ is \emph{strictly positive} 
if it has positive spectrum and $h\A$ is dense in $\A$ (for the topology coming from $\pi_\D$). 
Note that this implies that $h$ is strictly positive in the $C^{*}$-algebra $A$.

A more refined notion of unbounded multiplier for differentiable algebras which is compatible
with strict positivity is given in the next definition. The core idea is
abstracted from \cite[Definition 10.2.8]{HR},
which gives a commutator approach to properness of the metric. 
Examples illustrating the connection are presented in Section \ref{sec:met}.
\begin{defn}
\label{Lipmult} 
Let $(\A, E_{B},\D)$ be an unbounded Kasparov module,
and $c$ a self-adjoint multiplier of $\A$. 
Then $c$ is a \emph{complete multiplier} if:\\
1) $(c\pm i)^{-1}\in\A$;\\
2) $\im (\D\pm i)^{-1}(c\pm i)^{-1}=\im (c\pm i)^{-1}(\D \pm i)^{-1}\subset E_{B}$;\\
3) $[\D,c]$ is bounded on the set $\im (\D\pm i)^{-1}(c\pm i)^{-1}$.
\end{defn}
It should be noted that the condition in 2) is natural when dealing with 
commutators of unbounded operators. The sets mentioned are the 
natural domain for the operators $\D c$ and $c\D$, as $c$ maps 
$\im (c\pm i)^{-1}(\D\pm i)^{-1}$ into $\Dom \D$ and similarly for $\D$.

The following theorem  provides the relationships between unbounded complete
multipliers, approximate units, and strictly positive elements for differentiable algebras, and 
gives us our strong notion of completeness. 
This strong completeness is analogous to that of a 
geodesically complete Riemannian manifold, and is much stronger than completeness
of a general complete metric space. We exemplify these statements in Section \ref{sec:met}.
\begin{thm}
\label{thm:equivs}
Let $\D:\Dom \D\subset E_{B}\rightarrow E_{B}$ be self-adjoint and regular and $\A\subset\Lip(\D)$ 
a differentiable algebra such that $[AE_{B}]= E_{B}$. Then the following are equivalent:\\
1) there exists an increasing commutative approximate unit $(u_n)\subset\A$ 
with $\Vert[\D,u_n]\Vert_\infty\to 0$;\\
2) there exists a positive self-adjoint complete multiplier $c$ for $\A$;\\
3) there is a strictly positive element $h\in\A$ with 
$\im (\D\pm i)^{-1}h=\im h(\D\pm i)^{-1}$, and constant $C>0$ with $i[\D,h]\leq Ch^{2}$.
\end{thm}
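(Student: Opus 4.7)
My plan is to establish the three equivalences through the chain $(2) \Leftrightarrow (3)$ and $(2) \Leftrightarrow (1)$, using explicit constructions via functional calculus on a positive self-adjoint (possibly unbounded) operator.

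For $(2) \Leftrightarrow (3)$ I would use the correspondence $c = h^{-1}$, or equivalently $h = c^{-1}$, in the sense of closed operators. Given (3), define $c$ as the closure of the map $hx \mapsto x$ on the dense right ideal $h\A$; strict positivity of $h$ ensures $c$ is densely defined and positive self-adjoint. Spectral invariance of $\A \subset A$ (proved earlier in this section) then gives $(c+i)^{-1} = h(1+ih)^{-1} \in \A$. The image equality of Definition~\ref{Lipmult}(2) for $c$ transports from that in (3) for $h$ because $(1+ih)^{-1}$ preserves $\Dom \D$ (its commutator with $\D$ is bounded). Formally $[\D,c] = -h^{-1}[\D,h]h^{-1}$, and the estimate $i[\D,h] \leq Ch^{2}$ sandwiched by $h^{-1}$ in the commutative algebra generated by $h$ yields $\|[\D,c]\|_{\infty} \leq C$, establishing (2). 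Conversely, starting from (2), set $h := (c+1)^{-1} \in \A$, obtained by holomorphic functional calculus from $(c+i)^{-1} \in \A$; the identity $[\D,h] = -h[\D,c]h$ gives $i[\D,h] \leq \|[\D,c]\|_{\infty}\, h^{2}$, and strict positivity of $h$ follows from self-adjointness of $c$ and Lemma~\ref{spectrum}, yielding (3).

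For $(2) \Rightarrow (1)$ I set $u_{n} := f(c/n)$ via functional calculus, where $f\colon [0,\infty) \to [0,1]$ is a smooth decreasing cutoff with $f \equiv 1$ on $[0,1]$ and supported in $[0,2]$. Writing $u_{n} = g_{n}(h)$ for $h = (c+1)^{-1}$, the function $g_{n}$ extends continuously to $[0,1]$ with $g_{n}(0) = 0$, so $u_{n} \in \A$ by holomorphic functional calculus. The $u_{n}$ mutually commute and are increasing in $n$. That $(u_{n})$ is an approximate unit for $A$ follows from uniform convergence $(1 - g_{n}(y))y \to 0$ on $[0,1]$ combined with density of $hA$ in $A$. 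For $\|[\D,u_{n}]\|_{\infty} \to 0$, use the Fourier representation $f(c/n) = (2\pi)^{-1}\int \hat{f}(\xi)\, e^{i\xi c/n}\, d\xi$ together with the Duhamel estimate $\|[\D,e^{i\xi c/n}]\|_{\infty} \leq (|\xi|/n)\|[\D,c]\|_{\infty}$ to conclude $\|[\D,u_{n}]\|_{\infty} = O(1/n)$.

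For $(1) \Rightarrow (2)$ the idea is to construct $c$ inside the commutative subalgebra generated by $(u_{n})$. Passing to a subsequence with $\|[\D,u_{n}]\|_{\infty} \leq 2^{-n}$, define $v_{n} := u_{n} - u_{n-1}$ (with $u_{0} := 0$); these are mutually commuting positive elements of $\A$ summing strictly to the identity. In the Gelfand picture $C^{*}(u_{1},u_{2},\ldots) \cong C_{0}(X)$ the $v_{n}$ correspond to positive continuous functions, and I set $c(x) := \sum_{n} n\, v_{n}(x)$, defining a positive unbounded self-adjoint multiplier of $A$. Abel summation combined with the geometric decay of the commutator norms ensures that $[\D,c] = \sum_{n} n\,[\D,v_{n}]$ converges absolutely in operator norm, so $[\D,c]$ is bounded. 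The resolvent $(c+i)^{-1}$ is a $C_{0}(X)$-function vanishing at infinity on $X$ and lies in $\A$ via approximation by the resolvents $(c_{N}+i)^{-1} \in \A$ (spectral invariance) of the finite partial sums $c_{N} := \sum_{n=1}^{N} n\, v_{n} \in \A$.

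The main obstacle is the direction $(1) \Rightarrow (2)$: one must show not only that the formal sum $c = \sum n\, v_{n}$ defines a genuine self-adjoint unbounded multiplier of $A$, but also that its resolvents lie in the smaller operator algebra $\A$ in the $\|\cdot\|_{\D}$-topology, and that the image equality of Definition~\ref{Lipmult}(2) holds. This requires delicate interplay between the Gelfand picture of the commutative subalgebra, the operator-algebraic structure of $\A$, and the domain of $\D$; the essentiality hypothesis $[AE_{B}] = E_{B}$ is used to ensure that $c$ is affiliated to $A$ rather than to a proper ideal.
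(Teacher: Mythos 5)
Your overall strategy coincides with the paper's: the equivalence $(2)\Leftrightarrow(3)$ via $h=(1+c)^{-1}$ and the commutator identity $[\D,h]=-h[\D,c]h$, and $(2)\Rightarrow(1)$ via a Duhamel estimate for $[\D,f(c/n)]$ (the paper uses $f_n(x)=e^{-x/n}$ directly rather than a compactly supported cutoff, but the mechanism and the $O(1/n)$ bound are the same). These parts are essentially correct modulo routine care about whether $f(c/n)$ lands in $\A$ rather than merely in $\Lip(\D)$, a point the paper handles by observing that the commutative subalgebra generated by $(c\pm i)^{-1}$ sits essentially inside $\A$.

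The genuine gap is in $(1)\Rightarrow(2)$, and it is exactly the obstacle you flag but do not resolve: dense definedness of $c=\sum_n n\,v_n$ as a multiplier of $\A$. Passing to a subsequence with $\Vert[\D,u_n]\Vert_\infty\leq 2^{-n}$ controls only the commutators; it gives no control on $\Vert v_n a\Vert$ for $a$ in a dense subset, and without such control the series $\sum_n n\,v_n a$ need not converge, so $\Dom c$ can fail to be dense. Concretely, in the Gelfand picture nothing prevents $u_n(x)=1-1/\log(n+2)$ on a fixed compact set, in which case $\sum_n n\,v_n(x)$ diverges there and $c\equiv\infty$ on an open set, so $(c+i)^{-1}$ does not have dense range and $c$ is not a self-adjoint multiplier. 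The paper's fix is to choose, in advance, a countable total set $\{a_i\}\subset\A$ and pass to a further subsequence so that $\Vert(u_{n+1}-u_n)a_i\Vert<\epsilon^{2n}$ for $1\leq i\leq n$; this forces $\sum \epsilon^{-n}\Vert d_n a_i\Vert<\infty$ and hence $ca_i\in\A$ for a dense set of $a_i$. You need the analogous arrangement for your coefficients $n$. Two smaller omissions in the same direction: the claim that $(c+i)^{-1}\in C_0(X)$ requires the growth estimate $c(x)\geq k(1-t)$ off the compact sets $X_k=\{u_k\geq t\}$ (this does work with coefficients $n$, but must be proved), and condition 2) of Definition \ref{Lipmult} — the equality $\im(\D\pm i)^{-1}(c\pm i)^{-1}=\im(c\pm i)^{-1}(\D\pm i)^{-1}$ — is asserted as something that "must hold" but is not verified; the paper derives it from the resolvent identity for the truncations $c_k$ together with the uniform bound $\sup_k\Vert[\D,c_k]\Vert_\infty<\infty$.
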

\begin{proof} 
We show that $1)\Leftrightarrow 2)$ and $2) \Leftrightarrow 3)$.

We assume 1), so that there is an increasing commutative 
approximate unit $(u_n)\subset\A$ with $[\D,u_n]\to 0$
in norm. Suppose without loss of generality that there exists 
$0<\epsilon <1$ such that $\Vert[\D,u_n]\Vert_\infty<\epsilon^{2n}$. 
Moreover, let $\{a_{i}\}_{i\in\N}$ be a subset of $\A$ whose linear span is dense, and assume 
without loss of generality that for $1\leq i\leq n$ that
$\|(u_{n+1}-u_{n})a_{i}\| <\epsilon^{2n}$. Write  $d_{n}:=u_{n+1}-u_{n}\geq 0$ and define
$$
c=\sum_{n=1}^\infty \epsilon^{-n}d_{n},
$$
which is a sum of positive elements of $\A$. Then $c$ is densely defined, 
since for fixed $a_{i}$ and $i<k<\ell$ we have
\begin{align*}\|\sum_{n=k}^{\ell}\epsilon^{-n}d_{n}a_{i}\| & \leq
\sum_{n=k}^{\ell}\epsilon^{-n}\|(u_{n+1}-u_{n})a_{i}\|
\leq \sum_{n=k}^{\ell}\epsilon^{n},
\end{align*}
which goes to zero as $k\rightarrow\infty$ and therefore $ca_{i}\in\A$. Moreover, $c$ is 
obviously symmetric, so by Corollary \ref{symmul} it suffices to 
show that the resolvents $(c\pm i)^{-1}$ are densely defined and bounded. 
Consider the truncations $c_{k}:=\sum_{n=1}^{k}\varepsilon^{-n}d_{n}\in \A$. 
By Proposition \ref{lipmult}, $\mathbb{M}(\mathcal{A})$ is spectral 
invariant in $\mathbb{M}(A)$, so as the operators $c_{k}\pm i\in \mathbb{M}(\A)$ 
are invertible in $\mathbb{M}(A)$, the resolvents $(c_{k}\pm i)^{-1}$ are elements of $\mathbb{M}(\A)$.
Subsequently estimate 
\begin{align*}
\Vert[\D,c_{k}]\Vert_\infty =\big\Vert \sum_{n=1}^{k}\epsilon^{-n}([\D,u_{n+1}]-[\D,u_n])\big\Vert_\infty
&\leq \sum_{n=1}^{k} \epsilon^{-n}(\Vert[\D,u_{n+1}]\Vert_\infty+\Vert[\D,u_n]\Vert_\infty)\\
&\leq 2\sum_{n=1}^{k}\epsilon^{n},
\end{align*}
from which we deduce that $\sup_{k}\|[\D,c_{k}]\|_\infty<\infty$. Therefore
\[
\sup_{k}\|[\D,(c_{k}\pm i)^{-1}]\|_\infty
=\sup_{k}\|(c_{k}\pm i)^{-1}[\D,c_{k}](c_{k}\pm i)^{-1}\|_\infty\leq \sup_{k}\|[\D,c_{k}]\|_\infty<\infty,
\]
so $(c_{k}\pm i)^{-1}$ is a bounded sequence in $\mathbb{M}(\A)$. 
Moreover, for the elements $a_{i}$ we have
\[
((c_{\ell}\pm i)^{-1}-(c_{m}\pm i)^{-1})a_{i}
=(c_{\ell}\pm i)^{-1}(c_{m}\pm i)^{-1}\sum_{n=\ell}^{m}\varepsilon^{-n}d_{n}a_{i},
\]
so the sequence is strictly Cauchy, with limit $(c\pm i)^{-1}\in \mathbb{M}(\A)$, 
whence these operators are densely defined and bounded.
Hence the closure of $c$ is a positive, self-adjoint unbounded multiplier on $\mathcal{A}$. 

Now we show that properties 1)-3) of Definition \ref{Lipmult} hold true for $c$. 
For 1), we need to show that $(c\pm i)^{-1}\in \A$. We restrict
to the commutative subalgebra $\B\subset \A$ generated by the $u_n$, so that by Gelfand theory
there is a locally compact Hausdorff space $X$ with 
$\B\subset C^{*}(\{u_{n}\})\cong C_{0}(X)$ via the Gelfand transform. 
Every closed unbounded multiplier is determined by its Gelfand transform 
\cite[Thm 2.1,2.3]{Wood}. To show that  $(1+c)^{-1},(c\pm i)^{-1}\in \A$, 
since these are elements of $\M(\A)$, it suffices to a show that 
$(1+c)^{-1}$ and $(c\pm i)^{-1}\in C_{0}(X)\subset A$. 
To this end, fix $t\in (0,1)$ and consider the sets 
\[
X_{n}:=\{x\in X: u_{n}(x)\geq t\}.
\]
The $X_n$ form an increasing sequence of compact sets such that $X=\cup X_{n}$. We claim that
\[
\sum  \epsilon^{-n}d_{n}(x)\geq (1-t) \epsilon^{-k},\quad\textnormal{for } x\in X\setminus X_{k},
\]
which implies that $(c\pm i)^{-1}, (1+c)^{-1}\in C_{0}(X)$. 
For such $x\in X\setminus X_{k}$, and any $m\geq k$ it holds that
\[
\begin{split}
\sum_{n=0}^{\infty} \epsilon^{-n}d_{n}(x)\geq\sum_{n=k}^{\infty} \epsilon^{-n}d_{n}(x) 
&=\sum_{n=k}^{m} \epsilon^{-n}d_{n}(x) +\sum_{n>m} \epsilon^{-n}d_{n}(x)\\
&\geq \sum_{n=k}^{m} \epsilon^{-k}d_{n}(x)+\sum_{n>m} \epsilon^{-n}d_{n}(x)\\
&= \epsilon^{-k}(u_{m+1}-u_{k})(x) + \sum_{n>m} \epsilon^{-n}d_{n}(x)\\
&\geq  \epsilon^{-k}(u_{m+1}(x)-t) + \sum_{n>m} \epsilon^{-n}d_{n}(x),
\end{split}
\]
and since $u_{m+1}(x)\rightarrow 1$ and 
$\sum_{n>m}\epsilon^{-n}d_{n}(x)\rightarrow 0$, the estimate follows. 
To prove that point 2) of Definition \ref{Lipmult} holds, we need 
to show that the domain equality $\im (c\pm i)^{-1}(\D \pm i)^{-1}=\im (\D\pm i)^{-1}(c\pm i)^{-1}$ is true.
Observe that for each $y\in E_{B}$, the vector  $(c\pm i)^{-1}(\D \pm i)^{-1}y$ is a limit
\[
\lim_{k\rightarrow\infty}(c_{k}\pm i)^{-1}(\D\pm i)^{-1} y.
\]
Writing
\[ 
(c_{k}\pm i)^{-1}(\D\pm i)^{-1}
=(\D\pm i)^{-1}(c_{k}\pm i)^{-1}+(\D \pm i)^{-1}(c_{k}\pm i)^{-1}[\D,c_{k}](c_{k}\pm i)^{-1}(\D\pm i)^{-1},
\]
and recalling that the sequence $[\D,c_{k}](c_{k}\pm i)^{-1}(\D\pm i)^{-1}$ 
is uniformly bounded in operator norm, it follows that
\[
\lim_{k\rightarrow\infty}(c_{k}\pm i)^{-1}[\D,c_{k}](c_{k}\pm i)^{-1}(\D\pm i)^{-1}y
=(c\pm i)^{-1}[\D,c](c\pm i)^{-1}(\D\pm i)^{-1}y\in \im (c\pm i)^{-1}.
\]
Thus  $\im (c\pm i)^{1-}(\D \pm i)^{-1}\subset\im (\D\pm i)^{-1}(c\pm i)^{-1}$. 
The other inclusion is proved in the same way by writing
\[
(\D\pm i)^{-1}(c_{k}\pm i)^{-1}=(c_{k}\pm i)^{-1}(\D\pm i)^{-1}
+ (c_{k}\pm i)^{-1}(\D\pm i)^{-1}[\D,c_{k}](c_{k}\pm i)^{-1}(\D\pm i)^{-1}.
\] 
Lastly, to prove that point 3) of Definition \ref{Lipmult} holds, observe that
the commutator $[\D,c]$, defined on $\im (c\pm i)^{-1}(\D\pm i)^{-1}$, 
is bounded because it is the strong limit of the operators $[\D,c_{k}]$ 
on this subset, and $\sup_{k}\|[\D,c_{k}]\|_\infty$ is bounded.

Conversely, let $c$ be an unbounded positive complete multiplier for $\A$. Let $f_n:\R\to\R$ be 
given by $f_n(x)=e^{-x/n}$. 
For $y\in \Dom\D$
$$
[\D,f_n(c)]y=\int_0^1\frac{d}{ds}\left(e^{-c(1-s)/n}\D e^{-cs/n}y\right)ds
=-\frac{1}{n}\int_0^1e^{-c(1-s)/n}[\D,c]e^{-cs/n}y\, ds,
$$
and since both sides are bounded, this equality extends to all of $E_{B}$. Moreover
$$
\Vert[\D,f_n(c)]\Vert_\infty\leq \frac{1}{n}\Vert [\D,c]\Vert_\infty,
$$
so that $[\D,f_n(c)]\to 0$ in norm as $n\to\infty$.
Finally we need to see that the $f_n(c)$ define an approximate unit. The density
of $(c\pm i)^{-1}\A$ in $\A$ says that the inclusion of the commutative 
subalgebra $\mathcal{C}$ generated by $(c\pm i)^{-1}$
in $\A$ is essential. Since  $(f_n(c))$ is obviously an approximate unit for $\mathcal{C}$,
we are done.

To see that 2) and 3) are equivalent, let $c$ be an unbounded 
multiplier on $\A$ and set $h:=(1+c)^{-1}$, which is 
positive with dense range in $\A$. On the other hand, if $h\in\A$ 
is positive with dense range, then $c:=h^{-1}$ is densely defined on
$\Dom  h^{-1}={\rm Im} \,h$. The domain condition follows 
from the fact that $(h^{-1}\pm i)^{-1}=h(1\pm ih)^{-1}$, and 
$1\pm ih\in\M(\A)$ is invertible, so that
$(1\pm ih)^{-1}$ maps $\Dom \D=\im (\D\pm i)^{-1}$ 
bijectively onto itself. Then
\[
\im h(1\pm ih)^{-1}(\D\pm i)^{-1}=\im h(\D\pm i)^{-1}=\im (\D\pm i)^{-1}h=\im (\D\pm i)^{-1}h(1\pm ih)^{-1}.
\]
From the further assumption that $i[\D,h]\leq Ch^{2}$, it follows that for $e\in\im h(\D \pm i)^{-1}h$
\[
\langle i[\D,h^{-1}]e,e\rangle =-\langle h^{-1}i[\D,h]h^{-1}e,e\rangle
=\langle i[\D,h]h^{-1}e,h^{-1}e\rangle \leq C\langle he,h^{-1}e\rangle=C\langle e,e\rangle.
\]
Taking a sequence $hy_{n}\rightarrow y\in E_{B}$, boundedness on the whole of 
$\im h(\D \pm i)^{-1}$ follows. 
\end{proof}

\section{Metric completeness via approximate units}
\label{sec:met}
We recall that if $(\A,\H,\D)$ is a unital spectral triple, then the formula
\begin{equation}
d(\phi,\psi):=\sup\{|\phi(a)-\psi(a)|:\,\Vert[\D,a]\Vert\leq 1\},\quad \phi,\,\psi\in\mathcal{S}(A)
\label{eq:french-metric}
\end{equation}
defines a metric on the state space $\mathcal{S}(A)$ of $A$ provided that the set
\begin{equation}
B:=\{[a]\in A/\C1:\,\Vert[\D,a]\Vert\leq 1\}
\label{eq:born-to-be-bounded}
\end{equation}
is bounded. In the non-unital case we do not need to consider the quotient Banach space $A/\C1$
in Equation \eqref{eq:born-to-be-bounded},
just $A$, and again the same conditions guarantee that we obtain a bounded metric. 
It is known that in the unital case the resulting metric topology agrees with the weak$^*$
topology provided that $B$ is pre-compact, \cite{Lat,Rieffel}. We refer to the formula in 
Equation \eqref{eq:french-metric} as Connes' formula.

One would like to define unbounded metrics so that they restrict to bounded metrics on
each weak$^*$ compact subset of $\mathcal{S}(A)$, but it turns out that this is too strong.
Latr\'emoli\`ere identifies a class of tame compact subsets 
for which this is possible, \cite[Definition 2.28]{Lat2}, and shows
by example that not all compact subset of $\mathcal{S}(A)$ are tame. As well as the
difficulty in discussing the weak$^*$-topology, examples show that 
there is also the need to consider extended
metrics, so that points can be at infinite distance.

Our initial results concerning completeness of metric spaces rely on a weaker notion of 
approximate unit than we needed earlier, though we will see below how these various notions 
are related. For now, given a (symmetric) spectral triple $(\A,\H,\D)$,
we say that $(u_n)\subset \A\subset A$ is an 
\emph{adequate approximate unit} if $(u_n)$ is a sequential 
approximate unit for $A$ (in its $C^*$-norm topology) and $\sup\Vert[\D^*,u_n]\Vert_\infty<\infty$.
This is a weaker notion than a bounded approximate unit for $\A$.

\begin{prop} 
\label{prop:metric}
Let $(X,d)$ be a metric space, $\A=\Lip_0(X)$ be the algebra of Lipschitz
functions vanishing at infinity and $A=C_0(X)$. Let
$(\A,\H,\D)$ be a symmetric spectral triple such that for all $a\in \Lip_0(X)$
$$
C_1\Vert a\Vert_{\Lip,d}\leq \Vert[\D^*,a]\Vert_\infty\leq C_2 \Vert a\Vert_{\Lip,d}
$$
where $0< C_1\leq C_2<\infty$ are constants and $\Vert a\Vert_{\Lip,d}$ is the Lipschitz seminorm
of $a\in \Lip_{0}(X)$. If $\A$ has an adequate approximate unit, 
then $(X,d)$ is metrically complete.
\end{prop}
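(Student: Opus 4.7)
The plan is to argue by contradiction: assume $(X,d)$ is not complete, pick a Cauchy sequence $(x_k)\subset X$ with no limit in $X$, and let $\bar x$ denote its limit in the completion $\bar X$. I want to derive a contradiction using the adequate approximate unit, the Lipschitz norm equivalence, and the fact that elements of $C_0(X)$ must vanish at $\bar x$.

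First I would extract the uniform Lipschitz bound. The hypothesis $C_1\|a\|_{\Lip,d}\leq \|[\D^*,a]\|_\infty$ applied to $a=u_n$ together with the adequacy condition $\sup_n\|[\D^*,u_n]\|_\infty<\infty$ yields $C:=\sup_n\|u_n\|_{\Lip,d}<\infty$. Each $u_n$ thus extends uniquely to a $C$-Lipschitz function $\bar u_n$ on $\bar X$ with $\bar u_n(\bar x)=\lim_k u_n(x_k)$.

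Next I would show that $\bar u_n(\bar x)=0$ for every $n$. Since $u_n\in C_0(X)$, for any $\varepsilon>0$ the set $K_n^\varepsilon:=\{x\in X:|u_n(x)|\geq\varepsilon\}$ is compact in $X$. A compact subset of $X$ can contain only finitely many terms of $(x_k)$, because otherwise a subsequential cluster point would lie in $X$ and would have to agree with the Cauchy limit, contradicting the absence of a limit in $X$. Hence $u_n(x_k)\to 0$ as $k\to\infty$, forcing $\bar u_n(\bar x)=0$.

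The second half is to show that, for well-chosen test points close to $\bar x$, the values $u_n(x_N)$ must get close to $1$. Since $C_0(X)$ is a nonzero commutative $C^*$-algebra, its Gelfand spectrum $X$ is locally compact Hausdorff, so for each $x_N\in X$ there exists $g\in C_0(X)$ with $0\leq g\leq 1$ and $g(x_N)=1$. The approximate unit property $\|u_ng-g\|_\infty\to 0$ evaluated at $x_N$ gives $u_n(x_N)\to 1$. On the other hand, the $C$-Lipschitz bound gives
\[
|u_n(x_N)|=|u_n(x_N)-\bar u_n(\bar x)|\leq C\,d(x_N,\bar x).
\]
Choosing $N$ large enough that $d(x_N,\bar x)<1/(2C)$, which is possible since $x_k\to\bar x$ in $\bar X$, yields $|u_n(x_N)|\leq 1/2$ for every $n$, contradicting $u_n(x_N)\to 1$.

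The main subtlety I expect is the cleanest passage from ``$u_n\in C_0(X)$ plus no Cauchy limit in $X$'' to ``$\bar u_n(\bar x)=0$''; everything else is elementary once the uniform Lipschitz bound has been extracted from the norm equivalence. No extra structure on $X$ beyond what is implicit in $A=C_0(X)$ being a $C^*$-algebra is needed, since Gelfand duality supplies local compactness and hence the bump functions used above.
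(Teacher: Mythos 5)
Your proof is correct and follows essentially the same route as the paper's: both extract a uniform Lipschitz bound on the $u_n$ from adequacy together with the lower estimate $C_1\Vert\cdot\Vert_{\Lip,d}\leq\Vert[\D^*,\cdot]\Vert_\infty$, and then play values of $u_n$ near $1$ (from the approximate unit property) off against values near $0$ (from vanishing at infinity) at points close to the missing limit point, forcing the Lipschitz constants to blow up. Your verification that $u_n\to 0$ along the Cauchy sequence via compactness of the superlevel sets $\{|u_n|\geq\varepsilon\}$ is a careful justification of the step the paper dispatches with ``possible since $u_k$ vanishes at infinity''.
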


\begin{rmk} 1) The condition of the theorem implies that 
Connes' formula, Equation \eqref{eq:french-metric}, 
defines a metric $d^C$ which is bi-Lipschitz equivalent
to $d$. \\
2) The algebra $\Lip_0(X)$ is typically not 
separable in the Lipschitz norm, \cite{Weaver}, but our results also apply to closed
separable subalgebras of $\Lip_0(X)$, such as our differentiable 
algebras, cf. Definition \ref{def:diff-alg}. More examples are presented below.
\end{rmk}

\begin{proof}
We give the proof in the self-adjoint case, as the symmetric case is the same.
We will prove that if $(X,d)$ is not complete then for any 
sequential approximate unit  $(u_k)\subset \Lip_0(X)$ for $C_0(X)$,
the sequence $\Vert [\D,u_k]\Vert_\infty$ is unbounded, and so  
$(u_k)$ can not be an adequate approximate unit. 
Since the metric $d$ is bi-Lipschitz equivalent
to Connes' metric, for any $y,\,z\in X$ we have
$$
|u_k(y)-u_k(z)|\leq C\Vert [\D,u_k]\Vert_\infty\, d(y,z)
$$
for a constant $C>0$. Now let $x$ 
be in the metric completion $\overline{X}$ of $X$, and $x\not\in X$. Let $1/2>\epsilon>0$,
fix $y\in B_{1/n}(x)\cap X$ and let $k$ be large enough so that $u_k(y)>1-\epsilon$.
This is possible since $u_k$
is an approximate unit. Now let 
$z\in B_{1/n}(x)\cap X$ be such that $u_k(z)<\epsilon$, possible since
$u_k$ vanishes at infinity. Then for this choice of $k$ and $y,\,z\in B_{1/n}(x)\cap X$
$$
1-2\epsilon<|u_k(y)-u_k(z)|\leq C\Vert [\D,u_k]\Vert_\infty\, d(y,z)< \frac{2C}{n}\Vert[\D,u_k]\Vert_\infty.
$$
Hence we
see that for any $n$ there is a $k=k(n)$ such that 
$$
\frac{n(1-2\epsilon)}{2C}<\Vert[\D,u_k]\Vert_\infty.
$$
Since this is true for any approximate identity $(u_k)\subset \Lip_0(X)$, we are done.
\end{proof}

\begin{corl} 
Let $(M,g)$ be a Riemannian spin$^c$ manifold, $A=C_0(M)$, $\A=\Lip_0(M)$, and
$(A,L^2(M,S),\D)$ the Dirac  spectral triple of the spin$^c$ structure. If $\A$ has an adequate
approximate unit then
the Riemannian manifold $(M,g)$ is geodesically complete,
and $\D$ is self-adjoint.
\end{corl}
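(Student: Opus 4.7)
The plan is to deduce the two conclusions in sequence, both resting on classical facts about Riemannian geometry and Dirac operators: geodesic completeness will follow from Proposition \ref{prop:metric} combined with the Hopf--Rinow theorem, and self-adjointness will be a well-known consequence of geodesic completeness.

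First I would verify the bi-Lipschitz hypothesis of Proposition \ref{prop:metric} for $(X,d) = (M, d_{g})$, where $d_{g}$ denotes the Riemannian distance. For smooth compactly supported $a$ the principal-symbol identity $[\D, a] = c(da)$ holds, where $c$ is Clifford multiplication on the spinor bundle $S$. Because Clifford multiplication by a cotangent vector is isometric on the fibres of $S$, this gives $\Vert[\D, a]\Vert_{\infty} = \Vert da\Vert_{\infty}$. The extension to general $a \in \Lip_{0}(M)$ proceeds by Rademacher's theorem together with a mollification argument. Since $(M, d_{g})$ is a length space, $\Vert da\Vert_{\infty}$ coincides with the Lipschitz seminorm $\Vert a\Vert_{\Lip, d_{g}}$, so the hypothesis of Proposition \ref{prop:metric} is satisfied with $C_{1} = C_{2} = 1$. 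The assumed existence of an adequate approximate unit for $\A$ then forces $(M, d_{g})$ to be a complete metric space, and the Hopf--Rinow theorem immediately upgrades metric completeness to geodesic completeness of $(M, g)$.

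For the self-adjointness of $\D$, the plan is to invoke the classical theorem (Wolf, and in greater generality Chernoff) that the Dirac operator on a geodesically complete Riemannian spin$^{c}$ manifold is essentially self-adjoint on $C_{c}^{\infty}(M, S)$. Since the $\D$ appearing in the spectral triple is defined as the closure of the Dirac operator on smooth compactly supported sections, geodesic completeness guarantees that this closure is self-adjoint. The main step in the overall plan that requires care is the bi-Lipschitz verification: one must handle non-smooth Lipschitz $a$, but this reduces by approximation to the symbol identity in the smooth case. An alternative route to self-adjointness via Corollary \ref{cor:symmetric-spec} is tempting, but is blocked by the fact that an adequate approximate unit need not be a bounded approximate unit in the Lipschitz topology of $\A$; consequently I prefer to rely on the classical route through Hopf--Rinow and Wolf--Chernoff.
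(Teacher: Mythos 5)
Your proposal is correct and follows essentially the same route as the paper: verify that $\Vert[\D,a]\Vert_\infty$ recovers the Lipschitz seminorm (the paper simply cites Connes for this identity, where you unpack it via the symbol calculus and Rademacher's theorem), apply Proposition \ref{prop:metric} and Hopf--Rinow for geodesic completeness, and then invoke the classical essential self-adjointness of Dirac operators on complete manifolds (the paper cites \cite[Prop 10.2.10]{HR}, which is the same Wolf--Chernoff-type result). Your closing remark about why Corollary \ref{cor:symmetric-spec} cannot be used directly matches the paper's own discussion immediately following the corollary.
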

\begin{proof}
The point here is that $(M,g)$ need not, a priori, 
be complete, in particular it may be the interior of a manifold
with boundary. First we recall that by \cite{ConnesMetric}, the norm 
$\Vert [\D^*,f]\Vert_\infty$ is equal to the Lipschitz
norm of $f$ (with respect to the geodesic distance)
for all $f\in \Lip_0(M)$. Thus we can apply Proposition \ref{prop:metric} to obtain the first statement.
In particular if such an approximate unit exists, $(M,g)$ is metrically complete, and so 
geodesically complete by the Hopf-Rinow theorem.

The self-adjointness of the Dirac operator now follows as in \cite[Prop 10.2.10]{HR}.
\end{proof}

In this last result we managed to deduce self-adjointness of a (potentially) symmetric operator
using just an adequate approximate unit, whereas Corollary \ref{cor:symmetric-spec} requires
the existence of an honest bounded approximate unit for the Lipschitz topology. This is essentially
due to the special form of the geodesic metric on a Riemannian manifold. The Hopf-Rinow
theorem says that 
completeness implies
that `topological infinity' is at infinite distance.

The issues are perhaps best seen as follows. For any metric space $(X,d)$,
we obtain a new metric space of bounded diameter by taking the new
metric $\tilde{d}=d/(1+d)$. Then one can 
check that $(X,d)$ is complete if and only if $(X,\tilde{d})$
is complete. The identity map on $X$ is 
typically not a bi-Lipschitz map between these metric
spaces, and the property of having an adequate 
approximate unit whose Lipschitz constants go to zero
is not preserved by this operation.

We collect a few examples from the world of metric spaces 
about approximate units for Lipschitz algebras and differentiable algebras. 
The first result is rather negative.

\begin{lemma}
\label{lem:fin-dim-bad}
Let $(X,d)$ be a finite-diameter, noncompact, complete metric space. Then 
there is no adequate approximate unit in $\Lip_0(X)$ whose Lipschitz 
constants go to zero. 
\end{lemma}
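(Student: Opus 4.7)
The plan is to argue by contradiction, assuming we have an adequate approximate unit $(u_n)\subset\Lip_0(X)$ with Lipschitz constants $L_n\to 0$, and derive a lower bound on $L_n$ from three ingredients: finite diameter, completeness plus noncompactness, and the approximate unit property.

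First I would use completeness and noncompactness to extract an infinite $\epsilon$-separated set. Since a metric space is compact iff it is complete and totally bounded, the failure of compactness yields some $\epsilon>0$ and an infinite subset $S=\{z_1,z_2,\ldots\}\subset X$ with $d(z_i,z_j)>\epsilon$ for $i\neq j$. Next, fix any basepoint $x_0\in X$. Because $(u_n)$ is an approximate unit for $C_0(X)$, applying it to a compactly supported function peaked at $x_0$ gives $u_n(x_0)\to 1$, so eventually $u_n(x_0)>3/4$.

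The main step uses vanishing at infinity against the discrete set $S$. For each $n$ the set $K_n:=\{x\in X:|u_n(x)|\geq 1/2\}$ has compact closure since $u_n\in C_0(X)$; but any compact set meets the $\epsilon$-separated set $S$ in only finitely many points, so there exists $z\in S$ with $|u_n(z)|<1/2$. Now invoke the hypothesis that the diameter $D:=\mathrm{diam}(X)$ is finite: the Lipschitz inequality gives
\[
\tfrac14 < |u_n(x_0)-u_n(z)| \leq L_n\, d(x_0,z) \leq L_n D,
\]
so $L_n\geq 1/(4D)$ for all sufficiently large $n$, contradicting $L_n\to 0$.

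The one step I would regard as slightly subtle is justifying that $u_n(x_0)\to 1$ at a fixed point: this follows because $(u_n)$ is an approximate unit in the $C^*$-norm on $C_0(X)$, so for any $a\in C_c(X)$ with $a(x_0)=1$ we have $\|u_n a-a\|_\infty\to 0$, hence $u_n(x_0)\to 1$; alternatively one evaluates on a bump function peaked at $x_0$. The rest of the argument is elementary, with completeness entering only to guarantee the existence of the separated sequence $S$, which is precisely where the topological obstruction to a small-Lipschitz approximate unit resides.
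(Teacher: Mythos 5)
Your proof is correct and is essentially the paper's argument: pin $u_n$ near $1$ at a fixed basepoint via the approximate unit property, find a point where $u_n$ is small because it vanishes at infinity on a noncompact space, and divide the resulting gap by the finite diameter to bound the Lipschitz constants below. The detour through an $\epsilon$-separated set (and hence the use of completeness) is unnecessary --- noncompactness alone already forces the compact set $\{x:|u_n(x)|\ge 1/2\}$ to be a proper subset of $X$ --- but it does no harm.
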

\begin{proof}
Let $(u_n)$ be an approximate unit in $\Lip_0(X)$.
Since $(u_n)$ is a norm approximate unit, for any $x\in X$ and $1/2>\delta>0$ we can
find $N$ such that $u_N(x)>1-\delta$. Since $u_N$ vanishes at infinity we can find $y\in X$ such
that $u_N(y)<\delta$. Then $u_N(x)-u_N(y)>1-2\delta$, and as $d(x,y)\leq {\rm diam}(X)$ we find that
$$
\frac{u_N(x)-u_N(y)}{d(x,y)}>\frac{1-2\delta}{d(x,y)}\geq \frac{1-2\delta}{{\rm diam}(X)}.
$$
Hence the Lipschitz norm of the $u_N$'s is bounded below.
\end{proof}
Hence finite diameter complete spaces do not have spectral triples which both recover
the metric and satisfy the conditions of Theorem \ref{thm:equivs}. Also observe that we did not 
ask for an approximate unit $(u_n)$  for $\Lip_0(X)$ in the Lipschitz topology
with $\Vert u_n\Vert_{\Lip,d}\to 0$. These typically do not exist.
\begin{lemma}
\label{lem:R-is-bad}
Let $(u_n)\subset C_c^\infty(\R)$ be a differentiable 
approximate unit for the supremum norm topology on $C_0(\R)$ such that
the Lipschitz constants go to zero as $n\to\infty$ (these exist). Then $(u_n)$ is not an approximate
unit for the Lipschitz topology on $\Lip_0(\R)$.
\end{lemma}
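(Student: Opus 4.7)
The plan is to exhibit some $a \in \Lip_0(\R)$ for which $\Vert u_n a - a\Vert_\D$ does not tend to zero. Since $\Vert u_n a - a\Vert_\infty \to 0$ already holds by the sup-norm approximate-unit hypothesis, and since $\Vert \cdot\Vert_\D$ dominates the Lipschitz seminorm $a \mapsto \Vert a'\Vert_\infty$ (the essential supremum of the a.e.\ derivative), it suffices to find an $a$ for which $\Vert (u_n a - a)'\Vert_\infty$ stays bounded below in $n$.

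First I would apply the Leibniz rule, valid almost everywhere since $u_n \in C_c^\infty(\R)$ and $a$ is Lipschitz, to obtain
\[
(u_n a - a)' = u_n' a + (u_n - 1) a' \quad \text{a.e.}
\]
The first term is harmless: $\Vert u_n' a\Vert_\infty \leq \Vert u_n'\Vert_\infty \Vert a\Vert_\infty \to 0$ by the assumption that the Lipschitz constants of the $u_n$ tend to zero. The entire obstruction therefore lies in the second term $(u_n - 1)a'$. Since each $u_n$ has compact support, this term equals $-a'$ outside $\supp u_n$, so for it to converge to zero in sup norm one would need $a'$ itself to vanish at infinity. I would then seek an $a \in \Lip_0(\R)$ whose a.e.\ derivative does not vanish at infinity.

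To construct such an $a$, I would use the standard tent $t(x) := \max(1-|x|, 0)$ and set
\[
a(x) := \sum_{k=1}^\infty 2^{-k}\, t\bigl(2^k(x - 2^k)\bigr).
\]
The $k$-th summand has height $2^{-k}$, is supported in $[2^k - 2^{-k}, 2^k + 2^{-k}]$, and has slope $\pm 1$ on the interior of its support; the supports are pairwise disjoint for $k \geq 2$. Hence $a \in \Lip_0(\R)$ with Lipschitz constant $1$, and yet $|a'(x)| = 1$ on a sequence of points going to infinity. For any fixed $n$, choose $R_n$ with $\supp u_n \subset [-R_n, R_n]$ and pick $k$ large enough that the $k$-th bump lies in $(R_n, \infty)$. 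Selecting a point $x$ inside that bump with $|a'(x)| = 1$ gives $u_n(x) = u_n'(x) = 0$, so $|(u_n a - a)'(x)| = 1$ and hence $\Vert u_n a - a\Vert_\D \geq 1$ for every $n$.

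The argument is elementary; the only care required lies in the almost-everywhere Leibniz rule and in the identification of the Lipschitz seminorm with the essential supremum of the pointwise derivative, both of which are classical for Lipschitz functions on the line. This makes precise the intuition, already visible in Lemma~\ref{lem:fin-dim-bad}, that shrinking the Lipschitz constants of an approximate unit is incompatible with capturing all the Lipschitz structure of $\Lip_0(\R)$.
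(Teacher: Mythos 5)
Your proof is correct and follows essentially the same route as the paper: split $(u_na-a)'$ into $u_n'a+(u_n-1)a'$, kill the first term using the vanishing Lipschitz constants, and defeat the second by exhibiting an $a\in\Lip_0(\R)$ whose derivative does not vanish at infinity. The only difference is the witness — the paper uses $\sin(x^3)/(1+x^2)$ where you build a sum of shrinking tents — and either choice works.
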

\begin{proof}
Let $f(x)=\sin(x^3)/(1+x^2)\in \Lip_0(\R)$. 
The mean value theorem says that given $x,y\in\R$ there is some
$w$ between $x$ and $y$ such that
$$
\left|(f-u_nf)(x)-(f-u_nf)(y)\right|=|(f-u_nf)'(w)|d(x,y)=|(1-u_n(w))f'(w)-u'_n(w)f(w)|d(x,y).
$$
Since the Lipschitz constants of the $u_n$ converge to zero, and 
$u_n'\to 0$ uniformly, we see that $u'_nf\to 0$ uniformly.
As the derivative of $f$ is $f'(x)=3x^2\cos(x^{3})/(1+x^2) - 2x\sin(x^3)/(1+x^2)$, and $u_n$ vanishes
at infinity for each $n$, we see that $|(1-u_n(w))f'(w)|$ does not go to zero uniformly.
\end{proof}
\begin{rmk}
The function $f(x)=\sin(x^3)/(1+x^2)$ also appears in \cite[p 43]{CGRS2}, to demonstrate that 
derivatives must be controlled to handle summability in the nonunital setting.
\end{rmk}
Despite this lack of success, even with our strongest completeness condition, 
there are positive results, and these demonstrate the need to take smaller algebras than 
$\Lip_0(X)$.
Recall, \cite{DJ}, the pointwise Lipschitz constant
of a function $f$ at a non-isolated point $x\in X$ defined by
$$
\Lip(f)(x):=\limsup_{y\to x,\,y\neq x}\frac{|f(x)-f(y)|}{d(x,y)}.
$$
If $x$ is isolated we set $\Lip(f)(x)=0$. Then we set 
$$
\textnormal{L}_{00}(X)=\{f\in \Lip(X):\,f\mbox{ and }\Lip(f)\mbox{ vanish at infinity}\}.
$$
The function $\Lip(f)$ need not be continuous, but we can still ask for it to be small outside
a compact set.
The space $\textnormal{L}_{00}(X)$ is not always a Banach space in its natural norm 
$\|f\|_\infty+\|\Lip(f)\|_{\infty}$, but we can take its completion, which is a 
subspace of $C_{0}(X)$. We denote this Banach space by $\Lip_{00}(X)$.
\begin{lemma}
\label{lem:easy-peasy}
Let $(X,d)$ be a metric space and  $(u_n)\subset \Lip_{00}(X)$ an adequate approximate unit 
such that $\Vert u_n\Vert_{\Lip}\to 0$ as $n\to\infty$. Then $(u_n)$ is an
approximate identity for $\Lip_{00}(X)$.
\end{lemma}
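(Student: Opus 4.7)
The aim is to show that $\|u_n f - f\|_\infty + \|\Lip(u_n f - f)\|_\infty \to 0$ for every $f \in \Lip_{00}(X)$. Since the algebra is commutative, left and right multiplication agree, so proving $u_n f \to f$ suffices.

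First I would reduce to $f$ in the dense subspace $\textnormal{L}_{00}(X)$. The sup-norm part $\|u_n f - f\|_\infty \to 0$ is automatic because $f \in C_0(X)$ and $(u_n)$ is a $C^*$-approximate unit for $C_0(X)$. For the derivative part, from the identity
\begin{equation*}
(u_n-1)(x)f(x) - (u_n-1)(y)f(y) = (u_n(x)-1)(f(x)-f(y)) + (u_n(x)-u_n(y))f(y),
\end{equation*}
dividing by $d(x,y)$, taking $\limsup$ as $y \to x$, and invoking continuity of $f$ and $u_n$, I obtain the pointwise product estimate
\begin{equation*}
\Lip(u_n f - f)(x) \leq |u_n(x)-1|\,\Lip(f)(x) + \Lip(u_n)(x)\,|f(x)|.
\end{equation*}

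Next I estimate each term uniformly in $x$. The second term is dominated by $\|\Lip(u_n)\|_\infty \|f\|_\infty = \|u_n\|_{\Lip}\|f\|_\infty \to 0$ by hypothesis. For the first term I fix $\varepsilon > 0$ and, using that $\Lip(f)$ vanishes at infinity, choose a compact $K \subset X$ with $\Lip(f) < \varepsilon$ off $K$; off $K$ the term is bounded by $(1+\sup_n \|u_n\|_\infty)\varepsilon$. On $K$, I obtain uniform convergence $u_n \to 1$ as follows: pick $v \in C_0(X)$ with $v \equiv 1$ on $K$, then $\|u_n v - v\|_\infty \to 0$ by the approximate unit property, and restricting to $K$ gives $\sup_K|u_n - 1| \to 0$, so the term on $K$ is bounded by $\|\Lip(f)\|_\infty \sup_K|u_n - 1| \to 0$.

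Finally I extend from $\textnormal{L}_{00}(X)$ to $\Lip_{00}(X)$ by an $\varepsilon/3$ argument, which requires the multiplication operators $M_{u_n}\colon g \mapsto u_n g$ to be uniformly bounded on $\Lip_{00}(X)$. This follows from the analogous product rule $\Lip(u_n g)(x) \leq |u_n(x)|\Lip(g)(x) + \Lip(u_n)(x)|g(x)|$, giving $\|M_{u_n}\| \leq \sup_n(\|u_n\|_\infty + \|u_n\|_{\Lip}) < \infty$. The main technical subtlety lies in the first term of the pointwise estimate: it is controlled precisely because $\Lip(f)$ (and not merely $f$) vanishes at infinity, which is exactly the defining property of $\Lip_{00}(X)$ and the reason this lemma succeeds where Lemma \ref{lem:fin-dim-bad} and Lemma \ref{lem:R-is-bad} obstruct analogous statements for $\Lip_0(X)$.
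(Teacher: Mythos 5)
Your proof is correct and follows essentially the same route as the paper's: expand $u_nf-f$ via the Leibniz-type identity and bound the two resulting terms by $\Vert u_n\Vert_{\Lip}\,\Vert f\Vert_\infty$ and by $\sup_x \Lip(f)(x)\,|u_n(x)-1|$ respectively, the latter controlled because $\Lip(f)$ vanishes at infinity. You supply two details the paper leaves implicit --- the uniform convergence $u_n\to 1$ on compact sets via an auxiliary $v\in C_0(X)$, and the extension from the dense subspace $\textnormal{L}_{00}(X)$ to its completion via uniform boundedness of the multiplication operators --- but the core argument is the same.
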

\begin{proof}
We just need to show that for $f\in \Lip_{00}(X)$ we have
$\Vert f-u_nf\Vert_{\Lip}\to 0$. That is, we need to show that
$$
\sup_{x\neq y}\left|\frac{(f-u_nf)(x)-(f-u_nf)(y)}{d(x,y)}\right|\to 0\ \mbox{as}\ n\to\infty,
$$
which is to say, we need to show that
\begin{align*}
&\sup_{x\neq y}\left|\frac{(u_n(x)-u_n(y))f(x)-(f(y)-f(x))(u_n(y)-1)}{d(x,y)}\right|\\
&\qquad\qquad\qquad\leq \Vert u_n\Vert_{\Lip}\,\Vert f\Vert_\infty  +
\sup_{y\in X}\left|\Lip(f)(y)(u_n(y)-1)\right| \, \to 0\ \mbox{as}\ n\to\infty,
\end{align*}
the second term going to zero since $\Lip(f)$ vanishes at infinity.
\end{proof}
The last two lemmas show why we need to be 
able to restrict to closed subalgebras of $\Lip(\D)$ which
may be smaller than $\Lip(\D)\cap A$, but which are still norm dense in $A$. 
For general metric spaces it is not clear that one can
always find suitable algebras which have adequate approximate units. 
When the metric is suitably infinite and the metric space nice enough, we can find
approximate units for $\Lip_{00}(X)$. This result 
resembles the equivalences of Theorem \ref{thm:equivs}, and captures the idea that
topological infinity is at infinite distance.
\begin{prop}
\label{prop:actually-infinite}
Let $(X,d)$ be a metric space 
and $x_0\in X$ such that the function $x\mapsto d(x_0,x)$ is proper. Then we obtain an
approximate unit for $\Lip_{00}(X)$ whose Lipschitz constants go to zero. Hence $(X,d)$ is complete.
\end{prop}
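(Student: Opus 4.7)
The plan is to use the properness of $d(x_0,\cdot)$ to build concentric cutoff functions with prescribed Lipschitz constants, and then to cite Lemma \ref{lem:easy-peasy} for the conclusion that these form a Lipschitz approximate unit.

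First I would choose, for each $n\in\N$, a piecewise linear cutoff $\phi_n:[0,\infty)\to[0,1]$ with $\phi_n\equiv 1$ on $[0,n]$, $\phi_n\equiv 0$ on $[n+n^2,\infty)$, and $\phi_n$ affine in between, so that $\phi_n$ is Lipschitz with constant $1/n^2$. Set
\[
u_n(x):=\phi_n\bigl(d(x_0,x)\bigr).
\]
Since $y\mapsto d(x_0,y)$ is $1$-Lipschitz, each $u_n$ is Lipschitz with $\|u_n\|_{\Lip}\leq 1/n^2$, so the Lipschitz constants go to zero and the pointwise Lipschitz function $\Lip(u_n)$ also has norm bounded by $1/n^2$. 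By properness of $d(x_0,\cdot)$, the set $\{x:d(x_0,x)\leq n+n^2\}$ is compact, so $u_n$ is compactly supported; in particular $u_n$ and $\Lip(u_n)$ vanish at infinity, so $u_n\in\Lip_{00}(X)$.

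Next I would verify that $(u_n)$ is a sup-norm approximate unit for $C_0(X)$. Given $f\in C_0(X)$ and $\epsilon>0$, pick a compact $K\subset X$ with $|f|<\epsilon$ off $K$. Continuity of $d(x_0,\cdot)$ and compactness of $K$ give some $M$ with $d(x_0,x)\leq M$ for $x\in K$, so for $n\geq M$ we have $u_n\equiv 1$ on $K$, whence $\|u_nf-f\|_\infty\leq\epsilon$. Combined with $\|u_n\|_{\Lip}\to 0$, this makes $(u_n)$ an adequate approximate unit with vanishing Lipschitz constants; Lemma \ref{lem:easy-peasy} then promotes it to an approximate unit for $\Lip_{00}(X)$ in the Lipschitz topology.

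For the completeness conclusion, no metric-approximate-unit machinery is needed: properness of $d(x_0,\cdot)$ already implies that closed bounded sets are compact. Indeed, any $d$-Cauchy sequence is bounded, hence contained in some set $\{x:d(x_0,x)\leq R\}$, which is compact by properness; extracting a convergent subsequence and invoking the Cauchy condition shows that the original sequence converges. There is no real obstacle in this argument: the only thing requiring some care is choosing the cutoff rate $1/n^2$ so that Lipschitz constants tend to zero while the $u_n$ still eventually cover any fixed compact set.
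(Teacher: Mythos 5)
Your construction is essentially the paper's: both build cutoffs of the form $\phi_n(d(x_0,x))$ that are $1$ on a ball of radius $n$ and interpolate to $0$ over an annulus of width $\sim n^2$, giving Lipschitz constants $O(1/n^2)$, and both then invoke Lemma \ref{lem:easy-peasy} to upgrade the sup-norm approximate unit to one for $\Lip_{00}(X)$. The only (harmless) divergence is at the end, where you deduce completeness directly from properness via a Heine--Borel argument rather than routing through Proposition \ref{prop:metric}; this is if anything cleaner, since it avoids the spectral-triple hypotheses of that proposition.
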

\begin{proof}
Fix $x_0\in X$ as in the statement, and 
let 
$$
K_N=\{x\in X: \, d(x,x_0)\leq N\} .
$$
Then the $K_N$ form an increasing sequence of compact sets whose union is $X$.
Define functions on $X$ by
$$
u_N(x)=\left\{\begin{array}{ll} 1 & x\in K_N\\ 
\frac{N}{N-1}\left(1-\frac{d(x_0,x)}{N^2}\right) & x\in K_{N^2}\setminus K_N\\
0 & x\not\in K_{N^2}\end{array}\right..
$$
Checking the various cases shows that each $u_N\in \Lip_{00}(X)$ 
is a bounded Lipschitz function whose Lipschitz
constant is bounded by $1/N(N-1)$, 
and so $\Vert u_N\Vert_{\Lip}\to 0$ as $N\to\infty$. 
Moreover it is clear that $(u_N)$ is a sup norm approximate unit for
$C_0(X)$, and so by Proposition \ref{prop:metric} and Lemma \ref{lem:easy-peasy} we are done.
\end{proof}
For $\R^n$, and more generally geodesically complete manifolds $M$, we can always construct
an approximate unit as in Proposition \ref{prop:actually-infinite}. 
As a consequence, we can construct a bounded approximate unit for $\Lip_{00}(M)$, and
then Corollary \ref{cor:symmetric-spec} tells us directly that Dirac-type operators on $M$ are
self-adjoint.

Given a spectral metric space, we can still 
deduce the completeness of the state space $\mathcal{S}(A)$ 
from the existence of an adequate approximate unit, 
as was first shown by Latr\'emoli\`ere \cite{Lat}
for the case of bounded metrics.
As the context is somewhat different, we give the argument. 
\begin{prop} 
\label{prop:spec-met}
Let $(\A,\H,\D)$ be a symmetric spectral triple for which Connes' formula
\[
d(\sigma,\tau):=\sup\{|\sigma(a)-\tau(a)|: \|[\D^*,a]\|_\infty\leq 1\},
\] 
defines an extended 
metric on the state space $\mathcal{S}(A)$ (so $d$ may take the value $\infty$).
If $\A$ has an adequate approximate unit then $(\mathcal{S}(A),d)$ is complete.
\end{prop}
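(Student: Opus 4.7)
The plan is to extract a candidate limit state from a Cauchy sequence $(\sigma_k)\subset\mathcal{S}(A)$, using the definition of $d$ to pass from a Cauchy condition to uniform convergence on the Lipschitz unit ball, and the adequate approximate unit $(u_n)$ to prevent mass from escaping to infinity. Since $d$ is extended, we may first pass to a tail on which all pairwise distances are finite; once the tail has a limit in $d$, so does the full sequence.

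For every $a\in\A_{\D^{*}}$ with $\|[\D^{*},a]\|_\infty\leq 1$, the inequality $|\sigma_k(a)-\sigma_\ell(a)|\leq d(\sigma_k,\sigma_\ell)$ shows that $(\sigma_k(a))$ is Cauchy in $\C$, uniformly in such $a$. Rescaling by the Lipschitz seminorm, $\phi(a):=\lim_k\sigma_k(a)$ defines a linear functional on $\A_{\D^{*}}$ with $|\phi(a)|\leq\|a\|$. Since $\A_{\D^{*}}\supset\A$ is norm-dense in $A$, $\phi$ extends uniquely to a bounded linear functional $\sigma$ on $A$ with $\|\sigma\|\leq 1$. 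Positivity of $\sigma$ is inherited from the $\sigma_k$: for $a=c^{*}c\geq 0$ with $c\in A$, approximate $c$ in the $C^{*}$-norm by $c_m\in\A$; then $c_m^{*}c_m\to a$ with each $c_m^{*}c_m\in\A$ positive, so $\sigma(a)=\lim_m\sigma(c_m^{*}c_m)\geq 0$.

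The main step is showing $\|\sigma\|=1$, which is where the adequacy of $(u_n)$ is essential. Set $L:=\sup_n\|[\D^{*},u_n]\|_\infty<\infty$. Given $\varepsilon>0$, choose $K$ such that $d(\sigma_k,\sigma_\ell)<\varepsilon/(2L)$ for all $k,\ell\geq K$. Applying the definition of $d$ to $u_n/L$ gives
\[
|\sigma_k(u_n)-\sigma_K(u_n)|\leq L\,d(\sigma_k,\sigma_K)<\tfrac{\varepsilon}{2}
\]
uniformly in $n$, for every $k\geq K$. Because $(u_n)$ is a $C^{*}$-algebra approximate unit and $\sigma_K$ is a state, $\sigma_K(u_n)\to 1$, so we may pick $N$ with $\sigma_K(u_n)>1-\varepsilon/2$ for $n\geq N$. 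Combining the two estimates, $\sigma_k(u_n)>1-\varepsilon$ for every $k\geq K$ and $n\geq N$, whence $\sigma(u_n)=\lim_k\sigma_k(u_n)\geq 1-\varepsilon$, and therefore $\|\sigma\|\geq 1-\varepsilon$. Since $\varepsilon$ was arbitrary, $\sigma$ is a state.

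Finally, convergence $d(\sigma_k,\sigma)\to 0$ follows from the same uniform Cauchy estimate: for any $a$ in the Lipschitz unit ball,
\[
|\sigma_k(a)-\sigma(a)|=\lim_\ell|\sigma_k(a)-\sigma_\ell(a)|\leq\limsup_\ell d(\sigma_k,\sigma_\ell),
\]
and this upper bound is independent of $a$ and vanishes as $k\to\infty$. The main obstacle is the third step: the two clauses in the definition of an adequate approximate unit must be braided together, with the uniform Lipschitz bound $L$ controlling the Cauchy rate of $\sigma_k(u_n)$ in $k$, and the $C^{*}$-approximate unit property controlling its asymptotics in $n$. It is exactly the \emph{uniformity in $n$} of the first estimate, provided by the adequacy of $(u_n)$, that rules out any mass leaking to infinity in the weak-$*$ limit.
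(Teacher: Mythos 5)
Your proof is correct and follows essentially the same route as the paper: extract the pointwise limit functional, check positivity and $\|\sigma\|\leq 1$, and then use the uniform bound $L=\sup_n\|[\D^*,u_n]\|_\infty$ to make the estimate on $|\sigma_k(u_n)-\sigma_K(u_n)|$ uniform in $n$, which forces $\sigma(u_n)\to 1$ and hence $\sigma\in\mathcal{S}(A)$. You additionally spell out the final verification that $d(\sigma_k,\sigma)\to 0$, which the paper leaves implicit but which is needed to conclude completeness.
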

\begin{proof} 
Let $(u_{n})$ 
be an adequate approximate unit. Let $\sigma_{k}$ be a sequence 
of states that is Cauchy for the Connes metric, i.e. for $k<\ell$
\[
\sup\{|\sigma_{k}(a)-\sigma_{\ell}(a)|: \| [\D^*,a]\|_\infty\leq 1\}\rightarrow 0,
\]
as $k\rightarrow\infty$. Then $\sigma(a):=\lim_{k}\sigma_{k}(a)$, for 
$a\in \A$, is a well defined map $\A\rightarrow \mathbb{C}$. 
It is positive since for positive $a$, $\sigma(a)$ is a limit of positive 
numbers. It remains to show that $\sigma$ has norm $1$. To this end, 
let $a\in\A$ be in the unit ball for the $C^{*}$-norm. Then 
$|\sigma_{k}(a)|\leq 1$, so $|\sigma(a)|\leq 1$, showing that 
$\|\sigma\|\leq 1$, and thus $\sigma$ extends to all of $A$. 
Now since $u_{n}$ is an approximate unit, we have 
$\sigma_{k}(u_{n})\rightarrow 1$ for fixed $k$ and 
$n\rightarrow\infty$. Since $[\D^*,u_{n}]$ is bounded, 
we may assume that $\|[\D^*,u_{n}]\|_\infty\leq C$ for all $n$ and some positive constant $C$. 
This means that for $k<\ell$
\[
\sup_{n}|\sigma_{k}(u_{n})-\sigma_{\ell}(u_{n})|\rightarrow 0,
\]
as $k\rightarrow \infty$. 
Hence there exist $\varepsilon>0$ and $k$ 
sufficiently large such that for all $n$ 
\[
|\sigma(u_{n})-\sigma_{k}(u_{n})|<\varepsilon/2.
\] 
Now choose $n$ large enough such that $\|\sigma_{k}(u_{n})-1\|<\varepsilon/2$. Then
\[
|\sigma(u_{n})-1|\leq |\sigma(u_{n})-\sigma_{k}(u_{n})|+|\sigma_{k}(u_{n})-1|\leq\varepsilon.
\]
This shows that $\sigma(u_{n})\rightarrow 1$, and in particular that 
$\|\sigma\|=1$ and $\sigma\in\mathcal{S}(A)$.
\end{proof}

In particular, the presence of an adequate approximate unit ensures that the metric topology
limit of states is a state, and so such a sequence is a tight set, \cite[Definition 2.2]{Lat2}. It is likely
that our approach to completeness can further complement Latr\'emoli\`ere's approach to 
locally compact quantum metric spaces.

Finally, let us consider what can be said about closed subalgebras of $\Lip(\D)$ for a
general (symmetric) spectral triple $(\A,\H,\D)$.
\begin{prop}
\label{prop:CDA-CDA-CDA}
Let $(\A,\H,\D)$ be a symmetric spectral triple. 
Suppose that $A$ has an adequate approximate unit $(u_n)\subset\A$
such that $[\D^*,u_n]\to 0$ in operator norm. Then $(u_n)$ is an approximate unit for $\A$
if and only if $(u_n)$ is an operator norm topology  approximate unit for the $C^*$-algebra generated
by $A$ and the commutators $[\D^*,a]$, $a\in\A$.
\end{prop}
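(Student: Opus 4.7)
The plan is to realise both conditions in terms of norm convergence on a common set of generators and then invoke standard density arguments. Denote by $\mathcal{C}$ the $C^{*}$-algebra generated by $A$ together with the commutators $\{[\D^{*},a]:a\in\A\}$ inside $\bB(\H)$.

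First I would unpack the condition that $(u_{n})$ be an approximate unit for $\A$ using the standard Lipschitz representation $\pi_{\D}$ of \eqref{eq:sym-lip-norm}. The graded Leibniz rule gives
\begin{equation*}
\pi_{\D}(a)-\pi_{\D}(u_{n}a)=\begin{pmatrix} a-u_{n}a & 0 \\ [\D^{*},a]-[\D^{*},u_{n}]a-\gamma(u_{n})[\D^{*},a] & \gamma(a-u_{n}a)\end{pmatrix}.
\end{equation*}
The norm of a $2\times 2$ block operator is equivalent to the maximum of the norms of its entries. The $(1,1)$ and $(2,2)$ entries tend to zero in norm because $(u_{n})$ is by hypothesis an operator-norm approximate unit for $A$, and the middle term of the $(2,1)$ entry tends to zero in norm because of the standing assumption $\Vert[\D^{*},u_{n}]\Vert_{\infty}\to 0$, using boundedness of $a$. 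Hence $\Vert a-u_{n}a\Vert_{\D}\to 0$ is equivalent to $\gamma(u_{n})[\D^{*},a]\to [\D^{*},a]$ in operator norm, which for even $u_{n}$ reads $u_{n}[\D^{*},a]\to [\D^{*},a]$. The analogous computation on the right gives $\Vert a-au_{n}\Vert_{\D}\to 0$ if and only if $[\D^{*},a]u_{n}\to [\D^{*},a]$ in norm.

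Consequently, $(u_{n})$ is an approximate unit for $\A$ if and only if it is a two-sided operator norm approximate unit on each of the generators of $\mathcal{C}$, namely on the elements of $A$ (which is automatic from the adequate approximate unit hypothesis) and on the commutators $[\D^{*},a]$. A standard argument using uniform boundedness of $(u_{n})$, together with the telescoping identity $u_{n}g_{1}\cdots g_{k}-g_{1}\cdots g_{k}=(u_{n}g_{1}-g_{1})g_{2}\cdots g_{k}$ on products of generators, extends this convergence to the dense $*$-subalgebra of polynomials in the generators, and the uniform bound then propagates it to all of $\mathcal{C}$. This yields the forward implication. The converse is essentially free: if $(u_{n})$ is a norm approximate unit for $\mathcal{C}$ then in particular $u_{n}[\D^{*},a]\to[\D^{*},a]$ and $[\D^{*},a]u_{n}\to[\D^{*},a]$ in norm, so by the equivalences above $(u_{n})$ is an approximate unit for $\A$.

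The only mild subtlety lies in the grading: one needs $\gamma(u_{n})$, not merely $u_{n}$, to act as an approximate identity on commutators. This is harmless because approximate units in $\mathbb{Z}_{2}$-graded algebras may be taken to be even (equivalently, positive, as in our main applications), and this choice preserves all the relevant hypotheses including $[\D^{*},u_{n}]\to 0$ in norm. Beyond this bookkeeping point, the argument is a direct algebraic unfolding of the definitions, and no deeper analytic input is needed.
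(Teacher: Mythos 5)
Your proof is correct and follows essentially the same route as the paper: the graded Leibniz rule reduces the Lipschitz approximate-unit condition to the norm convergence $[\D^*,a]u_n\to[\D^*,a]$ (and its left-handed counterpart), after which the equivalence with being an approximate unit for the generated $C^*$-algebra is the standard bounded-net/density argument. The paper states this in two lines; your version merely makes the matrix bookkeeping, the grading remark, and the passage from generators to the full $C^*$-algebra explicit.
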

\begin{proof}
This just boils down to asking when $[\D^*,au_n-a]\to 0$ in operator norm. Using the Leibniz rule,
$$
[\D^*,au_n-a]=a[\D^*,u_n]+[\D^*,a](u_n-1),
$$
we obtain the result immediately.
\end{proof}

\section{Approximate units and connections on operator modules}
\label{sec:fucked-name}
Having demonstrated the usefulness of approximate identities in differentiable algebras, we now
refine our concepts to address the existence of connections on modules and the unbounded
Kasparov product. Using connections to identify explicit representatives of Kasparov products
has been used in several contexts, \cite{BMS,KaLe,LRV,Mes},
but doing this in a naive algebraic way leads to problems, as shown in \cite{Kaadabsorption,Senior}. 
\subsection{Projective modules}
For an operator algebra $\B$ with bounded approximate unit $v_{\lambda}$, the right 
$\B$-module $\H_{\B}:=\H\hotimes \B$ 
is called the \emph{standard rigged module}, \cite{Blecherrigged}. 
For notational convenience we write $\Z:=\mathbb{Z}\setminus\{0\}$.
The module $\H_{\B}$ can be concretely defined using an isometric 
representation $\pi:\B\rightarrow \bB(\H)$ as the space of column vectors 
\[
\Big\{(b_{i})_{i\in\Z}:\ b_{i}\in\B,\ \  \sum_{i\in\Z} \pi(b_{i})^{*}\pi(b_{i}) <\infty\Big\},
\]
where the sum converges in norm. From now on we fix  a $\mathbb{Z}_2$-graded 
$C^{*}$-algebra $B$ and an essential
unbounded $(B,C)$ Kasparov module $(\B,F_C,\D)$ with $\gamma$ the 
$\mathbb{Z}_2$-grading operator\footnote{We recall that 
if $C$ is non-trivially $\mathbb{Z}_2$-graded
then $\gamma$ is not adjointable as an operator on $F_C$}. We fix the representation
\[
\pi_{\D}(b):=\begin{pmatrix} b & 0 \\ [\D,b] & \gamma(b)\end{pmatrix}
\in\End_{C}^{*}(F\oplus F),\quad b\in \B,
\]
and we assume $\B$ to have a bounded approximate unit. The graded operator 
$\B^+$-module $\H_{\B^+}$ is the 
graded Haagerup tensor product of the graded Hilbert space $\ell^2(\Z)$ and the graded
algebra $\B^+$. Thus the module $\H_{\B^{+}}$ is naturally $\mathbb{Z}_2$-graded via 
\begin{equation}
\label{modgrad}
\Gamma(b_{i})_{i\in\Z}:=({\rm sign}(i)\gamma(b_{i}))_{i\in\Z},
\end{equation}
and defining the self-adjoint unitary
\[
\epsilon:\H_{\B^{+}}\to \H_{\B^{+}},\quad \epsilon (b_{i})_{i\in\Z}=({\rm sign}(i)b_{i})_{i\in\Z},
\] 
the grading operator \eqref{modgrad} on $\H_{\B^{+}}$ 
decomposes as $\Gamma:=\epsilon\diag(\gamma_{\B^{+}})=\diag(\gamma_{\B^{+}})\epsilon$.
This allows us to write the 
representation presenting $\H_{\B^+}$ as a concrete operator $\B^+$-module as
$$
(b_i)_{i\in\Z}\mapsto 
\begin{pmatrix} b_i & 0\\ {\rm sign}(i)[\D,b_i]_{\B^+} & {\rm sign}(i)\gamma_{\B^+}(b_i)\end{pmatrix}_{i\in\Z}
=\begin{pmatrix} 1 & 0\\ 0 & \varepsilon\end{pmatrix}
\begin{pmatrix} b_i & 0\\ [\D,b_i]_{\B^+} & \gamma_{\B^+}(b_i)\end{pmatrix}_{i\in\Z}.
$$
We will always consider $\H_{\B^{+}}$ where $\B^{+}$ 
is the unitisation of the differentiable algebra $\B$ (cf. Definition \ref{unitise}).

The  
\emph{compact operators} $\kK(\H_{\B^{+}})$ on $\H_{\B^+}$ are defined to be 
the Haagerup tensor product   $\kK\hotimes\B^{+}$, as defined in Equation \eqref{eq:Haag-tens}.
The algebra $\kK(\H_{\B^{+}})$ has a bounded approximate unit
\[
\chi_{n}=\sum_{1\leq |i| \leq n} | e_{i}\rangle\langle e_{i} |,
\]
where $e_{i}$ is the \emph{standard basis} of $\H_{\B^{+}}$. 
In \cite{KaLe, Mes} it was shown that the standard $B$-valued inner product on the
module $\H_{\B^{+}}$ actually takes values in $\B^+$. Then one  defines 
the \emph{adjointable operators} $\End^{*}_{\B^{+}}(\H_{\B^{+}})$ 
as the algebra of completely bounded maps $T:\H_{\B^{+}}\rightarrow\H_{\B^{+}}$ 
that admit an adjoint with respect to the standard inner product, so that
\begin{equation}\label{adjointable}
\langle Te, f\rangle=\langle e, T^{*}f\rangle.
\end{equation}
In \cite{KaLe} the class of submodules of $\H_{\B^{+}}$ defined by 
projection operators in $\End^{*}_{\B^{+}}(\H_{\B^{+}})$ are 
called \emph{operator $*$-modules}, and were classified by Kaad in \cite{Jens} 
for the case of commutative $\B$. 
The class of \emph{stably rigged modules} 
discussed in \cite{Mes} is essentially the same. In \cite{BMS}, this 
class is enlarged by incorporating countable direct sums of 
projections in $\End^{*}_{\B^{+}}(\H_{\B^{+}})$. The present 
paper further broadens the class of modules that can be 
used to construct the Kasparov product, refining the approximate unit techniques of \cite{Mes}. 

In \cite{BMS} the notion of unbounded projection operator was 
introduced, in order to deal with the differential structure on the $C^{*}$-module 
arising from the Hopf fibration.
In this section we develop the theory of such modules beyond the case of 
direct sums of bounded projections. This will be put to use to demonstrate existence of 
connections on projective operator modules. 
\begin{defn}[cf. \cite{BMS}]\label{def: proj} 
Let $\mathcal{B}$ be an operator $*$-algebra. 
A \emph{projective operator module} is an inner product operator module 
$\mathcal{E}$ over $\B$ that is isometrically unitarily isomorphic to $p\Dom p$ 
for some possibly 
unbounded even projection in $\mathcal{H}_{\B^{+}}$, such that the 
canonical basis vectors $\{e_{i}\}_{i\in\Z}$ are contained in $\Dom p$.
Here $\E$ is regarded as a $\B^+$-module in the usual way.
\end{defn}
Note that a projective operator module $\mathcal{E}$ over $\mathcal{B}$ 
admits a canonical $C^{*}$-completion, coming from the inner product. 
Equivalently, this completion can be obtained as the Haagerup tensor 
product $\mathcal{E}\hotimes_{\mathcal{B}}B$ over the completely 
contractive inclusion $\mathcal{B}\rightarrow B$ \cite[Corollary 2.18]{BMS}. We now characterise 
when a given $C^{*}$-module $E$ over $B$ admits a projective $\B$-submodule. 
The algebra of finite rank operators on $E$ is denoted $\Fin_{B}(E)$. 
By a \emph{(homogenous) frame for} $E$ we mean a sequence $(x_{i})_{i\in\Z}$ with 
the property that 
\begin{equation}
\label{compactau}\gamma_{E}(x_{i})
=\left\{\begin{matrix} x_{i} & \textnormal{if } i > 0 \\ -x_{i} & \textnormal{if } i<0\end{matrix}\right. ,
\qquad \textnormal{and that}\qquad 
\chi_{n}=\sum_{1\leq |i|\leq n} |x_{i}\rangle\langle x_{i}| \in\Fin_{B}(E)
\end{equation}
is an approximate unit for $\Fin_{B}(E)$ with $\|\chi_{n}\|_{\End_B^*(E)}\leq 1$ (that is $(\chi_n)$
is contractive). We refer to $\chi_{n}$ as the \emph{frame approximate unit for $(x_i)$}. 
All frames will be homogenous unless stated otherwise, so that $\gamma_E(x_i)={\rm sign}(i)x_i$.
\begin{prop} 
\label{columnfinite}
Let $\mathcal{B}$ be a differentiable algebra and $E_B$ a graded 
$C^{*}$-module over the $C^{*}$-closure $B$. Then 
$E_B$ is the completion of a projective operator $\B$-module $\mathcal{E}_\B\subset E_B$  
if and only if there is a  frame $(x_{i})_{i\in\Z}$ such that each of the column vectors 
$v_{j}=(\langle x_{i},x_{j} \rangle)_{i\in\Z}$ 
has finite norm in $\H_{\B^{+}}$. We call such a frame $(x_i)$, and the associated approximate unit
$\chi_{n}$, \emph{column finite}.
\end{prop}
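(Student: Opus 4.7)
The plan is to prove the equivalence by explicit construction in both directions, passing between a column finite frame $(x_i)$ and an even projection $p$ on $\H_{\B^+}$ through the standard frame/reconstruction map. Throughout I write $H_{B^+}$ for the $C^*$-completion of the operator module $\H_{\B^+}$.

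For the ``if'' direction, suppose $(x_i)_{i\in\Z}$ is a column finite frame. I would first define the $\B^+$-linear map
$$
\Phi:\H_{\B^+}\rightarrow E_B,\qquad (b_i)_{i\in\Z}\mapsto\sum_i x_ib_i.
$$
Contractivity of the frame approximate unit $\chi_n=\sum_{1\leq|i|\leq n}|x_i\rangle\langle x_i|$ together with the frame reconstruction $\sum_i x_i\langle x_i,e\rangle =e$ ensures that $\Phi$ is well defined and contractive (and extends to $H_{B^+}$), and that its adjoint $\Phi^\ast:E_B\to H_{B^+}$ is $e\mapsto (\langle x_i,e\rangle)_{i\in\Z}$ with $\Phi\Phi^\ast =1_{E_B}$. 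Setting $p:=\Phi^\ast\Phi$ yields a self-adjoint idempotent on $H_{B^+}$. A direct computation gives $pe_j=v_j$, so the column finiteness hypothesis is exactly the statement that $\{e_j\}_{j\in\Z}\subset \Dom(p|_{\H_{\B^+}})$. The grading compatibility $\gamma_E(x_i)={\rm sign}(i)x_i$ forces $p$ to be even with respect to the grading $\Gamma$ defined in \eqref{modgrad}. Thus $p\Dom p$ is a projective operator $\B$-submodule of $E_B$, and $\Phi$ identifies its $C^*$-completion with $E_B$ via \cite[Corollary 2.18]{BMS}.

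For the converse, assume $\mathcal{E}_\B\cong p\Dom p$ with $p$ an even projection on $\H_{\B^+}$ and $\{e_i\}_{i\in\Z}\subset\Dom p$. I would set $x_i:=pe_i\in\mathcal{E}_\B\subset E_B$. Self-adjointness of $p$ in the $\B^+$-valued inner product gives
$$
\langle x_i,x_j\rangle =\langle pe_i,pe_j\rangle =\langle e_i,pe_j\rangle,
$$
which identifies the column $v_j=(\langle x_i,x_j\rangle)_{i\in\Z}$ with $pe_j\in\H_{\B^+}$, proving column finiteness. The sign pattern $\gamma_E(x_i)={\rm sign}(i)x_i$ follows from the assumption that $p$ is even. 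For the frame property itself, the identity
$$
\sum_{1\leq |i|\leq n}|x_i\rangle\langle x_i| = p\Bigl(\sum_{1\leq |i|\leq n}|e_i\rangle\langle e_i|\Bigr)p
$$
shows that the $\chi_n$ for $(x_i)$ are the conjugates by $p$ of the standard approximate unit on $\H_{\B^+}$, which is contractive and converges strictly to the identity on $H_{B^+}$; hence they form a contractive approximate unit for $\kK(E_B)=\overline{\Fin_B(E_B)}$.

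The main obstacle lies in the interplay between the (potentially unbounded) projection $p$ on the operator module $\H_{\B^+}$ and the bounded projection it induces on the $C^*$-completion $H_{B^+}$. One must check that $\Phi^\ast\Phi$ really is the given operator module projection when restricted to $\H_{\B^+}$, and that the $\B^+$-valued and $B^+$-valued inner products produce compatible adjointness relations under \eqref{adjointable}. The transition between the two levels is governed by \cite[Corollary 2.18]{BMS} together with the spectral invariance of $\Lip(\D)$ in its $C^*$-closure, and correctly threading these results through the construction, while keeping track of the graded structure, is the part that will require the most care.
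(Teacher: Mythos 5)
Your dictionary between frames and projections is the same one the paper uses ($x_i=pe_i$ in one direction, $p=(\langle x_i,x_j\rangle)_{ij}=\Phi^*\Phi$ in the other), and your converse direction (projective module $\Rightarrow$ column finite frame) is essentially complete. The gap is in the ``if'' direction. Setting $p:=\Phi^*\Phi$ only produces a bounded, adjointable, norm-one projection on the $C^*$-completion $H_{B^+}$; what Definition \ref{def: proj} requires is a (generally unbounded) projection on the operator module $\H_{\B^+}$ itself, i.e.\ a densely defined, \emph{closed}, \emph{self-adjoint} (with respect to the $\B^+$-valued pairing \eqref{adjointable}) idempotent $p:\Dom p\subset\H_{\B^+}\to\H_{\B^+}$. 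None of closedness, self-adjointness or even well-definedness of the restriction is automatic from the $C^*$-level: the operator-module norm is strictly finer than the $C^*$-norm, and $\Dom p^*$ is computed inside $\H_{\B^+}$, not $H_{B^+}$. You flag this as ``the part that will require the most care,'' but the tools you propose to close it --- \cite[Cor.~2.18]{BMS} and spectral invariance of $\Lip(\D)$ --- do not address it; spectral invariance plays no role in this statement at all.

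The missing arguments are exactly where column finiteness does its work. Writing $q_i$ for the coordinate projection onto the $i$-th basis vector, closedness of $p$ on $\Dom p$ follows because each $q_ip$ is a \emph{bounded} adjointable operator on $\H_{\B^+}$ (its matrix is the single column $v_i$, which lies in $\H_{\B^+}$ precisely by the column finiteness hypothesis), so that $z_n\to z$ and $pz_n\to h$ force $q_ih=q_ipz$ for all $i$ and hence $pz=h$. Self-adjointness requires showing $\Dom p^*\subset\Dom p$, which is done by testing the adjoint relation against the basis vectors $e_i$ (legitimate only because $e_i\in\Dom p$, again column finiteness): for $z\in\Dom p^*$ with witness $x$ one computes $\sum_i q_ipz=\sum_i e_i\langle pe_i,z\rangle=\sum_i e_i\langle e_i,x\rangle=x$, whence $pz=x\in\H_{\B^+}$. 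Finally one must exhibit the module itself, $\E_\B=\{e\in E_B:(\langle x_i,e\rangle)_{i\in\Z}\in\Dom p\}$, and check that it is dense in $E_B$ (it contains the $x_i$) and closed. Without these three steps your ``if'' direction asserts the conclusion rather than proving it.
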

\begin{proof} 
$\Rightarrow$ When $\mathcal{E}_{\B}$ is projective we may 
assume that $\mathcal{E}_\B\subset\H_{\B^{+}}$ and $\{e_{i}\}$ is
the canonical basis of $\mathcal{H}_{\mathcal{B}^{+}}$, then setting 
$x_{i}=pe_{i}$ we observe that 
\[
\lim_{k\rightarrow\infty}\Big\langle pe_{i},\sum_{1\leq |j|\leq k}pe_{k}\langle pe_{k},pe_{j}\rangle\Big\rangle
= \lim_{k\rightarrow\infty}\Big\langle pe_{i},\sum_{1\leq |j|\leq k} e_{k}\langle e_{k},pe_{j}\rangle\Big\rangle,
\]
is norm convergent since $pe_{i}$ and $pe_{j}$ are in $\H_{\B^+}$. 
So $\chi_{n}:=\sum_{1\leq |i|\leq n} | pe_{i}\rangle\langle pe_{i} |$
is a column finite approximate unit for $\Fin_{B}(E)$.
\newline
$\Leftarrow$ We show that the matrix $p=(\langle x_{i},x_{j}\rangle)_{ij}$ is 
an even projection in $\mathcal{H}_{\mathcal{B}^{+}}$ with domain
\[
\Dom p:=\Big\{(b_{i})_{i\in\Z}\in\mathcal{H}_{\mathcal{B}^+}: \forall i\in \Z\  
\lim_{k\rightarrow\infty}
\Big(\sum_{1\leq |j|\leq k}\langle x_{i},x_{j}\rangle b_{j}\Big)\in\B\Big\}.
\]  
It is clear that $p$ is densely defined, as the canonical basis vectors 
$e_{i}\in\mathcal{H}_{\mathcal{B}^{+}}$ lie in the domain of $p$ by column finiteness. 
Moreover $p$ is closed. To see this, first denote by $q_{i}$ the projection onto 
the submodule spanned by the basis vector $e_{i}$. By column finiteness, 
$q_{i}p\in\End^{*}_{\mathcal{B}^{+}}(\mathcal{H}_{\mathcal{B}^{+}})$. Now if 
$\H_{\B^+}\ni z_{n}\rightarrow z$ and $pz_{n}\rightarrow h$, then 
$q_{i}pz_{n}\rightarrow q_{i}h$ and $q_{i}pz_{n}\rightarrow q_{i}pz$. 
Thus $q_{i}pz=q_{i}h$ for all $i$ and $pz=h\in\mathcal{H}_{\mathcal{B}^+}$ 
so $p$ is closed on its domain. 

Next we show that the symmetric operator 
$p$  is self-adjoint. Let $z\in\Dom p^{*}$, i.e.   there is 
$x\in\mathcal{H}_{\mathcal{B}^+}$ such that for all 
$w\in\Dom p$ we have $\langle pw,z\rangle=\langle w,x\rangle$. 
Since the basis vectors $e_{i}$ are in the domain, we can compute
\[
\begin{split}
\lim_{n}\sum_{1\leq|i|\leq n}q_{i}pz
&=\lim_{n}\sum_{1\leq |i|\leq n}e_{i}\langle e_{i},q_{i}pz\rangle 
=\lim_{n}\sum_{1\leq |i|\leq n}e_{i}\langle pe_{i},z\rangle 
=\lim_{n}\sum_{1\leq |i|\leq n}e_{i}\langle e_{i},x\rangle 
=x.
\end{split}
\]
This means that $pz=x$, so $z\in\Dom p$ and $p$ is self-adjoint. Now define
\begin{equation}
\label{Lipsubdef} 
\E_\B:=\{e\in E_B: (\langle x_{i},e\rangle)_{i\in\Z}\in\Dom p\},
\end{equation}
and observe that $x_{i}\in\E_\B$ by definition, so $\mathcal{E}_\B$ is dense in $E_B$. 
The module $\mathcal{E}_\B$ is closed in $\H_{\B^{+}}$ because a convergent net 
$e_{\lambda}\in\mathcal{E}_\B$ in particular converges in $E_B$ and therefore 
the limit must be of the form $(\langle x_{i},e\rangle)_{i\in\Z}$.
\end{proof}
Note that a column finite approximate unit is row finite as well, because of
the relation between the internal and external adjoint:
$(\pi_{\D}(\langle x_{i},x_{j}\rangle))^{*}=\flip\pi_{\D}(\langle x_{j},x_{i}\rangle)\flip^{*}$. 
\subsection{Connections and splittings}
We refine the notion of a connection on a projective 
module defined in \cite{BMS} by employing approximate units. 
The main improvement over \cite{BMS} is a new operator space topology on a projective module 
$\E_\B$, which is precisely the (weak) topology making the natural 
Grassmann connection continuous. 
Completing in this topology yields a possibly 
larger module $\E^{\nabla}_\B$. We will prove that for bounded projections,
the modules $\E_\B$ and $\E^{\nabla}_\B$ are cb-isomorphic.

Recall from \cite{BMS} the definition of the universal 1-forms $\Omega^{1}(B,\B)$ 
associated to a unital differentiable algebra $\B$, defined to be the kernel of the 
graded multiplication map 
\[
m: B\hotimes \B  \rightarrow B,\quad 
b_{1}\otimes b_{2} \mapsto \gamma(b_{1})b_{2}.
\]
In the nonunital case we use $\Omega^{1}(B^{+},\B^{+})$, so that the universal derivation
\begin{equation}
\d: \B\mapsto \Omega^{1}(B^{+},\B^{+}),\quad 
b\mapsto 1\otimes b - \gamma(b)\otimes 1,
\label{eq:dee}
\end{equation}
is well defined.
We will look at splittings of the \emph{universal exact sequence}
\[
0\rightarrow E\hotimes_{B^{+}}\Omega^{1}(B^{+},\mathcal{B}^{+})\rightarrow 
E\hotimes_\C \mathcal{B}^{+}\xrightarrow{m} E\rightarrow  0,
\]
that are compatible with the projective submodule $\mathcal{E}_\B\subset E_B$. 
Here $m:E\hotimes_\C \mathcal{B}^{+}\rightarrow E$, $m(e\hotimes b)=\gamma(e)b$ 
is the graded multiplication map. 
We adapt the algebraic notion of splitting to our setting.
\begin{defn} 
A completely bounded, graded, $\mathcal{B}^{+}$-module map 
$s:\mathcal{E}_\B\rightarrow E\hotimes_\C \B^{+}$ is a \emph{splitting} 
if  $m\circ s$ coincides with the inclusion map $\mathcal{E}_\B\rightarrow E_B$.
\end{defn}
We can now prove the analogue of the Cuntz-Quillen 
characterisation of algebraic projectivity 
\cite[Proposition 8.1, Corollary 8.2]{CQ} in the present analytic setting.
\begin{prop} 
Let $(x_i)_{i\in\Z}$ be a column finite frame 
as in Equation \eqref{compactau}, defining a projective $\B$-submodule $\E\subset E$.
The map
\begin{align}
\label{ausplit} 
s: \E_\B  \mapsto E\hotimes_\C \B^{+}, \qquad
e  \mapsto \sum _{i\in\Z} \gamma(x_{i})\otimes \langle x_{i},e\rangle,
\end{align}
defines a contractive $\B^{+}$-linear splitting of the universal exact sequence.
\end{prop}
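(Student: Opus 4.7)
The plan is to verify the five required properties of $s$: convergence of the defining series in the Haagerup norm, complete contractivity, $\B^+$-linearity, gradedness, and the splitting identity $m\circ s=\mathrm{id}$. The key technical input is the Haagerup norm inequality
\[
\Big\|\sum_i a_i\otimes b_i\Big\|_h^2\leq \Big\|\sum_i a_i a_i^{*}\Big\|\cdot\Big\|\sum_i b_i^{*}b_i\Big\|,
\]
combined with the column finite frame condition. Since $\gamma_E(x_i)=\operatorname{sign}(i)\,x_i$, the rank-one operators $|\gamma(x_i)\rangle\langle \gamma(x_i)|$ coincide with $|x_i\rangle\langle x_i|$, so their partial sums are exactly the frame approximate unit $\chi_n$, which is contractive by hypothesis \eqref{compactau}.

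For convergence, I would set $s_n(e):=\sum_{1\leq |i|\leq n}\gamma(x_i)\otimes\langle x_i,e\rangle$ and apply the Haagerup estimate to $s_m(e)-s_n(e)$ to obtain
\[
\|s_m(e)-s_n(e)\|_h^{2}\leq \|\chi_m-\chi_n\|_{\infty}\cdot\Big\|\sum_{n<|i|\leq m}\langle x_i,e\rangle^{*}\langle x_i,e\rangle\Big\|_{\B^+}.
\]
The first factor is bounded by $2$, and the second factor tends to $0$ as $n,m\to\infty$ because $e\in\E_\B$ means, by \eqref{Lipsubdef} and Proposition \ref{columnfinite}, that $(\langle x_i,e\rangle)_{i\in\Z}\in\Dom p\subset\H_{\B^{+}}$, so the series $\sum_i\langle x_i,e\rangle^{*}\langle x_i,e\rangle$ is norm convergent in $\B^+$. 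Hence $s_n(e)$ is Cauchy and $s(e)$ is well defined in $E\hotimes_{\C}\B^+$. Taking the limit in the same estimate yields $\|s(e)\|_h\leq \|e\|_{E}$, i.e.\ contractivity; applying the identical argument at each matrix level (using amplified frames $(x_i\cdot\mathrm{Id}_k)$) gives complete contractivity.

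The algebraic properties are essentially formal. $\B^+$-linearity follows from $\langle x_i,eb\rangle=\langle x_i,e\rangle b$. Gradedness is immediate from $\gamma(x_i)=\operatorname{sign}(i)x_i$ together with the tensor grading on $E\hotimes_{\C}\B^+$, since the parities of the two tensor factors in $\gamma(x_i)\otimes \langle x_i,e\rangle$ both flip when $e$ is replaced by $\gamma_E(e)$. For the splitting identity, using that $\gamma^{2}=\mathrm{id}$ one computes
\[
(m\circ s)(e)=\sum_{i\in\Z}\gamma(\gamma(x_i))\,\langle x_i,e\rangle=\lim_{n\to\infty}\sum_{1\leq|i|\leq n}x_i\langle x_i,e\rangle=\lim_{n\to\infty}\chi_n(e)=e,
\]
the last equality because $\chi_n\to 1$ strongly on $E$ as an approximate unit for $\Fin_B(E)$.

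The main obstacle is the convergence step: one must be careful to verify the Haagerup estimate in the $\C$-tensor product $E\hotimes_\C\B^+$ rather than the module tensor product, and to track the graded conventions carefully so that the operator $|\gamma(x_i)\rangle\langle\gamma(x_i)|$ really does agree with $|x_i\rangle\langle x_i|$ and so that $m\circ s$ produces the inclusion rather than a twisted version. Once this bookkeeping is in place, completeness, contractivity, and the splitting identity all fall out of the column finiteness hypothesis together with the contractive approximate unit $\chi_n$.
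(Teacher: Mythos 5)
Your argument is correct and follows essentially the same route as the paper's proof: both convergence of the series and contractivity come from the Haagerup norm estimate
$\big\|\sum\gamma(x_i)\otimes\langle x_i,e\rangle\big\|_h^2\le\big\|\sum|x_i\rangle\langle x_i|\big\|\cdot\big\|\sum\pi_{\D}(\langle x_i,e\rangle)^*\pi_{\D}(\langle x_i,e\rangle)\big\|$ combined with contractivity of the frame approximate unit and membership of $(\langle x_i,e\rangle)_{i\in\Z}$ in $\H_{\B^{+}}$, exactly as in the paper. The only point to tidy is that the resulting bound is $\|s(e)\|_h\le\|e\|_{\E}$ in the operator-module norm induced by $\pi_{\D}$ rather than the $C^*$-norm $\|e\|_E$; the algebraic verifications ($\B^{+}$-linearity, gradedness, and $m\circ s$ being the inclusion) that you spell out are the ones the paper leaves implicit.
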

\begin{proof} 
First we show that for $e\in\E$, $s(e)$ actually 
defines an element of $E\hotimes_\C \B^{+}$. To this 
end let $\varepsilon > 0$ and choose $n,m$ such that
\[ 
\Big\|\sum_{n\leq |i|\leq m}\pi_{\D}(\langle x_{i},e\rangle)^{*}\pi_{\D}(\langle x_{i},e\rangle)\Big\| 
< \varepsilon,
\]
which is possible because $e\in\mathcal{E}$. Now estimate
\[
\begin{split}\Big\|\sum_{n\leq |i|\leq m} \gamma(x_{i})\otimes \langle x_{i},e\rangle\Big\|^{2}_{h} 
& \leq  \Big\|\sum_{n\leq |i|\leq m} |x_{i}\rangle\langle x_{i}| \,\Big\|_{\kK(E)} 
\Big\|\sum_{n\leq |i|\leq m} \pi_{\D}(\langle x_{i},e\rangle)^{*}\pi_{\D}(\langle x_{i},e\rangle) \Big\|_{\B^{+}} \\
&\leq \Big\|\sum_{n\leq |i|\leq m}\pi_{\D}( \langle x_{i},e\rangle)^{*}\pi_{\D}(\langle x_{i},e\rangle) \Big\|_{\B^{+}} 
< \varepsilon,
\end{split}
\]
which shows that the partial sums of the series defining $s$ 
form a Cauchy sequence in the Haagerup norm, cf. Equation \eqref{eq:Haag-tens}. 
To show continuity of $s$ we  again estimate 
with the Haagerup norm to see that
\[
\begin{split}\|s(e)\|^{2}_{h}
& \leq \lim_{k\rightarrow\infty} \Big\|\sum_{1\leq |i|\leq k} 
| x_{i}\rangle\langle x_{i} |\,\Big\|_{\kK(E)}\ 
\Big\|\sum_{1\leq |i|\leq k} \pi_{\D}( \langle x_{i},e\rangle)^{*}\pi_{\D}(\langle x_{i},e\rangle) \Big\|_{\B}
\leq \|e\|^{2}_{\mathcal{E}}
\end{split}
\]
and we are done. 
\end{proof}
Recall that a \emph{connection} on a (graded, projective) operator module $\E_\B$ is a 
completely bounded linear operator 
$\nabla:\E_\B\rightarrow E\hotimes_{B^{+}}\Omega^{1}(B^{+},\B^{+})$ satisfying the 
Leibniz rule 
\[
\nabla(eb)=\nabla(e)b+\gamma(e)\hotimes \d b,
\] 
where $\d$ is defined in Equation \eqref{eq:dee}.
Associated to a splitting $s:\E_\B\rightarrow E\hotimes_\C \B^{+}$ is a universal connection
\[
\begin{split}
\nabla_{s}:\E_\B\rightarrow E\hotimes_{B^{+}}\Omega^{1}(B^{+}, \B^{+}), \qquad
e  \mapsto s(e)-\gamma(e)\otimes 1,
\end{split}
\]
which in the case of a column finite frame as in \eqref{compactau} takes the form
\[
\begin{split}
\nabla_{s}(e)=s(e)-\gamma(e)\otimes 1 
&=\sum \gamma(x_{i})\otimes \langle x_{i},e\rangle - \gamma(e)\otimes 1
=\sum \gamma(x_{i})\otimes \langle x_{i},e\rangle-\gamma(x_{i}\langle x_{i},e\rangle)\otimes 1\\
&=\sum \gamma(x_{i})(1\otimes \langle x_{i},e\rangle - \gamma(\langle x_{i},e\rangle)\otimes 1)
=\sum \gamma(x_{i})\otimes d\langle x_{i},e\rangle.\end{split}\]
On the other hand, the frame \eqref{compactau} induces a stabilisation map
\[
v:\E_\B \rightarrow \mathcal{H}_{\B^{+}} \qquad 
e \mapsto (\langle x_{i},e\rangle)_{i\in\Z},
\]
with adjoint
\[
v^{*}:\mathcal{H}_{\B^{+}} \rightarrow \E_\B \qquad
(b_{i})_{i\in\Z} \mapsto \sum_{i\in\Z} x_{i}b_{i},
\]
and  $v^{*}v={\rm Id}_\E$. The associated projection 
$p=vv^{*}$, is given by the matrix $p=(\langle x_{i},x_{j}\rangle)$. 

Recalling that the grading operator \eqref{modgrad} on $\H_{\B^{+}}$ 
decomposes as $\Gamma:=\epsilon\diag(\gamma_{\B^{+}})$,
the module 
$\mathcal{H}_{\B^{+}}$ admits a canonical connection 
$\epsilon \d:(b_{i})\mapsto\epsilon (\d b_{i})$. The isometry $v$ induces a 
connection 
$v^{*}\epsilon \d v:\E\rightarrow 
E\hotimes_{B^{+}}\Omega^{1}(B^{+},\B^{+})$, 
which we call the \emph{Grassmann connection}. These considerations prove the following lemma. 
\begin{lemma}
\label{vdv} Let $(x_{i})_{i\in\Z}$ be a (homogenous) frame and 
$v:E\to \H_{B^{+}}$ the associated isometry. 
Then $v$ is even, that is, $v(\gamma(e))=\Gamma(v(e))$. 
If $(x_{i})_{i\in\Z}$ is column finite the connection 
$\nabla_{s}:\E_\B\rightarrow E\hotimes_{B^{+}}\Omega^{1}(B^{+},\B^{+})$ 
associated to the splitting \eqref{ausplit} equals $ v^{*}\epsilon \d v$.
\end{lemma}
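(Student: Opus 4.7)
The plan is to verify both assertions by direct computation, using the homogeneity of the frame, the compatibility of the grading with the inner product, and the explicit formulas for $v$, $v^{*}$, $\epsilon$, and $\nabla_{s}$ already assembled in the paragraphs preceding the lemma.

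For the first assertion, I would use that the $B$-valued inner product on the graded module $E$ is compatible with the gradings, so that for homogeneous $x, y$ one has $\gamma_{B}(\langle x,y\rangle)=\langle \gamma_{E}(x),\gamma_{E}(y)\rangle$. Since the frame is homogeneous, $\gamma_{E}(x_{i})=\mathrm{sign}(i)x_{i}$, and hence
\[
\langle x_{i},\gamma_{E}(e)\rangle=\mathrm{sign}(i)\,\gamma_{B}(\langle x_{i},e\rangle).
\]
Reading off coordinates and recalling from \eqref{modgrad} that $\Gamma((b_{i}))=(\mathrm{sign}(i)\gamma_{B^{+}}(b_{i}))$, this gives exactly $v(\gamma_{E}(e))=\Gamma(v(e))$, so $v$ is even.

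For the second assertion, the calculation performed just before the lemma statement already establishes
\[
\nabla_{s}(e)=\sum_{i\in\Z}\gamma(x_{i})\otimes \d\langle x_{i},e\rangle,
\]
the sum converging in the Haagerup norm by column finiteness of $(x_{i})$. On the other hand, the canonical connection on the standard rigged module is $\epsilon\d:(b_{i})\mapsto \sum_{i} e_{i}\otimes \mathrm{sign}(i)\d b_{i}$, valued in $\H_{\B^{+}}\hotimes_{B^{+}}\Omega^{1}(B^{+},\B^{+})$. Applying this to $v(e)=(\langle x_{i},e\rangle)_{i\in\Z}$ and then extending the $\B^{+}$-linear map $v^{*}$ to forms by $(v^{*}\otimes \mathrm{Id})(e_{i}\otimes \omega)=x_{i}\otimes\omega$, one obtains
\[
v^{*}\epsilon\d v(e)=\sum_{i\in\Z} x_{i}\otimes\mathrm{sign}(i)\d\langle x_{i},e\rangle
=\sum_{i\in\Z}\gamma(x_{i})\otimes\d\langle x_{i},e\rangle,
\]
where the last equality is just $\mathrm{sign}(i)x_{i}=\gamma_{E}(x_{i})$ from homogeneity of the frame. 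Comparing the two expressions gives $v^{*}\epsilon\d v=\nabla_{s}$.

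The computation is essentially mechanical; the only nontrivial input beyond unwinding definitions is the column finiteness hypothesis, which is what allows $\nabla_{s}$ to be defined unambiguously as a Haagerup-convergent sum and makes the extension of $v^{*}$ to $\H_{\B^{+}}\hotimes_{B^{+}}\Omega^{1}(B^{+},\B^{+})$ well-defined. The potential pitfall is keeping the $\mathrm{sign}(i)$ factor consistently on the correct side — once interpreted as $\gamma_{E}(x_{i})=\mathrm{sign}(i)x_{i}$, everything aligns with the expression for $\nabla_{s}$ already derived.
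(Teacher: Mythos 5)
Your proof is correct and follows essentially the same route as the paper, which simply declares that the preceding computation $\nabla_{s}(e)=\sum_{i}\gamma(x_{i})\otimes \d\langle x_{i},e\rangle$ together with the decomposition $\Gamma=\epsilon\,\diag(\gamma_{\B^{+}})$ and the definition of $\epsilon\d$ "prove the following lemma." You have merely written out the details the paper leaves implicit — the graded compatibility $\gamma(\langle x_{i},e\rangle)=\langle\gamma(x_{i}),\gamma(e)\rangle$ giving evenness of $v$, and the cancellation of the two $\mathrm{sign}(i)$ factors identifying $v^{*}\epsilon\d v$ with $\nabla_{s}$ — and these details are right.
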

In order to deal with unbounded projections we 
need to extend the techniques developed in \cite{BMS} and 
introduce a slightly different operator space structure. 
To this end we need to pass from the universal derivation 
$\d$ to the represented derivation $\delta_{\D}:b\mapsto [\D,b]$ 
coming from the defining Kasparov module $(\B,F_C,\D)$ for $\B$. Recall, 
Remark \ref{essrmk},  that we assume $[BF_{C}]=F_{C}$ and 
define the unitisation $\B^{+}$ according to Definition \ref{unitise}. The closed linear 
span of \emph{represented 1-forms} is the operator space
\[
\Omega^{1}_{\D}:=\overline{\Big\{\sum_{i} \pi(b_{i})[\D,\pi (b_{i}')] : b_{i}, b_{i}'\in \B\Big\}}
\subset \End^*_C(F),
\]
which is a $(B,\B)$-bimodule.
Using the universality of the derivation $\d$, there is a completely bounded $(B,\B)$-bimodule map
\[
j_{\D}:\Omega^{1}(B^{+},\B^{+})\rightarrow \Omega^{1}_{\D},
\]
uniquely determined by $\d b\mapsto [\D,\pi(b)]$ since $\pi(1)=1$, cf. \cite[Prop 2.22]{BMS}. 
In this way we obtain a connection 
\[
\nabla_{\D}:\E_\B\xrightarrow{\nabla_{s}} E\hotimes_{B}\Omega^{1}(B^{+},\B^{+})
\xrightarrow {1\otimes j_{\D}} E\hotimes_{B}\Omega^{1}_{\D},
\] 
sometimes referred to as a \emph{represented connection}. 
In the case of a free module, the tensor product 
$\H_{\B^{+}}\hotimes_{B^{+}}\Omega^{1}_{\D}$ can be 
identified with the space $\H_{\Omega^{1}_{\D}}:=\H\hotimes \Omega^{1}_{\D}$ 
of square summable sequences of forms $(\omega_j)$, where $\sum_j\omega^*_j\omega_j$ 
converges in $\End^*_C(F)$.  
Let $(x_i)$ be a column finite frame for $\E_\B$, and consider the space 
\begin{equation}
\label{enabla}
\E^{\nabla}_\B:=\Big\{e\in E_B: \lim_{n\rightarrow \infty}
\Big(\sum_{1\leq |k|\leq n}\langle x_{i},\gamma(x_{k})\rangle[\D,\langle x_{k},e\rangle ]\Big)_{i\in\Z} 
\in \H_{\Omega^{1}_{\D}}\Big\}.
\end{equation}
This space $\E_\B^\nabla$ may be strictly larger than $\E_\B$, and is an 
operator module in the representation
\begin{equation}
\label{Ep}
\pi_{\nabla}(e):=\begin{pmatrix} v(e) & 0 \\ vv^{*}\epsilon [\D, v(e)] & v(\gamma(e))\end{pmatrix}
\in \bigoplus_{i\in\Z} \End_{C}^{*}(F\oplus F),\quad \Vert e\Vert_{\E^\nabla}:=\Vert\pi_\nabla(e)\Vert
\end{equation}
where we have used slightly abusive notation for the entrywise graded commutator with $\D$ in the
indicated column vector. We will write $\gamma$ for $\diag(\gamma_{\B^{+}})$ and 
$\D_{\epsilon}$ for the self-adjoint regular operator $\epsilon\diag(\D)$ on 
$\H_{B^{+}}\hotimes_{B^{+}}F$. There is an equality of domains $\Dom \diag(\D)=\Dom \D_{\epsilon}$, 
and the closed graded derivations
\begin{equation}\label{differentgradings}
[\diag(\D),T]_{\gamma}:=\diag(\D)T-\gamma T \gamma\diag(\D),\quad [\D_{\epsilon},T]_{\Gamma}:=\D_{\epsilon} T-\Gamma T\Gamma\D_{\epsilon},
\end{equation}
are related via
$[\D_{\epsilon},T]_{\Gamma}=[\diag(\D),\epsilon T]_{\gamma}=\epsilon[\diag(\D),T]_{\gamma}$. 
Therefore these derivations have the same domain inside 
$\End^{*}_{C}(\H_{\B^{+}}\hotimes_{B^{+}} F)$. With regards to gradings on
the module $\E_\B^\nabla$ defined in
\eqref{enabla}, observe that there is an identity
\[
\begin{split}
\Big(\sum_{1\leq |k|\leq n}\langle x_{i},\gamma(x_{k})\rangle[\D,\langle x_{k},e\rangle ]\Big)^{*}
&=\sum_{1\leq |k|\leq n} -[\D,\gamma\langle e, x_{k}\rangle ]\langle \gamma(x_{k}),x_{i})\rangle\\
&=\sum_{1\leq |k|\leq n} \gamma\big(\,[\D,\langle e, x_{k}\rangle ]\langle x_{k},\gamma(x_{i})\rangle\,\big),
\end{split}
\]
and thus for $e\in\E^{\nabla}$, the series of row vectors 
\begin{equation}
\label{secondconv}
\left(\sum_{1\leq |k|\leq n} [\D,\langle e, x_{k}\rangle ]\langle x_{k},\gamma(x_{i})\rangle\right)_{i\in\Z}^{t}\end{equation}
is convergent.
\begin{lemma}
\label{nablaincl}
Let $p:\Dom p\rightarrow \H_{\B^{+}}$ be a  projection 
such that $\E_\B=p\Dom p$ is a projective operator module,
and consider the operator module  
$\E^{\nabla}_\B$ defined in Equation
\eqref{enabla}.\\
1) There is a completely contractive dense inclusion $\iota: \E_\B\rightarrow \E^{\nabla}_\B$.\\
2) If $p\in \End^{*}_{\B^{+}}(\H_{\B^{+}})$ then $\iota$ is a
cb-isomorphism.
\end{lemma}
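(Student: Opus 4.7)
The plan is to exploit the factorisation $\pi_{\nabla}(e) = p\cdot \pi_{\D}(v(e))$, valid for $e \in \E_\B$ because $pv(e) = v(e)$ and $pv(\gamma(e)) = v(\gamma(e))$, with $p = vv^*$ acting columnwise. This presents $\iota$ as insertion of the projection $p$ into the off-diagonal entry of the Lipschitz representation.

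For part 1, the inclusion $\E_\B \subset \E^{\nabla}_\B$: given $e \in \E_\B$, I need the column with $i$-th entry $(p\epsilon[\D,v(e)])_i = \sum_k \langle x_i, \gamma(x_k)\rangle[\D, \langle x_k, e\rangle]$ to lie in $\H_{\Omega^{1}_{\D}}$. Expanding $\langle x_i, e\rangle = \sum_k \langle x_i, x_k\rangle\langle x_k, e\rangle$ (convergent in $\B$) and applying the graded Leibniz rule yields
\[
(p\epsilon[\D, v(e)])_i = \operatorname{sign}(i)\left([\D,\langle x_i,e\rangle] - \sum_k [\D,\langle x_i,x_k\rangle]\langle x_k,e\rangle\right).
\]
The first term is the $i$-th component of $\epsilon[\D,v(e)]\in \H_{\Omega^{1}_{\D}}$, bounded by $\|e\|_\E$ via $\pi_{\D}$. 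The second is the $i$-th component of $\epsilon[\D,p]v(e)$; its $\H_{\Omega^{1}_{\D}}$-convergence follows from a Cauchy--Schwarz estimate using row finiteness of $(\langle x_i, x_k\rangle)_k$ (noted after Proposition \ref{columnfinite}) together with $\sum_k \langle x_k,e\rangle^*\langle x_k,e\rangle \in B$. Complete contractivity then follows by amplifying the factorisation $\pi_{\nabla}(e) = p\cdot\pi_{\D}(v(e))$ to matrix levels and noting that $p$ acts as the identity on its range, which contains $v(e)$ and its matrix amplifications. Density is obtained via the frame truncations $e_n := \chi_n e = \sum_{|i|\leq n} x_i\langle x_i,e\rangle \in \E_\B$; convergence $e_n\to e$ in the $\E^{\nabla}$-norm reduces to convergence of the truncation of the defining series for $e\in \E^{\nabla}_\B$ in \eqref{enabla}.

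For part 2, boundedness of $p$ in $\End^{*}_{\B^{+}}(\H_{\B^{+}})$ ensures that $p$ extends to a bounded operator on $\H_{\Omega^{1}_{\D}}$ by Haagerup-tensor functoriality applied to the bounded module map $p:\H_{\B^{+}}\to \H_{\B^{+}}$, giving the forward bound $\|p\epsilon[\D,v(e)]\| \leq \|p\|\,\|[\D,v(e)]\|$. For the reverse estimate I use the Leibniz identity $[\D, v(e)] = [\D, p]v(e) + \gamma(p)[\D, v(e)]$ (valid since $v(e)=pv(e)$), which controls $\|[\D, v(e)]\|$ by $\|p[\D,v(e)]\|$ plus the remainder $\|[\D,p]v(e)\|$; the latter is bounded on $\E_\B$ by the argument from part 1. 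The resulting cb-equivalence of norms yields the cb-isomorphism.

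The main obstacle is the contractivity estimate in part 1 for unbounded $p$: one cannot use $\|p\xi\| \leq \|\xi\|$ universally, only on the range of $p$, and the column $\epsilon[\D, v(e)]$ need not lie in that range. The proof must therefore track the Leibniz rearrangement at every matrix amplification and verify that the ``commutator remainder'' $[\D, p]v(e)$ is controlled by $\|e\|_\E$ despite $[\D, p]$ itself being a priori unbounded. Making this uniform bookkeeping precise at the matrix level is the principal technical content of the argument.
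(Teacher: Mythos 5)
Your overall strategy is the paper's: part 1 rests on the factorisation $\pi_{\nabla}(e)=\diag(p,p)\cdot\pi_{\D}(v(e))$ together with contractivity of the projection, and your part 2 --- passing between $p\epsilon[\D,v(e)]$ and $\epsilon[\D,v(e)]$ by the perturbation $[\D_{\epsilon},p]_{\Gamma}v(e)$, which is bounded when $p\in\End^{*}_{\B^{+}}(\H_{\B^{+}})$ --- is exactly the paper's triangular-matrix argument written additively. Part 2 is correct as it stands.

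The gap is in your verification that $\iota$ lands in $\E^{\nabla}_{\B}$ at all, i.e.\ that for $e\in\E_{\B}$ the truncated columns $\xi_{n}:=\bigl(\sum_{1\leq |k|\leq n}\langle x_{i},\gamma(x_{k})\rangle[\D,\langle x_{k},e\rangle]\bigr)_{i\in\Z}$ converge in $\H_{\Omega^{1}_{\D}}$. You split the $i$-th entry by the Leibniz rule into $[\D,\langle x_{i},e\rangle]$ minus $\sum_{k}[\D,\langle x_{i},x_{k}\rangle]\langle x_{k},e\rangle$ and treat the two resulting columns separately. The first column does lie in $\H_{\Omega^{1}_{\D}}$, but for the second --- which is the column of $\epsilon[\D_{\epsilon},p]v(e)$ --- your Cauchy--Schwarz estimate against the row $(\langle x_{i},x_{k}\rangle)_{k}$ only yields convergence of each entry at fixed $i$, with a bound involving the $i$-th row norm, which is not uniform in $i$; it says nothing about square-summability over $i$, which is what membership in $\H_{\Omega^{1}_{\D}}$ means. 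Nor can it in general: if that column were controlled by $\|e\|_{\E}$ for every projective module, $[\D_{\epsilon},p]$ would effectively be bounded on $v(E)$ and $\E^{\nabla}_{\B}$ would never exceed $\E_{\B}$, which is false. You correctly flag this as ``the main obstacle'' but leave it unresolved. The repair is to not split. Write $\xi_{n}=pQ_{n}\omega$, where $\omega:=\epsilon[\D,v(e)]\in\H_{\Omega^{1}_{\D}}$ (this uses only $v(e)\in\H_{\B^{+}}$) and $Q_{n}$ is the coordinate truncation onto $1\leq|k|\leq n$. For any finitely supported column $\eta$ of $1$-forms, column finiteness and the frame identity $\sum_{i}\langle x_{k},x_{i}\rangle\langle x_{i},x_{k'}\rangle=\langle x_{k},x_{k'}\rangle$ give
\[
\|p\eta\|^{2}=\|\langle \eta,p\eta\rangle\|\leq \|\eta\|\,\|p\eta\|,\qquad\text{hence}\qquad \|p\eta\|\leq\|\eta\|.
\]
Applied to $\eta=(Q_{n}-Q_{m})\omega$ this makes $(\xi_{n})$ Cauchy, and applied to $Q_{n}\omega$ and its matrix amplifications it gives complete contractivity in one stroke; this is precisely the content of the paper's one-line estimate $\|\diag(p,p)M\|\leq\|M\|$. (Your density argument via the truncations $\chi_{n}e$ is the right idea, but note it needs $\langle x_{i},e\rangle\in\B$ for $e\in\E^{\nabla}_{\B}$, i.e.\ the extension of the inner product to $\E^{\nabla}_{\B}$.)
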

\begin{proof} 
The estimate
\[
\left\|\begin{pmatrix} v(e) & 0 \\ vv^{*}\e [\D, v(e) ] & \gamma(v(e)) \end{pmatrix}\right\|
=\left\|\begin{pmatrix} p& 0 \\ 0 & p \end{pmatrix}
\begin{pmatrix} v(e) & 0 \\ \epsilon [\D, v(e) ] & v(\gamma(e)) \end{pmatrix}\right\|\leq 
\left\|\begin{pmatrix} v(e) & 0 \\ \epsilon [\D, v(e) ] & \gamma(v(e)) \end{pmatrix}\right\|,
\]
proves 1). For 2), observe first that $\Gamma p=p\Gamma$, so  
$p\epsilon=p\gamma\epsilon\gamma=p\Gamma\gamma
=\Gamma p\gamma=\epsilon \gamma p \gamma=\epsilon \gamma(p)$ and 
\[ 
[\D_{\epsilon},p]_{\Gamma}v(e)=[\diag(\D),\epsilon p]_{\gamma}v(e)
=[\D,\epsilon v(e)]-\epsilon\gamma p \gamma [\D,v(e)]= [\D,\epsilon v(e)]- p\epsilon [\D,v(e)].
\] 
Thus we can write 
\[
\begin{pmatrix} 
v(e) & 0 \\ vv^{*}\e [\D, v(e)] & \gamma(v(e)) \end{pmatrix}
=\begin{pmatrix} 1 & 0 \\ -[\D_{\epsilon}, p]_{\Gamma} & 1\end{pmatrix}\begin{pmatrix} v(e) & 0 \\ \epsilon [\D,v(e)] & v(\gamma(e))
\end{pmatrix},
\]
and as $[\D_{\epsilon},p]_{\Gamma}$ is bounded, the matrix 
$\begin{pmatrix} 1 & 0 \\ -[\D_{\epsilon}, p]_{\Gamma} & 1\end{pmatrix}$ is invertible. The assertion follows.\end{proof}
\begin{prop}
\label{innerprod} 
The module $\E^{\nabla}_\B$ has the following properties.\\
1) The inner product $\E_\B\times \E_\B\rightarrow \B$ extends to an inner product 
$\E^{\nabla}_\B\times \E^{\nabla}_\B\rightarrow \B$. \\
2) For each $e\in \E^{\nabla}_\B$ the operator $e^{*}:\E^{\nabla}_\B\rightarrow \B$ 
defined by $f\mapsto \langle e,f\rangle$ is completely bounded and adjointable, 
with adjoint $b\mapsto eb$, and satisfies the estimate 
$\|e^{*}\|_{cb}\leq 2\|e\|_{\E^{\nabla}}$.
\end{prop}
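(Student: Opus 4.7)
My plan is to exploit the frame decomposition provided by Proposition \ref{columnfinite}, which realises $\E_\B$ as $p\mathcal{H}_{\B^+}$ (at the level of the $C^*$-completion) via the isometry $v(e)=(\langle x_i,e\rangle)_{i\in\Z}$. The inner product and its commutator with $\D$ both collapse to quantities that are bounded precisely by the two pieces of data defining $\|e\|_{\E^\nabla}$ in Equation \eqref{Ep}, namely $v(e)$ and $vv^*\epsilon[\D,v(e)]$. A Leibniz-type identity will split $[\D,\langle e,f\rangle]$ into two symmetric terms, each of which pairs a Haagerup-summable $\H_{\B^+}$-element with a Haagerup-summable $\H_{\Omega^1_\D}$-element, and it is from these two terms that the factor $2$ in the estimate emerges.

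For part 1), I would first observe that for $e,f\in\E_\B^\nabla$ the identity $v^*v=\Id$ formally gives
$$\langle e,f\rangle=\langle v(e),v(f)\rangle_{\H_{\B^+}}=\sum_{i\in\Z}\langle x_i,e\rangle^*\langle x_i,f\rangle,$$
a series convergent in $\B^+$ since $v(e),v(f)\in\H_{\B^+}$ and the inner product of $\H_{\B^+}$ lands in $\B^+$. To promote the value from $B$ to $\B$, I need $[\D,\langle e,f\rangle]$ to extend to a bounded operator. Applying the graded Leibniz rule termwise and resumming via $vv^*=p$ and the relation $pv(f)=v(f)$, one obtains an identity of the schematic form
$$[\D,\langle e,f\rangle]=\langle vv^*\epsilon[\D,v(e)],v(f)\rangle+\langle\gamma(v(e)),vv^*\epsilon[\D,v(f)]\rangle,$$
with graded signs handled as in the discussion surrounding Equation \eqref{differentgradings} and using the row-convergent expression in \eqref{secondconv}. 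The left factor in each term lies in $\H_{\Omega^1_\D}$ by the defining condition \eqref{enabla} of $\E^\nabla_\B$, the right factor in $\H_{\B^+}$ by construction, so Cauchy--Schwarz for the Haagerup tensor product gives boundedness. Hence $\pi_\D(\langle e,f\rangle)$ is a bounded operator and $\langle e,f\rangle\in\B$.

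For part 2), the adjointness of $e^*:f\mapsto\langle e,f\rangle$ is a direct consequence of the inner product axioms: for $b\in\B$,
$$\langle e^*(f),b\rangle_\B=\langle e,f\rangle^*b=\langle f,e\rangle b=\langle f,eb\rangle,$$
so the adjoint is $b\mapsto eb$. Complete boundedness with the constant $2$ follows by running the estimate of the previous paragraph entrywise on a matrix $(f_{ij})\in M_n(\E^\nabla_\B)$ and comparing matrix norms via $\pi_\D$. The $(1,1)$-block of $\pi_\D(\langle e,f_{ij}\rangle)$ contributes $\|e\|_{\E^\nabla}\|(f_{ij})\|_{M_n(\E^\nabla)}$ via a single Cauchy--Schwarz; the $(2,1)$-block contributes an additional $2\|e\|_{\E^\nabla}\|(f_{ij})\|_{M_n(\E^\nabla)}$ from the two Leibniz terms, each already of this size. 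Collecting the dominant term gives $\|e^*\|_{\cb}\leq 2\|e\|_{\E^\nabla}$.

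The principal obstacle is the careful bookkeeping of the graded signs and of the distinction between the two gradings $\Gamma$ and $\gamma$ on $\mathcal{H}_{\B^+}\hotimes_{\B^+}F$ recorded in Equation \eqref{differentgradings}, so that the two series in \eqref{enabla} and \eqref{secondconv} genuinely combine to give an equality of bounded operators in $\End^*_C(F)$ rather than merely a formal sum. Once this is pinned down, the two Cauchy--Schwarz estimates in the Haagerup norm are completely mechanical, and the constant $2$ is forced by the two-term structure of the Leibniz rule.
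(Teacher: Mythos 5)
Your argument is correct and is essentially the paper's own proof: the Leibniz rule applied to the partial sums $\sum_j\langle e,x_j\rangle\langle x_j,f\rangle$, followed by insertion of the frame identity so that each term pairs a $C^*$-bounded row against one of the columns guaranteed by \eqref{enabla} or \eqref{secondconv}, with closedness of $[\D,\cdot]$ and the two-term Leibniz structure yielding the constant $2$. One small slip: for $e\in\E^{\nabla}_\B$ the column $v(e)$ lies only in $\H_{B^{+}}$, not in $\H_{\B^{+}}$ (that would essentially force $e\in\E_\B$), so the initial series converges only in $B$ and the passage to $\B$ rests entirely on the commutator estimate, exactly as you then carry out.
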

\begin{proof}
For 1) we must show that for $e,f\in \E^{\nabla}_\B$ the inner product $\langle e,f\rangle\in\B$. 
Let $(x_i)_{i\in\Z}$ be a defining column finite frame for $\E_\B$.
By definition the series of column vectors
\[
\sum_{j\in\Z} (\langle x_{i},\gamma(x_{j})\rangle[\D,\langle x_{j},e\rangle])_{i\in\Z},
\]
is norm convergent for $e\in \E^{\nabla}_\B$ by \eqref{enabla}. Consider the partial sums
\[ 
[\D,\sum_{1\leq |j|\leq n} \langle e,x_{j}\rangle\langle x_{j},f\rangle ]  
=\sum_{1\leq |j|\leq n}\gamma(\langle e, x_{j}\rangle)[\D, \langle x_{j}, f\rangle ] 
+[\D,\langle e,x_{j}\rangle]\langle x_{j}, f\rangle.
\]
The two terms on the right hand side are convergent sums, 
since (using the pairing of row and column vectors)
\[
\begin{split}
\big\|\sum_{1\leq |j|\leq n}\gamma(\langle e, x_{j}\rangle)[\D, \langle x_{j}, f\rangle ] \big\|
&=\big\|\sum_{1\leq |j|\leq n}\sum_{i\in\Z} 
\langle \gamma(e), x_{i}\rangle\langle x_{i},\gamma(x_{j})\rangle[\D, \langle x_{j},f\rangle ] \big\| \\
&=\big\|\sum_{1\leq |j|\leq n}(\langle x_{i},\gamma(e)\rangle)_{i\in\Z}^{*}\cdot(\langle x_{i},\gamma(x_{j})\rangle 
[\D, \langle x_{j},f\rangle ])_{i\in\Z}\big\| \\
&\leq \|e\|_{E}\,\,\big\| \sum_{1\leq |j|\leq n}(\langle x_{i},\gamma(x_{j})\rangle [\D, \langle x_{j},f\rangle ] )_{i\in\Z}\big\|,
\end{split}
\]
and similarly for the other term. Since 
$\sum_{i\in \Z}\langle e,x_{i}\rangle\langle x_{i},f\rangle$ converges to 
$\langle e,f\rangle$ and $[\D,\cdot ]$ is a closed derivation, it follows that 
$\langle e,f\rangle \in\B$.  Therefore we can write
\[
\begin{split} 
\begin{pmatrix} \langle e,f\rangle & 0 \\ [\D, \langle e, f \rangle] &\gamma \langle e,f\rangle\end{pmatrix} 
=\sum_{i\in\Z} &\begin{pmatrix} \langle e,x_{i}\rangle & 0 \\ 0& \langle \gamma(e),x_{i}\rangle\end{pmatrix}\begin{pmatrix} \langle x_{i},f\rangle & 0 \\ \sum_{j\in\Z} \langle x_{i},\gamma(x_{j})\rangle [\D,\langle x_{i}, f\rangle ] & \langle x_{i}, \gamma(f)\rangle\end{pmatrix} \\ 
&+\begin{pmatrix} 0 & 0 \\ \sum_{j\in\Z} [\D, \langle e,x_{j}\rangle ]\langle x_{j},\gamma(x_{i})\rangle & 0\end{pmatrix}\begin{pmatrix}  \langle \gamma(x_{i}),f\rangle & 0\\ 0 & \langle x_{i},f\rangle \end{pmatrix}. 
\end{split}
\]
These series are convergent in view of  \eqref{enabla} and \eqref{secondconv} and the equalities also hold when $f$ is a matrix of elements of $\E^{\nabla}_\B$ yielding the estimate 
\[
\|e^{*}(f)\|_{\B}\leq (\|e\|_{E}\|\|f\|_{\E^{\nabla}}
+ \|e\|_{\E^{\nabla}}\|f\|_{E})\leq 2\|e\|_{\E^{\nabla}}\|f\|_{\E^{\nabla}}, 
\]
whence $\|e^{*}\|_{cb}\leq 2\|e\|_{\E^{\nabla}}$. 
\end{proof}
\subsection{Complete projective modules}
We now define several algebras of operators on the modules 
$\E_\B$ and $\E^{\nabla}_\B$. Recall that $\E_\B\subset \E^{\nabla}_\B$ 
is a proper submodule in general by Lemma \ref{nablaincl}. As for $C^{*}$-modules, 
we denote the space of finite rank operators by $\Fin_{\mathcal{B}}(\mathcal{E})$. We give
$\Fin_{\mathcal{B}}(\mathcal{E})$
the operator algebra structure determined by regarding $\Fin_{\mathcal{B}}(\mathcal{E})$
as an algebra of operators on $\E^\nabla_\B$:
\begin{equation}
\label{compactrep}
\pi_{\nabla}(K):=\begin{pmatrix} vKv^{*} & 0 \\ p[\D_{\epsilon},vKv^{*}]p & vKv^{*}\end{pmatrix},
\end{equation}
where $\D$ comes from the defining Kasparov module $(\B,F_C,\D)$ for $\B$ and $p$ comes from a
defining column finite frame $(x_i)_{i\in\Z}$. 
For $e,f\in\E_\B$, consider the column, respectively row, vectors 
\begin{equation}
v|e\rangle =v(e)=(\langle x_{i},e\rangle)_{i\in\Z},\quad 
\langle f | v^{*}=v(f)^{*}=(\langle f,x_{i}\rangle)_{i\in\Z}^{t},
\label{eq:row-column}
\end{equation}
which are elements of $\H_{\B^{+}}$ and $\H_{\B^{+}}^t$ respectively. Thus the rank one 
operator $|e\rangle\langle f|$ is such that 
$[\D_{\epsilon},v(|e\rangle\langle f|)v^{*}]$  is a bounded matrix. 
Therefore the representation \eqref{compactrep} is well-defined on 
$\Fin_{\B}(\E)$. We emphasise that we do 
\emph{not} consider finite rank operators associated to vectors coming from $\E^{\nabla}_\B$ here.
The
ideal of \emph{compact operators} $\kK(\E^{\nabla})$ is defined to be the closure of 
$\Fin_{\B}(\E)$ in the operator space norm $\Vert\pi_\nabla(\cdot)\Vert_\infty$. We now address the 
issue of approximate units for $\kK_{\B}(\E^{\nabla})$.
\begin{lemma}
\label{lem: cliffconv} 
Let $(\B,F_C,\D)$ be the defining Kasparov module for $\B$, and $\E_\B$ a projective 
operator module. 
For a column finite frame
approximate unit $\chi_{n}$ associated to the defining frame $(x_i)_{i\in\Z}$, any $K\in\Fin_{\B}(\E)$ 
satisfies 
\[
vKv^{*}\Dom\D_{\epsilon}\subset \Dom\D_{\epsilon},
\] 
and $[\D_{\epsilon},vKv^{*}]$ extends to a bounded 
adjointable operator in $\End^{*}_{C}(\H_{B}\hotimes_{B}F)$. 
Moreover 
\begin{align}
\label{cliffconv}
&\lim_{n\rightarrow\infty} v\chi_{n}v^{*}[\D_{\epsilon},vKv^{*}]
=vv^{*}[\D_{\epsilon},vKv^{*}],\nonumber\\
&\lim_{n\rightarrow\infty} [\D_{\epsilon},vKv^{*}]v\chi_{n}v^{*}=[\D_{\epsilon},vKv^{*}]vv^{*},
\end{align} 
in operator norm.
\end{lemma}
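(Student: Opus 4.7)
The plan is to use linearity in $K$ to reduce to the rank-one case $K = |e\rangle\langle f|$ with $e, f \in \E_\B$, so that $vKv^{*} = v(e)v(f)^{*}$ is a ``rank-one'' adjointable operator on $\H_{\B^{+}}\hotimes_{\B^{+}}F$ whose column $v(e)$ and row $v(f)^{*}$ both factor through the (possibly unbounded) projection $p = vv^{*}$. Domain preservation I would handle by first showing that $v(f)^{*}$ maps $\Dom\D_{\epsilon}$ into $\Dom\D$: for $z=(z_{j})\in\Dom\D_{\epsilon}$, approximate $v(f)^{*}z$ by the truncations $v(\chi_{n}f)^{*}z\in\Dom\D$ and apply the graded Leibniz rule
\[
\D\, v(\chi_{n}f)^{*}z=\sum_{|j|\leq n}[\D,\langle f,x_{j}\rangle]z_{j}+\sum_{|j|\leq n}\gamma(\langle f,x_{j}\rangle)\D z_{j},
\]
so that the second sum converges to $\gamma v(f)^{*}\diag(\D)z$ by boundedness of $v(f)$ in $\H_{\B^{+}}$, while the first sum converges in $F$ by the $\E^{\nabla}_{\B}$-property of $f$ exactly as in equation \eqref{secondconv}. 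Once $v(f)^{*}z\in\Dom\D$, the column finiteness of the frame together with the bounded commutators $[\D,\langle x_{i},e\rangle]\in\End_{C}^{*}(F)$ forces $v(e)\cdot v(f)^{*}z\in\Dom\D_{\epsilon}$.

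Next I would compute the commutator via the graded Leibniz rule,
\[
[\D_{\epsilon},v(e)v(f)^{*}]_{\Gamma}=\epsilon[\D,v(e)]v(f)^{*}+\epsilon\gamma(v(e))[\D,v(f)^{*}]_{\Gamma},
\]
and argue boundedness. Although neither of $[\D,v(e)]$ nor $[\D,v(f)^{*}]$ need be bounded on its own when $p$ is unbounded, the composition with the bounded columns $v(f)$ and $v(e)$ confines the image of each term to the range of $p$, on which the $\E^{\nabla}_{\B}$-property provides the needed bound. Concretely, the componentwise computation in Proposition \ref{innerprod} showing $\langle e,f\rangle\in\B$ with bounded $[\D,\langle e,f\rangle]$ adapts to yield an operator norm estimate of the form $\|[\D_{\epsilon},vKv^{*}]\|\leq 2\|e\|_{\E^{\nabla}}\|f\|_{\E^{\nabla}}$. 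Adjointability then follows from $(vKv^{*})^{*}=vK^{*}v^{*}$ with $K^{*}=|f\rangle\langle e|\in\Fin_{\B}(\E)$.

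Finally, for the operator-norm convergences I would observe that $v\chi_{n}v^{*}=\sum_{1\leq|i|\leq n}v(x_{i})v(x_{i})^{*}$ is uniformly bounded and satisfies $v\chi_{n}v^{*}w\to pw$ in norm for every $w$ in the range of $p$, since $v^{*}w\in\E_\B$ and $\chi_{n}v^{*}w\to v^{*}w$ in $E_{B}$. Because $[\D_{\epsilon},vKv^{*}]$ is finite-rank with image contained in $p(\H_{\B^{+}}\hotimes F)$ by the previous paragraph, the difference $v\chi_{n}v^{*}[\D_{\epsilon},vKv^{*}]-vv^{*}[\D_{\epsilon},vKv^{*}]$ collapses to the action of $v\chi_{n}v^{*}-p$ on finitely many range columns, and hence vanishes in operator norm. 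The right-multiplication statement is obtained by the adjoint argument. The hardest step is the boundedness assertion for $[\D_{\epsilon},vKv^{*}]$ in the absence of bounded $p$: the Leibniz pieces are individually unbounded, and it is only through $pv(e)=v(e)$, $pv(f)=v(f)$ and the column-finiteness of the defining frame that the necessary cancellation materialises.
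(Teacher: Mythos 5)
Your reduction to $K=|e\rangle\langle f|$ and your treatment of domain preservation and boundedness are essentially sound, and in fact the paper disposes of those points more quickly: since $e,f\in\E_\B$ the matrix $vKv^{*}=(\langle x_{i},e\rangle\langle f,x_{j}\rangle)_{ij}$ lies in $\kK\hotimes\B$, which is automatically contained in the domain of the derivation $[\D_{\epsilon},\cdot\,]$; no delicate cancellation between Leibniz terms is needed, only the square-summability of the columns and rows in the Lipschitz representation, i.e.\ $v(e),v(f)\in\H_{\B^{+}}$.

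The genuine gap is in your final step. You assert that $[\D_{\epsilon},vKv^{*}]$ has image contained in $p(\H_{\B^{+}}\hotimes F)$ and deduce the norm convergence from $v\chi_{n}v^{*}\to p$ on range vectors. This premise is false for the Leibniz term $\epsilon[\D,v(e)]v(f)^{*}$: its image consists of vectors of the form $([\D,\langle x_{i},e\rangle]w)_{i\in\Z}$, and these are \emph{not} in the range of $p=(\langle x_{i},x_{j}\rangle)_{ij}$ in general --- if they were, $p$ would commute with the derivation on such columns and the whole distinction between $\E_\B$ and $\E^{\nabla}_\B$ would evaporate. What actually happens when you multiply by $v\chi_{n}v^{*}$ is that you produce precisely the truncated sums $\bigl(\sum_{1\leq|k|\leq n}\langle x_{i},\gamma(x_{k})\rangle[\D,\langle x_{k},e\rangle]\langle f,x_{j}\rangle\bigr)_{ij}$, and their norm convergence as $n\to\infty$ is exactly the defining condition \eqref{enabla} for $e\in\E^{\nabla}_\B$, supplied by the inclusion $\E_\B\subset\E^{\nabla}_\B$ of Lemma \ref{nablaincl}. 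You invoke the $\E^{\nabla}$-property earlier (where only boundedness of the rows and columns of commutators, i.e.\ $\E_\B$-membership, is actually needed) but not here, where it is indispensable; the two conditions are genuinely different, and conflating them hides the entire content of the convergence statement \eqref{cliffconv}. The other Leibniz term, whose image really does lie in the range of $v(e)$, converges by the argument you give (using that $f\in\E_\B$ makes the row of commutators bounded), so the repair is localised to the first term.
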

\begin{proof}It suffices to prove this for rank one operators 
$K=|e\rangle\langle f|$. In that case, Equation \eqref{eq:row-column}
shows that  $vKv^{*}$ is given by the infinite matrix
\[
(\langle x_{i},e\rangle\langle f,x_{j}\rangle)_{ij}\in \kK\hotimes\B,
\]
and thus is in the domain of the derivation $[\D_{\epsilon}, \cdot]$. 
The norm limits \eqref{cliffconv} are given by
\[
\begin{split}\lim_{n\rightarrow\infty}v\chi_{n}v^{*}[\D_{\epsilon},&vKv^{*}]
=\lim_{n\rightarrow\infty}\left( \sum_{1\leq |k|\leq n}\langle x_{i}, \gamma(x_{k})\rangle[\D,\langle x_{k},e\rangle\langle f, x_{j}\rangle ]\right)_{ij} \\
&=\lim_{n\rightarrow\infty}\left(\sum_{1\leq |k|\leq n}\langle x_{i}, \gamma(x_{k})\rangle\gamma\langle x_{k},e\rangle[\D,\langle f, x_{j}\rangle ]+\langle x_{i}, \gamma(x_{k})\rangle[\D,\langle x_{k},e\rangle ]\langle f,x_{j}\rangle\right)_{ij}\\
&=\big(\langle x_{i},\gamma(e)\rangle[\D,\langle f, x_{j}\rangle ]\big)_{ij} 
+\lim_{n\rightarrow\infty}\left(\sum_{1\leq |k|\leq n}\langle x_{i}, \gamma(x_{k})\rangle[\D,\langle x_{k},e\rangle ]\langle f,x_{j}\rangle\right)_{ij},
\end{split} 
\]
where the first term is a well-defined infinite matrix because $f\in\E$ and the second term is a norm convergent limit because $e\in\E\subset \E^{\nabla}$. 
The other limit is handled verbatim.
\end{proof}
Given a frame approximate unit $(\chi_n)$ for $\kK(E_B)$, denote by
$\mathscr{C}(\chi_{n})$ the convex hull of $(\chi_n)$. This is the 
algebraic convex hull, and {\em not} the closed convex hull.
\begin{defn}
\label{completemodule}
Let $(\B,F_C,\D)$ be the defining Kasparov module for $\B$, and $\E_\B$ a projective operator module with column finite frame approximate unit $\chi_{n}$. 
The module $\E_{\B}$ is a \emph{complete projective operator module} if there is 
an approximate unit $(u_n)\subset\mathscr{C}(\chi_n)$ for $\kK(E_B)$ such that the 
sequence of operators 
$p[\D_{\epsilon},vu_{n}v^{*}]p: \H_{B^{+}}\hotimes_{B^{+}}F\rightarrow \H_{B^{+}}\hotimes_{B^{+}}F$ 
converges to $0$ strictly. 
\end{defn}
This definition should be viewed in the light of property 2) of 
Proposition \ref{cor:bdd-strong} as well as Corollary \ref{cor:symmetric-spec}.
\begin{prop}
\label{cptcbau} 
Let $\E_\B$ be a complete projective module over $\B$. 
Then $\kK(\E^{\nabla})$ has a bounded approximate unit consisting of elements of $\Fin_{\B}(\E)$.
\end{prop}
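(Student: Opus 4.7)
The plan is to show that the sequence $(u_n)$ supplied by Definition \ref{completemodule} itself serves as the desired bounded approximate unit. Since each $u_n$ is a convex combination of the contractive $\chi_m$'s, it automatically lies in $\Fin_\B(\E)$ and satisfies $\|u_n\|_{\End^*_B(E)} \leq 1$. By density of $\Fin_\B(\E)$ in $\kK(\E^\nabla)$ combined with uniform boundedness of $(\pi_\nabla(u_n))$, it suffices to verify (i) that $\sup_n\|\pi_\nabla(u_n)\|_\infty < \infty$, and (ii) that for each $K \in \Fin_\B(\E)$ we have $\|\pi_\nabla(u_n K - K)\|_\infty \to 0$ and $\|\pi_\nabla(K u_n - K)\|_\infty \to 0$.

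For the boundedness claim (i), set $A_n := v u_n v^*$, so $\|A_n\|_\infty \leq 1$. The uniform boundedness principle applied to the strictly convergent sequence $p[\D_\epsilon, A_n]p \to 0$ postulated in Definition \ref{completemodule} gives $\sup_n \|p[\D_\epsilon, A_n]p\|_\infty < \infty$, and inspection of the representation \eqref{compactrep} yields the desired bound on $\|\pi_\nabla(u_n)\|_\infty$.

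For (ii), fix $K \in \Fin_\B(\E)$ and use $pv = v$ and $v^*v = \Id_\E$ to compute
\[
\pi_\nabla(u_n K - K) = \begin{pmatrix} (A_n - p)\, vKv^* & 0 \\ p[\D_\epsilon, (A_n - p)\, vKv^*]_\Gamma p & (A_n - p)\, vKv^* \end{pmatrix}.
\]
The diagonal entry tends to $0$ in norm because $vKv^*$ is compact (as $K$ is of finite rank), $A_n \to p$ strictly on $\H_{B^+}$ (since $u_n e \to e$ for every $e \in E$), and strict convergence times a compact operator yields norm convergence. For the off-diagonal entry, I apply the graded Leibniz rule $[\D_\epsilon, XY]_\Gamma = [\D_\epsilon, X]_\Gamma Y + \Gamma(X)[\D_\epsilon, Y]_\Gamma$, use that $A_n$ and $p$ are even, and invoke the identity $p[\D_\epsilon, p]_\Gamma p = 0$ which follows from $p = p^2$, to obtain
\[
p[\D_\epsilon, (A_n - p)\, vKv^*]_\Gamma p = p[\D_\epsilon, A_n]_\Gamma p \cdot vKv^* + (A_n - p)\,[\D_\epsilon, vKv^*]_\Gamma p.
\]
Both terms now vanish in norm: the first because $p[\D_\epsilon, A_n]p \to 0$ strictly while $vKv^*$ is compact, and the second because $A_n - p \to 0$ strictly while $[\D_\epsilon, vKv^*]_\Gamma p$ is compact by Lemma \ref{lem: cliffconv}, which realises it as a norm limit of the finite-rank operators $[\D_\epsilon, vKv^*]_\Gamma\, v\chi_n v^*$. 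The mirror-image computation $v(Ku_n - K) v^* = vKv^*(A_n - p)$, together with the companion compactness statement from Lemma \ref{lem: cliffconv}, handles $Ku_n - K$ in the same way. The one delicate point is the interpretation of $p[\D_\epsilon, p]_\Gamma p = 0$ when $p$ is unbounded, but this is applied only after composition with the compact $vKv^*$, which factors through $\Dom p$ on both sides, so the manipulation remains rigorous.
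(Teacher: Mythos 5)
Your argument is correct and follows essentially the same route as the paper's proof: the approximate unit $(u_n)\subset\mathscr{C}(\chi_n)$ from Definition \ref{completemodule}, uniform boundedness of $\pi_\nabla(u_n)$ via the Banach--Steinhaus principle, the same two-term Leibniz decomposition $p[\D_{\epsilon},vu_{n}v^{*}]p\cdot vKv^{*}+(vu_nv^*-p)[\D_{\epsilon},vKv^{*}]p$, and Lemma \ref{lem: cliffconv} for the term in which the derivative hits $vKv^*$, followed by density of $\Fin_\B(\E)$ plus uniform boundedness. The one stylistic difference is that the paper never differentiates the (possibly unbounded) projection $p$: it expands $p[\D_{\epsilon},vu_{n}Kv^{*}]p$ directly by the Leibniz rule applied to the product $vu_{n}v^{*}\cdot vKv^{*}$ --- both factors bounded with bounded commutators by Lemma \ref{lem: cliffconv} --- and lands on exactly your two terms, so the delicate identity $p[\D_{\epsilon},p]_{\Gamma}p=0$ that you invoke (and then have to defend for unbounded $p$) can be avoided altogether.
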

\begin{proof} 
Let $\chi_n=\sum_{1\leq |i|\leq n} |x_{i}\rangle\langle x_{i}|$ be the defining
column finite frame approximate unit. Consider an 
approximate unit $(u_{n})\in \mathscr{C}(\chi_n)$ as in Definition \ref{completemodule}. 
It follows from the uniform boundedness 
principle that $\sup_{n}\|\pi_{\nabla}(u_{n})\|<\infty$: this follows because for each 
$x\in \H_{B^{+}}\hotimes_{B^{+}}F$ the sequence $p[\D,vu_{n}v^{*}]px$ 
converges, so that
\[
\sup_{n}\| p[\D_{\epsilon},vu_{n}v^{*}]px\|<\infty,\quad\textnormal{and therefore}\quad
\sup_{n}\|p[\D_{\epsilon}, vu_{n}v^{*}]p\|<\infty.
\] 
For $K\in \Fin_{\B}(\E^{\nabla})$ it then follows that
\[
p[\D_{\epsilon},vu_{n}Kv^{*}]p=p[\D_{\epsilon},vu_{n}v^{*}]vKv^{*}
+vu_{n}v^{*}[\D_{\epsilon},vKv^{*}]p\rightarrow p[\D_{\epsilon},vKv^{*}    ]p,
\]
by Lemma \ref{lem: cliffconv}. Now since $\Fin_{\B}(\E^{\nabla})\subset \kK_{\B}(\E^{\nabla})$ 
is dense and $\pi_{\nabla}(u_{n})$ is uniformly bounded, it follows that 
$\pi_{\nabla}(u_{n}K)\rightarrow \pi_{\nabla}(K)$ and 
$\pi_{\nabla}(Ku_{n})\rightarrow \pi_{\nabla}(K)$ for all $K\in\kK_{\B}(\E^{\nabla})$.
\end{proof}
We now present some sufficient conditions for a projective operator module to be complete.  
\begin{prop} 
For a projective operator module $\E_\B=p\Dom p$ with defining column finite frame $(x_i)$
and corresponding approximate unit $(\chi_n)$, each of the 
following conditions imply completeness of the module $\E_\B$:\\ 
1) there is an approximate unit $(u_n)\in \mathscr{C}(\chi_n)$ for $\kK(E_B)$ such that
the operators $p[\D_{\epsilon},u_{n}]p$ converge to $0$ in norm on the $C^{*}$-module 
$\H_{B^{+}}\hotimes_{B^+}F_C$;\\
2) the projection $p$ is a countable direct sum of finite even projections $p_{k}\in M_{2m_{k}}(\B^{+})$;\\
3) the projection $p$ is an element of $\End^{*}_{\B^{+}}(\H_{\B^{+}})$;
\end{prop}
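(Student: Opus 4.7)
The plan is to verify each condition separately, but all three arguments rest on the same elementary identity: for any \emph{even} projection $q$ in the appropriate algebra with $[\D_\epsilon, q]_\Gamma$ bounded, one has $q[\D_\epsilon,q]_\Gamma q=0$, because $\Gamma q\Gamma=q$ turns the graded commutator into the ordinary one and $q\D_\epsilon q^2 - q^2\D_\epsilon q=0$.

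For condition 1 there is essentially nothing to prove: identifying $u_n\in\Fin_\B(\E)$ with its image $vu_nv^*\in\End^*_{\B^+}(\H_{\B^+})$, the hypothesis asserts norm-convergence of $p[\D_\epsilon,vu_nv^*]p$ to zero, which is stronger than the strict convergence required in Definition \ref{completemodule}.

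For condition 2 I would choose the frame $(x_i)_{i\in\Z}$ to respect the block decomposition: for each $k$, allocate $2m_k$ consecutive indices and let the corresponding frame vectors be the images under $p_k$ of the standard basis of $(\B^+)^{2m_k}$, so that $v\bigl(\sum_{i\in\textup{block }k}|x_i\rangle\langle x_i|\bigr)v^*=p_k$. Setting $n_N:=m_1+\cdots+m_N$, the subsequence $u_N:=\chi_{n_N}$ lies trivially in $\mathscr{C}(\chi_n)$, is still an approximate unit for $\kK(E_B)$, and satisfies $vu_Nv^*=q_N:=p_1\oplus\cdots\oplus p_N\oplus 0\oplus\cdots$. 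Since $\D_\epsilon=\epsilon\diag(\D)$ and $q_N$ are both block-diagonal, the graded commutator acts blockwise, and conjugating by $p=\diag(p_k)$ gives the block-diagonal operator with $k$-th entry $p_k[\D,p_k]_\gamma p_k=0$. Hence $p[\D_\epsilon,vu_Nv^*]p=0$ identically, so the condition of Definition \ref{completemodule} is satisfied a fortiori.

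For condition 3 I would take $u_n:=\chi_n$ itself. Since $x_i=pe_i$ (in the identification of $\E_\B$ with $p\Dom p$), one has $v(x_i)=pe_i$ and therefore $vu_nv^*=pq_np$, where $q_n:=\sum_{1\le|i|\le n}|e_i\rangle\langle e_i|$ is the standard even projection on $\H_{\B^+}$. Because $q_n$ commutes with the block-diagonal $\D_\epsilon$, the graded Leibniz rule collapses to
\[
[\D_\epsilon,pq_np]_\Gamma=[\D_\epsilon,p]_\Gamma\,q_n\,p+p\,q_n\,[\D_\epsilon,p]_\Gamma.
\]
Boundedness of $[\D_\epsilon,p]_\Gamma$ is automatic, being already exploited in the proof of Lemma \ref{nablaincl}. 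Sandwiching with $p$ and using that $q_n\to 1$ strictly together with the even-projection identity $p[\D_\epsilon,p]_\Gamma p=0$ shows that $p[\D_\epsilon,pq_np]_\Gamma p$ converges strictly to $p[\D_\epsilon,p]_\Gamma p+p[\D_\epsilon,p]_\Gamma p=0$. The only real bookkeeping obstacle across all three cases is tracking the $v\cdot v^*$ identification between $\Fin_\B(\E)$ and $\End^*_{\B^+}(\H_{\B^+})$ and keeping the graded structures consistent; once that is set up, each argument is completed by the same even-projection cancellation.
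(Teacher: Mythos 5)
Your proposal is correct and follows essentially the same route as the paper: condition 1 is immediate since norm convergence implies strict convergence, condition 2 reduces to the blockwise identity $p_k[\D,p_k]p_k=0$ for an approximate unit taken as a subsequence of the block-adapted frame approximate unit, and condition 3 uses $\chi_n=pq_np$, the vanishing of $[\D_\epsilon,q_n]$, the boundedness of $[\D_\epsilon,p]$, the strong convergence $q_n\to 1$, and the even-projection cancellation $p[\D_\epsilon,p]p=0$. Your Leibniz expansion in case 3 is in fact slightly more careful than the paper's (which silently drops one of the two symmetric terms, harmlessly since both tend strictly to $p[\D_\epsilon,p]p=0$), so no gap remains.
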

\begin{proof} 
Because norm convergence implies strict convergence, 
1) implies that the sequence $p[\D_{\epsilon},u_{n}]p$ 
converges strictly on $\H_{B^{+}}$, hence the module $\E_\B$ is 
complete in the sense of Definition \ref{completemodule}. Thus, to prove 2), 
it is enough to show that $2)\Rightarrow 1)$.

$2)\Rightarrow 1)$ For a countable family of finite projections 
$p_{i}$ with $[\D_{\epsilon}, p_{i}]$ bounded, for each $i$ we have 
$p_{i}[\D_{\epsilon},p_{i}]p_{i}=0$ and 
\[
p_{k}=\sum_{1\leq |i|\leq m_{k}}| pe_{i}^{k}\rangle \langle pe_{i}^{k}|.
\]
By identifying the direct sum $\bigoplus_{k=0}^{\infty} (\B^+)^{2m_{k}}$ with $\H_{\B^{+}}$ and setting
$p=\oplus_i^\infty p_i$, 
we can define an approximate unit $u_n=\oplus_{i=1}^np_i$. The explicit form of $p_i$ given above
shows that $u_n$ is a subsequence of the approximate unit 
associated to the frame $(pe^i_k)$, and so in the convex hull. Then 
$p[\D,u_n]p=\sum_{i=1}^np[\D,p_i]p=\sum_{i=1}^np_i[\D,p_i]p_i=0$.

To show that 3) implies completeness, observe that 
$p\in\End^{*}_{\B^{+}}(\H_{\B^{+}})$ if and only if $p\otimes \Id_F$ 
preserves the domain of $\D_{\epsilon}$ and $[\D_{\epsilon},p\ox \Id_F]$ 
is a bounded operator. Let $q_{n}$, $i\in\Z$ denote the 
projection onto the submodule generated by the basis vectors 
$e_{i}$, $1\leq |i|\leq n$, let $x_i=pe_i$, and $\chi_n=\sum_{i=1}^n|x_i\rangle\langle x_i|$.

Now {\em on the image of $p$}, a short calculation shows that $\chi_ny=\chi_npy=pq_npy$.  
Then 
$$
p[\D_{\epsilon},\chi_{n}]p=p[\D_{\epsilon},pq_{n}]p=p[\D_{\epsilon},p]q_{n}p.
$$
The projections 
$q_{n}$ converge strongly to the identity on $\H_{B^{+}}$, and $[\D_\epsilon,p]$ is bounded. 
Therefore it follows that for any $x\in\H_{B^{+}}$,
$p[\D_{\epsilon},\chi_{n}]px=p[\D_{\epsilon},p]q_{n}px\rightarrow p[\D_{\epsilon},p]px=0,$
and so Definition \ref{completemodule} is satisfied. 
\end{proof}
\subsection{Self-adjointness and regularity}
We now come to the study of self-adjointness and regularity of induced operators 
$1\otimes_{\nabla}\D$ on tensor product modules. The setting for this construction 
is as follows. Let $(\B,F_C,\D)$ be the unbounded Kasparov module defining $\B$, 
which we recall, Remark \ref{essrmk},
is essential so that $[BF_{C}]=F_{C}$.
Given a projective module $\E_\B\subset\E^{\nabla}_\B$ with grading $\gamma$ 
one obtains an odd symmetric operator 
\begin{equation}
\label{indop}
1\otimes_{\nabla}\D: \E\otimes_{\B} \Dom \D\rightarrow E\hotimes_{B}F,
\end{equation}
via the usual formula 
$1\otimes_{\nabla}\D(e\otimes f):= \gamma(e)\otimes \D f + \nabla_{\D}(e)f$. 
We extend $1\otimes_{\nabla} \D$ to its minimal closure.

In \cite{BMS} it was shown that this operator is self-adjoint and regular in the case 
where $p$ is a direct sum of bounded projection operators. In \cite{Kaadabsorption} 
it was shown that there exist unbounded projections for which the resulting operator 
is not self-adjoint. The counterexample uses the half-line, a noncomplete metric 
space. In this section we show that for complete projective modules the induced 
operator is self-adjoint and regular, by an argument similar to that for the 
Dirac operator on a complete manifold.

Write $\partial:=v(1\otimes_{\nabla}\D) v^{*}$ 
with domain and definition
\begin{equation}
\label{domain}
\Dom \partial=v\Dom (1\otimes_{\nabla}\D)\oplus (1-p)\H_{B^{+}}\hotimes_{B^+} F,\quad 
\partial (vy+(1-p)z)=v^{*}(1\otimes_{\nabla}\D)y.
\end{equation}
We have $\G(\partial)\subset (\H_{B^{+}}\hotimes_{B^+}F)^{\oplus 2}$, 
and the graph of the adjoint operator $\partial^{*}$ is 
given by $\G(\partial^{*}):=\flip\G(\partial)^{\perp}$, where we recall that  
$\flip=\begin{pmatrix} 0 & -1 \\ 1 & 0\end{pmatrix}$. 
\begin{lemma} 
The operator $1\otimes_{\nabla}\D$ is self-adjoint and regular on 
$\Dom (1\otimes_{\nabla}\D)$ if and only if the operator 
$\partial$ is self-adjoint and regular on $\Dom\partial$.
\end{lemma}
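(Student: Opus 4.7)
The strategy is to exhibit $\partial$ as the orthogonal direct sum of a unitary conjugate of $1\otimes_{\nabla}\D$ with a zero operator, so that self-adjointness and regularity transfer between the two.

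First I would observe that the stabilisation isometry $v:\E_\B\to\H_{\B^+}$ satisfies $v^*v=\Id_\E$ and $vv^*=p$, and induces an isometric $C^*$-module embedding
\[
V:=v\otimes 1:\bar{\E}\hotimes_{B}F\longrightarrow \H_{B^+}\hotimes_{B^+}F,
\]
where $\bar{\E}$ is the $C^*$-completion of $\E_\B$. Because $v(\E_\B)\subset p\,\Dom p$ and $p$ is an idempotent with $p=vv^*$, the range of $V$ and the submodule $(1-p)\H_{B^+}\hotimes_{B^+}F$ are orthogonal in the $C^*$-module sense, so that
\[
\H_{B^+}\hotimes_{B^+}F\;=\;\range V\;\oplus\;(1-p)\bigl(\H_{B^+}\hotimes_{B^+}F\bigr)
\]
is an orthogonal decomposition of Hilbert $C^*$-modules over $C$, and $V$ becomes a unitary isomorphism onto its range.

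Next, using the definition \eqref{domain} of $\Dom\partial$, I would reinterpret $\partial$ through this decomposition: on $V(\Dom(1\otimes_\nabla\D))$, the relation $\partial=v(1\otimes_\nabla\D)v^*$ gives $\partial\circ V=V\circ(1\otimes_\nabla\D)$, so the first block is the unitary conjugate $V(1\otimes_\nabla\D)V^*$; on $(1-p)\H_{B^+}\hotimes_{B^+}F$ the operator acts as zero. Thus $\partial$ is precisely the direct sum
\[
\partial\;\cong\;V(1\otimes_{\nabla}\D)V^{*}\;\oplus\;0
\]
on $\range V\oplus(1-p)(\H_{B^+}\hotimes_{B^+}F)$.

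Self-adjointness and regularity of closed operators on Hilbert $C^*$-modules are preserved by unitary conjugation and by orthogonal direct sums, and the zero operator on any $C^*$-module is trivially self-adjoint and regular. Since closability of $\partial$ matches that of $1\otimes_\nabla\D$ under this identification, the graphs satisfy $\mathcal{G}(\partial)=(V\oplus V)\mathcal{G}(1\otimes_\nabla\D)\oplus\mathcal{G}(0)$, and the same for the adjoints, which gives the equivalence of the two properties in both directions. The only technical point I expect to need care is the verification that $\range V$ and $(1-p)(\H_{B^+}\hotimes_{B^+}F)$ constitute an orthogonally complemented pair despite $p$ being unbounded; this is handled by using $p^2=p$ on $\Dom p$ together with the density of $v(\E_\B)$ in $\range V$ to pass to the $C^*$-completion.
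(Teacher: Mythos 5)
Your argument is correct and is essentially the paper's: both rest on the orthogonal decomposition $\H_{B^{+}}\hotimes_{B^{+}}F=\range(v\otimes 1)\oplus(1-p)(\H_{B^{+}}\hotimes_{B^{+}}F)$ — where $p=vv^{*}$ is a bounded adjointable projection at the $C^{*}$-module level, so the technical worry you flag about unboundedness of $p$ evaporates — together with the identification, immediate from \eqref{domain}, of $\partial$ as $v(1\otimes_{\nabla}\D)v^{*}$ on the first summand and $0$ on the second. The paper merely phrases the conclusion via the dense-range criterion for $T\pm i$ (computing $(\partial\pm i)(vy+(1-p)z)=v(1\otimes_{\nabla}\D\pm i)y+i(1-p)z$) rather than invoking preservation of self-adjointness and regularity under unitary conjugation and orthogonal direct sums, which for regularity amounts to the same thing.
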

\begin{proof} 
Recall that a closed, densely-defined symmetric operator 
$T:\Dom T\to E$ is self-adjoint and regular if and only if the operators
$T\pm i:\Dom T\to E$ have dense range, 
cf. \cite[Lemma 9.7, 9.8]{Lance} and \cite[Proposition 4.1]{KaLe2}.

Suppose that $1\otimes_{\nabla} \D \pm i$ have dense range. 
Then, for $x=vy+(1-p)z$ with $y\in \Dom (1\otimes_{\nabla}\D)$ we have
\[
(\partial\pm i)x=v(1\otimes_{\nabla}\D \pm i)y+ i(1-p)z,\quad (1\otimes_{\nabla}\D\pm i)y
=v^{*}(\partial\pm i)x.
\]
Since $\im v$ and $\im (1-p)$ are orthogonal, 
it follows that $\partial\pm i$ has dense range 
in $\H_{B^{+}}\hotimes_{B^{+}} F$ if and only if 
$(1\otimes_{\nabla}\D)\pm i$ has dense range in $E\hotimes_{B}F$.
\end{proof}
We now prove that $\partial$ is self-adjoint and regular. 
Since the representation of $B$ on $F_{C}$ is assumed to be 
essential, we have the identification
\begin{equation}\label{stablesum}
\H_{B^{+}}\hotimes_{B^{+}}F_{C}\xrightarrow{\sim}\bigoplus_{i\in\Z} F_{C},
\end{equation}
and the self-adjoint regular operator  $\D_{\epsilon}$ coincides with the operator 
$1\otimes _{d} \D$ on $\H_{B^+}\hotimes_{B^+} F$ and $\epsilon\d$ the trivial connection.
\begin{lemma}
\label{connv} 
Let $(\B,F_C,\D)$ be the defining unbounded Kasparov 
module for $\B$, with $[BF_{C}]=F_{C}$,  
and let  $\E_\B\subset\E^{\nabla}_\B$ be a complete projective 
module over $\B$ with grading $\gamma$ and defining frame $(x_i)_{i\in\Z}$.
For an elementary tensor 
$e\otimes f\in \E\otimes_{\B^{+}} \Dom\D$, $(1\otimes_{\nabla}\D)(e\ox f)$ is given by the
formula
\begin{eqnarray}
 \gamma(e)\otimes \D f + \nabla(e)f
&=& \sum_{i\in\Z} x_{i}\otimes \langle x_{i},\gamma(e)\rangle \D f
+\gamma( x_{i})\otimes [\D,\langle x_{i},e\rangle ] f \nonumber\\ \label{connectionexpansion}
&=& \sum_{i\in\Z} x_{i}\otimes \langle x_{i},\gamma(e)\rangle \D f
+ \sum_{i,j\in\Z} x_{i}\otimes \langle x_{i},\gamma(x_{j})\rangle [\D,\langle x_{j},e\rangle ] f \qquad \\
&=&\sum_{i\in\Z} x_{i}\otimes \D \langle \gamma(x_{i}),e\rangle f= \sum_{i\in\Z} \gamma(x_{i})\otimes \D \langle x_{i},e\rangle f.
\label{simple}
\end{eqnarray}
More symbolically, $1\otimes_{\nabla}\D =v^*\partial v=v^{*}\D_{\epsilon} v$ and 
$\partial=p\D_{\epsilon}p$ on $v\E\otimes_{\B^{+}} \Dom \D$. The map
\begin{equation} 
\label{graphmap}g:
\E^{\nabla}\hotimes_{\B} \G(\D)  \rightarrow  \G(1\otimes _{\nabla}\D),\quad 
e\otimes \begin{pmatrix} f \\ \D f \end{pmatrix} \mapsto 
\begin{pmatrix} e\otimes f \\ (1\otimes _{\nabla}\D) (e\otimes f)\end{pmatrix}, 
\end{equation}
is a completely contractive operator with dense range.
\end{lemma}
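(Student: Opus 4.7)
The plan is to verify the explicit frame expansions \eqref{connectionexpansion}--\eqref{simple}, reinterpret them symbolically in terms of $v$, $v^*$, $p$ and $\D_\epsilon$, and then analyse the graph map $g$. Starting from the definition $(1\otimes_\nabla\D)(e\otimes f)=\gamma(e)\otimes\D f+\nabla(e)f$, I apply Lemma \ref{vdv} (so that $\nabla=v^*\epsilon \d v$) together with the bimodule map $j_\D$ sending $\d b$ to $[\D,b]$, giving
$$\nabla_\D(e)=\sum_{i\in\Z}\gamma(x_i)\otimes[\D,\langle x_i,e\rangle].$$
Inserting the frame reconstructions $\gamma(e)=\sum_i x_i\langle x_i,\gamma(e)\rangle$ and $\gamma(x_j)=\sum_i x_i\langle x_i,\gamma(x_j)\rangle$ produces the two lines of \eqref{connectionexpansion}. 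For \eqref{simple}, the graded Leibniz rule $\D a=\gamma(a)\D+[\D,a]$ expands $\sum_i\gamma(x_i)\otimes\D\langle x_i,e\rangle f$ into $\sum_i\gamma(x_i)\gamma\langle x_i,e\rangle\otimes\D f+\nabla_\D(e)f$; applying the graded frame reconstruction $\gamma(e)=\sum_i\gamma(x_i)\gamma\langle x_i,e\rangle$ collapses the first sum to $\gamma(e)\otimes\D f$.

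Next I pass to the symbolic identification. Under \eqref{stablesum}, the element $v(e)\otimes f$ is identified with the column $(\langle x_i,e\rangle f)_i$, on which the self-adjoint regular operator $\D_\epsilon=\epsilon\diag(\D)$ acts componentwise as $\mathrm{sign}(i)\D$, producing $(\mathrm{sign}(i)\D\langle x_i,e\rangle f)_i$. Applying $v^*$ reproduces precisely the right-hand side of \eqref{simple}, so that $1\otimes_\nabla\D=v^*\D_\epsilon v$ on $\E\otimes_{\B^+}\Dom\D$. Consequently $\partial=v(1\otimes_\nabla\D)v^*=vv^*\D_\epsilon vv^*=p\D_\epsilon p$ on $v(\E\otimes_{\B^+}\Dom\D)$, yielding the second symbolic claim.

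For the graph map $g$, well-definedness on elementary tensors is clear because $\B\subset\Lip(\D)$ preserves $\Dom\D$, so $\G(\D)$ is a right $\B$-module and the Haagerup balancing is consistent. Dense range is straightforward: for $e\in\E\subset\E^\nabla$ and $f\in\Dom\D$ the vector $g(e\otimes(f,\D f))$ lies in $\G(1\otimes_\nabla\D)$, and since $1\otimes_\nabla\D$ is by construction the minimal closure of its action on $\E\otimes_{\B^+}\Dom\D$, these images are dense in the graph. The main obstacle is complete contractivity. I would extract it from the representation \eqref{Ep}: a direct computation using the frame expansion above shows that the column $\pi_\nabla(e)\left(\begin{smallmatrix} f\\ \D f\end{smallmatrix}\right)$, computed in $(\H_{\B^+}\hotimes_{\B^+}F)^{\oplus 2}$, coincides with the image of $\left(\begin{smallmatrix}e\otimes f\\ (1\otimes_\nabla\D)(e\otimes f)\end{smallmatrix}\right)$ under the entrywise isometric stabilisation $v$. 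Since $v$ is a complete isometry, the graph norm of $g(e\otimes(f,\D f))$ equals the norm of $\pi_\nabla(e)(f,\D f)^t$, and the Haagerup factorisation \eqref{eq:Haag-tens} together with the definition of $\|\cdot\|_{\E^\nabla}$ majorises this by the Haagerup norm on $\E^\nabla\hotimes_\B\G(\D)$. The matrix-level estimate is obtained by applying the same argument to matrices of elementary tensors.
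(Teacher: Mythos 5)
Your proposal is essentially correct, but it packages the key estimate differently from the paper, and the difference is worth recording. The paper's proof does not bother deriving the frame expansions (it treats them as definitions/reformulations) and instead spends all its effort on two separate Haagerup-norm estimates: one for the double sum $\sum_{i,j}x_i\otimes\langle x_i,\gamma(x_j)\rangle[\D,\langle x_j,e_k\rangle]f_k$ and one for $\sum_k\gamma(e_k)\otimes\D f_k$, each bounded by $\|\sum_k\pi_\nabla(e_k)\pi_\nabla(e_k)^*\|^{1/2}\,\|\sum_k\langle(f_k,\D f_k),(f_k,\D f_k)\rangle\|^{1/2}$; adding them yields $\|g\|_{cb}\le 2$. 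You instead observe that the pair $\bigl(v(e\otimes f),\,v((1\otimes_\nabla\D)(e\otimes f))\bigr)$ is exactly $\pi_\nabla(e)\binom{f}{\D f}$, so a single row--column Cauchy--Schwarz estimate against the matrix $\pi_\nabla(e)$ gives the bound with constant $1$. This is cleaner and actually recovers the complete \emph{contractivity} asserted in the statement, which the paper's own proof only establishes up to the factor $2$. Your treatment of the algebraic identities (frame reconstruction plus the graded Leibniz rule for \eqref{simple}) and of dense range (images of the core $\E\otimes_{\B^+}\Dom\D$ are dense in the graph of the minimal closure) is fine.

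One caveat you should make explicit: the intermediate single-sum expression $\sum_i\gamma(x_i)\otimes[\D,\langle x_i,e\rangle]$ that you obtain from Lemma \ref{vdv} only converges for $e\in\E$, since for $e\in\E^\nabla$ the column $([\D,\langle x_i,e\rangle])_i$ need not lie in $\H_{\Omega^1_\D}$ --- only its $p$-projected version does, by the very definition \eqref{enabla}. So the formal ``insert the frame reconstruction'' step is licensed on $\E\otimes_{\B^+}\Dom\D$, and the extension of $g$ to $\E^\nabla\hotimes_\B\G(\D)$ must go through the double-sum/$\pi_\nabla(e)$ form directly, whose convergence is part of the definition of $\E^\nabla$. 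Your final estimate is phrased in terms of $\pi_\nabla(e)$ and therefore does cover this, but as written the logic appears to derive the double sum from a possibly divergent single sum; reordering the argument (define via $\pi_\nabla$, then specialise to $e\in\E$ to recover the single-sum formula) removes the issue.
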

\begin{proof} 
First we show that the sum \eqref{connectionexpansion} is convergent, so 
that the map \eqref{graphmap} is well-defined. The first term on the right hand side of 
\eqref{connectionexpansion} converges trivially. For the second term we prove
slightly more, estimating for finite sums $\sum_k e_k\ox f_k\in \E^\nabla\otimes_\B \Dom \D$
\begin{equation}
\begin{split}
\Big\|\sum_{i,j,k} x_{i}\otimes \langle x_{i},\gamma(x_{j})\rangle 
&[\D,\langle x_{j},e_{k}\rangle ] f_{k} \Big\|^{2}_{h} \\
&\leq\Big\|\sum_{i\in\Z}|x_{i}\rangle\langle x_{i}| \Big\|_{\kK(E)}\,
\Big\|\Big(\sum_{j,k\in\Z}\langle x_{i},\gamma(x_{j})\rangle [\D,\langle x_{i},e_{k}\rangle ] f_{k}\rangle\Big)_{i\in\Z}\Big\|^{2} \\
&\leq \|\sum_{k}\pi_{\nabla}(e_{k})\pi_{\nabla}(e_{k})^{*}\|\,\|\sum_{k}\langle f_{k},f_{k}\rangle \|\\
&\leq \|\sum_{k}\pi_{\nabla}(e_{k})\pi_{\nabla}(e_{k})^{*}\| \left\|\sum\left\langle \begin{pmatrix} f_{k} \\ \D f_{k}\end{pmatrix},\begin{pmatrix} f_{k} \\ \D f_{k}\end{pmatrix}\right\rangle \right\|,
\end{split}
\label{eq:est-one}
\end{equation}
proving that both \eqref{connectionexpansion} and  \eqref{simple} are well-defined. The estimate 
\eqref{eq:est-one} also provides half of the estimates needed to prove 
continuity of the map $g$. The other half is proving continuity of
$e\otimes f\mapsto \gamma(e)\ox\D f$.
So, again, consider a finite sum
$\sum_ke_k\ox f_k\in \E^\nabla\otimes_\B \Dom \D$.
We have the estimate
\[
\begin{split}\left\|\sum_{k} \gamma(e_{k})\otimes \D f_{k}\right\|^{2}
&\leq \left\|\sum|\gamma(e_{k})\rangle\langle \gamma(e_{k})| \right\|_{\kK(E)}\left\|\sum\langle \D f_{k},\D f_{k}\rangle \right\|\\ 
& \leq\|\sum_{k}\pi_{\nabla}(e_{k})\pi_{\nabla}(e_{k})^{*}\| \left\|\sum\left\langle \begin{pmatrix} f_{k} \\ \D f_{k}\end{pmatrix},\begin{pmatrix} f_{k} \\ \D f_{k}\end{pmatrix}\right\rangle \right\|,
\end{split}
\]
by using the fact that the $C^{*}$-module tensor product $E\otimes_BF$ is 
isometrically isomorphic to the Haagerup tensor product $E\hotimes_BF$ cf. \cite[Thm. 4.3]{BlecherC*}.

Combining the two norm estimates above and taking the infimum over all representations in the tensor product shows that the map $g$ satisfies
\[
\left\| g\Big(\sum_{k} e_{k}\otimes \begin{pmatrix} f_{k} \\ \D f_{k}\end{pmatrix}\Big)\right\|
\leq 2 \left\|\sum_{k} \pi_{\nabla}(e_{k})\otimes \begin{pmatrix} f_{k} \\ \D f_{k}\end{pmatrix}\right\|_{h},
\] 
and we are done.
\end{proof}
\begin{lemma}
\label{lem:domain} Let $\E_{\B}$ be a projective operator module with 
column finite frame $(x_i)$ and $R\in \mathscr{C}(\chi_n)$. Then\\
$1)$ the operator $vRv^{*}$ maps $\Dom \D_{\epsilon}$ into $\Dom\partial$;\\
$2)$ the operator $vRv^{*}$ maps $\Dom \partial^{*} $ into 
$\Dom \D_{\epsilon}$;\\
$3)$ if $\E_{\B}$ is complete, then $vRv^{*}$ maps $\Dom \partial^{*} $ into $\Dom \D_{\epsilon}\cap\Dom \partial\subset \Dom \partial$.
\end{lemma}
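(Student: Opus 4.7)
Throughout, since every $R\in\mathscr{C}(\chi_n)$ is a finite linear combination of the column-finite frame approximate units $\chi_n = \sum_{1\leq|i|\leq n}|x_i\rangle\langle x_i|$, it suffices by linearity to argue for $R = \chi_n$. The operator $vRv^* = \sum_{1\leq|i|\leq N}|v(x_i)\rangle\langle v(x_i)|$ is then finite rank on $\H_{B^+}\hotimes_{B^+}F$ with image in $\linspan\{v(x_i)\}_{|i|\leq N}\hotimes F\subset v(\E)\hotimes F$. Consequently for any $\xi\in\H_{B^+}\hotimes F$ we can write
\[
vRv^*\xi \;=\; v\Big(\sum_{1\leq|i|\leq N}x_i\otimes h_i(\xi)\Big),\qquad h_i(\xi):=\langle v(x_i),\xi\rangle\in F,
\]
and each of the three claims reduces to a domain question about the scalar coefficients $h_i(\xi)$.

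For part (1), let $y\in\Dom\D_\epsilon$, so under the identification \eqref{stablesum} we have $y=(f_j)_j$ with each $f_j\in\Dom\D$ and $(\D f_j)_j\in\H_{B^+}\hotimes F$. Then $h_i(y)=\sum_j\langle x_i,x_j\rangle f_j$, which I would show lies in $\Dom\D$ by approximating with finite truncations $h_i^{(m)}=\sum_{|j|\leq m}\langle x_i,x_j\rangle f_j\in\Dom\D$ and invoking closedness of $\D$. The Leibniz rule gives $\D h_i^{(m)}=\sum_{|j|\leq m}\bigl([\D,\langle x_i,x_j\rangle]f_j+\gamma(\langle x_i,x_j\rangle)\D f_j\bigr)$, whose second partial sum converges by the column-finiteness of $v(\gamma(x_i))$ paired with $(\D f_j)_j\in\H_{B^+}\hotimes F$, and whose first partial sum converges by invoking the boundedness of $[\D_\epsilon,vRv^*]$ from Lemma \ref{lem: cliffconv}. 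Hence $h_i(y)\in\Dom\D$, placing $Rv^*y\in\E\otimes_\B\Dom\D$ inside the algebraic core of $1\otimes_\nabla\D$ and yielding $vRv^*y\in v\Dom(1\otimes_\nabla\D)\subset\Dom\partial$.

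For part (2), let $z\in\Dom\partial^*$. The defining estimate $|\langle\partial\xi,z\rangle|\leq\|\xi\|\,\|\partial^*z\|$ on $\Dom\partial$ is tested against $\xi=v(x_i\otimes\eta)$ for $\eta\in\Dom\D$. By Lemma \ref{connv} we have $\partial\xi = v\gamma(x_i)\otimes\D\eta + v\nabla(x_i)\eta$, with the second summand bounded in $\eta$; isolating the $\D\eta$-dependence yields that $\eta\mapsto\langle\D\eta, h_i(z)\rangle$ is a bounded linear functional on $\Dom\D$, so self-adjointness of $\D$ forces $h_i(z)\in\Dom\D$. The subsequent step is to upgrade this componentwise information to the column-membership $vRv^*z\in\Dom\D_\epsilon$: this requires convergence of $([\D,\langle x_j,x_i\rangle]h_i(z))_j$ in $\H_{B^+}\hotimes F$, which I would obtain by dualising Lemma \ref{lem: cliffconv} against the finite-rank structure of $vRv^*$ and exploiting that $h_i(z)$ is already in $\Dom\D$.

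For part (3), combine (1) and (2): the conclusion $h_i(z)\in\Dom\D$ from (2) also places $Rv^*z=\sum_{|i|\leq N}x_i\otimes h_i(z)$ in the algebraic core $\E\otimes_\B\Dom\D\subset\Dom(1\otimes_\nabla\D)$, giving $vRv^*z\in v\Dom(1\otimes_\nabla\D)\subset\Dom\partial$. The completeness hypothesis of Definition \ref{completemodule} enters through the strict convergence $p[\D_\epsilon,vRv^*]p\to 0$, which stabilises the various finite-rank approximations simultaneously and allows the conclusions of (1) and (2) to be assembled into the single statement $vRv^*z\in\Dom\D_\epsilon\cap\Dom\partial$. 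I expect the main obstacle to be the upgrade in (2) from the componentwise condition $h_i(z)\in\Dom\D$ to the global $\Dom\D_\epsilon$ membership, since the off-diagonal combinations $[\D,\langle x_j,x_i\rangle]$ summed in $j$ are not directly controlled by the membership $x_i\in\E^\nabla$ and must be handled by leveraging the full strength of Lemma \ref{lem: cliffconv} together with the duality constraint coming from $\Dom\partial^*$.
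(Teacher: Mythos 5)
Your part (1) is essentially sound, though it takes a different route from the paper: you verify membership of each coefficient $h_i(y)=\langle v(x_i),y\rangle$ in $\Dom\D$ for arbitrary $y\in\Dom\D_\epsilon$ by truncation and closedness of $\D$, whereas the paper only checks the statement on the core $\H_\B\otimes_\B\Dom\D$ (where $\langle vx_i,h\rangle\in\B$ makes it immediate) and then extends by observing that the bounded adjointable matrix $\pi^p_\D(\chi_k)=\bigl(\begin{smallmatrix}v\chi_kv^*&0\\ p[\D_\epsilon,v\chi_kv^*]&v\chi_kv^*\end{smallmatrix}\bigr)$ maps $\G(\D_\epsilon)$ into the closed graph $\G(\partial)$. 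One correction: the convergence of $\sum_j[\D,\langle x_i,x_j\rangle]f_j$ is not supplied by Lemma \ref{lem: cliffconv} (whose bounded operator $[\D_\epsilon,v\chi_Nv^*]$ has entries $[\D,\sum_k\langle x_i,x_k\rangle\langle x_k,x_j\rangle]$, not the individual $[\D,\langle x_i,x_j\rangle]$); the correct tool is row/column finiteness of the frame, i.e.\ convergence of $\sum_j\pi_\D(\langle x_i,x_j\rangle)\pi_\D(\langle x_i,x_j\rangle)^*$.

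Parts (2) and (3) have genuine gaps. In (2), the step ``$\eta\mapsto\langle\D\eta,h_i(z)\rangle$ is a bounded functional, so self-adjointness of $\D$ forces $h_i(z)\in\Dom\D$'' is the Hilbert-space Riesz argument, and it is invalid here: $F_C$ and $\H_{B^+}\hotimes_{B^+}F$ are $C^*$-modules over $C$, and a bounded $C$-linear map into $C$ need not be represented by an inner product, so boundedness alone does not place $h_i(z)$ in $\Dom\D^*$. (The argument can be repaired by exhibiting the representing element explicitly from $\langle v(x_i\otimes\eta),\partial^*z\rangle$ minus the connection term, but that is not what you wrote.) The paper sidesteps this entirely: since $\pi^p_\D(\chi_k)$ maps $\G(\D_\epsilon)$ into $\G(\partial)$, its adjoint maps $\G(\partial)^\perp=\flip\G(\partial^*)$ into $\G(\D_\epsilon)^\perp=\flip\G(\D_\epsilon)$, and reading off the second component gives $v\chi_kv^*x\in\Dom\D_\epsilon$ with no representability issue. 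Your ``upgrade'' worry, by contrast, is a non-issue: once $h_i(z)\in\Dom\D$ is known, column finiteness of $(\langle x_j,x_i\rangle)_j$ controls both $\sum_j\gamma(\langle x_j,x_i\rangle)\D h_i(z)$ and $\sum_j[\D,\langle x_j,x_i\rangle]h_i(z)$ directly. Finally, in (3) you never actually use completeness: your route (componentwise $h_i(z)\in\Dom\D$, hence $Rv^*z\in\E\otimes_\B\Dom\D$) would, if valid, render the hypothesis redundant, and the phrase ``stabilises the various finite-rank approximations'' does not correspond to an argument. The paper's proof of (3) genuinely needs completeness: it takes $v\chi_nv^*x\in\Dom\D_\epsilon$ from (2), approximates by $vu_kv^*v\chi_nv^*x\in\Dom\partial$ using (1), and uses the strict convergence $p[\D_\epsilon,vu_kv^*]p\to0$ together with the identity $(\partial vu_kv^*-vu_kv^*\D_\epsilon)p=p[\D_\epsilon,vu_kv^*]p$ and closedness of $\partial$ to conclude $v\chi_nv^*x\in\Dom\partial$. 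You should either carry out the repair of (2) honestly and explain why your stronger conclusion does not contradict the known failure of self-adjointness for incomplete modules, or adopt the paper's graph-adjoint argument.
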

\begin{proof} It suffices to show that
the frame approximate 
unit $\chi_{n}$ of $(x_i)$ 
has the properties $1),2)$ and $3)$, 
for then any finite convex combination $R$ of $\chi_{n}$'s also has these properties. 
For $1)$, consider the adjointable operators
\[
\pi_{\D}^{p}(\chi_{k}):=
\begin{pmatrix} v\chi_{k}v^{*} & 0 \\ p[\D_{\epsilon},v\chi_{k}v^{*}] & v\chi_{k}v^{*}\end{pmatrix} : 
(\H_{B}\hotimes_{B} F)^{\oplus 2}  \rightarrow (vE\hotimes_{B}F)^{\oplus 2}.
\]
For $x=h\otimes f\in \H_\B\otimes_\B \Dom \D\subset \Dom\D_{\epsilon}$, we have that
\begin{equation}
\label{algtens} 
v\chi_{k}v^{*}(x)=\sum_{1\leq |i|\leq k} v x_{i}\otimes \langle x_{i},v^{*}(h)\rangle f
=\sum_{1\leq |i|\leq k} v x_{i}\otimes \langle vx_{i},h\rangle f,
\end{equation}

and since $v(x_{i}),h\in \H_{\B^{+}}$, we have $\langle vx_{i},h\rangle\in \B$ 
and thus $\langle vx_{i},h\rangle f\in\Dom \D$. Hence the finite sum \eqref{algtens} 
is an element of $v\E\otimes_{\B}\Dom \D\subset \Dom\partial$. 
By Lemma \ref{connv} we get that
\[
\pi_{\D}^{p}(\chi_{k})\begin{pmatrix} x \\ \D_{\epsilon} x \end{pmatrix} = 
\begin{pmatrix} v\chi_{k}v^{*}x \\ p(\D_{\epsilon}) v\chi_{k}v^{*} x\end{pmatrix}
=\begin{pmatrix} v\chi_{k}v^{*}x \\ \partial v\chi_{k}v^{*} x\end{pmatrix}.
\]
It follows from \eqref{stablesum} that $\H_{\B}\otimes_{\B} \Dom\D$ is a core for 
$\D_{\epsilon}$, for it contains the algebraic direct sum $\oplus_{i\in\Z}\Dom\D$.  
Thus the bounded operators $\pi_{\D}^{p}(\chi_{k})$ map a dense subspace of 
$\G(\D_{\epsilon})$ into $\G(\partial)$, and therefore they map all of 
$\G(\D_{\epsilon})$ into $\G(\partial)$. This proves $1)$.

For $2)$, consider the adjoint 
$\pi_{\D}^{p}(\chi_{k})^{*}$, which by $1)$ maps $\G(\partial)^{\perp}$ into $\G(\D_{\epsilon})^{\perp}$. The equalities 
$\G(\partial)^{\perp}=\flip\G(\partial^{*})$ 
and 
$\G(\D_{\epsilon})^{\perp}=\flip\G(\D_{\epsilon})$ allow us to compute, for $x\in\Dom \partial^{*}$
\begin{align*}
\pi_{\D}^{p}(\chi_{k})^{*}\begin{pmatrix} -\partial^{*}x \\ x \end{pmatrix} 
&= \begin{pmatrix} v\chi_{k}v^{*} & -[\D_{\epsilon},v\chi_{k}v^{*}]p \\ 0 & v\chi_{k}v^{*}\end{pmatrix}
\begin{pmatrix} -\partial^{*}x \\ x \end{pmatrix} \\
&=\begin{pmatrix} -v\chi_{k}^{*}\partial^{*}x-[\D_{\epsilon},v\chi_{k}v^{*}]x \\ v\chi_{k}v^{*}x \end{pmatrix}
\in \flip\G(\D_{\epsilon}).
\end{align*}
Hence  $v\chi_{k}v^{*}x\in \Dom \D_{\epsilon}$ whenever $x\in \Dom \partial^{*}$ which proves $2)$.

For $3)$ it suffices to show that $v\chi_{n}v^{*}$ maps $\Dom\partial^{*}$ into 
$\Dom \partial$ and then use $2)$. Let $x\in \Dom \partial^{*}$ and, since $\E_{\B}$ is complete, 
let $u_{k}\in\mathscr{C}(\chi_{n})$ be the approximate unit from 
Definition \ref{completemodule}. By $2)$ $v\chi_{n}v^{*}x\in\Dom\D_{\epsilon}$ 
and by $1)$  $vu_{k}v^{*}v\chi_{n}v^{*}x\in\Dom\partial$. We have 
$\lim_{k}vu_{n}v^{*}v\chi_{n}v^{*}x=v\chi_{n}v^{*}x$ in norm in 
$\H_{B^{+}}\hotimes_{B^{+}}F_{C}$. Now the operator $p[\D_{\epsilon}, vu_{k}v^{*}]p$ is defined on the
dense subspace $v\E\otimes_{\B^+}\Dom\D$, and bounded there. Hence it extends to a bounded operator on the 
whole module $\H_{\B^+}\hotimes_{\B^+} F_C$. The relation  
\begin{equation}
\label{eq:colly-wobble}
(\partial vu_{k}v^{*}-vu_{k}v^{*}\D_{\epsilon})p=p[\D_{\epsilon}, vu_{k}v^{*}]p,
\end{equation}
which is valid on the subspace $\Dom\partial\cap\Dom\D_\epsilon\cap p\H_{\B^+}\hotimes_{\B^+}F_C$, 
along with the boundedness of $p[\D_{\epsilon}, vu_{k}v^{*}]p$, imply that the left hand side of 
Equation \eqref{eq:colly-wobble} is bounded. Combining all these facts with the strict convergence
$p[\D_{\epsilon}, vu_{k}v^{*}]p\to 0$,
we find that
\begin{align*}
\lim_{k}\partial vu_{k}v^{*}v\chi_{n}v^{*}x
&=\lim_{k}vu_{k}v^{*}\D_{\epsilon} v\chi_{n} v^{*}x
+\partial vu_{k}v^{*}v\chi_{n}v^{*}-vu_{k}v^{*}\D_{\epsilon}v\chi_{n}v^{*}x \\
&= \lim_{k}vu_{k}v^{*}p\D_{\epsilon} p v\chi_{n}v^{*}x + p[\D_{\epsilon}, vu_{k}v^{*}]pv\chi_{n}v^{*}x
\end{align*}
which since $vu_kv^*\to p$ strictly, shows that the sequence converges. 
Since $\partial$ is closed, $v\chi_{n}v^{*}x\in\Dom\partial$.
\end{proof}
The paper \cite{KaLe2} introduces a local-global principle for 
regular operators on $C^{*}$-modules. The main technical 
tool developed is the following. Let $E_{B}$ be a $C^{*}$-module 
and $\sigma:B\rightarrow \C$ a state and $\H_{\sigma}=L^{2}(B,\sigma)$ 
the associated GNS representation. The \emph{localisation} $E^{\sigma}$ 
is the Hilbert space completion of $E_{B}$ in the inner product 
$\langle e,f\rangle_{\sigma}:=\sigma(\langle e,f\rangle)$, 
and there is a dense inclusion $\iota_{\sigma}:E_{B}\rightarrow E^{\sigma}$ 
and a $*$-representation $\pi_{\sigma}:\End_{B}^{*}(E_B)\to \bB(E^{\sigma})$. Equivalently, 
$E^\sigma=E\ox_BL^2(B,\sigma)$, where $L^2(B,\sigma)$ denotes the GNS representation
space of $B$ defined by the state $\sigma$. A closed, densely 
defined symmetric operator $T$ on $E$ induces a closed densely 
defined symmetric operator $T^{\sigma}$ in $E^{\sigma}$, by 
defining it on the dense subspace $\iota_{\sigma}(\Dom T)\subset E^{\sigma}$ and taking the closure. 
It then holds that $\iota_{\sigma}(\Dom T^{*})\subset \Dom (T^{\sigma})^{*}$, 
cf. \cite[Lemma 2.5]{KaLe2}.
\begin{thm}[Theorem 4.2, \cite{KaLe2}] 
Let $T$ be a closed densely defined symmetric operator in the $C^{*}$-module $E_B$. 
Then $T$ is self-adjoint and regular if and only if all localisations $T^{\sigma}$ are self-adjoint.
\end{thm}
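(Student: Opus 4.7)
The plan is to prove the two directions of the equivalence separately, using throughout the standard criterion (cf.\ \cite[Lemma 9.7]{Lance}, \cite[Proposition 4.1]{KaLe2}) that a closed densely defined symmetric operator on a $C^{*}$-module is self-adjoint and regular if and only if both $T \pm i$ have dense range. Both directions will rely on the fact that $\iota_{\sigma}(\Dom T)$ is by construction a core for $T^{\sigma}$, which in particular gives the identification $(T \pm i)^{\sigma} = T^{\sigma}\pm i$ on this dense subspace.

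For the forward direction, suppose $T$ is self-adjoint and regular, so that the resolvents $(T \pm i)^{-1} \in \End^{*}_{B}(E)$ are bounded and adjointable. I would verify the intertwining identity $\pi_{\sigma}((T+i)^{-1})(T^{\sigma}+i)\iota_{\sigma}(x)=\iota_{\sigma}(x)$ on the core $\iota_{\sigma}(\Dom T)$, and then extend by continuity using that $T^{\sigma}$ is closed. This exhibits $\pi_{\sigma}((T\pm i)^{-1})$ as a bounded two-sided inverse to $T^{\sigma}\pm i$ on the Hilbert space $E^{\sigma}$, which is equivalent to $T^{\sigma}$ being self-adjoint.

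For the converse, assume every localisation $T^{\sigma}$ is self-adjoint; it suffices to show that $(T+i)\Dom T$ is dense in $E$, the case $T-i$ being symmetric. Let $e \in E$ with $\langle e, (T+i)x\rangle_{B}=0$ for every $x \in \Dom T$. Applying an arbitrary state $\sigma$ yields $\langle \iota_{\sigma}(e), (T^{\sigma}+i)\iota_{\sigma}(x)\rangle_{\sigma}=0$. Since $\iota_{\sigma}(\Dom T)$ is a core for $T^{\sigma}$ and $T^{\sigma}+i$ is surjective by self-adjointness, the set $(T^{\sigma}+i)\iota_{\sigma}(\Dom T)$ is dense in $E^{\sigma}$, forcing $\iota_{\sigma}(e)=0$. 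Equivalently $\sigma(\langle e,e\rangle)=0$ for every state $\sigma$, and since $\langle e,e\rangle \geq 0$ in $B$ and states separate positive elements, this forces $\langle e,e\rangle=0$ and hence $e=0$, as required.

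The main subtlety I expect is carefully justifying the identification $(T\pm i)^{\sigma} = T^{\sigma}\pm i$ and the closedness arguments needed to extend the core-level identities to the full domains; this rests entirely on the core property of $\iota_{\sigma}(\Dom T)$, which is part of the construction of $T^{\sigma}$. Beyond this and the positivity of states on $B$, no further analytic machinery is required.
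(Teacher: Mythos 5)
A preliminary remark: the paper does not prove this statement itself — it is imported verbatim from \cite{KaLe2} — so there is no in-paper argument to compare yours against, and I am assessing your proposal on its own terms. Your forward direction is fine and is the standard (easy) half: the intertwining identities for $\pi_{\sigma}((T\pm i)^{-1})$ on the core $\iota_{\sigma}(\Dom T)$, extended using closedness of $T^{\sigma}$, show that $T^{\sigma}\pm i$ are surjective, and a closed symmetric Hilbert space operator with both deficiency spaces trivial is self-adjoint.

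The converse, however, has a genuine gap at its last step. What you actually prove is that $\bigl(\mathrm{ran}(T+i)\bigr)^{\perp}=\{0\}$, equivalently $\ker(T^{*}-i)=\{0\}$ (and similarly for $-i$), and you then conclude that $\mathrm{ran}(T+i)$ is dense. In a Hilbert space this inference is valid, but in a Hilbert $C^{*}$-module a closed submodule with trivial orthogonal complement need not be dense: for instance $C_{0}((0,1])\subset C([0,1])$, viewed as a submodule of $C([0,1])$ over itself, is closed, proper, and has zero orthogonal complement. Since $\langle (T\pm i)x,(T\pm i)x\rangle\geq\langle x,x\rangle$ and $T$ is closed, the ranges $\mathrm{ran}(T\pm i)$ are themselves closed submodules, so your argument establishes only the vanishing of the deficiency submodules $\ker(T^{*}\mp i)$ — which is strictly weaker than surjectivity of $T\pm i$, and the gap between the two is exactly the regularity phenomenon the theorem is designed to control. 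What your computation does correctly establish en route is that each localisation $\iota_{\sigma}(\mathrm{ran}(T\pm i))$ is dense in $E^{\sigma}$; the missing ingredient is a local-global principle for \emph{density} of a convex subset $F\subset E$, namely $d(y,F)=\sup_{\sigma}d_{E^{\sigma}}(\iota_{\sigma}(y),\iota_{\sigma}(F))$, which requires a Hahn--Banach or minimax argument over the compact convex quasi-state space of $B$ and is where the genuine content of \cite{KaLe2} lies. With such a lemma your argument closes; without it, the passage from ``trivial orthogonal complement'' to ``dense range'' is unjustified, and is precisely the step one is not allowed to take for modules.
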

For an unbounded Kasparov module $(\B,F_{C},\D)$ and a state 
$\sigma:C\rightarrow \C$ we obtain a contractive map 
$\B\rightarrow \Lip(\D^{\sigma})$. This follows because by 
definition $\iota_{\sigma}(\Dom\D)$ is a core for $\D^{\sigma}$ 
and for all $b\in\B$ and $f\in\Dom \D$ we have 
$\pi_{\sigma}(b)\iota_{\sigma}(f)=\iota_{\sigma}(bf)\in \iota_{\sigma}(\Dom \D)$. 
Thus $\pi_{\sigma}(b)$ preserves the core $\iota_{\sigma}(\D)$ for $\D^{\sigma}$. 
The commutator satisfies
\begin{equation}
\begin{split}
\| [\D^{\sigma},\pi_{\sigma}(b)]\iota_{\sigma}(f)\|^{2}=\|\iota_{\sigma}([\D,b]f)\|^{2}
&=\sigma(\langle [\D,b]f,[\D,b]f\rangle)
\leq \|[\D,b]\|^{2}\sigma(\langle f,f\rangle)\\
&=\|[\D,b]\|^{2}\|\iota_{\sigma}(f)\|^{2},
\end{split}
\label{eq:cb-piD}
\end{equation}
and is thus bounded on this core. Thus $[\D^{\sigma},\pi_{\sigma}(b)]=\pi_{\sigma}([\D,b])$ 
and we can write $\pi_{\D^{\sigma}}(b)=\pi_{\sigma}(\pi_{\D}(b))$ and 
hence the map $\pi_{\D}(b)\mapsto \pi_{\D^{\sigma}}(b)$ is 
completely bounded. We let $\B^{\sigma}$ be the completion of 
$\B$ is the norm induced by $\pi_{\D^{\sigma}}$, and define the 
localised module $\E_{\B^\sigma}$ over $\B^{\sigma}$ via the map $\H_{\B^{+}}\to\H_{\B^{\sigma +}}$.
\begin{lemma}
\label{localdomain} 
Let $\E_\B$ be a complete projective operator module for $(\B, F_{C},\D)$ 
with column finite frame $(x_{i})$ and frame approximate unit $\chi_{n}$. 
Then for all states $\sigma:C\rightarrow C$, the localised module 
$\E_{\B^{\sigma}}$ is a complete projective module for 
$(\B^{\sigma},F^{\sigma},\D^{\sigma})$. Moreover, under the 
identification $E\hotimes_{B} F^{\sigma}\cong (E\hotimes_{B} F)^{\sigma}$ 
we have $1\otimes_{\nabla}\D^{\sigma}=(1\otimes_{\nabla}\D)^{\sigma}$ 
as unbounded operators. Therefore\\
1) for each $n$, the operator 
$\pi_{\sigma}(\chi_{n})$ maps $\Dom (\partial^{\sigma})^{*}$ 
into $\Dom \partial^{\sigma}$;\\
2) there is an approximate unit $(u_n)\subset\mathscr{C}(\chi_n)$
such that $p[\D^{\sigma}_{\epsilon},vu_{n}v^{*}]p$ 
converges to $0$ $*$-strongly on $\H_{B^{+}}\hotimes_{B^{+}}F^{\sigma}$.
\end{lemma}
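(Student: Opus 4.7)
The plan is to establish the lemma in three logical steps: first build the complete projective module structure of $\E_{\B^\sigma}$, then verify the operator identity on a common core, and finally deduce 1) and 2) by invoking Lemma \ref{lem:domain} and Definition \ref{completemodule} in the localised setting.

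First I would show that $(\iota_\sigma(x_i))$ is a column finite frame for $\E_{\B^\sigma}$. Equation \eqref{eq:cb-piD} gives a completely bounded map $\pi_\D(b)\mapsto \pi_{\D^\sigma}(b)$, which extends to a cb-contraction $\H_{\B^+}\to \H_{\B^{\sigma+}}$ by functoriality of the Haagerup tensor product. Applying this map to the Gram matrix $p=(\langle x_i,x_j\rangle)$ from Proposition \ref{columnfinite} preserves the column vector estimates, so $(\iota_\sigma(x_i))$ is column finite and defines a projective $\B^\sigma$-submodule via the same construction. Since the defining approximate unit $(u_n)\subset\mathscr{C}(\chi_n)$ has bounded $\pi_\nabla$-norms (cf. Proposition \ref{cptcbau}), and cb-maps preserve these bounds, $(\pi_\sigma(u_n))$ remains in $\mathscr{C}(\pi_\sigma(\chi_n))$ with uniformly bounded norms.

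Next I would verify that $1\otimes_{\nabla}\D^\sigma=(1\otimes_{\nabla}\D)^\sigma$ under the identification $E\hotimes_BF^\sigma\cong(E\hotimes_BF)^\sigma$. On the algebraic core $\E\otimes_\B\Dom\D$, the formula \eqref{simple} gives
\[
(1\otimes_\nabla \D)(e\otimes f)=\sum_{i\in\Z}\gamma(x_i)\otimes \D\langle x_i,e\rangle f,
\]
and the same formula (with $\D^\sigma$ in place of $\D$ and $\iota_\sigma$ applied to $f$) defines $1\otimes_\nabla \D^\sigma$ on $\E\otimes_{\B^\sigma}\Dom\D^\sigma$. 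Since $\iota_\sigma(\Dom\D)$ is a core for $\D^\sigma$ by definition, the image of the algebraic core is a core for both operators; as both are closed and agree on this common core, they coincide. This also identifies $\partial^\sigma=(v(1\otimes_\nabla\D)v^*)^\sigma$ with the analogous construction for $\D^\sigma$.

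For the completeness of $\E_{\B^\sigma}$, take the approximate unit $(u_n)\subset\mathscr{C}(\chi_n)$ supplied by Definition \ref{completemodule} for $\E_\B$, so that $p[\D_\epsilon, vu_nv^*]p\to 0$ strictly on $\H_{B^+}\hotimes_{B^+}F_C$. Since the sequence is uniformly bounded (Proposition \ref{cptcbau}), and $\iota_\sigma:F\to F^\sigma$ has dense image, standard $\varepsilon/3$-arguments show that $\pi_\sigma(p[\D_\epsilon,vu_nv^*]p)$ converges strongly to $0$ on $\H_{B^+}\hotimes_{B^+}F^\sigma$. The same argument applied to the adjoints, which are also uniformly bounded and strictly convergent to $0$, gives $*$-strong convergence, proving 2) and establishing completeness of the localised module.

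Claim 1) then follows immediately: since $\E_{\B^\sigma}$ is a complete projective module with column finite frame $\pi_\sigma(\chi_n)$, Lemma \ref{lem:domain}.3 applied to $R=\pi_\sigma(\chi_n)\in\mathscr{C}(\pi_\sigma(\chi_n))$ shows that this operator maps $\Dom(\partial^\sigma)^*$ into $\Dom \partial^\sigma$. The main technical obstacle is the verification of the operator equality $1\otimes_\nabla\D^\sigma=(1\otimes_\nabla\D)^\sigma$: identifying the correct core on each side and tracking the two connection formulae through the localisation isomorphism requires care, particularly because the connection term involves the frame inner products, which behave well under localisation only because of the column finiteness inherited in the first step.
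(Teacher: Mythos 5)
Your proposal follows the paper's own argument essentially step for step: column finiteness of the localised frame via complete boundedness of $\pi_{\D}(b)\mapsto\pi_{\D^{\sigma}}(b)$, completeness and statement 2) via uniform boundedness of $p[\D_{\epsilon},vu_{n}v^{*}]p$ together with density of $\H_{B^{+}}\hotimes\iota_{\sigma}(F)$, the identity $(1\otimes_{\nabla}\D)^{\sigma}=1\otimes_{\nabla}\D^{\sigma}$ via a common core, and statement 1) from Lemma \ref{lem:domain} applied to the localised complete module. The only place you assert rather than prove is that $\iota_{\sigma}(\E\otimes_{\B}\Dom\D)$ is also a core for $1\otimes_{\nabla}\D^{\sigma}$; the paper supplies the missing Haagerup-norm estimate $\|\sum\gamma(x_{i})\otimes[\D,\langle x_{i},e\rangle](f_{k}-f_{\ell})\|^{2}_{h}\leq\|([\D,\langle x_{i},e\rangle])_{i\in\Z}\|^{2}\,\|f_{k}-f_{\ell}\|^{2}$ for a graph-norm approximating sequence $f_{k}\to f\in\Dom\D^{\sigma}$, which is exactly the column-finiteness point you flag at the end, so this is a gap of detail rather than of idea.
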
 
\begin{proof} 
To check that $\E_{\B^{\sigma}}$ is a complete projective module, it suffices 
to show that the defining frame $(x_i)$ of $\E_\B$ is column finite for $\B^\sigma$,
and that there exists an approximate unit 
$(u_n)\in\mathscr{C}(\chi_{n})$ such that  
$p[D_{\epsilon}^{\sigma},vu_{n}v^{*}]p\rightarrow 0$ $*$-strongly on 
$\H_{B^+}\hotimes_{B^{+}}F^{\sigma}$. Column finiteness follows from 
complete boundedness of the map 
$\pi_{\D}(b)\mapsto \pi_{\D^{\sigma}}(b)$, proved after \eqref{eq:cb-piD} above.
This is because complete boundedness shows that for all $e\in \E_\B$
\[
\|\pi_{\D^{\sigma}}(\langle x_{i},e\rangle)_{i\in\Z}\|\leq \||\pi_{\D}(\langle x_{i},e\rangle)_{i\in\Z}\|,
\]
and so in particular for all the vectors $x_{j}$. Definition \ref{completemodule} 
gives an approximate unit $(u_{n})$ in the convex set 
$\mathscr{C}(\chi_{n})\subset \Fin_\B(\E)$ 
for which the sequence 
$p[\D^{\sigma}_{\epsilon},vu_{n}v^{*}]p$ 
converges to $0$ strictly 
on $\H_{B^{+}}\hotimes_{B^{+}}F$ 
and is therefore uniformly bounded in $n$. Thus the localised sequence 
$p[\D_{\epsilon}^{\sigma},\pi_{\sigma}(u_{n})]p=\pi_{\sigma}(p[\D_{\epsilon},u_{n}]p)$ is 
bounded as well and converges strongly to $0$ on the dense subspace
$\H_{B^{+}}\hotimes \iota_{\sigma}(F)$ and thus on all of 
$\H_{B^{+}}\hotimes F^{\sigma}$. Hence $\E_{\B^{\sigma}}$ is a complete 
projective module for $(\B^{\sigma},F^{\sigma},\D^{\sigma})$, which in particular proves 2).

The operator $1\otimes_{\nabla}\D^{\sigma}$ is defined 
on its core $\E\otimes_{\B^{+}}\Dom \D^{\sigma}$ 
while $(1\otimes_{\nabla}\D)^{\sigma}$ is defined 
on $\iota_{\sigma}(\Dom 1\otimes_{\nabla}D)$. We claim that the subspace 
\[
X:=\iota_{\sigma}(\E\otimes_{\B}\Dom \D)
=\E\otimes_{\B}\iota_{\sigma}(\Dom \D)\subset\iota_{\sigma}(\Dom 1\otimes_{\nabla}D) ,
\]
is a common core for $(1\otimes_{\nabla}\D)^{\sigma}$ 
and $1\otimes_{\nabla}\D^{\sigma}$. Since $\E\otimes_{\B}\Dom \D$ 
is a core for $1\otimes_{\nabla}\D$, its image under $\iota_{\sigma}$ 
is a core for $(1\otimes_{\nabla}\D)^{\sigma}$. To see that it is also a core 
for $1\otimes_{\nabla}\D^{\sigma}$, we use the definition
\[
1\otimes_{\nabla}\D^{\sigma}(e\otimes f)=\gamma(e)\otimes \D f + \nabla_\D(e)f
=\gamma(e)\otimes \D^{\sigma} f + \sum \gamma(x_{i})\otimes [\D^{\sigma},\langle x_{i},e\rangle ]f,
\]
and take a sequence $f_{k}\in\Dom\D$ converging to $f\in \Dom \D^{\sigma}$ 
in the graph norm. The term $\gamma(e)\otimes \D^{\sigma} f_{k}$ will 
then converge to $\gamma(e)\otimes \D^{\sigma} f$. The other term 
can be estimated using the Haagerup norm
\[
\begin{split} 
\big\|\sum \gamma(x_{i})\otimes [\D,\langle x_{i},e\rangle](f_{k}-f_{\ell})\big\|^{2}_{h} 
&\leq \big\|\sum |x_{i}\rangle\langle x_{i} | \big\|_{\kK(E)} 
\left\| ([\D,\langle x_{i},e\rangle](f_{k}-f_{\ell}))_{i\in\Z}\right\|^{2}\\
&\leq  \left\|([\D,\langle x_{i},e\rangle] )_{i\in\Z}\right\|^{2}\|f_{k}-f_{\ell}\|^{2},
\end{split}
\]
and the norm of the column $([\D,\langle x_{i},e\rangle] )_{i\in\Z}$ 
is finite because $e\in \E_\B$.  Therefore we can approximate any 
$y\in \E\otimes_{\B}\Dom \D^{\sigma}$ by elements of 
$X$ in the graph norm of $1\otimes_{\nabla}\D^{\sigma}$. 
Thus the closure of $1\otimes_{\nabla}\D^{\sigma}$ on $X$ 
contains $\E\otimes_\B \Dom \D^{\sigma}$ which is the defining 
core for $1\otimes_{\nabla}\D^{\sigma}$. Therefore $X$ is a 
common core and since the operators 
$(1\otimes_{\nabla}\D)^{\sigma}$ and $1\otimes_{\nabla}\D^{\sigma}$ coincide on 
$X$, it follows that $(1\otimes_{\nabla}\D)^{\sigma}=1\otimes_{\nabla}\D^{\sigma}$.

Statement 1) now follows by applying Lemma \ref{lem:domain} 
to the frame $(x_i)$ of the complete projective module $\E_{\B^{\sigma}}$.
\end{proof}
We now come to the main application of complete projective modules: self-adjointness of the connection
operator $1\otimes_\nabla\D$. A further application of the domain mapping properties
of approximate units then allows us to show that $\kK(\E^\nabla)$ is a differentiable algebra. 
\begin{thm} 
\label{selfreg}
Let $\E_\B$ be a complete projective module for $(\B, F_{C},\D)$. 
Then the operator $1\otimes_{\nabla}\D$ is self-adjoint and regular.
\end{thm}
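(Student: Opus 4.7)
The plan is to reduce to a localized, Hilbert space problem via the Kaad--Lesch local--global principle, and then exhibit an explicit approximation scheme of domain elements using the complete approximate unit.

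First, by the earlier lemma identifying self-adjointness and regularity of $1\otimes_\nabla\D$ with that of $\partial = p\D_\epsilon p$ on $\Dom\partial$, it suffices to prove $\partial$ is self-adjoint and regular. By the Kaad--Lesch local--global principle (\cite[Theorem 4.2]{KaLe2}), this reduces to showing that for every state $\sigma\colon C\to\C$ the localisation $\partial^\sigma$ is self-adjoint on the Hilbert space $\H_{B^+}\hotimes_{B^+}F^\sigma$. By Lemma \ref{localdomain} the localized module $\E_{\B^\sigma}$ is again a complete projective module for $(\B^\sigma,F^\sigma,\D^\sigma)$ and, via the identification $E\hotimes_B F^\sigma\cong (E\hotimes_B F)^\sigma$, we have $(1\otimes_\nabla \D)^\sigma=1\otimes_\nabla \D^\sigma$. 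Thus the localized $\partial^\sigma$ is precisely the $\partial$-operator built from the complete projective module $\E_{\B^\sigma}$, and both Lemma \ref{lem:domain} and Definition \ref{completemodule} apply verbatim in the localized setting.

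Fix a state $\sigma$, and let $x\in \Dom(\partial^\sigma)^*$. Using the orthogonal decomposition $\H_{B^+}\hotimes_{B^+}F^\sigma = p^\sigma \H_{B^+}\hotimes_{B^+}F^\sigma \oplus (1-p^\sigma)\H_{B^+}\hotimes_{B^+}F^\sigma$ and the fact that $\partial^\sigma$ vanishes on the $(1-p^\sigma)$-summand (so the $(1-p^\sigma)$-part of $x$ automatically lies in $\Dom\partial^\sigma$), we may assume $x=p^\sigma x$. Let $(u_n)\subset\mathscr{C}(\chi_n)$ be an approximate unit as in Definition \ref{completemodule}, localized via Lemma \ref{localdomain}. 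Set $x_n:=\pi_\sigma(u_n)x$. By part 3 of Lemma \ref{lem:domain}, applied to the localized complete module, each $x_n$ lies in $\Dom\partial^\sigma\cap\Dom\D_\epsilon^\sigma$. Since $\pi_\sigma(u_n)\to p^\sigma$ strongly (by Theorem \ref{thm: idempotent} combined with Lemma \ref{localdomain}) and $x=p^\sigma x$, we obtain $x_n\to x$ in the Hilbert space norm.

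The core of the argument is to show that $\partial^\sigma x_n\to (\partial^\sigma)^*x$. For this we exploit the identity
\[
\partial^\sigma\, \pi_\sigma(u_n)\,y \;=\; \pi_\sigma(u_n)\,\D_\epsilon^\sigma\,y \;+\; p^\sigma[\D_\epsilon^\sigma,\pi_\sigma(u_n)]p^\sigma\, y,
\]
valid on $\Dom\partial^\sigma\cap\Dom\D_\epsilon^\sigma$ and analogous to Equation \eqref{eq:colly-wobble}. Applied to $y=x_m$ for $m\geq n$ this allows one to express $\partial^\sigma x_n$ in terms of $\D_\epsilon^\sigma x_m$ and the commutators $p^\sigma[\D_\epsilon^\sigma,\pi_\sigma(u_n)]p^\sigma$, which by the completeness hypothesis converge strictly to zero and are uniformly bounded. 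Combined with the symmetry $\langle \partial^\sigma x_n, y\rangle = \langle x, \pi_\sigma(u_n)\partial^\sigma y\rangle$ for $y\in \Dom\partial^\sigma$, one identifies the limit as $(\partial^\sigma)^*x$. Since $\partial^\sigma$ is closed, $x\in \Dom\partial^\sigma$, proving $\Dom(\partial^\sigma)^*\subset \Dom\partial^\sigma$, hence self-adjointness.

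The main obstacle is the Cauchy/convergence argument in the final step: the operator $\D_\epsilon^\sigma$ is unbounded and $x$ need not lie in $\Dom\D_\epsilon^\sigma$, so one cannot naively apply the identity to $x$ itself. The trick is to apply it only to the already-regularised vectors $x_m\in\Dom\D_\epsilon^\sigma$ (guaranteed by Lemma \ref{lem:domain}.2) and pass to the limit using strict convergence of the commutators and uniform boundedness of $\pi_\sigma(u_n)$, exactly as in the classical proof of essential self-adjointness of the Dirac operator on a geodesically complete manifold, where the commutator $[\D, \chi_n]\to 0$ plays the role of our vanishing $p^\sigma[\D_\epsilon^\sigma,\pi_\sigma(u_n)]p^\sigma$.
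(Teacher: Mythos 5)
Your proposal is correct and follows essentially the same route as the paper: reduce to $\partial$, localise via the Kaad--Lesch local--global principle and Lemma \ref{localdomain}, use the approximate unit $(u_n)$ of Definition \ref{completemodule} to map $\Dom(\partial^\sigma)^*$ into $\Dom\partial^\sigma$, and conclude by closedness of $\partial^\sigma$ using the uniformly bounded, strictly vanishing commutators $p[\D_\epsilon^\sigma,\pi_\sigma(u_n)]p$. The detour through the doubly-indexed vectors $x_m$, $m\ge n$, is unnecessary: the duality identity you state already yields $\partial^\sigma(\pi_\sigma(u_n)x)=\pi_\sigma(u_n)(\partial^\sigma)^*x-[\partial^\sigma,\pi_\sigma(u_n)]^*x$ directly, which is exactly the paper's final computation.
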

\begin{proof} 
We must show that for all 
states $\sigma:C\rightarrow \C$ the operator $\partial^{\sigma}$ on the Hilbert space 
$(E\hotimes_{B} F)^{\sigma}\cong E\hotimes_{B} F^{\sigma}$ is self-adjoint. 
Let $(u_{n})\subset \mathscr{C}(\chi_{n})$ be an approximate unit 
as in Definition \ref{completemodule}.
By Lemma \ref{localdomain},  $\pi_{\sigma}(u_{n})$ 
maps $\Dom (\partial^{\sigma})^{*}$ into 
$\Dom \partial^{\sigma}$ and $p[\D^{\sigma}_{\epsilon},vu_{n}v^{*}]p$ 
converges to $0$ $*$-strongly on $\H_{B^{+}}\hotimes_{B^{+}}F^{\sigma}$ . For $x$ in 
the dense subspace $\H_{\B^{+}}\otimes \Dom \D^{\sigma}$, 
which is a core for $\partial^{\sigma}$, using Equation \eqref{connectionexpansion} we have, 
\[
\begin{split}
[\partial^{\sigma},vu_{k}v^{*}]x&=\partial^{\sigma} vu_{k}v^{*}x-vu_{k}v^{*}\partial^{\sigma} x 
=p\D_{\epsilon}^{\sigma}p vu_{k}v^{*}x- vu_{k}v^{*}p\D^{\sigma}_{\epsilon}px
=p[\D^{\sigma}_{\epsilon}, vu_{k}v^*]px \rightarrow 0,
\end{split}
\]
in norm, and by uniform boundedness of $p[D_{\epsilon},vu_{k}v^{*}]p$, the convergence 
holds for all $x\in \H_{\B^{+}}\hotimes F^{\sigma}$. Therefore if $y\in \Dom (\partial^{\sigma})^{*}$, then 
$u_{k}y\in \Dom \partial^{\sigma}$ and $u_{k}y\rightarrow y$, so we can compute
\[ 
(\partial^{\sigma})^{*}y=\lim u_{k}(\partial^{\sigma})^{*}y
=\lim \partial^{\sigma} u_{k}y- [(\partial^{\sigma})^{*},u_{k}] y 
=\lim u_{k}\partial^{\sigma}y+[\partial^{\sigma},u_{k}]^{*}y= \partial^{\sigma}y,
\]
which shows that $y\in \Dom \partial^{\sigma}$ and $\partial^{\sigma}$ is self-adjoint. The local-global
principle of \cite{KaLe2} now says that $\partial$ is self-adjoint and regular.
\end{proof}
We now describe the algebra of adjointable operators on a
complete projective module. Let the isometry of $C^{*}$-modules
$
v:E\rightarrow \H_{B^{+}}
$
be such that it induces a column finite frame $(x_i)_{i\in\Z}$, which 
in turn determines a 
complete projective  submodule $\E_\B\subset\E^{\nabla}_\B\subset E_B$. The defining representation
\[
\pi_{\nabla}(K)=\begin{pmatrix} vKv^{*} & 0 \\ p[\D_{\epsilon},vKv^{*}]p & vKv^{*}\end{pmatrix},
\]
preserves the submodule 
$
(p\H_{B^{+}}\hotimes_{B^{+}}F)\oplus (p\H_{B^{+}}\hotimes_{B^{+}}F),
$
and annihilates the orthogonal complement. 
\begin{defn} 
The \emph{algebra of adjointable operators} on the projective module 
$\E^{\nabla}_\B$ is the idealiser of $\pi_{\nabla}(\kK(\E^{\nabla}))$ in 
$\End^{*}_{C}((p\H_{B^{+}}\hotimes_{B^{+}}F)^{\oplus 2})$. 
It is denoted $\End^{*}_{\B}(\E^{\nabla})$.
\end{defn}
\begin{prop}\label{repend} 
If $\E_\B$ is a complete projective module then $\End^{*}_{\B}(\E^{\nabla})$ 
is an operator $*$-algebra, isometrically isomorphic to $\M(\kK(\E^{\nabla}))$ 
and coinciding with a closed subalgebra of $\Lip(1\otimes_{\nabla}\D)$.
Hence $\kK(\E^\nabla)$ is a differentiable algebra.
\end{prop}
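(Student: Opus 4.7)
The plan is to give three cb-equivalent descriptions of $\End^{*}_\B(\E^{\nabla})$: as the idealiser (by definition), as the multiplier algebra $\M(\kK(\E^{\nabla}))$, and as a closed subalgebra of $\Lip(1\otimes_{\nabla}\D)$. The necessary ingredients are already in hand: Theorem~\ref{selfreg} makes $1\otimes_{\nabla}\D$ self-adjoint and regular, so $\Lip(1\otimes_{\nabla}\D)$ is well defined; Proposition~\ref{cptcbau} provides a bounded approximate unit $(u_n)\subset\Fin_\B(\E)$ for $\kK(\E^{\nabla})$; and combining the identification $\partial=v(1\otimes_{\nabla}\D)v^{*}$ from Lemma~\ref{connv} with Lemma~\ref{lem: cliffconv} shows $\kK(\E^{\nabla})\subset\Lip(1\otimes_{\nabla}\D)$ with compatible operator-space norms.

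First I would invoke Lemma~\ref{multiplierproperties}(1) to identify the strict closure $\M(\kK(\E^{\nabla}))$ with the idealiser of $\pi_{\nabla}(\kK(\E^{\nabla}))$ inside the ambient $C^{*}$-algebra $\End^{*}_{C}((p\H_{B^{+}}\hotimes_{B^{+}}F)^{\oplus 2})$; by definition this idealiser is $\End^{*}_\B(\E^{\nabla})$, yielding an isometric cb-isomorphism $\End^{*}_\B(\E^{\nabla})\cong\M(\kK(\E^{\nabla}))$. Next I would apply Proposition~\ref{lipmult} with $\A=\kK(\E^{\nabla})$ and $\D$ replaced by $1\otimes_{\nabla}\D$; the essentiality hypothesis $[\kK(\E^{\nabla})\cdot(E\hotimes_{B}F)]=E\hotimes_{B}F$ is automatic from the approximate unit. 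This realises $\M(\kK(\E^{\nabla}))$ cb-isomorphically as the subalgebra of $T\in\Lip(1\otimes_{\nabla}\D)$ that idealise $\kK(\E^{\nabla})$, preserve $\Dom(1\otimes_{\nabla}\D)$, and have bounded graded commutator with $1\otimes_{\nabla}\D$. This is manifestly a closed subalgebra of $\Lip(1\otimes_{\nabla}\D)$.

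For the operator $*$-algebra structure, recall that $\Lip(1\otimes_{\nabla}\D)$ carries the completely bounded involution coming from the $\flip$-construction of Equation \eqref{eq:sym-lip-norm}. Since $1\otimes_{\nabla}\D$ is self-adjoint, the defining conditions of the subalgebra identified above (preservation of $\Dom(1\otimes_{\nabla}\D)$, idealising $\kK(\E^{\nabla})$, bounded commutator) are all stable under $T\mapsto T^{*}$, so the Lipschitz involution restricts to $\End^{*}_\B(\E^{\nabla})$, making it an operator $*$-algebra.

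Finally, to conclude that $\kK(\E^{\nabla})$ is a differentiable algebra in the sense of Definition~\ref{def:diff-alg}, note that $\Fin_\B(\E)$ is visibly stable under the rank-one involution $|e\rangle\langle f|\mapsto|f\rangle\langle e|$, and this involution agrees with the restriction of the Lipschitz involution to $\Fin_\B(\E)$; closing in the $\pi_{\nabla}$-topology exhibits $\kK(\E^{\nabla})$ as a closed operator $*$-subalgebra of $\Lip(1\otimes_{\nabla}\D)$, and separability is immediate from the existence of a countable column-finite frame. The main technical point will be verifying compatibility between the adjoint defined by Equation \eqref{adjointable} and the Lipschitz involution on the $(2\times 2)$-matrix representation $\pi_{\nabla}$; once that compatibility is in place everything else is bookkeeping combining Lemma~\ref{multiplierproperties}, Proposition~\ref{lipmult}, and Proposition~\ref{cptcbau}.
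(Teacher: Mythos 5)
Your proposal is correct and follows essentially the same route as the paper: both arguments hinge on Theorem~\ref{selfreg} to identify $p[\D_{\epsilon},vKv^{*}]p$ with $[1\otimes_{\nabla}\D,vKv^{*}]$ for finite rank $K$, and then reduce the multiplier/Lipschitz identifications to Proposition~\ref{lipmult} and Lemma~\ref{multiplierproperties}, using the bounded approximate unit from Proposition~\ref{cptcbau}. The only cosmetic difference is that the paper, before invoking Proposition~\ref{lipmult}, spells out directly that an element $T$ of the idealiser preserves the core $v\Fin_{\B}(\E)v^{*}\cdot\Dom\partial$ and has bounded commutator there (hence lies in $\Lip(1\otimes_{\nabla}\D)$), whereas you delegate exactly this step to the domain-preservation argument inside the proof of Proposition~\ref{lipmult}; either way the same mechanism is doing the work.
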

\begin{proof} 
This essentially follows from Propositions \ref{cor:bdd-strong} and \ref{lipmult}. 
Since $1\otimes_{\nabla}\D$ is self-adjoint and 
regular the commutators $[1\otimes_{\nabla}\D,vKv^{*}]$ coincide with 
the operators $p[\D_{\epsilon},vKv^{*}]p$ when $K$ is finite rank. Suppose now that $T\in \End^{*}_{\B}(\E^{\nabla})$, so there is a sequence of operators $T_{n}$ such that for all $K\in\Fin_{\B}(\E)$ we have $T_{n}K\in\Fin_{\B}(\E)$ and both $T_{n}K$ and  $p[\D_{\epsilon},vT_{n}Kv^{*}]p$ are convergent. Then since
\[p[\D_{\epsilon},vT_{n}Kv^{*}]p=[\partial,vT_{n}Kv^{*}],\]
it follows that 
\[ 
\partial (vT_{n}Kv^{*}x)=[\partial,vT_{n}Kv^{*}]x+vT_{n}Kv^{*}\partial (x),
\]  
is convergent for all $x\in \Dom \partial$. 
Thus $TK$ preserves $\Dom\partial$ for all 
$K\in\Fin_{\B}(\E)$ and $v\Fin_{B}(\E)v^{*}\cdot \Dom \partial$ 
is dense in $p\Dom \partial$ in the graph norm by definition of 
$1\otimes_{\nabla}\D$. Thus $T$ preserves a core, and on this core the commutator
\[ 
[\partial,vTv^{*}]vKv^{*}x=[\partial, vTKv^{*}]x-v\gamma(T)v^{*}[\partial, vKv^{*}]x,
\]
is a bounded operator. Thus $vTv^{*}\in\Lip(\partial)$, which is equivalent to 
$T\in \Lip(1\otimes_{\nabla} \D)$ as desired. The argument now 
proceeds as in Proposition \ref{lipmult}.
\end{proof}
\section{Completeness and the Kasparov product}
The constructive approach to the Kasparov product has appeared in several slightly
different versions in recent years, \cite{BMS, KaLe, Mes}. The variations have come from
the assumptions imposed on the correspondences $(\A,\mathcal{E}_\B,S,\nabla)$ which refine
the notion of unbounded Kasparov module. The most recent refinement in \cite{BMS} was
the inclusion of a class of unbounded projections into the theory, 
required to deal with examples arising from
the Hopf fibration. Unbounded projections also 
appear in the construction of products for 
Cuntz-Krieger algebras \cite{GM}, the natural 
Kasparov module for $SU_{q}(2)$ \cite{KRS,Senior} 
and the differential approach to the stabilisation theorem \cite{Kaadabsorption}.
In the previous section of the present paper, the notion of complete 
projective module enlarges the class of unbounded projections we can work with.
\subsection{Constructing the unbounded Kasparov product}
In this
section we will show that the lifting constructions of 
\cite{BJ, Kucerovsky2} can be refined in such a way that we 
can lift a pair of cycles $(A,E_{B},F_{1})$ and $(B, F_{C}, F_{2})$ 
to an unbounded Kasparov module $(\B, F_{C}, T)$ and a correspondence 
$(\A, E_{\B},S,\nabla)$ for $(\B, F_{C}, T)$. This has the advantage 
that their Kasparov product as constructed through Theorem 
\ref{constructprod} is then well-defined. Since we only 
have to lift two classes, we provide a significant 
improvement over the results of \cite{Kucerovsky2}, where it was shown 
that any three $KK$-classes, with one the product of the other two, can
be lifted to unbounded classes in a way compatible with 
Kucerovsky's conditions for representing products  \cite{Kucerovsky1}.

Our first task is to assemble the results in the literature and 
blend them with the present work in order to give sufficient 
conditions under which the unbounded Kasparov product can 
be constructed. These conditions will allow us to show in Section \ref{subsec:lift} that any 
Kasparov product can be realised as the composition 
of a correspondence and an unbounded Kasparov module.
\begin{defn} 
\label{correspondence}
Given $(\B,F_C,T)$ an unbounded Kasparov module with bounded approximate unit for $\B$, 
an $\A$-$\B$ \emph{correspondence} for $(\B,F_C,T)$ is a quadruple 
$(\A,\mathcal{E}_\B,S,\nabla)$ such that:\\
1) $\mathcal{E}_\B$ is a complete projective operator module over the 
algebra $\mathcal{B}$;\\
2) the algebra $\A$ is dense in $A$ and $\mathcal{A}\subset\End^{*}_{\mathcal{B}}(\mathcal{E}^{\nabla})\cap \Lip(S)$;\\
3) a self-adjoint regular operator $S:\Dom S\rightarrow E$ such that
$(S\pm i)^{-1}\in \End^{*}_{\B}( \mathcal{E}^{\nabla})$ and $a(S\pm i)^{-1}\in \kK_{\mathcal{B}}(\mathcal{E}^{\nabla})$ for $a\in\mathcal{A}$;\\
4) a connection $\nabla:\mathcal{E}^{\nabla}\rightarrow E\hotimes_{B}\Omega^{1}_{\D}$ 
such that $\nabla((S\pm i)^{-1}\E^{\nabla})\subset \Dom S\otimes 1$ and 
$[\nabla,S](S\pm i)^{-1}:\mathcal{E}^{\nabla}\rightarrow E\hotimes_{\mathcal{B}}\Omega^{1}_{T}$ 
is completely bounded.

The correspondence is called \emph{strongly complete} if there is an 
approximate unit $(u_{n})$ for $A$ such that both $[S,u_{n}]\rightarrow 0$ 
and $[1\otimes_{\nabla}T,u_{n}\otimes {\rm Id}_F]\to 0$ in $C^{*}$-norm. 
\end{defn} 
In condition 4), we regard $\nabla$ as an odd operator so the commutator is the
graded commutator $[\nabla,S]=\nabla S - \gamma(S)\nabla$. 
One of the key points in the construction of the Kasparov product 
is the self-adjointness of the product operator, and this is deduced from the general framework 
of weakly anti-commuting operators described in the appendix.
\begin{lemma}
\label{anticomm} 
Let $(\A,\E_{\B},S,\nabla)$ be an $\A$-$\B$ 
correspondence for $(\B, F_{C}, T)$. The self-adjoint regular operators 
$s:=S\otimes 1$ and $t:=1\otimes_{\nabla}T$ weakly anticommute in $E\hotimes_{B}F$.
\end{lemma}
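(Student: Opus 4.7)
The plan is to identify a common core for $s = S\otimes 1$ and $t = 1\otimes_\nabla T$ on which the graded anticommutator $st+ts$ can be computed explicitly, and then to invoke condition 4) of Definition \ref{correspondence} to control the resulting expression in the manner required by the weak anticommutation framework in the appendix.

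First, I would take as common core the algebraic tensor product
\[
\mathcal{D} := (S+i)^{-1}\mathcal{E}^{\nabla} \otimes_{\mathcal{B},\text{alg}} \Dom T,
\]
and verify that $\mathcal{D}\subset \Dom(st)\cap\Dom(ts)$. For $e\in (S+i)^{-1}\E^\nabla$ and $f\in\Dom T$, $t(e\otimes f) = \gamma(e)\otimes Tf + \nabla(e)f$ is defined; moreover, condition 4) guarantees $\nabla(e)$ takes values in a subspace of $E\hotimes_B\Omega^1_T$ on which $S\otimes 1$ makes sense, so $s\circ t$ is defined on $\mathcal{D}$. The calculation of $ts$ on $\mathcal{D}$ is immediate, since $Se\in\E^\nabla$.

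Next I would compute the graded anticommutator directly. Using that $S$ is odd so $S\gamma(e) = -\gamma(Se)$, the terms $S\gamma(e)\otimes Tf$ and $\gamma(Se)\otimes Tf$ cancel, leaving
\[
(st + ts)(e\otimes f) \;=\; \bigl((S\otimes 1)\nabla(e) + \nabla(Se)\bigr)\cdot f \;=\; [\nabla,S](e)\cdot f,
\]
where the right-hand side is the action of the element $[\nabla,S](e)\in E\hotimes_B\Omega^1_T$ on $f$ via $\Omega^1_T\subset \End^*_C(F)$. Writing $e=(S+i)^{-1}e'$ with $e'\in\E^\nabla$, condition 4) of Definition \ref{correspondence} gives
\[
\bigl\|[\nabla,S](e)\bigr\|_{E\hotimes\Omega^1_T} \;\leq\; \bigl\|[\nabla,S](S+i)^{-1}\bigr\|_{cb}\,\|e'\|_{\E^\nabla},
\]
and pairing with $f$ then yields a bound of the form $\|(st+ts)(e\otimes f)\| \leq C\,\|(s+i)(e\otimes f)\|\,\|f\|$, which is precisely the kind of relative bound on the anticommutator that the appendix's definition of weak anticommutation requires.

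The step I expect to be the main obstacle is the verification that $\mathcal{D}$ is a genuine common core for $s$ and $t$, in the sense needed to deduce the global weak anticommutation property from the local estimate on $\mathcal{D}$. The subtlety is that $t = 1\otimes_\nabla T$ was defined by closing from $\E\otimes_\B\Dom T$ (see Lemma \ref{connv}), whereas we work with the strictly smaller dense subspace $(S+i)^{-1}\E^\nabla\otimes_{\alg}\Dom T$; one must show that this smaller domain is still a core for $t$, which follows from the fact that $(S+i)^{-1}\E^\nabla$ is dense in $\E^\nabla$ in the $\nabla$-topology and that the graph map $g$ of Lemma \ref{connv} is continuous. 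Once the core property is established, the anticommutator computation and the bound from condition 4) give the weak anticommutation directly.
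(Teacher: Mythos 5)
Your proof is essentially correct and rests on the same two pillars as the paper's: the computation $[s,t](e\otimes f)=[\nabla,S](e)f$ on elementary tensors, and condition 4) of Definition \ref{correspondence} to bound $[\nabla,S](S\pm i)^{-1}$. The one place where you diverge — and where you create work for yourself — is the choice of core. Definition \ref{weaklyanti} does not ask for a common core for $s$ and $t$; it asks only for a core $X$ for $t$ such that $(s\pm\lambda i)^{-1}X\subset\Dom t$, with $t(s\pm\lambda i)^{-1}X\subset\Dom s$ and $[s,t](s\pm\lambda i)^{-1}$ bounded on $X$. The paper therefore takes $X:=\E^{\nabla}\otimes_{\B}\Dom T$, which is a core for $t$ directly by the dense-range statement of Lemma \ref{connv}, and observes that $(s\pm i)^{-1}$ preserves $X$ because $(S\pm i)^{-1}$ maps $\E^{\nabla}$ into $\E^{\nabla}$ by condition 3) of Definition \ref{correspondence}. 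With that choice, the "main obstacle" you identify — showing that $(S+i)^{-1}\E^{\nabla}\otimes_{\mathrm{alg}}\Dom T$ is itself a core for $t$ — never arises: the resolvent of $s$ is applied to the core, rather than absorbed into it. Your density argument for the smaller subspace would likely go through, but it is unnecessary. One further small point: your final estimate $\|(st+ts)(e\otimes f)\|\leq C\|(s+i)(e\otimes f)\|\,\|f\|$ carries a spurious factor of $\|f\|$; what is actually needed (and what condition 4) delivers) is operator boundedness of $[s,t](s\pm i)^{-1}$ on $X$, i.e. $\|[s,t](s\pm i)^{-1}\xi\|\leq C\|\xi\|$, which then feeds into Theorem \ref{thm:bob} via the identity $[t,(s\pm i)^{-1}]=(s\mp i)^{-1}[s,t](s\pm i)^{-1}$.
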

\begin{proof} We will show that the conditions of Definition \ref{correspondence} 
imply those of Definition  \ref{weaklyanti}.
By Lemma \ref{connv} the map
\[
g:\E^{\nabla}\hotimes_{\B}\G(T)\to \G(1\otimes_{\nabla} T),\quad 
e\otimes \begin{pmatrix} f \\ Tf \end{pmatrix}
\mapsto \begin{pmatrix} e\otimes f \\ 1\otimes_{\nabla}T (e\otimes f)\end{pmatrix},
\]
has dense range. This means that the submodule $X:=\E^{\nabla}\otimes_{\B}\Dom T$,
is a core for $1\otimes_{\nabla}T$. Since $(S\pm i)^{-1}:\E^{\nabla}\rightarrow \E^{\nabla}$, 
the resolvents $(s\pm i)^{-1}$ preserve the core $X$, so $1)$ of Definition \ref{weaklyanti} is satisfied. By condition $4)$ of Definition \ref{correspondence} it follows that $t(s\pm i)^{-1}X\subset \Dom s$, so $2)$ of Definition \ref{weaklyanti} is satisfied as well. On the core $X$ 
the graded commutator can be computed as
\[
\begin{split} 
[t,(s\pm i)^{-1}]=t(s\pm i)^{-1}+(s\mp i)^{-1}t
&= (s\mp i)^{-1}(t(s\pm i) +(s\mp i)t)(s\pm i)^{-1} \\
&= (s\mp i)^{-1}[s,t](s\pm i)^{-1} ,
\end{split}
\]
and this is a bounded operator because 
$$
[s,t](e\otimes f)=(S\ox 1)(\gamma(e)\ox Tf+\nabla(e)f)+\gamma(Se)\ox Tf+\nabla(Se)\ox f=[\nabla, S](e)f.
$$ 
Thus, $(s\pm i)^{-1}$ preserve the domain of $t$ and $[s,t](s\pm i)^{-1}$ 
are bounded there, proving condition $3)$ of Definition \ref{weaklyanti}.
\end{proof}
The following theorem encompasses and generalises the 
constructions of the unbounded Kasparov product that have appeared in \cite{BMS, KaLe, Mes}.
\begin{thm}
\label{constructprod}
Let $(\B,F_C,T)$ be an unbounded Kasparov module and let
$(\A,\mathcal{E}_\B,S,\nabla)$ an $A$-$B$ correspondence for $(\B,F_C,T)$. 
Then $(\A,(E\hotimes_{B}F)_C, S\otimes 1 + 1\otimes_{\nabla}T)$ 
is an unbounded Kasparov module representing the Kasparov product of $(\A,E_B,S)$ and 
$(\B,F_C,T)$.
\end{thm}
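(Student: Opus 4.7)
The plan is to verify the three defining properties of an unbounded Kasparov module for the triple $(\A,(E\hotimes_{B}F)_C, \mathcal{D})$ where $\mathcal{D}:=S\otimes 1 + 1\otimes_{\nabla}T$, and then invoke Kucerovsky's criterion to identify the $KK$-class with the Kasparov product.

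First I would establish self-adjointness and regularity of $\mathcal{D}$. Set $s:=S\otimes 1$ and $t:=1\otimes_{\nabla}T$. The operator $s$ is self-adjoint and regular because $S$ is, and by Theorem \ref{selfreg} together with completeness of $\E_{\B}$ the operator $t$ is self-adjoint and regular. Lemma \ref{anticomm} then provides the weak anticommutation hypothesis of Definition \ref{weaklyanti}, whose appendix then yields self-adjointness and regularity of the sum $s+t=\mathcal{D}$ on the intersection $\Dom s\cap\Dom t$, together with the resolvent identity $(\mathcal{D}\pm i)^{-1}(\Dom s\cap\Dom t)\subset \Dom s\cap \Dom t$ and the relevant continuity of the resolvent.

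Next I would verify the Kasparov module conditions for $a\in\A$. For the boundedness of $[\mathcal{D},a\otimes 1]$, write the graded commutator as $[s,a\otimes 1]+[t,a\otimes 1]=[S,a]\otimes 1+[t,a\otimes 1]$; the first term is bounded since $\A\subset \Lip(S)$, and the second is bounded because $\A\subset \End^{*}_{\B}(\E^{\nabla})\subset \Lip(1\otimes_{\nabla}T)$, which follows by Proposition \ref{repend}. For local compactness of $a(\mathcal{D}\pm i)^{-1}$, I would use the standard resolvent identity
\[
a(\mathcal{D}\pm i)^{-1}=a(s\pm i)^{-1}+a(\mathcal{D}\pm i)^{-1}\,[\text{bounded remainder}]\,(s\pm i)^{-1},
\]
obtained from the weak anticommutation, and combine compactness of $a(S\pm i)^{-1}$ on $\E^{\nabla}$ (condition $3)$ of Definition \ref{correspondence}) with the compactness of the resolvent of $T$, observing that the compact operators on $\E^{\nabla}$ tensored along $\nabla$ with $(T\pm i)^{-1}$ land in $\kK(E\hotimes_{B}F)$.

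Finally I would appeal to Kucerovsky's theorem on Kasparov products. This requires verifying the connection condition, the domain condition, and positivity modulo bounded operators. The domain condition is immediate from Lemma \ref{connv} and the fact that $\E\otimes_{\B}\Dom T$ is a core for $t$. For the connection condition, for each $e\in \E_\B$ one must check that the commutator of the $T_e$-type operator with $\mathcal{D}$ lies in $\End^{*}_{C}(F\oplus (E\hotimes_{B}F))$; this follows from the definition of $\nabla$ together with the bound on $[\nabla,S](S\pm i)^{-1}$ supplied by condition $4)$ of Definition \ref{correspondence}. The positivity condition reduces to the positivity of $[s,t]$ on the relevant dense subspace, which is already encoded in the correspondence data through the Leibniz property of $\nabla$. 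The main obstacle is the careful bookkeeping in the connection condition, since the unbounded projection defining $\E_{\B}$ requires one to work with the frame $(x_i)$ and the approximation from Proposition \ref{cptcbau} rather than a single projection; this is where completeness of the module, as opposed to mere projectivity, is essential for controlling the remainder terms uniformly.
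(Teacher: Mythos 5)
Your overall architecture is exactly that of the paper: self-adjointness and regularity of $t=1\otimes_\nabla T$ from Theorem \ref{selfreg}, weak anticommutation from Lemma \ref{anticomm}, self-adjointness of the sum from Theorem \ref{thm:bob}, and then Kucerovsky's criterion \cite[Theorem 13]{Kucerovsky1} for identifying the class with the product (the paper defers the last two verifications to \cite[Theorems 6.7, 7.2]{KaLe} and \cite[Theorem 6.3.4]{Mes}). However, two of the details you fill in would not survive as written. First, your resolvent identity for local compactness is false: from $(\D\pm i)=(s\pm i)+t$ one gets $a(\D\pm i)^{-1}=a(s\pm i)^{-1}-a(\D\pm i)^{-1}t(s\pm i)^{-1}$, and the remainder $t(s\pm i)^{-1}$ is \emph{not} bounded --- weak anticommutation only gives boundedness of $[s,t](s\pm i)^{-1}$, not of $t(s\pm i)^{-1}$. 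The argument of \cite[Theorem 6.7]{KaLe} instead uses the quadratic-form estimate $\langle \D x,\D x\rangle\geq (1-\epsilon)\langle sx,sx\rangle-C\langle x,x\rangle$ coming from relative boundedness of the anticommutator, which yields $(1+\D^2)^{-1}\lesssim(1+(1-\epsilon)s^2)^{-1}$ and hence local compactness from $a(S\pm i)^{-1}\in\kK_\B(\E^\nabla)$ together with compactness of the resolvent of $T$.

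Second, Kucerovsky's positivity hypothesis does not reduce to positivity of $[s,t]$ (which need not hold and is not part of the correspondence data); it reduces to the lower bound $\langle sx,\D x\rangle+\langle\D x,sx\rangle\geq -C\langle x,x\rangle$, i.e.\ to the graded anticommutator $[s,t]$ being relatively bounded with respect to $s$. That is supplied by condition 4) of Definition \ref{correspondence} (boundedness of $[\nabla,S](S\pm i)^{-1}$), not by the Leibniz rule; one absorbs the cross term via $|\langle x,[s,t]x\rangle|\leq C\|x\|(\|sx\|+\|x\|)\leq \tfrac12\|sx\|^2+C'\|x\|^2$. Your treatment of the bounded commutators $[\D,a\otimes 1]$ via $\A\subset\Lip(S)\cap\End^*_\B(\E^\nabla)$ and Proposition \ref{repend}, and of the connection and domain conditions via Lemma \ref{connv}, is sound and consistent with the references the paper invokes.
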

\begin{proof} 
The operator $t:=1\otimes_{\nabla}T$ is self-adjoint and 
regular in $E\hotimes_{B}F$ by Theorem \ref{selfreg}, as is the operator $s:=S\otimes 1$. 
Now $s$ and $t$ weakly anticommute  by Lemma \ref{anticomm}, and hence their sum 
is self-adjoint and regular in $E\hotimes_{B}F$ by Theorem \ref{thm:bob}, 
with locally compact resolvent by an argument similar to \cite[Theorem 6.7]{KaLe}. 
Thus $(\A,(E\hotimes_{B}F)_C, S\otimes 1 + 1\otimes_{\nabla}T)$ is an unbounded Kasparov module.
One then shows, exactly as in \cite[Theorem 7.2]{KaLe} 
and \cite[Theorem 6.3.4]{Mes}, that the hypotheses of
Kucerovsky's theorem, \cite[Theorem 13]{Kucerovsky1}, are satisfied. 
Hence this cycle 
represents the Kasparov product.
\end{proof}
Now we embark on a series of lifting results of increasing sophistication, whose ultimate aim is
to show that any pair of composable $KK$-classes can be represented by
unbounded Kasparov modules satisfying the hypotheses of Theorem \ref{constructprod}.
Recall that for a bounded $(A,B)$-Kasparov module 
$(A,E_{B}, F)$ the associated ideal of $A$ -\emph{locally compact operators} is 
$$
J_A(E_B):=\{T\in \End^{*}_{B}(E_{B}): aT,\,Ta\in \kK(E_{B}) \ \textnormal{for all } a\in A\}.
$$
The operator $F$ is in the idealiser of $J_A(E_B)$, for if $T\in J_A(E_B)$ then 
$FTa\in\kK(E_{B})$ since $Ta\in \kK(E_{B})$ and $aFT=FaT-[F,a]T\in \kK(E_{B})$ as well. 
Moreover, $1-F^{2}$ and hence $(1-F^{2})^{\frac{1}{2}}$ are both 
elements of $J$. The $C^{*}$-algebra $J_A(E_B)$ is not $\sigma$-unital in general. 
The following counterexample to $\sigma$-unitality arose 
from discussions of the first author with J. Kaad: 
Let $I$ be an ideal in a unital $C^{*}$-algebra $B$. 
Take $E:=C_{0}(\N, I)$ viewed as a $C^{*}$-module 
over $C_{0}(\N, I)$ and let $A:=C_{0}(\N, B)$. 
Then $J_{A}(E)=C_{b}(\N, I)$ which is not $\sigma$-unital.

In \cite{BJ} it was shown that any bounded Kasparov module 
$(A,E_{B},F)$ can be represented by an unbounded Kasparov 
module $(A,E_{B},\D)$. The operator $\D$ is obtained from 
$F$ by constructing a suitable strictly positive $\ell$ element 
in the ideal $J_A(E_B)$ and then setting $\D:=F\ell^{-1}$. We denote 
by $J_{F}$ the $C^{*}$-algebra generated by $\kK(E_B)$ and 
$1-F^{2}$, which is a separable subalgebra of $J_A(E_B)$. The 
element $\ell$ is constructed from an approximate unit for 
$J_{F}$ with certain quasicentrality properties. 
\begin{defn}
\label{admlift} 
Let $(A,E_{B}, F)$ be a Kasparov 
module with $[AE_{B}]=E_{B}$, $F=F^{*}$ and $1-F^{2}\geq 0$. 
A strictly positive element $\ell\in J_{F}$ is \emph{admissible} if:\\
1) there exists $C>0$ with $i[F,\ell]\leq C\ell^{2}$;\\
2) $(1-F^{2})^{\frac{1}{2}}\ell^{-1}$ is bounded on the range of $\ell$ and has norm $c<1$;\\
3) there is a total subset $\{a_{i}\}\subset A$ for which $a_i:\ell E\to \ell E$, the commutators 
$[\ell^{-1},a_{i}]$ 
and $[F,a_{i}]\ell^{-1}$ are bounded on the range of $\ell$, and 
so extend to operators in $\End^{*}_{B}(E)$.
\end{defn}
\begin{thm} 
\label{thm:firstlift}
If $\ell\in J_{F}$ is admissible then $\D:=\frac{1}{2}(F\ell^{-1}+\ell^{-1}F)$ 
is a self-adjoint regular operator with the property that $(\A, E_{B},\D)$ 
is an unbounded Kasparov module defining the same class as $(A,E_{B}, F)$.
\end{thm}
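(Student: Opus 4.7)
The plan is to verify in turn that (i) $\D$ is a densely defined closed symmetric operator on a natural domain, (ii) $\D$ is self-adjoint and regular, (iii) $\D$ has $A$-locally compact resolvent and bounded commutators with a total subset of $A$, and (iv) the bounded transform of $\D$ differs from $F$ by an $A$-locally compact operator.

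For (i), I would take $\Dom\D := \{e\in \ell E_{B}:Fe\in\ell E_{B}\}$, on which both $F\ell^{-1}$ and $\ell^{-1}F$ are defined. Strict positivity of $\ell$ in $J_{F}$ makes $\ell^{-1}$ a positive self-adjoint regular operator with $\Dom\ell^{-1}=\ell E_{B}$, and combined with $F=F^{*}$ this makes $\D$ symmetric on $\Dom\D$. The identities to keep in mind are
\[
\D\ell=F+\tfrac{1}{2}\ell^{-1}[F,\ell],\qquad \ell\D\ell=\tfrac{1}{2}(\ell F+F\ell),
\]
and the admissibility hypothesis $i[F,\ell]\leq C\ell^{2}$ is engineered, via a pairing argument analogous to the one in the proof of Theorem \ref{thm:equivs}, to ensure that $\ell^{-1}[F,\ell]\ell^{-1}$ extends to a bounded adjointable operator. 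Consequently $\D\ell$ and $\ell\D\ell$ extend to bounded adjointable operators, so $\ell$ maps $E_{B}$ continuously into $\Dom\D$, the image is dense, and $\D$ is closable.

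For (ii), I propose to invoke the Kaad--Lesch local-global principle, Theorem 4.2 of \cite{KaLe2}: after localising against a state of $C$, essential self-adjointness of $\D^{\sigma}$ can be verified on the core $\iota_{\sigma}(\ell E_{B})$, where a candidate resolvent is $(\D\pm i)^{-1}=\ell\bigl(\tfrac{1}{2}(\ell F+F\ell)\pm i\ell^{2}\bigr)^{-1}\ell$. The invertibility of the middle factor is secured by the positivity $\mathrm{Im}\bigl(\tfrac{1}{2}(\ell F+F\ell)\pm i\ell^{2}\bigr)=\pm\ell^{2}\geq 0$ together with the dense range of $\ell$. Step (iii) then follows from $J_{F}\subset J_{A}(E_{B})$, which gives $a\ell\in\kK(E_{B})$ for each $a\in A$ and hence $a(\D\pm i)^{-1}\in\kK(E_{B})$; the commutator bounds come from the direct expansion
\[
[\D,a_{i}]=\tfrac{1}{2}\bigl([F,a_{i}]\ell^{-1}+\ell^{-1}[F,a_{i}]\bigr)+\tfrac{1}{2}\bigl(F[\ell^{-1},a_{i}]+[\ell^{-1},a_{i}]F\bigr),
\]
each of whose summands is bounded by admissibility condition 3).

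For (iv), the admissibility bound $\|(1-F^{2})^{1/2}\ell^{-1}\|_{\infty}\leq c<1$ is used as in \cite{BJ} to compare the bounded transform $b(\D)=\D(1+\D^{2})^{-1/2}$ with $F$: the estimate ensures that $b(\D)-F$ lies in $J_{A}(E_{B})$, so $a(b(\D)-F)\in\kK(E_{B})$ for all $a\in A$ and the two modules represent the same class in $KK(A,B)$. The main obstacle is step (ii): although the admissibility conditions are precisely those needed to close all formal manipulations, the rigorous domain tracking, the extension of $\ell^{-1}[F,\ell]\ell^{-1}$ to a bounded operator, and the invertibility argument for $\tfrac{1}{2}(\ell F+F\ell)\pm i\ell^{2}$ form the technically delicate heart of the argument, and the Kaad--Lesch local-global principle is indispensable for reducing the problem to a manageable Hilbert-space computation.
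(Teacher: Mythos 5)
There is a genuine gap, in fact two. First, in your step (ii) the claimed invertibility of $\tfrac{1}{2}(\ell F+F\ell)\pm i\ell^{2}$ does not follow from the stated reasons: a bounded adjointable operator whose ``imaginary part'' is merely positive ($\pm\ell^{2}\geq 0$) and which has dense range need not be invertible in $\End^{*}_{B}(E)$, because $\ell^{2}$ is strictly positive in $J_{F}$ but \emph{not} bounded below (if it were, $\ell$ would be invertible and the whole construction would be trivial). The natural estimate $\|(\tfrac{1}{2}(\ell F+F\ell)\pm i\ell^{2})x\|\,\|x\|\geq\langle x,\ell^{2}x\rangle$ degenerates, so your candidate resolvent $\ell(\cdots)^{-1}\ell$ is not shown to be a bounded, everywhere-defined operator. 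The paper avoids this entirely: it does not reprove self-adjointness, regularity or local compactness of the resolvent, but cites \cite[Lemmas 1.4, 2.2]{Kucerovsky2} for the statement that $F\ell^{-1}$ is an almost self-adjoint regular operator with resolvent in $J_{F}$ and bounded commutators with the $a_{i}$. If you want to argue via the local-global principle instead, you must supply an actual surjectivity argument for $\D^{\sigma}\pm i$; ``positivity of the imaginary part plus dense range'' is not one.

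Second, and more seriously, your step (iv) omits what is in fact the entire content of the paper's proof. You assert that the admissibility bound $\|(1-F^{2})^{1/2}\ell^{-1}\|\leq c<1$ ``ensures that $b(\D)-F$ lies in $J_{A}(E_{B})$,'' but that is neither proved nor the criterion the paper uses. The paper invokes \cite[Proposition 17.2.7]{Blackadar} (the Connes--Skandalis positivity criterion): one must show that $a\bigl(F\tilde{\D}(1+\tilde{\D}^{*}\tilde{\D})^{-1/2}+\tilde{\D}(1+\tilde{\D}^{*}\tilde{\D})^{-1/2}F\bigr)a^{*}$ is positive modulo compacts, with $\tilde{\D}=F\ell^{-1}$. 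Establishing this occupies the bulk of the argument: the anticommutator is expanded into a positive term plus commutator corrections, and each correction is controlled by writing fractional powers as integrals $\frac{1}{\pi}\int_{0}^{\infty}\lambda^{-1/2}(\lambda+1+\ell^{-1}F^{2}\ell^{-1})^{-1}\cdots\,d\lambda$ and using the operator inequality $\ell^{-1}F^{2}\ell^{-1}\geq\ell^{-2}-c^{2}$ (which is exactly where condition 2) of Definition \ref{admlift} enters) to obtain norm-convergent integrals whose integrands become compact after multiplication by elements of $A$. Without this computation, or a proof of your alternative claim that $a(b(\D)-F)\in\kK(E_{B})$ together with a homotopy argument, the assertion that the two modules define the same $KK$-class is unsupported.
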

\begin{proof} 
It is shown in \cite[Lemmas 1.4, 2.2]{Kucerovsky2} that $F\ell^{-1}$ defines 
an almost self-adjoint (so $[F,\ell^{-1}]$ is bounded) 
regular operator on $E_{B}$ with resolvent in $J_F$ 
and that it has bounded commutators with all the $a_{i}$. Thus 
$(\A,E_B,\D)$ is an unbounded Kasparov module and it suffices to show
the equivalence of the Kasparov modules defined by $F$ and  
$\tilde{\D}(1+\tilde{\D}^*\tilde{\D})^{-1/2}$, where $\tilde{\D}=F\ell^{-1}$.
By \cite[Proposition 17.2.7]{Blackadar}, it suffices to show that 
$$
a\big(F\tilde{\D}(1+\tilde{\D}^*\tilde{\D})^{-1/2}+(\tilde{\D}(1+\tilde{\D}^*\tilde{\D})^{-1/2}F\big)a^*
$$
is positive modulo compacts for all $a\in A$. Simplifying yields
\begin{align*}
&F\tilde{\D}(1+\tilde{\D}^*\tilde{\D})^{-1/2}+\tilde{\D}(1+\tilde{\D}^*\tilde{\D})^{-1/2}F\\&=
F^2\ell^{-1}(1+(F\ell^{-1})^*(F\ell^{-1}))^{-1/2}+F\ell^{-1}(1+(F\ell^{-1})^*(F\ell^{-1}))^{-1/2}F\\
&=F[F,\ell^{-1}](1+(F\ell^{-1})^*(F\ell^{-1}))^{-1/2}+F\ell^{-1}[F,(1+(F\ell^{-1})^*(F\ell^{-1}))^{-1/2}]\\
&\qquad\quad+2F\ell^{-1/2}[\ell^{-1/2},(1+(F\ell^{-1})^*F\ell^{-1})^{-1/2}]F+2F\ell^{-1/2}(1+(F\ell^{-1})^*F\ell^{-1})^{-1/2}\ell^{-1/2}F
\end{align*}
The first term on the right hand side of the last equality is compact when multiplied by 
any $a\in A$ on the right and the last term is positive, so we are left with the second and third terms.
Now we compute the commutator in the second term 
using the integral formula for fractional powers, \cite{CGRS2}. So
\begin{align*}
&\ell^{-1}[F,(1+(F\ell^{-1})^*(F\ell^{-1}))^{-1/2}]=
\frac{1}{\pi}\int_0^\infty\lambda^{-1/2}\ell^{-1}[F,(\lambda+1+(F\ell^{-1})^*(F\ell^{-1}))^{-1}]\,d\lambda\\
&=-\frac{1}{\pi}\int_0^\infty\lambda^{-1/2}\ell^{-1}(\lambda+1+\ell^{-1}F^2\ell^{-1})^{-1}
[F,(F\ell^{-1})^*(F\ell^{-1})](\lambda+1+\ell^{-1}F^2\ell^{-1})^{-1}d\lambda\\
&=-\frac{1}{\pi}\int_0^\infty\lambda^{-1/2}\ell^{-1}(\lambda+1+\ell^{-1}F^2\ell^{-1})^{-1}
[F,\ell^{-1}]F^2\ell^{-1}(\lambda+1+\ell^{-1}F^2\ell^{-1})^{-1}d\lambda\\
&-\frac{1}{\pi}\int_0^\infty\lambda^{-1/2}\ell^{-1}(\lambda+1+\ell^{-1}F^2\ell^{-1})^{-1}
\ell^{-1}F^2[F,\ell^{-1}](\lambda+1+\ell^{-1}F^2\ell^{-1})^{-1}d\lambda
\end{align*}
We observe that $\ell^{-1}(1-F^2)^{1/2}$ is bounded 
and of norm $c<1$ by Definition \ref{admlift}. 
It follows that $\ell^{-1}F^2\ell^{-1}=\ell^{-2}-\ell^{-1}(1-F^2)\ell^{-1}\geq \ell^{-2}-c^21$. The 
functional calculus then yields the estimates \cite[Appendix A]{CP1}
(for the norm of endomorphisms on $E_B$)
$$
\Vert\ell^{-1}(1+\lambda+\ell^{-2}-c^2)^{-1}\Vert\leq \frac{1}{2\sqrt{1+\lambda-c^2}},\quad
\Vert(1+\lambda+\ell^{-2}-c^2)^{-1}\Vert\leq \frac{1}{1+\lambda-c^2}.
$$
Thus the integral converges in norm. Since multiplying the integrand on right and left by
an element of $A$ yields a compact endomorphism, the same is true of the integral. For the third term
the integral formula yields
\begin{align*}
&\ell^{-1/2}[\ell^{-1/2},(1+\ell^{-1}F^2\ell^{-1})^{-1/2}]\\
&=-\ell^{-1/2}\frac{1}{\pi}\int_0^\infty\lambda^{-1/2}(1+\lambda+\ell^{-1}F^2\ell^{-1})^{-1}
\ell^{-1}[\ell^{-1/2},F^2]\ell^{-1}(1+\ell^{-1}F^2\ell^{-1}+\lambda)^{-1}\,d\lambda.
\end{align*}
In order to obtain the norm convergence of this integral, we write 
$$
\ell^{-1}[\ell^{-1/2},F^2]\ell^{-1}=\ell^{-1/2}(\ell^{-1}(F^2-1)\ell^{-1})-(\ell^{-1}(F^2-1)\ell^{-1})\ell^{-1/2}
$$
and 
$$
\ell^{-1/2}(1+\lambda+\ell^{-1}F^2\ell^{-1})^{-1}
=\ell^{-1/2}(1+\lambda+\ell^{-1}F^2\ell^{-1})^{-1}\ell^{-1/2}\ell^{1/2}.
$$
Since $\ell^{-1/2}(1+\lambda+\ell^{-1}F^2\ell^{-1})^{-1}\ell^{-1/2}\leq \ell^{-1}(1+\lambda+\ell^{-2}-c^2)^{-1}$,
the same norm estimates we used for the second term give us
\begin{align*}
&\Vert\ell^{-1/2}[\ell^{-1/2},(1+\ell^{-1}F^2\ell^{-})^{-1/2}]\Vert\\
&\leq \frac{c^2}{2\pi}\int_0^\infty\lambda^{-1/2}(1+\lambda-c^2)^{-3/2}d\lambda
+ \frac{c^2 \Vert \ell^{1/2}\Vert}{4\pi}\int_0^\infty\lambda^{-1/2}(1+\lambda-c^2)^{-1}d\lambda<\infty
\end{align*}
and so the integral converges in norm. As 
multiplying the integrand on both sides by an element of
$A$ yields a compact endomorphism, the same is 
true of the operator defined by the integral. This completes the proof.
\end{proof}
\subsection{Quasicentral approximate units}
The construction of an admissible multiplier 
$\ell$ needed in Theorem \ref{thm:firstlift} uses quasicentral approximate units as in \cite{BJ}. 
In order to lift Kasparov modules to correspondences, this notion needs to be refined.
The existence of quasicentral approximate units in $C^{*}$-algebras 
has been crucial for the development of $KK$-theory, notably in 
Higson's proof of the Kasparov technical theorem.

{\bf In this section $\B$ will always denote a unital operator algebra, and $\mathcal{J}\subset\B$
a closed ideal with bounded approximate unit.} 

For such $\mathcal{J}$ and $\B$, we wish to prove the existence of quasicentral approximate 
units.
We will do this by using the argument of 
Akemann and Pedersen \cite{AP}. This method was employed in \cite[Theorem 3.1]{Arias} to 
construct quasicentral approximate units for closed ideals in operator algebras with \emph{contractive} 
approximate units. 
By virtue of Proposition \ref{thm: idempotent},  the technique 
works for operator algebras with bounded approximate unit. 

In Theorem \ref{Ped} below, we prove a strong form of 
quasicentrality, unknown even in the case of $C^{*}$-algebras. 
Namely, we will view the ideal $\J\subset \B$ as sitting inside 
$\End^{*}_{\B}(\H_{\B})$ as `scalar' matrices $\J\cdot{\rm Id}_{\H_\B}$. 
Although $\J$ is not an ideal inside $\End^{*}_{\B}(\H_{\B})$, 
we will see that $\J$ admits approximate units that are 
quasicentral inside $\End^{*}_{\B}(\H_{\B})$. That is 
$[u_{\lambda}\cdot{\rm Id}_{\H_\B}, T]\to 0$ in norm for all $T\in \End_{\B}^{*}(\H_{\B})$. 

By an \emph{ideal} in an operator algebra we will always mean a closed, 
two sided ideal. For an 
operator algebra $\B$ its \emph{amplification} is 
\[
\B^{\infty}:=\{(b_{i})_{i\in\Z}\in\prod_{i\in\Z}\B: \sup_{i\in\Z}\|b_{i}\|<\infty\},
\] 
which is canonically an operator algebra in the indicated norm. 

For a general operator algebra $\B$, the module of infinite columns $\H_{\B}$ is paired with the module of infinite rows $\H_{\B}^{t}$ via
\[((b_{i})_{i\in\Z}^{t},(c_{i})_{i\in\Z}):=\sum b_{i}c_{i},\]
and $\End^{*}_{\B}(\H_{\B})$ is defined to be the algebra of 
completely bounded operators $T:\H_{\B}\to\H_{\B}$ for 
which there exists $\tilde{T}:\H_{\B}^{t}\to \H_{\B}^{t}$ 
such that $(x,Ty)=(\tilde{T}x,y)$ for all $x\in\H_{\B}^{t}$, 
$y\in \H_{\B}$, cf. \cite[Section 3]{Blecherrigged}. 
For operator $*$-algebras, the spaces $\H_{\B}^{t}$ and 
$\H_{\B}$ are anti-isomorphic (\cite[Lemma 4.4.1]{Mes}) 
and this definition of $\End^{*}_{\B}(\H_{\B})$
is equivalent to the one given earlier in  \eqref{adjointable}.

We wish to describe $\End^{*}_{\B}(\H_{\B})$ as an algebra of infinite 
matrices.  Since $\B$ is unital, to an element $T\in\End^{*}_{\B}(\H_{\B})$ 
we can associate matrix coefficients $(T_{ij})$ by using the canonical basis 
$e_{i}$ of $\H_{\B}$. For an infinite matrix $T:=(T_{ij})_{i,j\in\Z}$ 
the $N$-\emph{truncation} is the finite $2N\times 2N$-matrix
\[
T_{N}=(T_{ij})_{N}:=(T_{ij})_{1\leq |i|,|j|\leq N}.
\]
\begin{lemma}
\label{lem: l2rowcolumn} 
Let $T\in \End^{*}_{\B}(\H_{\B})$ and $\pi:\B\to\bB(\H)$ be  
a completely isometric representation. Then the matrix coefficients $T_{ij}\in\B$ satisfy

\[
\sum_{i}\pi(T_{ij})^{*}\pi(T_{ij})<\infty,\qquad \sum_{j}\pi(T_{ij})\pi(T_{ij})^{*}<\infty,
\]
where these series are norm convergent in $\bB(\H)$. For any $(b_{i})_{i\in\Z}\in \H_{\B}$ the series
\[\sum_{j\in\Z} \pi(T_{ij})\pi(b_{j}),\quad \sum_{j}\pi(T_{ij})^{*}\pi(b_{j}),\]
are norm convergent in $\bB(\H)$. 
\end{lemma}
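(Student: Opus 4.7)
The plan is to identify the matrix coefficients by testing against the canonical basis, to read off the column and row bounds from $T\in\End^*_{\B}(\H_{\B})$ and its dual, and then to derive the two series convergences from an operator-valued Cauchy--Schwarz inequality applied to tails.

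Since $\B$ is unital the canonical basis $(e_{j})_{j\in\Z}\subset\H_{\B}$ is available, and I would define $T_{ij}\in\B$ as the $i$-th component of the column $Te_{j}\in\H_{\B}$. The concrete description of $\H_{\B}$ given earlier in the section (sequences with $\sum_{i}\pi(b_{i})^{*}\pi(b_{i})$ norm-convergent in $\bB(\H)$) then immediately yields $\sum_{i}\pi(T_{ij})^{*}\pi(T_{ij})<\infty$ for each fixed $j$. For the row bound I would pass to the dual operator $\tilde T\colon\H_{\B}^{t}\to\H_{\B}^{t}$, whose existence is the defining feature of $\End^{*}_{\B}(\H_{\B})$. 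Unwinding $(\tilde T e_{i}^{t},c)=(e_{i}^{t},Tc)=(Tc)_{i}$ for all $c\in\H_{\B}$ identifies $\tilde Te_{i}^{t}$ with the row $(T_{ij})_{j\in\Z}\in\H_{\B}^{t}$, and the dual (row) description of $\H_{\B}^{t}$ that is forced by the pairing gives $\sum_{j}\pi(T_{ij})\pi(T_{ij})^{*}<\infty$ in norm.

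For the two series involving $(b_{j})\in\H_{\B}$ I would invoke the operator Cauchy--Schwarz inequality
\[
\Big\|\sum_{j=N}^{M} a_{j}c_{j}\Big\|^{2}\leq \Big\|\sum_{j=N}^{M} a_{j}a_{j}^{*}\Big\|\cdot\Big\|\sum_{j=N}^{M} c_{j}^{*}c_{j}\Big\|,
\]
obtained by viewing $(a_{j})$ as a row operator $\bigoplus_{j}\H\to\H$ and $(c_{j})$ as a column operator $\H\to\bigoplus_{j}\H$ and estimating the row-times-column product. For the first series, I take $a_{j}=\pi(T_{ij})$ and $c_{j}=\pi(b_{j})$: the two right-hand factors are respectively tails of the row bound from the previous paragraph and of the defining series for $(b_{j})\in\H_{\B}$, so both vanish as $N,M\to\infty$ and the partial sums form a Cauchy sequence in $\bB(\H)$. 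For the second series I would pass to the Hilbert-space adjoint of the partial sums and apply the same inequality in its dual form, again combining the row bound of $T$ with the column bound coming from $(b_{j})\in\H_{\B}$.

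The only conceptually substantive step will be the identification $\tilde Te_{i}^{t}=(T_{ij})_{j}$, where the adjointability hypothesis on $T$ genuinely enters; after that everything is a mechanical application of the Cauchy--Schwarz inequality to tails of positive norm-convergent series.
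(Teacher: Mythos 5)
Your argument is essentially identical to the paper's: the columns are read off from $Te_j\in\H_{\B}$, the rows from $\tilde Te_i^t\in\H_{\B}^{t}$ via the adjointability pairing, and the norm convergence of the series follows from the same row-times-column (operator Cauchy--Schwarz) estimate applied to tails. The only caveat, inherited from the statement itself, is that for the second series $\sum_j\pi(T_{ij})^*\pi(b_j)$ the Cauchy--Schwarz factor required is $\sum_j\pi(T_{ij})^*\pi(T_{ij})$ (fixed $i$, summed over $j$), which matches the stated column condition only after the intended re-indexing to $\sum_i\pi(T_{ij})^*\pi(b_i)$; the paper's own proof glosses over this point in exactly the same way.
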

\begin{proof} 
Using the basis vectors $e_i$ and considering 
\[T(e_j)=(T_{ij})_{i\in\Z}\in\H_{\B},\,\tilde{T}(e_j)^{t}=(T_{ji})_{i\in\Z}^{t}\in\H_{\B}^{t},\] we obtain the stated conditions 
on the rows and columns of $T$. Considering the series $\sum_{j}T_{ij}b_{j}$, estimate the tails by
\[\begin{split} \|\sum_{|j|\geq n} \pi(T_{ij}b_{j})\|^{2}&=\|\sum_{|j|,|k|\geq n}\pi(x_{k})^{*}\pi(T_{ik})^{*}\pi(T_{ij})\pi(b_{j})\|\\
&=\|(\pi(b_{k}))_{|k|\geq n}^{*}\cdot (\pi(T_{ik})^{*})_{|k|\geq n}\cdot(\pi(T_{ij}))^{t}_{|j|\geq n}\cdot(\pi(b_{j}))_{|j|\geq n}\|\\
&\leq \|\sum \pi(x_{j})^{*}\pi(x_{j})\|\| (\pi(T_{ik})^{*})_{|k|\geq n}\cdot(\pi(T_{ij}))^{t}_{|j|\geq n}\|\\
&=\|x\|^{2}_{\H_{\B}}\|\sum_{|j|\geq n} \pi(T_{ij})\pi(T_{ij})^{*}\|\to 0,\end{split}\]
as $n\to \infty$ because the the rows $(T_{ij})_{j\in\Z}$ are elements of $\H_{\B}$. The argument for the series $\sum_{i}\pi(T_{ij})^{*}\pi(T_{ij})$ is similar, now using the condition on the columns of $(T_{ij})$.
\end{proof}
Given a closed, two-sided ideal $\J\subset\B$ there is an 
embedding $\H_{\J}\rightarrow \H_{\B}$ and we define the subalgebra 
\begin{equation}\label{MZ}
\End^{*}_{\B}(\H_{\B},\J)=\{T\in \End_{\B}^{*}(\H_{\B}): T\H_{\B}\subset \H_{\J}\}.
\end{equation}
\begin{lemma}
Every $T\in\End^{*}_{\B}(\H_{\B})$ has the property that $T\H_{\J}\subset \H_{\J}$. Consequently the subalgebra $\End^{*}_{\B}(\H_{\B},\J)$ 
is a closed two-sided ideal in $\End^{*}_{\B}(\H_{\B})$. The algebra
$\End^{*}_{\B}(\H_{\B},\J)$ can be equivalently 
defined as the subalgebra of those $T\in\End^{*}_{\B}(\H_{\B})$ all of 
whose matrix coefficients $T_{ij}\in\J$.
\end{lemma}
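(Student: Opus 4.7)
My plan is to prove the three assertions in sequence, with the componentwise formula $(Tx)_i=\sum_j T_{ij}x_j$ from Lemma \ref{lem: l2rowcolumn} doing essentially all of the work.

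\textbf{Invariance of $\H_{\J}$.} First I would fix a completely isometric representation $\pi:\B\to\bB(\H)$ and use it to view $\H_{\J}$ as the closed submodule of $\H_{\B}$ consisting of those columns $(b_i)$ whose entries all lie in $\J$ (since $\J$ is norm-closed in $\B$, the $\J$-norm and the restriction of the $\B$-norm coincide on such columns). Given $T\in\End^*_{\B}(\H_{\B})$ and $x=(b_i)\in\H_{\J}$, the vector $Tx$ is already an element of $\H_{\B}$, so it suffices to check that each entry lies in $\J$. By Lemma \ref{lem: l2rowcolumn} we have the norm-convergent expansion $(Tx)_i=\sum_j T_{ij}b_j$ in $\B$. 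Each summand lies in $\J$ because $\J$ is a two-sided ideal of $\B$, and since $\J$ is norm closed we conclude $(Tx)_i\in\J$. Hence $Tx\in\H_{\J}$.

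\textbf{Closed two-sided ideal.} The invariance just established immediately shows that $\End^*_{\B}(\H_{\B},\J)$ is a left ideal: for $S\in\End^*_{\B}(\H_{\B})$ and $T\in\End^*_{\B}(\H_{\B},\J)$, one has $ST\H_{\B}\subset S\H_{\J}\subset\H_{\J}$. The right-ideal property is trivial from the definition, as $TS\H_{\B}\subset T\H_{\B}\subset\H_{\J}$. Closedness is inherited from closedness of $\H_{\J}$ inside $\H_{\B}$: if $T_n\to T$ in the operator norm of $\End^*_{\B}(\H_{\B})$ and $T_n\H_{\B}\subset\H_{\J}$, then for each $x\in\H_{\B}$ the sequence $T_nx\in\H_{\J}$ converges to $Tx$ in $\H_{\B}$, so $Tx\in\H_{\J}$.

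\textbf{Matrix coefficient characterisation.} For one direction, if $T\in\End^*_{\B}(\H_{\B},\J)$, then evaluating on the canonical basis vector $e_j$ gives $Te_j=(T_{ij})_{i\in\Z}\in\H_{\J}$, so in particular $T_{ij}\in\J$ for all $i,j$. Conversely, if every $T_{ij}$ lies in $\J$, then for any $x=(b_j)\in\H_{\B}$ the formula $(Tx)_i=\sum_j T_{ij}b_j$ from Lemma \ref{lem: l2rowcolumn} expresses the $i$-th component as a norm-convergent sum in $\B$ all of whose terms already lie in $\J$ (because $\J$ is a right ideal), so by norm-closedness $(Tx)_i\in\J$. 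Thus $Tx\in\H_{\J}$, and $T\in\End^*_{\B}(\H_{\B},\J)$.

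The only point requiring any real care is the validity of the matrix-coefficient expansion $(Tx)_i=\sum_j T_{ij}x_j$ and the claim that $\H_{\J}$ embeds as a norm-closed submodule of $\H_{\B}$; the first is precisely the content of Lemma \ref{lem: l2rowcolumn}, and the second rests on the fact that the operator space norms on $\H_{\J}$ and $\H_{\B}$ are both inherited from the ambient $\bB(\H)$-representation. Once these are in place, the argument is essentially the same bookkeeping as in the $C^*$-algebraic setting.
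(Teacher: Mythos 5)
Your proof is correct and follows essentially the same route as the paper's: the componentwise expansion $(Tx)_i=\sum_j T_{ij}x_j$ from Lemma \ref{lem: l2rowcolumn} combined with the two-sided ideal property and norm-closedness of $\J$ gives the invariance of $\H_{\J}$ and the matrix-coefficient characterisation, while closedness of the ideal is inherited from closedness of $\H_{\J}$ in $\H_{\B}$ exactly as in the paper. Your explicit remark identifying $\H_{\J}$ with the columns in $\H_{\B}$ whose entries lie in $\J$ is a point the paper uses implicitly, so making it is a small improvement rather than a deviation.
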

\begin{proof} 
Let $x=(b_{j})_{j\in\Z}\in \H_{\J}$ and $e_{i}$ the standard basis elements of $\H_{\B}$. 
We need to show that for $T\in \End^*_\B(\H_\B)$, the coordinates $\langle e_{i},Tx\rangle$ 
are elements of $\J$. For an isomtetric representation $\pi$ we have 
\[
\pi(\langle e_{i},Tx\rangle) =\sum_{j\in\Z} \pi(T_{ij}b_{j})\in \J,
\]
which is a convergent series by Lemma \ref{lem: l2rowcolumn}. 
The elements of the series lie in $\J$ since $b_{j}\in\J$, and 
$\J$ is closed, so it follows that $\langle e_{i},Tx\rangle\in\J$.
  
To see that $\End_{\B}^*(\H_{\B},\J)$ is closed, we use the fact that 
$\H_{\J}\subset \H_{\B}$ is closed.  For then if $T_{n}$ is a sequence in 
$\End_{\B}^*(\H_{\B},\J)$ which is Cauchy for the norm on 
$\End^{*}_{\B}(\H_{\B})$ then for each $x\in\H_{\B}$ the sequence 
$T_{n}x\in \H_{\J}$ is Cauchy. Hence the limit $Tx\in \H_{\B}$ is actually 
an element of $\H_{\J}$. For $S\in\End_{\B}^{*}(\H_{\B})$, $T\in\End^{*}_{\B}(\H_{\B},\J)$ 
and $x\in \H_{\B}$ we have $STx\in \H_{\J}$ because $Tx\in \H_{\J}$ and 
$TSx\in \H_{\J}$ by definition of $\End^{*}_{\B}(\H_{\B},\J)$.

It is immediate that for $T\in \End_{\B}^{*}(\H_{\B},\J)$ all the $T_{ij}$ are in $\J$. 
Conversely, if $T_{ij}\in \J$ for all $i,j$ then by Lemma \ref{lem: l2rowcolumn} 
for any $(b_{i})_{i\in\Z}\in\H_{\B}$ we have $T(b_{i})_{i\in\Z}=(\sum T_{ij}b_{j})_{i\in\Z}$ 
which is an element of $\H_{\B}$ all of whose coordinates are in $\J$ and 
hence an element of $\H_{\J}$.
\end{proof}

For a directed set $\Lambda$, the set
\[
\Lambda^{\infty}:=\{(\lambda_{i})_{i\in\Z}:\lambda_{i}\in\Lambda\},
\]
is a directed set with the partial order 
\[
(\lambda_{i})\leq(\mu_{i})\Leftrightarrow \lambda_{i}\leq\mu_{i},\quad\textnormal{ 
for all } i.
\]
If $\{u_{\lambda}\}_{\lambda\in\Lambda}$ is a bounded approximate unit for an ideal $\J\subset\B$, then 
\[
u_{(\lambda_{i})}:=\{\textnormal{diag }(u_{\lambda_{i}})_{i\in\Z}\}_{(\lambda_{i})\in\Lambda^{\infty}},
\]
is a net in $\End^{*}_{\B}(\H_{\B},\J)$ indexed by $\Lambda^{\infty}$. 
The diagonal matrices $v_{n,\lambda}$ defined by
\[
(v_{n,\lambda})_{ii}:= \left\{\begin{matrix} v_{\lambda} & |i|\leq n \\ 0, & |i|>n\end{matrix}\right.
\]
constitute a subnet.
The algebra $\End^{*}_{\B}(\H_{\B},\J)$ admits an approximate unit whenever $\J$ does.
\begin{lemma}
\label{lem: ampau} 
If $(u_{\lambda})_{\lambda\in\Lambda}$ is a bounded approximate unit for $\J$, then:\\
1) the net 
$(u_{(\lambda_{i})})_{(\lambda_{i})\in\Lambda^{\infty}}$,
is a bounded approximate unit for $\End^{*}_{\B}(\H_{\B},\J)$;\\
2) the subnet $(v_{n,\lambda})$ is a bounded approximate unit for $\kK\hotimes\J$.
In particular $\kK\hotimes\J$ has a sequential approximate unit whenever $\J$ does.
\end{lemma}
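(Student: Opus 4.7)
My plan is to prove the two parts in reverse order, since (2) is the simpler case and its proof structure reappears in (1). Boundedness is immediate: both $u_{(\lambda_i)} = \diag(u_{\lambda_i})$ and $v_{n,\lambda}$ (which is $u_\lambda$ times the projection onto the first $2n$ basis vectors of $\H_{\B}$) have operator norm bounded by $\sup_\lambda \|u_\lambda\|$.

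For part 2, the algebra $\kK \hotimes \J$ contains $\bigcup_n M_{2n}(\J)$ as a norm-dense subalgebra. For a finite-rank matrix $T$ supported in the $2N \times 2N$ block and any $n \geq N$, a direct computation shows $v_{n,\lambda} T - T$ is a matrix supported in the same finite block with entries $(u_\lambda - 1)T_{ij}$, each vanishing in norm as $\lambda \to \infty$; since we are in a fixed finite block, the operator norm vanishes as well. The same holds on the right. A standard $\epsilon/3$ argument, exploiting uniform boundedness, extends this to all of $\kK \hotimes \J$. The sequential refinement follows by restricting $\N \times \Lambda$ to a cofinal diagonal sequence when $\Lambda = \N$.

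For part 1, the crucial new ingredient is the flexibility of the index set $\Lambda^\infty$: we may choose each diagonal entry $\lambda_i$ separately. Given $T \in \End^{*}_{\B}(\H_{\B},\J)$ and $\epsilon > 0$, I would split $T = p_N T + (1 - p_N) T$ where $p_N$ is the projection onto the first $2N$ coordinates. The finite block $p_N T$ lies in $\kK \hotimes \J$ and is handled by part 2. For the tail $(1-p_N) T$, the row-wise summability of $\sum_j T_{ij} T_{ij}^{*}$ from Lemma \ref{lem: l2rowcolumn}, combined with the ideal property, allows us to choose, for each $|i| > N$, an index $\mu_i \in \Lambda$ making $\|(u_{\mu_i} - 1)$ applied to the $i$-th row $\|$ small enough to produce a summable tail.

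The main obstacle is converting these row-by-row pointwise estimates into a uniform operator-norm bound on $(u_{(\lambda_i)} - \Id)T$. This is handled by the identity $\|S\|^{2} = \|SS^{*}\|$ applied to $S = (u_{(\lambda_i)} - \Id) T$, which reduces the question to controlling the positive operator $SS^{*}$, whose $(i,i)$-entry is $(u_{\lambda_i} - 1)(TT^{*})_{ii}(u_{\lambda_i} - 1)^{*}$; the diagonal entries of $TT^{*}$ are convergent sums of elements of $\J$ by Lemma \ref{lem: l2rowcolumn}, so the row-by-row estimate closes into a Schur-type bound on the positive matrix $SS^{*}$. The argument for $\|Tu_{(\lambda_i)} - T\| \to 0$ is dual, using column rather than row summability of $T^{*}T$.
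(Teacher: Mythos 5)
Your key idea --- exploiting the index set $\Lambda^{\infty}$ to choose a separate $\lambda_i$ for each row (or column) so that the individual errors form a convergent series, each single row/column error being controlled via Lemma \ref{lem: l2rowcolumn} and the fact that the rows and columns of $T$ lie in $\H_{\J}$ --- is exactly the idea of the paper's proof, and your treatment of part 2 and of the finite block is fine. The gap is in the step that converts the row-by-row estimates into an operator-norm bound on $S=(u_{(\lambda_i)}-1)T$. Controlling the diagonal entries $(SS^{*})_{ii}$ of the positive matrix $SS^{*}$ does not by itself control $\|SS^{*}\|$: the $n\times n$ all-ones matrix is positive with every diagonal entry equal to $1$ and norm $n$. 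So ``a Schur-type bound on $SS^{*}$'' cannot be read off from the diagonal alone; to close your argument you would need the off-diagonal Cauchy--Schwarz estimate $\|(SS^{*})_{ik}\|\leq\|(SS^{*})_{ii}\|^{1/2}\|(SS^{*})_{kk}\|^{1/2}$ for positive operator matrices, together with \emph{summability} of the square roots $\|(SS^{*})_{ii}\|^{1/2}$ (so the error for the $i$-th row must be chosen like $\varepsilon 2^{-(|i|+1)}$, not merely ``small''), before the Schur test applies.

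The detour through $SS^{*}$ is in any case unnecessary, and avoiding it is what the paper does. A matrix supported on a single row (resp.\ column) has operator norm equal to the $\H_{\B}^{t}$-norm of that row (resp.\ the $\H_{\B}$-norm of that column), so once the $j$-th column of $Tu_{(\lambda_j)}-T$ has norm less than $\varepsilon 2^{-(|j|+1)}$ the triangle inequality
\[
\|T\,\textnormal{diag}(u_{\lambda_{j}})-T\|\leq\sum_{j}\|(b_{ij}u_{\lambda_{j}}-b_{ij})_{i}\|<\varepsilon
\]
finishes the right approximate unit property immediately, with the left-hand case handled symmetrically using rows. This route needs no $C^{*}$-identity or positivity argument, which also sidesteps the question of whether $\|S\|^{2}=\|SS^{*}\|$ is available for the norm carried by $\End^{*}_{\B}(\H_{\B})$ over a non-self-adjoint operator algebra $\B$.
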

\begin{proof}  
Given an operator $T:=(b_{ij})$ and $\varepsilon >0$, by 
Lemma \ref{lem: l2rowcolumn} we can choose $\lambda_{j}$, $j\in\Z$, 
such that the columns $(b_{ij})_{i}$ satisfy
\[
\|(b_{ij}-b_{ij}u_{\lambda_{j}})_{i}\|<\varepsilon 2^{-(|j|+1)}.
\]
Since $(\lambda_{j})_{j\in\Z}\in\Lambda^{\infty}$, the matrix $u_{(\lambda_{j})}=\textnormal{diag}(u_{\lambda_{j}})$ is an element of the directed set in 1). We can estimate
\[
\begin{split}\|T\textnormal{diag}(u_{\lambda_{j}})-T\|
&=\|(b_{ij})\textnormal{diag}(u_{\lambda_{j}})-(b_{ij})\| 
=\|\sum_{j}(b_{ij}u_{\lambda_{j}}-b_{ij})_{i}\| \\
&\leq \sum_{j}\|(b_{ij}u_{\lambda_{j}}-b_{ij})_{i}\| 
< \sum_{j}\varepsilon 2^{-(|j|+1)} 
\ \leq \varepsilon,\end{split}
\]
showing we have a right approximate unit. In a 
similar way one shows that the directed set is a left approximate unit. The proof of 2) is similar
but easier.
\end{proof}

We define two representations $\pi:\B\rightarrow\bB(\H_{\pi})$ and 
$\rho:\B\rightarrow\bB(\H_{\rho})$ to be \emph{cb-equivalent} if there 
exists a cb-isomorphism $g:\H_{\pi}\rightarrow\H_{\rho}$ such that $\pi=g^{-1}\rho g$.
\begin{lemma}
\label{lem: decompose} 
Let $\pi:\End^{*}_{\B}(\H_{\B})\rightarrow\bB(\H)$ be a cb-representation. 
There exist idempotents $q_{i}\in\pi(\End^{*}_{\B}(\H_{\B}))\subset\bB(\H)$ 
such that $\sum_{i}q_{i}=\pi(1)$ and $\sup_{i}\|q_{i}\|<\infty$. Consequently, 
$\pi$ is cb-equivalent to the representation
\[
\begin{split} 
\End^{*}_{\B}(\H_{\B})\rightarrow\bB\Big(\bigoplus_{i\in\Z}q_{i}\H\Big) \qquad
(b_{ij})\mapsto (q_{i}\pi(b_{ij}) q_{j}).
\end{split}
\]
\end{lemma}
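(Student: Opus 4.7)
My approach is to take the obvious candidates for the idempotents, namely the images under $\pi$ of the diagonal matrix units, and then reduce the strong-convergence claim to Theorem \ref{thm: idempotent} applied to the natural ideal $\kK\hotimes\B \subset \End^{*}_{\B}(\H_{\B})$.

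Concretely, let $e_{ii}\in\End^{*}_{\B}(\H_{\B})$ be the diagonal matrix unit projecting onto the $i$-th summand of $\H_{\B}$. Then $e_{ii}^{2}=e_{ii}$, $e_{ii}e_{jj}=0$ for $i\neq j$, and $\|e_{ii}\|=1$. Setting $q_{i}:=\pi(e_{ii})$ gives idempotents in $\pi(\End^{*}_{\B}(\H_{\B}))$ with $q_{i}q_{j}=\delta_{ij}q_{i}$ and $\sup_{i}\|q_{i}\|\leq\|\pi\|_{cb}$. The partial sums are $P_{N}=\sum_{|i|\leq N}q_{i}=\pi(v_{N})$ with $v_{N}=\sum_{|i|\leq N}e_{ii}$. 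By Lemma \ref{lem: ampau} the sequence $(v_{N})$ is a bounded approximate unit for the closed ideal $\kK\hotimes\B\subset\End^{*}_{\B}(\H_{\B})$, so Theorem \ref{thm: idempotent} applied to the cb-representation $\pi|_{\kK\hotimes\B}$ gives that $P_{N}$ converges strongly to an idempotent $q\in\bB(\H)$ satisfying $q\pi(a)=\pi(a)q=\pi(a)$ for every $a\in\kK\hotimes\B$.

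To identify $q$ with $\pi(1)$, I invoke Lemma \ref{multiplierproperties}, under which $\End^{*}_{\B}(\H_{\B})$ is cb-isomorphic to the strict multiplier algebra $\M(\kK\hotimes\B)$, together with the uniqueness statement in Theorem \ref{thm: ext}. Indeed, both the given $\pi$ and the canonical extension of $\pi|_{\kK\hotimes\B}$ are cb-representations of $\End^{*}_{\B}(\H_{\B})$ that agree on the ideal, so they must coincide, giving $\pi(1)=q=\sum_{i}q_{i}$ in the strong operator topology.

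For the asserted cb-equivalence, after cutting $\H$ down to $\pi(1)\H$ (so that $\pi$ becomes essential and $1=\sum q_{i}$ strongly), I define $g\colon\H\to\bigoplus_{i\in\Z}q_{i}\H$ by $g(h):=(q_{i}h)_{i}$, with candidate inverse $(h_{i})\mapsto\sum_{i}h_{i}$. The uniform bound $\sup_{N}\|P_{N}\|\leq\|\pi\|_{cb}$, together with its matrix amplifications obtained by applying the same construction to $M_{n}(\End^{*}_{\B}(\H_{\B}))\cong\End^{*}_{\B}(\H_{\B}^{\oplus n})$, produces the cb-bounds on $g$ and $g^{-1}$. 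The $(i,j)$-block of the conjugated representation equals $q_{i}\pi(b)q_{j}=\pi(e_{ii}be_{jj})$, which depends only on the matrix entry $b_{ij}$ of $b\in\End^{*}_{\B}(\H_{\B})$ and is the meaning of $q_{i}\pi(b_{ij})q_{j}$ in the statement.

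\textbf{Main obstacle.} The decisive step is the identification $\sum_{i}q_{i}=\pi(1)$: it is not enough to know that the $v_{N}$ are idempotents with $v_{N}\to 1$ in some suitable sense on $\H_{\B}$, since cb-representations of non-$C^{*}$ operator algebras do not automatically respect such limits. The argument must pass through the ideal $\kK\hotimes\B$ and use uniqueness of the cb-extension to its multiplier algebra, which is precisely where the machinery of Sections $1.2$ and $1.3$ is used. Once this is in hand, the matrix decomposition and cb-bounds for $g$ are essentially bookkeeping from the amplified uniform estimate on the $P_{N}$.
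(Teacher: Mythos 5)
Your idempotents are the same as the paper's: $q_i=\pi(e_{ii})$ is exactly what the paper calls $\pi_i(1)$, and the cb-bookkeeping at the end (uniform bounds on the partial sums $P_N$ and on their matrix amplifications) is sound. The problem is the central identification $\sum_i q_i=\pi(1)$. Theorem \ref{thm: idempotent} applied to $\pi|_{\kK\hotimes\B}$ does give strong convergence of $P_N=\pi(v_N)$ to an idempotent $q$, but that theorem identifies $q\H$ as $[\pi(\kK\hotimes\B)\H]$, and nothing forces this subspace to equal $\pi(1)\H$: a cb-representation of $\End^{*}_{\B}(\H_{\B})$ is not determined by its restriction to the ideal $\kK\hotimes\B$. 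The uniqueness you invoke from Theorem \ref{thm: ext} is uniqueness among extensions normalised by $\pi(1)\H=[\pi(\A)\H]$ and $(1-\pi(1))\H=\Nil\pi(\A)$ (equivalently, among strict-to-strong continuous extensions); the given $\pi$ has no reason to satisfy this normalisation, so ``two cb-representations agreeing on the ideal must coincide'' is not available. A concrete obstruction: for $\B=\C$ one has $\End^{*}_{\C}(\H_{\C})=\bB(\ell^{2})$ and $\kK\hotimes\C=\kK(\ell^{2})$; the GNS representation of a state vanishing on the compacts annihilates every $e_{ii}$, so all $q_i=0$ while $\pi(1)\neq 0$, and both $\sum_i q_i=\pi(1)$ and the asserted cb-equivalence fail for that $\pi$.

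To be fair, the paper's own proof is no more forthcoming at this point --- it simply asserts the cb-isomorphism $q\H\cong\bigoplus_i\H_i$ --- so your write-up has the merit of isolating exactly where the difficulty sits; but the proposed repair via uniqueness of multiplier extensions does not close it. To make the argument work one needs an extra hypothesis tying $\pi$ to the ideal, for instance $[\pi(\kK\hotimes\B)\H]=\pi(1)\H$, i.e. that $\pi$ really is the canonical extension of its restriction to $\kK\hotimes\B$ (in which case your appeal to Theorem \ref{thm: ext} becomes legitimate and the rest of your proof goes through verbatim); alternatively the identity $\sum_i q_i=\pi(1)$ and the matrix decomposition must be asserted only on the subspace $[\pi(\kK\hotimes\B)\H]$ rather than on all of $\pi(1)\H$.
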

\begin{proof} 
Using matrix coefficients and embedding $\B$ in the 
$(i,i)$-diagonal slot of $\End_{\B}^{*}(\H_{\B})$ 
we obtain from $\pi$ a family of representations 
$\pi_{i}:\B\rightarrow \bB(\H)$ satisfying $\pi_{i}\pi_{j}=0$. The elements $q_{i}:=\pi_{i}(1)$ 
are the corresponding idempotents in $\bB(\H)$. We also write $q=\pi(1)$. If we 
write $\H_{i}:=q_{i}\H$, then since $q_{i}q_{j}=\delta_{ij}$, and $\|q_{i}\|\leq\|\pi\|$, 
we have a cb-isomorphism $q\H\cong\bigoplus_{i\in\Z}\H_{i}$ and so a 
cb-isomorphism $g:\H\rightarrow (1-q)\H\oplus\bigoplus_{i\in\Z}\H_{i}$. 
Using the identifications $[\pi(\B)\H]=q\H$ and $\Nil\pi(\B)=(1-q)\H$, this clearly gives a 
cb-equivalence between $\pi$ and the matrix representation $(q_{i}\pi(a_{ij})q_{j})$.
\end{proof}
\begin{lemma}
\label{lem: qcomm} 
Let $\J\subset\B$ be an ideal in a unital operator algebra $\B$ and 
let $(u_{\lambda})$ be a bounded approximate unit 
for $\J$. Let $\pi:\End^{*}_{\B}(\H_{\B})\rightarrow \bB(\H)$ 
be a cb-representation. Consider the subalgebras 
$\J\cong\J\cdot \textnormal{Id}_{\H_{\B}}\subset\End^{*}_{\B}(\H_{\B},\J)\subset\End^{*}_{\B}(\H_{\B})$ 
and the representations $\pi:\J\rightarrow \bB(\H)$ 
and $\pi:\End^{*}_{\B}(\H_{\B},\J)\rightarrow \bB(\H)$, defined by restriction. Then: \\
1) there is an equality of essential subspaces $[\pi(\J)\H]=[\pi(\End^{*}_{\B}(\H_{\B},\J))\H]$;\\
2) the idempotent $q=w$-$\lim u_{\lambda}\in\bB(\H)$ from 
Proposition \ref{thm: idempotent} commutes with $\pi(T)$ for all $T\in \End^{*}_{\B}(\H_{\B})$.
\end{lemma}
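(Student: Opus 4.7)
My plan is to reduce to the block-matrix form of $\pi$ provided by Lemma \ref{lem: decompose}, compute the strong limit $q$ component-wise, and then exploit the two-sided ideal relation $\J\trianglelefteq\B$ to establish both claims by manipulating matrix-unit identities.

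First I would invoke Lemma \ref{lem: decompose} and pass to the cb-equivalent representation $\pi$ on $\bigoplus_{i\in\Z}\H_i$ with matrix coefficients $\sigma_{ij}(b):=\pi(bE_{ij}):\H_j\to\H_i$, where $E_{ij}$ is the matrix unit with $1_\B$ at position $(i,j)$. The homomorphism property of $\pi$ translates into the two factorisations $\sigma_{ii}(b_1)\sigma_{ij}(b_2)=\sigma_{ij}(b_1 b_2)=\sigma_{ij}(b_1)\sigma_{jj}(b_2)$. Applying Theorem \ref{thm: idempotent} to each cb representation $\sigma_{jj}|_\J:\J\to\bB(\H_j)$ produces idempotents $Q_j:=\textnormal{s-}\lim_\lambda\sigma_{jj}(u_\lambda)$ satisfying $Q_j\sigma_{jj}(b)=\sigma_{jj}(b)Q_j=\sigma_{jj}(b)$ for all $b\in\J$. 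Under the matrix decomposition $\pi(u_\lambda\cdot\mathrm{Id}_{\H_\B})$ acts as the block diagonal $\mathrm{diag}(\sigma_{ii}(u_\lambda))$, and a dominated convergence argument shows this converges strongly to $q=\mathrm{diag}(Q_i)$, which is then the idempotent of Theorem \ref{thm: idempotent} for the subalgebra $\J\cdot\mathrm{Id}$.

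For (1) the inclusion $[\pi(\J)\H]\subseteq[\pi(\End^*_\B(\H_\B,\J))\H]$ is immediate. For the reverse, given $T=(T_{ij})\in\End^*_\B(\H_\B,\J)$ every entry $T_{ij}$ lies in $\J$, so $Q_i\sigma_{ii}(T_{ij})=\sigma_{ii}(T_{ij})$; using the factorisation $\sigma_{ij}(T_{ij})=\sigma_{ii}(T_{ij})\sigma_{ij}(1)$ this forces $Q_i\sigma_{ij}(T_{ij})=\sigma_{ij}(T_{ij})$, so every component of $\pi(T)h$ lies in $Q_i\H_i$, hence $\pi(T)h\in\bigoplus Q_i\H_i=q\H=[\pi(\J)\H]$. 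For (2), I would first take the strong limit on each side of $\sigma_{ii}(u_\lambda)\sigma_{ij}(1)=\sigma_{ij}(u_\lambda)=\sigma_{ij}(1)\sigma_{jj}(u_\lambda)$ to obtain the off-diagonal compatibility $Q_i\sigma_{ij}(1)=\sigma_{ij}(1)Q_j$. Next, since $\J$ is a two-sided ideal, for any $b\in\B$ both $u_\lambda b$ and $bu_\lambda$ belong to $\J$, so Theorem \ref{thm: idempotent} gives $Q_j\sigma_{jj}(u_\lambda)\sigma_{jj}(b)=\sigma_{jj}(u_\lambda)\sigma_{jj}(b)$ and $\sigma_{jj}(b)\sigma_{jj}(u_\lambda)Q_j=\sigma_{jj}(b)\sigma_{jj}(u_\lambda)$; taking strong limits yields $Q_j\sigma_{jj}(b)Q_j=Q_j\sigma_{jj}(b)=\sigma_{jj}(b)Q_j$, i.e.\ $Q_j$ commutes with the full image $\sigma_{jj}(\B)$. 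Combining these two identities, for arbitrary $T=(T_{ij})\in\End^*_\B(\H_\B)$ the $(i,j)$-entry of the commutator is
\[
Q_i\sigma_{ij}(T_{ij})=Q_i\sigma_{ii}(T_{ij})\sigma_{ij}(1)=\sigma_{ii}(T_{ij})Q_i\sigma_{ij}(1)=\sigma_{ii}(T_{ij})\sigma_{ij}(1)Q_j=\sigma_{ij}(T_{ij})Q_j,
\]
which is precisely $q\pi(T)=\pi(T)q$.

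The main obstacle is the `centrality' $Q_j\sigma_{jj}(b)=\sigma_{jj}(b)Q_j$ for $b\in\B$ rather than $b\in\J$, without assuming that $(u_\lambda)$ is quasi-central in $\B$. In the $C^*$-setting one classically passes to a quasi-central approximate unit, but that tool (Theorem \ref{Ped}) is only proved subsequently. The argument above bypasses this: the individual commutators $[\sigma_{jj}(u_\lambda),\sigma_{jj}(b)]$ need not vanish, yet the limit idempotent $Q_j$ inherits centrality purely from the fact that $u_\lambda b,bu_\lambda\in\J$ are absorbed by $Q_j$ on both sides combined with strong convergence of $\sigma_{jj}(u_\lambda)$.
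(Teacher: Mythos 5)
Your overall architecture (block decomposition via Lemma \ref{lem: decompose}, entrywise idempotents $Q_j$, the intertwining relation $Q_i\sigma_{ij}(1)=\sigma_{ij}(1)Q_j$) is viable, and part (1) is essentially sound, modulo the fact that interchanging $Q_i$ with the infinite sum $\sum_j\sigma_{ij}(T_{ij})h_j$ silently uses the $\ell^2$ row/column summability of Lemma \ref{lem: l2rowcolumn}. But the step you yourself single out as the crux --- the centrality $Q_j\sigma_{jj}(b)=\sigma_{jj}(b)Q_j$ for $b\in\B$ --- does not follow from the two identities you wrote down. You invoke left-absorption on $u_\lambda b\in\J$, giving $Q_j\sigma_{jj}(u_\lambda)\sigma_{jj}(b)=\sigma_{jj}(u_\lambda)\sigma_{jj}(b)$, and right-absorption on $bu_\lambda\in\J$, giving $\sigma_{jj}(b)\sigma_{jj}(u_\lambda)Q_j=\sigma_{jj}(b)\sigma_{jj}(u_\lambda)$. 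Take the strong limit of the first: since $\sigma_{jj}(u_\lambda)\to Q_j$ strongly and $Q_j^2=Q_j$, both sides converge to $Q_j\sigma_{jj}(b)h$; the second likewise degenerates to $\sigma_{jj}(b)Q_j=\sigma_{jj}(b)Q_j$. Both identities are tautologies in the limit and yield no relation between $Q_j\sigma_{jj}(b)$ and $\sigma_{jj}(b)Q_j$. The repair is to cross the pairings: apply \emph{right}-absorption to $u_\lambda b$, i.e.\ $\sigma_{jj}(u_\lambda)\sigma_{jj}(b)Q_j=\sigma_{jj}(u_\lambda)\sigma_{jj}(b)$, whose strong limit is $Q_j\sigma_{jj}(b)Q_j=Q_j\sigma_{jj}(b)$, and \emph{left}-absorption to $bu_\lambda$, i.e.\ $Q_j\sigma_{jj}(b)\sigma_{jj}(u_\lambda)=\sigma_{jj}(b)\sigma_{jj}(u_\lambda)$, whose strong limit is $Q_j\sigma_{jj}(b)Q_j=\sigma_{jj}(b)Q_j$. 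Combining the two gives the desired commutation, so your mechanism does work, but only with this corrected bookkeeping; as written the derivation is vacuous.

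For comparison, the paper proves (2) without any entrywise computation: having established (1) and its adjoint version $[\pi(\J)^{*}\H]=[\pi(\End^{*}_{\B}(\H_{\B},\J))^{*}\H]$, it observes that $\End^{*}_{\B}(\H_{\B},\J)$ is an ideal in $\End^{*}_{\B}(\H_{\B})$, so both $q\H=[\pi(\End^{*}_{\B}(\H_{\B},\J))\H]$ and its topological complement $(1-q)\H=[\pi(\End^{*}_{\B}(\H_{\B},\J))^{*}\H]^{\perp}$ are invariant under $\pi(\End^{*}_{\B}(\H_{\B}))$; the two resulting relations $q\pi(T)q=\pi(T)q$ and $(1-q)\pi(T)(1-q)=\pi(T)(1-q)$ immediately give $q\pi(T)=\pi(T)q$. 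That route is cleaner and sidesteps all summability issues; you may want to adopt it, or at least note that your corrected absorption argument is the entrywise shadow of the same ideal-invariance phenomenon.
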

\begin{proof} 
First we show that $[\pi(\J)\H]=[\pi(\End^{*}_{\B}(\H_{\B},\J))\H]$. It is clear that 
\[[\pi(\J)\H]\subset [\pi(\End^{*}_{\B}(\H_{\B},\J))\H],\] so we proceed to show the reverse inclusion. By 
Lemma \ref{lem: decompose} we may assume that there are idempotents 
$q_{i}:\H\rightarrow \H$ such that $\H=\bigoplus_{i\in\Z}\H_{i}$ and  
$\pi(a_{ij})=(q_{i}\pi(a_{ij})q_{j})$. We wish to show that for 
$(h_{i})\in  [\pi(\End^{*}_{\B}(\H_{\B},\J))\H]$, it holds that 
$\pi(u_{\lambda}\cdot{\rm Id}_{\H_{\B}})(h_{i})\rightarrow (h_{i})$, 
so that $(h_{i})\in [\pi(\J\cdot{\rm Id}_{\H_{\B}})\H]$. Thus we must show that for every $\varepsilon >0$ 
there exists $\mu\in\Lambda$ such that for all $\lambda\geq \mu$ it holds that 
$\|\pi(u_{\lambda}\cdot{\rm Id}_{\H_{\B}})(h_{i})-(h_{i})\|<\varepsilon$. So let $\varepsilon>0$ 
and choose $N\in\N$ such that 
\[
\Big\|\sum_{|i| > N}\langle h_{i},h_{i}\rangle\Big\|^{\frac{1}{2}}<\frac{\varepsilon}{2(C\|\pi\|+1)},
\]
where $C:=\sup_{\lambda}\|u_{\lambda}\|$. We claim we can choose $\mu$ 
such that for all $\lambda\geq \mu$ and  $1\leq |i| \leq N-1$
\begin{equation}
\label{mu}
\|\pi(u_{\lambda}\cdot{\rm Id}_{\H_{\B}})(h_{i})_{|i|<N}-(h_{i})_{|i|<N}\|<\frac{\varepsilon}{2N}.
\end{equation}
To see this, first observe that $(h_{i})_{1\leq |i|\leq N}\in [\pi(\End^{*}_{\B}(\H_{\B},\J))\H_{i}]$. 
This is the case because \[q_{[N]}:=\sum_{1\leq |i|\leq N}q_{i}\in \End^{*}_{\B}(\H_{\B}),\] 
and $(h_{i})_{1\leq |i|\leq N}= q_{[N]}(h_{i})$. Then, by Lemma \ref{Nil} and 
Lemma \ref{lem: ampau}
$$
\pi(u_{\lambda}\cdot\textnormal{Id}_{\H_{\B}})(h_{i})_{|i|<N}\rightarrow (h_{i})_{|i|<N},
$$
and since we only deal with finitely many entries (at most $2N$), 
this means we can choose $\mu$ as in Equation \eqref{mu}. 
Thus we have for $\lambda\geq\mu$ that
\[
\begin{split}
\|\pi(u_{\lambda}\cdot{\rm Id}_{\H_{\B}})(h_{i})-(h_{i})\|  
& \leq \|\pi(u_{\lambda}\cdot{\rm Id}_{\H_{\B}})(h_{i})_{1\leq |i|\leq N}-(h_{i})_{1\leq |i|\leq N}\|\\
&\qquad\quad+\|\pi(u_{\lambda}\cdot{\rm Id}_{\H_{\B}})(h_{i})_{|i|> N}-(h_{i})_{|i|> N}\| \\
&\leq \sum_{1\leq |i|\leq N}\|\pi_{i}(u_{\lambda}\cdot{\rm Id}_{\H_{\B}})h_{i}-h_{i}\|\\
&\qquad\quad+\Big\|\sum_{|i|> N}\langle(\pi(u_{\lambda}\cdot{\rm Id}_{\H_{\B}}-{\rm Id}_{\H_{\B}})h_{i}, (\pi(u_{\lambda}\cdot{\rm Id}_{\H_{\B}}-{\rm Id}_{\H_{\B}})h_{i}\rangle\Big\|^{\frac{1}{2}}\\
&< \frac{\varepsilon}{2}+(\|\pi(u_{\lambda}\cdot{\rm Id}_{\H_{\B}})\|+1)\,
\Big\|\sum_{|i|> N}\langle h_{i}, h_{i}\rangle\Big\|^{\frac{1}{2}}\\
&< \frac{\varepsilon}{2}+\frac{(\|\pi(u_{\lambda}\cdot{\rm Id}_{\H_{\B}})\|+1)\varepsilon}{2(C\|\pi\|+1)}
< \varepsilon,
\end{split}
\]
showing that $\pi(u_{\lambda})(h_{i})\rightarrow (h_{i})$. 

In the same vein $[\pi(\J)^{*}\H]=[\pi(\End^{*}_{\B}(\H_{\B},\J))^{*}\H]$.
As $\End^{*}_{\B}(\H_{\B},\J))$ is an ideal in $\End^{*}_{\B}(\H_{\B})$, the subspace 
$[\pi(\End^{*}_{\B}(\H_{\B},\J))\H]$ is $\End^{*}_{\B}(\H_{\B})$-invariant. The topological 
complement, given by $[\pi(\End^{*}_{\B}(\H_{\B},\J))^{*}\H]^{\perp}$, is $\End^{*}_{\B}(\H_{\B})$ 
invariant as well: Let $v\in [\pi(\End^{*}_{\B}(\H_{\B},\J))^{*}\H]^{\perp}$ and 
$h\in [\pi(\End^{*}_{\B}(\H_{\B},\J))^{*}\H]$. Then $\pi(T)^{*}h\in  [\pi(\End^{*}_{\B}(\H_{\B},\J))^{*}\H]$ 
because $\End^{*}_{\B}(\H_{\B},\J)$ is an ideal and thus
\[
\langle \pi(T)v,h\rangle=\langle v,\pi(T)^{*}h\rangle=0.
\]
That is $\pi(T)v\in [\pi(\End^{*}_{\B}(\H_{\B},\J))^{*}\H]^{\perp}$. From $3),\,4)$ of 
Proposition \ref{thm: idempotent}, we see that
\[
q\pi(T)q=\pi(T)q,\quad (1-q)\pi(T)(1-q)=\pi(T)(1-q),
\]
from which $q\pi(T)=\pi(T)q$ follows readily.
\end{proof}
\begin{lemma}
\label{weak} 
Let $(u_{\lambda})$ be a bounded approximate unit for an ideal $\mathcal{I}$ in an operator  
algebra $\mathcal{B}$. Then for all $T\in \End^{*}_{\B}(\H_{\B})$,  
$[ u_{\lambda}\cdot\textnormal{Id}_{\H_{\B}},T]\xrightarrow{w} 0$. That is, 
$u_{\lambda}\cdot\textnormal{Id}_{\H_{\B}}$ commutes with $\End^{*}_{\B}(\H_{\B})$ weakly asymptotically.
\end{lemma}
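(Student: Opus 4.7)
The plan is to reduce this to the commutation property of the weak-limit idempotent $q$ that we already have in hand from Lemma \ref{lem: qcomm}. Since the statement concerns convergence in a weak sense, I interpret it as: for every cb-representation $\pi:\End^{*}_{\B}(\H_{\B})\to\bB(\H)$, the net $\pi([u_{\lambda}\cdot{\rm Id}_{\H_{\B}},T])$ converges to $0$ in the weak operator topology of $\bB(\H)$.

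First I would fix an arbitrary cb-representation $\pi$. Viewing $\mathcal{I}$ as an operator algebra with bounded approximate unit $(u_{\lambda})$ and restricting $\pi$ along the diagonal embedding $\mathcal{I}\hookrightarrow\End^{*}_{\B}(\H_{\B})$, $a\mapsto a\cdot{\rm Id}_{\H_{\B}}$, Theorem \ref{thm: idempotent} produces an idempotent $q\in\bB(\H)$ which is the strong, hence weak, limit of $\pi(u_{\lambda}\cdot{\rm Id}_{\H_{\B}})$. The crucial input is then Lemma \ref{lem: qcomm}(2), which tells us that this particular idempotent $q$ commutes with $\pi(T)$ for \emph{every} $T\in\End^{*}_{\B}(\H_{\B})$, not merely with the elements of $\mathcal{I}\cdot{\rm Id}_{\H_{\B}}$ or of $\End^{*}_{\B}(\H_{\B},\mathcal{I})$.

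With these two facts I would conclude quickly. Writing $U_{\lambda}:=\pi(u_{\lambda}\cdot{\rm Id}_{\H_{\B}})$ and $S:=\pi(T)$, separate weak operator continuity of left and right multiplication by a fixed bounded operator yields
\[
U_{\lambda}S \xrightarrow{w} qS, \qquad SU_{\lambda} \xrightarrow{w} Sq,
\]
and since $qS=Sq$ by Lemma \ref{lem: qcomm}(2), the commutator
\[
\pi([u_{\lambda}\cdot{\rm Id}_{\H_{\B}},T]) \;=\; U_{\lambda}S - SU_{\lambda} \;\xrightarrow{w}\; qS-Sq \;=\; 0.
\]
As $\pi$ was an arbitrary cb-representation, the net $[u_{\lambda}\cdot{\rm Id}_{\H_{\B}},T]$ converges to $0$ weakly.

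The only non-routine ingredient is the commutation $q\pi(T)=\pi(T)q$ itself, and this has already been done in Lemma \ref{lem: qcomm}; once that is granted, the present lemma is a direct consequence of separate WOT-continuity of multiplication. So there is no real obstacle at this stage, provided the weak-asymptotic statement is interpreted representation-by-representation, as is natural in the setting of cb-representations of operator algebras.
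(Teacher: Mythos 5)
Your core argument is the same as the paper's: both proofs rest on Theorem \ref{thm: idempotent} (the approximate unit converges strongly to an idempotent $q$) together with Lemma \ref{lem: qcomm}(2) ($q$ commutes with $\pi(T)$ for every $T\in\End^{*}_{\B}(\H_{\B})$), and then conclude by separate continuity of multiplication. That part of your write-up is correct, and in fact you get SOT convergence of the commutators to $[q,\pi(T)]=0$ in each fixed cb-representation, which is more than you need.

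The one genuine issue is your final step, where you pass from ``WOT convergence in every cb-representation'' to ``$[u_{\lambda}\cdot\mathrm{Id}_{\H_{\B}},T]\xrightarrow{w}0$'' by declaring the two interpretations to be the same. They are equivalent, but the equivalence is exactly the reduction the paper's proof spends its first half on, and it matters for how the lemma is used: in Theorem \ref{Ped} the authors apply Mazur's theorem (the norm closure and weak closure of a convex set coincide), which requires convergence in the Banach-space weak topology $\sigma(X,X^{*})$ of $X=\End^{*}_{\B}(\H_{\B})$ (equivalently, of $\bB(\H)$ under a fixed completely isometric embedding), not merely WOT convergence in some family of representations. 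To bridge the gap you must take an arbitrary bounded functional $\phi$ on $\End^{*}_{\B}(\H_{\B})$, extend it by Hahn--Banach to the enveloping $C^{*}$-algebra $C^{*}(\End^{*}_{\B}(\H_{\B}))\subset\bB(\H)$, write it as a linear combination of four states, and realise each state as a vector state in the universal GNS representation $\pi_{u}$; since $\pi_{u}$ restricts to a cb-representation of $\End^{*}_{\B}(\H_{\B})$, your argument then applies and gives $\phi([u_{\lambda}\cdot\mathrm{Id}_{\H_{\B}},T])\to 0$. With that paragraph added, your proof coincides with the paper's; without it, the statement you have proved is not in the form in which the lemma is subsequently invoked.
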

\begin{proof} The argument we give is modelled on the proof of \cite[Lemma 3.1]{AP}. 
We assume that $\End^{*}_{\B}(\H_{\B})$, and hence $\J$ and $\B$ are 
completely isometrically embedded in $\bB(\H)$. Let the linear functional $\phi:\End^{*}_{\B}(\H_{\B})\rightarrow \C$ 
be continuous. By the Hahn-Banach theorem we may extend $\phi$ to the enveloping 
$C^{*}$-algebra $C^{*}(\End^{*}_{\B}(\H_{\B}))$, the $C^{*}$-algebra generated 
by $\End^{*}_{\B}(\H_{\B})\subset\bB(\H)$. Since every element in the 
dual of a $C^{*}$-algebra is a linear combination of four states, it 
suffices to prove weak convergence with respect to all states of $C^{*}(\End^{*}_{\B}(\H_{\B}))$. 
If we denote by $\pi_{u}:C^{*}(\End^{*}_{\B}(\H_{\B}))\to\bB(\H_{u})$ the universal GNS-representation of 
$C^{*}(\End^{*}_{\B}(\H_{\B}))$, the state $\phi$ has the form 
$b\mapsto\langle\pi_{u}(b)v,v\rangle$, where $v$ is a  vector 
in the GNS-space $\H_{u}$. 
Since $(u_{\lambda})$ is an approximate unit for $\mathcal{J}$, $\pi(u_{\lambda})$ 
converges strongly to an idempotent $q$ onto $[\pi(\mathcal{J})\mathcal{H}_{u}]$. 
By Lemma \ref{lem: qcomm}, $q$ commutes with $\pi(a_{ij})$. Hence 
\[
\lim_{\lambda}\phi([u_{\lambda}\cdot\textnormal{Id}_{\H_{\B}},T])=\langle v,[q,\pi_{u}(T)]v\rangle=0.\qedhere
\]
\end{proof}
\begin{defn} 
Let $\mathcal{J}\subset \B$ be an ideal and $u_{\lambda}$ a 
bounded approximate unit for $\mathcal{J}$. The bounded approximate unit 
$u_{\lambda}$ is said to be 
$\End^{*}_{\B}(\H_{\B})$-\emph{quasicentral} if for all $T\in\End^{*}_{\B}(\H_{\B})$ 
we have $\lim_{\lambda}\| [T, u_{\lambda}\cdot\textnormal{Id}_{\H_{\B}}]\|=0$.
\end{defn}
We are now ready to establish the existence of 
$\End^{*}_{\B}(\H_{\B})$-quasicentral approximate units. 
It should be noted that this result is new even for $C^{*}$-algebras.
\begin{thm}
\label{Ped}
Let $(u_{\lambda})_{\lambda\in \Lambda}$ be a bounded approximate unit for a closed ideal 
$\mathcal{J}$ in a unital operator algebra $\mathcal{B}$. Then there is an 
$\End^{*}_{\B}(\H_{\B})$-quasicentral approximate unit $(v_{\mu})_{\mu\in M}$ 
for $\mathcal{J}$, contained in the convex hull of $(u_{\lambda})$.
\end{thm}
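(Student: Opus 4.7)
The plan is to adapt the Akemann--Pedersen technique from \cite{AP}: take the weak asymptotic commutation already provided by Lemma \ref{weak} as input and upgrade it to norm-convergence by passing to convex combinations, using Hahn--Banach.

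First I would organise the approximation data into a directed set. Let $M$ be the collection of triples $\mu = (F, G, \e)$ with $F\subset \End^{*}_{\B}(\H_{\B})$ finite, $G\subset\J$ finite and $\e>0$, ordered by $F\subseteq F'$, $G\subseteq G'$, $\e'\leq \e$. Fixing a completely isometric embedding $\End^{*}_{\B}(\H_{\B})\hookrightarrow \bB(\H)$ and $\mu=(F,G,\e)\in M$, form the Banach space
\[
X_\mu := \bB(\H)^{\oplus |F|} \oplus \J^{\oplus |G|} \oplus \J^{\oplus |G|}
\]
with the supremum norm, and consider the net
\[
x_\lambda := \bigl([u_\lambda\cdot \Id_{\H_{\B}}, T]\bigr)_{T\in F} \oplus (u_\lambda a - a)_{a\in G} \oplus (a u_\lambda - a)_{a\in G} \in X_\mu.
\]
By Lemma \ref{weak} the commutator components tend weakly to zero, while the remaining components tend to zero in norm (hence weakly) because $(u_\lambda)$ is an approximate unit for $\J$. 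Therefore $x_\lambda\to 0$ weakly in $X_\mu$.

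Next I would apply the Hahn--Banach separation theorem. Since $0$ lies in the weak closure of $\{x_\lambda:\lambda\in\Lambda\}$, it also lies in the weak closure of the convex hull $\mathrm{conv}\{x_\lambda\}$; but the weak and norm closures of a convex subset of a Banach space coincide. Hence $0$ lies in the norm closure of $\mathrm{conv}\{x_\lambda\}$, so I can select a convex combination $v_\mu := \sum_i c_i u_{\lambda_i}$ with $c_i\geq 0$, $\sum_i c_i = 1$, satisfying
\[
\|[v_\mu\cdot \Id_{\H_{\B}}, T]\| < \e \text{ for all } T\in F, \qquad \|v_\mu a - a\|,\ \|a v_\mu - a\| < \e \text{ for all } a\in G.
\]
Because $\J$ is a vector subspace, $v_\mu\in\J$, and $\|v_\mu\|\leq \sup_\lambda\|u_\lambda\|$, so $(v_\mu)_{\mu\in M}$ is a bounded net lying in the convex hull of $(u_\lambda)$. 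By construction it is simultaneously an approximate unit for $\J$ (pass to larger $G$ and smaller $\e$) and $\End^{*}_{\B}(\H_{\B})$-quasicentral (pass to larger $F$).

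The only genuinely nontrivial input is the weak convergence used in the first step, and that has already been established in Lemma \ref{weak} via the idempotent extraction of Theorem \ref{thm: idempotent} together with the universal GNS trick; everything else is standard bookkeeping around Hahn--Banach. I therefore do not expect any new operator-algebraic obstacles beyond those resolved in Lemmas \ref{lem: qcomm} and \ref{weak}.
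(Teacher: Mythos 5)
Your proof is correct and follows essentially the same Akemann--Pedersen route as the paper: both rest on Lemma \ref{weak} and upgrade weak convergence to norm convergence of convex combinations via Mazur/Hahn--Banach, the only difference being that you fold the approximate-unit defects $u_{\lambda}a-a$, $au_{\lambda}-a$ into the weakly null net, whereas the paper instead draws its convex combinations from far-out tails $\mathscr{C}(u_{\mu}:\mu\geq\lambda)$ so that the approximate-unit property is automatic. The one thing your bookkeeping loses is that far-out structure itself, which the paper exploits later (e.g.\ in Lemma \ref{uniformseq} and Theorem \ref{Liplift}) to preserve extra convergence properties of the original net; restricting your Hahn--Banach selection to tails recovers it at no cost.
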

\begin{proof} The proof is formally identical to that of \cite[Theorem 3.2]{AP}. 
We assume $\End^{*}_{\B}(\H_{\B})$ is isometrically isomorphically embedded in $\bB(\H)$.
Denote by $\mathscr{C}(u_{\lambda})$ the convex hull of $(u_{\lambda}\cdot{\rm Id}_{\H_{\B}})$. 
Choose elements $b_{1},\dots,b_{n}\in \End^{*}_{\B}(\H_{\B})$, $v\in\mathscr{C}(u_{\lambda})$ 
and $\varepsilon>0$. Consider 
\[
\mathscr{C}(u_{\lambda})\subset \End^{*}_{\B}(\H_{\B})^{n}
=\bigoplus_{i=1}^{n}\End^{*}_{\B}(\H_{\B}),
\]
by diagonal embedding and set $b=\textnormal{diag}(b_{1},\dots,b_{n})$. The set 
$\mathscr{C}_{b}:=\{[u,b]:u\in \mathscr{C}(u_{\lambda})\}$ is 
convex and hence its norm and weak closures in $\bB(\mathcal{H}^{n})$ 
coincide. By Lemma \ref{weak}, $[u_{\lambda}\cdot\textnormal{Id}_{\H_{\B}},b]\xrightarrow{w} 0$, and 
hence $0$ must be a norm limit of elements of $\mathscr{C}_{b}$. That is,  
there exists $v\in\mathscr{C}_{b}$ with $\|[v,b]\|<\varepsilon$. Letting 
$\Omega$ denote the set of finite subsets of $\End^{*}_{\B}(\H_{\B})$, the 
argument shows that for each pair $(\lambda,\omega )\in \Lambda\times\Omega$ 
there is a $v_{\lambda, \omega}\in\mathscr{C}(u_{\mu}:\lambda\leq\mu)$ for which
\[
\|[v_{\lambda\omega},b]\|<\frac{1}{|\omega|},
\]
for all $b\in\omega$.
The relation 
\[
(\lambda,\omega)\leq (\lambda',\omega ')
\Leftrightarrow \lambda\leq\lambda'\ \mbox{and}\  \omega\subset\omega ',
\]
defines a partial order on $\Lambda\times\Omega$, with 
respect to which $(v_{\lambda\omega})$ is a bounded approximate unit. 
\end{proof}
The next theorem considers quasicentral approximate units for 
algebras of multipliers, relative to a second ideal. This result is 
not as general as the above theorem, but provides the statement we 
need  for our refinement of the Kasparov technical theorem in the next section.

\begin{thm}
\label{Ped2} 
Let $\B$ be a unital operator algebra and $\K$ an
ideal with bounded approximate unit $(v_{n})$. 
Assume that $\J$, $\A\subset \B$ are subalgebras 
such that $\J$ is an ideal in $\B$, $\J\A$, $\A\J\subset \K$ and $\K\A=\A\K=\K$. 
If $\A$ has a bounded approximate unit $(u_{k})$ 
then there exists an $\End^{*}_{\B}(\H_{\B},\J)$ quasicentral 
approximate unit for $\A$ contained in the convex hull of $(u_{k})$.
\end{thm}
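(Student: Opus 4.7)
The plan is to replicate the strategy of Theorem~\ref{Ped}: first establish weak asymptotic commutation of the diagonal embeddings $u_k\cdot\mathrm{Id}_{\H_{\B}}$ with the relevant endomorphism ideal, then upgrade to norm convergence via the Akemann-Pedersen convex hull argument. The novelty is that $(u_k)$ does not itself sit inside any ideal of $\B$; the structural hypotheses $\A\K=\K\A=\K$ together with $\K\triangleleft\B$ supply the missing bridge.

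First I would promote $(u_k)$ to a bounded approximate unit for $\K$. Given $b\in\K$ and $\varepsilon>0$, the hypothesis $\A\K=\K$ (closed linear span) lets one approximate $b$ by a finite sum $\sum_i a_i k_i$ with $a_i\in\A$, $k_i\in\K$; since $u_k a_i\to a_i$ in $\A$ and hence in $\B$, a standard three-$\varepsilon$ estimate exploiting boundedness of $(u_k)$ yields $\|u_k b-b\|\to 0$. The symmetric statement $\|bu_k-b\|\to 0$ uses $\K\A=\K$.

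For the weak commutation, embed $\End^{*}_{\B}(\H_{\B})$ completely isometrically in some $\bB(\H)$ and work in the universal GNS representation $\pi_u$ of the enveloping $C^{*}$-algebra. Applying Lemma~\ref{lem: qcomm} and Theorem~\ref{thm: idempotent} to the ideal $\K\triangleleft\B$ with its given approximate unit $(v_n)\subset\K$, one obtains an idempotent $q_{\K}\in\bB(\H_u)$ with $q_{\K}\H_u=[\pi_u(\K)\H_u]$ which commutes with every $\pi_u(S)$ for $S\in\End^{*}_{\B}(\H_{\B})$. For $T\in\End^{*}_{\B}(\H_{\B},\J)$ the entries of $u_k T$ and $Tu_k$ lie in $\A\J+\J\A\subset\K$, so $[u_k\cdot\mathrm{Id}_{\H_{\B}},T]\in\End^{*}_{\B}(\H_{\B},\K)$ and $\pi_u$ of this commutator is supported entirely on $q_{\K}\H_u$. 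Moreover $\pi_u(T)$ preserves $q_{\K}\H_u$: for $k\in\K$ the product $Tk$ has entries in $\J\K\subset\K$ (this uses $\K\triangleleft\B$), and Lemma~\ref{lem: qcomm}(1) applied to $\K$ gives $\pi_u(Tk)\H_u\subset[\pi_u(\End^{*}_{\B}(\H_{\B},\K))\H_u]=q_{\K}\H_u$. On $q_{\K}\H_u$, the promotion step together with continuity of $\pi_u$ on $\A\subset\B$ shows that $\pi_u(u_k\cdot\mathrm{Id}_{\H_{\B}})$ converges strongly to the identity. Combining these observations gives $\pi_u([u_k\cdot\mathrm{Id}_{\H_{\B}},T])\to 0$ strongly, hence weakly against every vector state of $C^{*}(\End^{*}_{\B}(\H_{\B}))$.

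With weak convergence in hand the proof of Theorem~\ref{Ped} adapts verbatim: for any finite family $T_1,\dots,T_n\in\End^{*}_{\B}(\H_{\B},\J)$, diagonal embedding into $\End^{*}_{\B}(\H_{\B})^n$ reduces to a single operator, and Mazur's theorem (weak closure equals norm closure for convex sets) extracts a convex combination $w_{\omega,\lambda}$ of $\{u_k:k\geq\lambda\}$ with $\|[w_{\omega,\lambda}\cdot\mathrm{Id}_{\H_{\B}},T_j]\|<1/|\omega|$ for every $T_j\in\omega$. Indexing over pairs $(\omega,\lambda)$ of finite subsets $\omega\subset\End^{*}_{\B}(\H_{\B},\J)$ and tails of $(u_k)$, the resulting net remains an approximate unit for $\A$, since convex combinations drawn from sufficiently late stages inherit the approximate unit property. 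The principal obstacle is the middle step: arranging that the commutator lives precisely where $\pi_u(u_k)$ can be shown to converge to the identity, namely on $q_{\K}\H_u$, which is exactly where the three hypotheses $\K\triangleleft\B$, $\A\K=\K$, and $\K\A=\K$ must cooperate.
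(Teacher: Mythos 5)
Your argument is correct and follows essentially the same route as the paper: pass to the universal GNS representation, use the idempotent obtained from $(v_n)$ (which commutes with all of $\pi_u(\End^{*}_{\B}(\H_{\B}))$ by Lemma \ref{lem: qcomm}) to see that $[u_k\cdot\mathrm{Id}_{\H_{\B}},T]$ is sandwiched by the projection onto $[\pi_u(\K)\H_u]$, exploit $\A\K=\K\A=\K$ to get $\pi_u(u_k)\to 1$ strongly there, and finish with the Akemann--Pedersen convexity argument. Your preliminary "promotion" of $(u_k)$ to a norm approximate unit for $\K$ is a harmless variant of the paper's direct observation that $[\pi_u(\A)[\pi_u(\K)\H_u]]=[\pi_u(\K)\H_u]$, and the remark about $\J\K\subset\K$ is redundant given that the idempotent already commutes with $\pi_u(T)$.
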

\begin{proof} 
Assume without loss of generality that $\End_{\B}^{*}(\H_{\B})$ is completely 
isometrically embedded in $\bB(\H)$ and let 
$C^{*}(\End_{\B}^{*}(\H_{\B}))\subset \bB(\H)$ denote its enveloping 
$C^{*}$-algebra in this representation. We wish to prove that for all functionals 
$\phi:\End^{*}_{\B}(\H_{\B},\K)\rightarrow \C$, and all $T\in \End^{*}_{\B}(\H_{\B},\J)$ we have
$\phi([u_{k},T])\rightarrow 0.$ 

As in the proof of Lemma \ref{weak}, it will suffice to prove 
this for vector states $\phi=\langle v,\cdot v\rangle$ on $C^{*}(\End_{\B}^*(\H_{\B}))$ 
coming from the universal GNS representation 
$\pi_{u}:C^{*}(\End_{\B}(\H_{\B}))\rightarrow \bB(\H_{u})$. 
Since both $\K$ and $\A$ have bounded approximate 
units, Proposition \ref{thm: idempotent} gives  two
idempotents: $p$ mapping onto 
\[
[\pi_{u}(\K\cdot\textnormal{Id}_{\H_{\B}})\H_{u}]=[\pi_{u}(\End^{*}_{\B}(\H_{\B},\K))\H_{u}],
\] 
(cf. Lemma \ref{lem: qcomm}) 
and $q$ mapping onto $[\pi_{u}(\A\cdot\textnormal{Id}_{\H_{\B}})\H_{u}]$ as the strong limits of 
$(v_{n})$ and $(u_{k})$ respectively. Since $\A\K=\K$, we 
have $[\pi_{u}(\A)[\pi_{u}(\K)\H_{u}]]=[\pi_{u}(\K)\H_{u}]$ and thus $u_{k}\rightarrow 1$ 
strongly on $[\pi_{u}(\K)\H_{u}]=p\H_{u}$, again by Proposition 
\ref{thm: idempotent}. We claim that $pq=qp=p$. 
To see this, first observe that $pq=qp$, since by 
Lemma \ref{weak}, $p$ commutes with all elements of 
$\B$ and hence in particular with $\A$. Therefore, for any $h\in\H$ we have
\[
qph=\lim u_{k}ph=\lim pu_{k}h=pqh.
\]
Then since $(u_{k})$ converges strongly to $1$ on $[\pi_{u}(\K)\H_{u}]$ it follows that 
\[
qph=\lim \pi(u_{k})ph=ph,
\]
which proves our claim. Now let $T\in \End^*_{\B}(\H_{\B},\J)$ and 
consider $[u_{k},T]\in \End_{\B}^{*}(\H_{\B},\K)$. The operator 
$T$ commutes with $p$ and since $[u_{k},T]\in \End_{\B}^{*}(\H_{\B},\K)$ 
this operator equals $p[u_{k},T] p$, and
\begin{align*}
\lim \phi([u_{k},T])=\lim\langle v,  \pi_{u}([u_{k},T])v\rangle
&=\lim \langle v,p\pi_{u}([u_{k},T])pv\rangle\\
&\rightarrow\langle v,p[q,\pi_{u}(T)]pv\rangle=\langle v,[p,\pi_{u}(T)]v\rangle=0.
\end{align*}
Thus, the commutators $[u_{k},T]$ converge to $0$ 
in the weak topology of $ \End_{\B}^{*}(\H_{\B},\K)$. The same 
argument as in the proof of Theorem \ref{Ped} 
now shows that the convex hull of $(u_{k})$ contains 
an approximate unit that is quasicentral for $ \End_{\B}^{*}(\H_{\B},\J)$.
\end{proof}

\subsection{Completeness and the technical theorem}
Having established the existence of quasi-central approximate units in 
operator algebras with bounded approximate unit, we can formulate an 
extension of Kasparov's technical theorem in the spirit of Higson, \cite{H}. 
For practical purposes we state the following corollary of 
Theorem \ref{Ped} as a Lemma. When $(u_{n})$ is a sequential approximate unit, 
we say that $(v_{n})\in\mathscr{C}(u_{k})$  is a sequence of \emph{far out} 
convex combinations if $(v_{n})\in\mathscr{C}(u_{k}:k\geq n)$.
\begin{lemma}
\label{uniformseq} 
Let $\J$ be an ideal in a separable unital operator algebra 
$\B$, $(u_{n})$ a sequential bounded approximate unit for $\J$. Let  $(z_{i})_{i\in\N}\subset \B$ 
a countable norm bounded subset of $\B$ and  $1>\varepsilon>0$. There exists a 
$\B$ quasicentral, sequential bounded approximate unit $(v_{n})$ for $\J$, contained 
in the convex hull of $(u_{n})$, such that  
\[
\sup_{i\in\N}\| [v_{n+1}-v_{n},z_{i}] \|<\varepsilon^{n}.
\]
\end{lemma}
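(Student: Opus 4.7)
The plan is to combine a diagonal amplification trick with the convex hull argument underlying Theorem \ref{Ped}, reducing uniform control over $i$ to norm control of a single commutator. Let $C:=\max(1,\sup_i\|z_i\|)$ and fix a countable dense subset $(d_j)_{j\in\N}$ of the unit ball $\{b\in\B:\|b\|\leq 1\}$ that contains each $z_i/C$. Form the diagonal operator $W:=\textnormal{diag}(d_j)_{j\in\N}\in \End^*_\B(\H_\B)$; since $\sup_j\|d_j\|\leq 1$, it is well-defined with $\|W\|\leq 1$. For the scalar embedding $u\mapsto u\cdot\textnormal{Id}_{\H_\B}$, the commutator $[u\cdot\textnormal{Id}_{\H_\B},W]$ is diagonal with entries $[u,d_j]$, so $\|[u\cdot\textnormal{Id}_{\H_\B},W]\|=\sup_j\|[u,d_j]\|$.

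By Lemma \ref{weak}, $[u_n\cdot\textnormal{Id}_{\H_\B},W]\to 0$ weakly in $\End^*_\B(\H_\B)$. For each $n$, the set $\{[u\cdot\textnormal{Id}_{\H_\B},W]:u\in\mathscr{C}(u_k:k\geq n)\}$ is convex, so Mazur's theorem identifies its weak and norm closures; the tail of the weakly null sequence already lies in this set, so $0$ is in the norm closure. Hence, for any prescribed $\delta_n>0$, there exists $v_n\in\mathscr{C}(u_k:k\geq n)$ with $\sup_j\|[v_n,d_j]\|<\delta_n$. Choosing $\delta_n:=\varepsilon^n/(3C)$ yields $\sup_i\|[v_n,z_i]\|\leq C\delta_n=\varepsilon^n/3$, and therefore
\[
\sup_i\|[v_{n+1}-v_n,z_i]\|\leq \sup_i\|[v_{n+1},z_i]\|+\sup_i\|[v_n,z_i]\|<\tfrac{\varepsilon^{n+1}+\varepsilon^n}{3}<\varepsilon^n.
\]

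It remains to verify that $(v_n)$ is a $\B$-quasicentral bounded approximate unit for $\J$. Boundedness and the approximate unit property are automatic since $v_n$ is a far-out convex combination of the bounded approximate unit $(u_n)$. For $\B$-quasicentrality, pick $b\in\B$ with $\|b\|\leq 1$ (the general case follows by scaling); density supplies $d_j$ with $\|b-d_j\|<\eta$, so $\|[v_n,b]\|\leq 2M\eta+\delta_n$ with $M:=\sup_n\|v_n\|<\infty$, which tends to $0$ as $n\to\infty$ followed by $\eta\to 0$. The main obstacle is precisely the uniform $\sup_i$ bound at the prescribed geometric rate $\varepsilon^n$: it is handled by encoding the entire bounded collection $(z_i)$, together with a dense subset of the unit ball of $\B$, as the diagonal entries of a single element $W\in\End^*_\B(\H_\B)$, so that a single weak-to-norm convex hull passage simultaneously controls all the required commutators.
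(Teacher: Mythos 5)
Your proof is correct and follows essentially the same route as the paper: both arguments encode the family $(z_i)$ (and, for quasicentrality, a dense subset of $\B$) as a diagonal element of $\End^{*}_{\B}(\H_{\B})$, invoke Lemma \ref{weak} to get weak nullity of the commutators, and use the Mazur/Akemann--Pedersen convexity passage to extract far-out convex combinations in $\mathscr{C}(u_k:k\geq n)$. Your packaging is slightly more quantitative --- you bound $\sup_i\|[v_n,z_i]\|<\varepsilon^n/3$ directly at each stage and obtain the estimate on $v_{n+1}-v_n$ by the triangle inequality, where the paper instead extracts a subsequence from the net of Theorem \ref{Ped} --- but the underlying mechanism is identical.
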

\begin{proof} 
Assuming $\mathcal{B}$ separable and 
$(u_{n})$ countable, the new approximate 
unit can be chosen in such a way as to satisfy the asserted properties. 
This is done by choosing a countable dense subset 
$\{b_{1},b_{2},\dots \}$ of $\mathcal{B}$, embedding 
$\B$ in $\B^{\infty}$ diagonally as usual, and considering 
$z:=\diag(z_{1},\dots,z_{i},\dots)\in\B^{\infty}$. From the proof of 
Theorem \ref{Ped} we obtain an $\End_{\B}^{*}(\H_{\B})$ quasicentral 
approximate unit $v_{n,\omega}$ indexed by $\N\times\Omega$, with 
$\Omega$ the set of finite subsets of $\End_{\B}^{*}(\H_{\B})$. Now 
choose $v_{0}:=v_{0\{z\}}$ and inductively assume 
$v_{k}:=v_{n_{k},\omega_{k}}\in\mathscr{C}(u_{n})$ 
was chosen from $v_{n,\omega}$ in such a way that
\[
v_{k}=\sum_{i=n_{k}}^{m_{k}}\theta_{i}u_{i},\quad \theta_{i}\in [0,1],\quad  
\sum_{i=n_{k}}^{m_{k}}\theta_{i}=1,
\]
for some $n_{k},m_{k}\in\N$. Now choose 
\[
v_{k+1}:=v_{n_{k+1},\{x, b_{1},\dots,b_{n_{k+1}}\}},
\] 
where $n_{k+1}>m_{k},$ yielding a sequential approximate unit for 
$\J$, which is quasicentral for the $(b_{i})$ and hence for $\B$, 
as well as for the element $z\in\B^{\infty}$. It should be noted that 
quasicentrality does not necessarily hold for all of $\B^{\infty}$. 
By choosing a subsequence, we can realise that
\[
\| [(v_{n+1}-v_{n})\cdot{\rm Id}_{\H_\B},x] \|
=\| [(v_{n+1}-v_{n})\cdot{\rm Id}_{\H_\B},\diag(z_{1},\dots,z_{i},\dots)] \|
=\sup_{i\in\N}\| [v_{n+1}-v_{n},z_{i}] \| <\varepsilon^{n},
\]
for the given sequence $z_{i}$
as desired.
\end{proof}

\begin{thm}
\label{thm: technical} 
Let $\B$ be a unital operator algebra, $\K\subset \B$ an ideal with 
countable bounded commutative approximate unit and 
$\A,\J\subset \B$ closed separable subalgebras with
$\A\J$, $\J\A\subset \K$, $\K\subset \J$ and $\A\K=\K\A=\K$. Suppose we are given\\ 
1) a bounded total subset $\{a_{i}\}\subset\A$ and countable bounded 
approximate units $(u'_{k})\subset\A$, $(v'_{k})\in\J$, and \\
2) $F\in\B$ such that, $[F,\A]\subset \K$ and  $\lim_{k\rightarrow\infty}\|[F,u_{k}']\|=0$.

For any $0<\e<1$, there exist countable bounded approximate 
units $(v_{n}),(u_{k})$ contained in the convex hull 
$\mathscr{C}(v'_{k})$ and $\mathscr{C}(u'_{k})$ respectively such that for  
$d_{n}:=v_{n+1}-v_{n}$ the following convergence properties hold:\\
1) $\|[d_{n},F]\|<\e^{2n}$;\\
2) $\|[d_{n},a_{i}]\|<\e^{2n}$  for all $i$;\\
3) $\|[d_{n},u_{k}]\|<\e^{2n}$ for all $k$;\\
4) $\|[d_{n},u_{k}]\|<\e^{2k}$ for all $n$;\\
5) $\|d_{n}[F,a_{i}]\|<\e^{2n}$ for  $n\geq i$;\\
6) $\|d_{n}[F,u_{k}]\|<\e^{2k}$ for all  $n;$\\
7) $\|d_{n}[F,u_{k}]\|<\e^{2n}$ for  $n\geq k;$\\
8) $\|u_{k}a_{i}-a_{i}\|<\e^{2k}$ for $k>i$;\\
9) $\|d_{n}[F,u_{k}a_{i}-a_{i}]\|<\e^{k+2n}$ for  $n>k>i$;\\
10)$\|[d_{n},u_{k}a_{i}-a_{i}]\|<\e^{k+2n}$ for  $n>k>i$.

In fact these properties continue to hold for
any subsequence $(\tilde{v}_{n}):=(v_{k_{n}})$ and the conclusion  
holds for any finite number of subalgebras $\A_{1},\dots, \A_{n}$ satisfying the hypotheses 
on $\A$.
\end{thm}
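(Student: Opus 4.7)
The approach follows the spirit of Higson's proof of the classical Kasparov technical theorem \cite{H}, upgraded with the strong quasi-central approximate units for operator algebras established in Theorems~\ref{Ped} and \ref{Ped2}, together with the iterated convex combination technique of Lemma~\ref{uniformseq}. The plan is a two-stage construction: first, fix the sequence $(u_k)\in\mathscr{C}(u'_k)$ with rapidly decaying data, then construct $(v_n)\in\mathscr{C}(v'_k)$ recursively, ensuring at each step that $v_{n+1}$ is sufficiently far out in the approximate unit and sufficiently quasi-central relative to an accumulated finite set of test elements in $\B$.

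In Step 1, since $(u'_k)$ is a bounded approximate unit for $\A$, since $\|[F,u'_k]\|\to 0$ by hypothesis, and since $\{a_i\}$ is countable, we pass to convex combinations to obtain $(u_k)\in\mathscr{C}(u'_k)$ satisfying $\|[F,u_k]\|<\delta_k$ and $\|u_k a_i-a_i\|<\delta_k$ for $i<k$, with $\delta_k$ as small as we wish. Choosing $\delta_k=\e^{2k}/M$, where $M$ bounds the sequence $(d_n)$, at once yields condition~8 and, after multiplication by $d_n$, conditions~6 and~7. In Step 2, assume $v_1,\dots,v_n$ have been built. At stage $n$ we form the finite set
$$
S_n:=\{F\}\cup\{a_i:i\leq n\}\cup\{u_k:k\leq n\}\cup\{[F,u_k]:k\leq n\}\cup\{u_ka_i-a_i,\ [F,u_ka_i-a_i]:i<k\leq n\}\subset\B,
$$
and apply Theorem~\ref{Ped} inside the convex hull of $\{v'_k:k\geq N_n\}$, for $N_n$ sufficiently large, to select $v_{n+1}$ with $\|[v_{n+1},b]\|<\eta_{n,b}$ for each $b\in S_n$, where $\eta_{n,b}$ is the exponent required by the corresponding target in conditions~1--4 and~10. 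Simultaneously we arrange $\|v_{n+1}c-c\|<\eta_{n,c}$ for $c$ in the countable family $\{[F,a_i],[F,u_k],[F,u_ka_i-a_i]\}\subset\K\subset\J$, delivering conditions~5,~7,~9. The inductive hypothesis provides analogous estimates for $v_n$, so writing $d_n=v_{n+1}-v_n$ and applying the triangle inequality produces all ten bounds.

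The main obstacle is conditions~9 and~10, which demand the sharp rate $\e^{k+2n}$ on the products $d_n[F,u_ka_i-a_i]$ and the commutators $[d_n,u_ka_i-a_i]$ for $n>k>i$. A naive estimate using $\|u_ka_i-a_i\|<\e^{2k}$ from Step~1 only yields $\e^{2k}$, which for $n>k$ is strictly weaker than the required $\e^{k+2n}$. The decisive point is that, once $(u_k)$ has been fixed in Step~1, the countable families $\{u_ka_i-a_i\}$ and $\{[F,u_ka_i-a_i]\}$ become prescribed subsets of $\B$ which may be systematically injected into the diagonal quasi-centralisation procedure for $(v_n)$. Since the former lies in $\A$ and the latter in $\K$ (using $[F,\A]\subset\K$), Theorem~\ref{Ped} guarantees that arbitrarily strong quasi-centrality, on prescribed finite subsets of $\B$, is achievable within the convex hull of an arbitrary tail of $(v'_k)$. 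The stability of the conclusion under passage to a subsequence $(\tilde v_n)=(v_{k_n})$ follows because the exponential decay estimates are preserved under telescoping, and the extension to finitely many subalgebras $\A_1,\dots,\A_m$ merely enlarges the countable test set $S_n$.
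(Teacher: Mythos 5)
Your overall strategy (Higson-style recursion with accumulated finite test sets $S_n$) cannot deliver three of the ten estimates, and the order in which you fix the two approximate units is the wrong way round for condition 4). First, conditions 2) and 3) are asserted \emph{for all} $i$ and \emph{for all} $k$: for a fixed $n$, the single operator $d_n=v_{n+1}-v_n$ must nearly commute with every element of the infinite bounded families $\{a_i\}$ and $\{u_k\}$ simultaneously. Choosing $v_{n+1}$ quasicentral only for the finite set $S_n\supset\{a_i,u_k:i,k\leq n\}$ yields the bounds only in the range $i,k\leq n$; for $i>n$ the commutator $[d_n,a_i]$ is merely bounded by $2\|d_n\|\sup_i\|a_i\|$ and there is no mechanism forcing it below $\e^{2n}$. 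This is precisely why the paper embeds the whole families as the diagonal operators $a=\diag(a_1,a_2,\dots)$ and $u=\diag(u_1',u_2',\dots)$ in $\End^*_{\B}(\H_{\B})$ and invokes the strong quasicentrality of Theorem~\ref{Ped} there: one application gives $\|[d_n,a]\|<\e^{2n}$, i.e.\ $\sup_i\|[d_n,a_i]\|<\e^{2n}$, in one stroke. Second, condition 4) demands $\|[d_n,u_k]\|<\e^{2k}$ for \emph{all} $n$, in particular for $n<k$, where $\e^{2k}<\e^{2n}$ is strictly stronger than anything stage-$n$ quasicentrality can give; once you have frozen $(u_k)$ in your Step 1 and then built $(v_n)$, both sequences are fixed and no freedom remains to enforce this. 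The paper's construction reverses the order for exactly this reason: it builds $(v_n)$ first, then forms the single element $d=\diag(d_1,d_2,\dots)\in\End^*_{\tilde\B}(\H_{\tilde\B},\J)$ and applies Theorem~\ref{Ped2} to extract $(u_k)\subset\mathscr{C}(u_k')$ quasicentral for $d$ — this is where the hypotheses $\A\J,\J\A\subset\K$ and $\A\K=\K\A=\K$ are actually used — while condition 3) survives because it is stable under convex combinations of the $u_k'$.

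Two smaller points. Your claim that 6) \emph{and} 7) follow from Step 1 ``after multiplication by $d_n$'' is half wrong: $\|[F,u_k]\|<\e^{2k}/M$ gives 6), but for $n\geq k$ one has $\e^{2n}\leq\e^{2k}$, so 7) is strictly stronger and must instead come from $[F,u_k]\in\K\subset\J$ together with $d_nj\to0$ for fixed $j\in\J$ (a reindexing of $(v_n)$), as you partially acknowledge later. For 9) and 10), naming the families $\{u_ka_i-a_i\}$ as test elements is not enough to reach the rate $\e^{k+2n}$: quasicentrality for these elements themselves only yields $\e^{2n}$. The paper obtains the extra factor $\e^{k}$ by first securing 8) and then quasicentralising against the \emph{renormalised} diagonals $z_i=\diag\bigl(\e^{-k}(u_ka_i-a_i)\bigr)_k$, which are bounded elements of $\A^\infty$ precisely because of 8); this renormalisation (via Lemma~\ref{uniformseq}) is the missing ingredient in your sketch, and it is also what makes the estimates survive passage to subsequences and far-out convex combinations, which your final sentence asserts without justification.
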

\begin{proof} The case of a finite number of algebras $\A_{1},\dots,\A_{n}$ is reduced to that of a single algebra by setting $\A:=\oplus_{i=1}^{n}\A_{i}\subset \oplus^n_{i=1}\B$ and $\oplus^n_{i=1}\J$. 
We thus prove the theorem for a single algebra $\A$.

Embed $\B$, and hence $\K$, $\J$ and $\A$ into 
$\End_{\B}^{*}(\H_{\B})$ as multiples of the identity operator $\Id_{\H_{\B}}$. The elements 
\[
a:=\textnormal{diag}(a_{1},\dots,a_{i},\dots)\quad \textnormal{ and }\quad 
u:=\textnormal{diag}(u_{1}',\dots,u_{k}',\dots),
\]
are elements of 
$\End^{*}_{\B}(\H_{\B})$ as well. Thus by Theorem \ref{Ped}, there exists a 
countable, commutative, approximate unit 
$(v_{n})\subset \mathscr{C}(v'_{n})$, which is quasicentral for $F, \, a,\,u$.

Write $d_{n}:=v_{n+1}-v_{n}$. By quasicentrality for $F, a$ and $u$, we can 
re-index the $(v_{n})$ if necessary, and we may assume that
\begin{equation}
\label{estimates} 
\|[d_{n},F]\|, \|[d_{n},a]\| ,\| [d_{n}, u] \| <\e^{2n},
\end{equation}
which in particular means that
\[\|[d_{n}, u_{k}'] \|, \| [d_{n},a_{i}] \|< \e^{2n},\quad\textnormal{for all } i,k.\]
This proves the estimates $1)$, $2)$ and $3)$ for $u_{k}'$. 

Next we construct an approximate unit $(u_{k})\subset\mathscr{C}(u_{k}')$ 
satisfying 3) and 4). The hypotheses imply that $\J$ is an ideal in the algebra $\tilde{\B}:=1+\A+\J\subset\B$. 
 The uniformly bounded sequence 
$d_{n}\in \J$ defines an element $d:=$ diag$(d_{1},\dots, d_{n},\dots )\in \End_{\tilde{\B}}^{*}(\H_{\tilde{\B}},\J)$. 
Apply Theorem \ref{Ped2} to the approximate unit $(u_{k}')$ to obtain a 
countable approximate unit $(u_{k})\subset\mathscr{C}(u_{k}')$ which is 
quasicentral for $d$. Thus we may assume that
\[
\|[d_{n},u_{k}]\|<\e^{2k},\quad\textnormal{for all } n.
\]
Property $3)$ is preserved under convex combinations, and is thus valid for $u_{k}$.

For 5), since $[F,a_{i}]\in \K\subset \J$, we may reindex $v_{n}$ and assume that
\begin{equation}
\label{estimates2}
\|d_{n}[F,a_{i}]\|<
\e^{2n},\quad\textnormal{whenever } n\geq i,
\end{equation}
as desired. 

To prove $6)$, observe that because by assumption 
$\lim_{k\to\infty} \|[F,u_{k}']\|=0$, we may assume that
\[
\|[F,u_{k}]\|<\frac{\e^{2k}}{2C},
\]
with $C:=\sup_{k}\|v_{k}'\|$. Then since $\|d_{n}\|\leq 2C$ the claim follows.

For $7)$ we use that $[F,u_{k}]\in\K\subset \J$ so $d_{n}[F,u_{k}]\xrightarrow{n} 0$ 
for each $k$, which, by passing to a subsequence of the $d_{n}$ if necessary, amounts to
\[
\|d_{n}[F,u_{k}]\|<\e^{2n},\quad \textnormal{whenever } n\geq k.
\]
For each $a_{i}$ 
we have the norm convergence $u_{k}a_{i}\xrightarrow{k} a_{i}$. Therefore,
given $0<\e<1$, 
we may re-index the approximate unit $u_{k}$ for $\A$ such that 
\[
\|u_{k}a_{i}-a_{i}\|<\e^{k}, \textnormal{ whenever } k>i.
\]
Note that such a re-indexing does not affect the norm convergence 
$[F,u_{k}]\xrightarrow{k} 0$ nor the properties $3),4)$ and $6)$. 
To preserve property $7)$ we may need to pass to a reindexing of 
$v_{n}$, which can be done without affecting properties $1)-6)$.
This means that we may assume that for all $k>i$ we have 
$\|u_{k}a_{i}-a_{i}\|<\e^{2k}$, which proves 8). 

For 9) and 10), we can, since 8) is true, assume that for all $i\in \Z$ we have that
\[
z_{i}:=\diag\left(\frac{u_{k}a_{i}-a_{i}}{\e^{k}}\right)_{k}\in \A^{\infty}\subset \End^{*}_{\B}(\H_{\B}).
\]
Apply Lemma \ref{uniformseq} to obtain an approximate unit 
$(v_{n})$ for $\J$ which is quasicentral for all the $z_{i}$, and we may achieve 
\begin{equation}
\label{estimates3}
\|[d_{n},z_{i}]\|<\e^{2n},\quad\textnormal{whenever } n\geq i,
\end{equation}
as well. 
Note that the estimates \eqref{estimates}, \eqref{estimates2}, \eqref{estimates3} 
remain valid when $(v_{n})$ is replaced by a subsequence or a 
sequence of far out convex combinations. The same is true 
when $(u_{k})$ is replaced by a subsequence $(\tilde{u}_{k}):=(u_{n_{k}})$, so 
that $u$ is replaced by $\tilde{u}:={\rm diag}(\tilde{u}_{k})_{k}$ and $z_{i}$ by 
$\tilde{z}_{i}:=(\frac{\tilde{u}_{k}a_{i}-a_{i}}{\e^{k}})_{k}$:
\[
\|[d_{n},\tilde{u}]\|
=\sup_{k}\|[d_{n},\tilde{u}_{k}]\|=\|[d_{n},\tilde{u}_{n_{k}}]\|\leq\sup_{k}\|[d_{n},u_{k}]\|<\e^{2n},
\]
and
\[
\begin{split}\|[d_{n},\tilde{z}_{i}]\| 
&=\sup_{k}\|[d_{n},\frac{\tilde{u}_{k}a_{i}-a_{i}}{\e^{k}}]\| 
\leq\sup_{k}\|[d_{n},\frac{u_{n_{k}}a_{i}-a_{i}}{\e^{n_{k}}}]\|
\leq\sup_{k}\|[d_{n},\frac{u_{k}a_{i}-a_{i}}{\e^{k}}]\| <\e^{2n}.
\end{split}
\]

Lastly, since for fixed $i,k$ we have $[F,\frac{u_{k}a_{i}-a_{i}}{\e^{k}}]\in \J$, the sequence
\[
d_{n}[F,\frac{u_{k}a_{i}-a_{i}}{\e^{k}}]\xrightarrow{n} 0,
\]
which means that another re-indexing achieves
\[
\|d_{n}[F,\frac{u_{k}a_{i}-a_{i}}{\e^{k}}]\|<\e^{2n},
\]
for $n>k>i$ and thus
\[
\|d_{n}[F,u_{k}a_{i}-a_{i}]\|<\e^{k+2n},\quad\textnormal{whenever } n>k>i.
\]
This completes the proof of $9)$ and $10)$.
\end{proof}
For our first lifting result, 
we need the following elementary result concerning the strict topology on a $C^{*}$-module.
\begin{lemma}
\label{strictconvex} 
Let $(T_{n})\subset\kK(E_{B})$ be a sequence converging strictly to 
$T\in\kK(E_{B})$. Then there exists a sequence 
$(S_{n})\subset\mathscr{C}(T_{n})$ such that $S_{n}\to T$ in norm. Hence for any essential
Kasparov module $(A,E_B,F)$, $A$ has an approximate unit $(u_n)$ such that $[F,u_n]\to 0$ in norm.
\end{lemma}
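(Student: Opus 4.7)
The plan is to reduce both statements to a Hahn--Banach/Mazur argument: first for the general convex-combination claim, then applied to the commutator sequence $([F,u_n'])$ for a given approximate unit $(u_n')$ of $A$.

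For the first claim, strict convergence of $(T_n)$ together with the uniform boundedness principle gives $\sup_n\|T_n\|_\infty<\infty$. View $\mathscr{C}(T_n)$ as a convex subset of the Banach space $\kK(E_B)$; by Mazur's theorem its weak and norm closures coincide, so it suffices to show $T$ lies in the weak closure of $\mathscr{C}(T_n)$. To exhibit weak convergence $T_n\to T$ in $\kK(E_B)$, I would pass to the universal representation $\pi_u$ of $\M(\kK(E_B))=\End^*_B(E_B)$ on a Hilbert space $\H_u$. Non-degeneracy of $\pi_u$, together with strict convergence ($T_na\to Ta$ and $aT_n\to aT$ in norm for $a\in\kK(E_B)$) and the uniform norm bound, gives $\ast$-strong convergence $\pi_u(T_n)\to\pi_u(T)$ in $\bB(\H_u)$. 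On bounded subsets of a von Neumann algebra, $\ast$-strong convergence implies $\sigma$-weak convergence, and the restriction of the $\sigma$-weak topology on $\kK(E_B)^{**}=\pi_u(\kK(E_B))''$ to $\kK(E_B)$ coincides with the Banach-space weak topology. Applying Mazur to each tail $(T_k)_{k\geq n}$ and extracting diagonally yields $S_n\in\mathscr{C}(T_k:k\geq n)$ with $\|S_n-T\|_\infty\to 0$.

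For the second claim, let $(u_n')\subset A$ be any sequential approximate unit for the separable $C^*$-algebra $A$. Essentiality $[AE_B]=E_B$, combined with a density argument on rank-one operators of the form $|ae\rangle\langle f|$ and the fact that $[F,a]\in\kK(E_B)$ for $a\in A$, shows that $(u_n')$ is also a norm approximate unit for $\kK(E_B)$ and that $F$ idealises $\kK(E_B)$. Consequently, for every $K\in\kK(E_B)$,
\[
[F,u_n']K=F(u_n'K)-u_n'(FK)\longrightarrow FK-FK=0,\qquad K[F,u_n']\longrightarrow 0,
\]
in norm, so $[F,u_n']\to 0$ strictly in $\kK(E_B)$. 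Applying the first part to $T_n=[F,u_n']$ and $T=0$ produces convex combinations $u_n:=\sum_i\lambda_i^{(n)}u_i'$ with indices $i\geq n$ satisfying $[F,u_n]=\sum_i\lambda_i^{(n)}[F,u_i']\to 0$ in norm. Such far-out convex combinations of an approximate unit remain an approximate unit for $A$, completing the argument.

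The main obstacle is the strict-to-weak passage in $\kK(E_B)$, which rests on identifying $\kK(E_B)^{**}$ with $\pi_u(\kK(E_B))''$ and on the fact that bounded $\ast$-strong convergence implies $\sigma$-weak convergence in a von Neumann algebra. Once this step is in place, Mazur plus a diagonal extraction handles the convex combinations, and the application to commutators is routine given the essentiality hypothesis.
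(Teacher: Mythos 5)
Your argument is correct and follows essentially the same route as the paper: both reduce the first claim to showing $T_n\to T$ weakly in $\kK(E_B)$ by passing to the universal representation (where strict convergence plus the uniform bound gives bounded $\ast$-strong, hence weak-operator, convergence) and then invoke the coincidence of weak and norm closures of convex sets, with a tail/diagonal extraction; the paper phrases the weak-convergence step via vector states in the universal GNS representation rather than via $\kK(E_B)^{**}=\pi_u(\kK(E_B))''$, but this is the same mechanism. The application to $T_n=[F,u_n']$, $T=0$ using essentiality is likewise identical to the paper's.
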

\begin{proof} We need to show that $T_{n}\to T$ in the weak topology of $\kK(E_B)$, 
for then there exists a sequence in the convex hull of the $T_{n}$ 
that converges to $S_{n}$ in norm. Since every linear functional on a 
$C^{*}$-algebra is a linear combination of four states, it suffices to 
show that for all states $\sigma:\kK(E_{B})\to \C$ we have 
$\sigma(T_{n})\to \sigma (T)$. Any such $\sigma$ can be realised as a 
vector state for a vector $v_\sigma$ in the universal GNS-representation 
$\H_{u}$ of $\kK(E)$. The representation $\pi_u:\kK(E_B)\to \bB(\H_u)$ is essential, so 
$\pi_{u}(T_{n})$ converges to $\pi_{u}(T)$ weakly in 
$\bB(\H_{u})$. Thus 
$\sigma(T_{n})=\langle v_{\sigma}, \pi_u(T_{n})v_{\sigma}\rangle_{u}
\to \langle v_{\sigma}, \pi_u(T)v_{\sigma}\rangle_{u}=\sigma(T)$ and we are done.

Let $(A, E_{B}, F)$ be a Kasparov module for which the 
$A$ representation is essential. Any approximate unit 
$(u_{n})$ for $A$ will converge strictly to the identity operator on 
$E_B$ and $[F,u_{n}]\to 0$ strictly in $E_{B}$. Therefore the previous argument
yields a contractive 
approximate unit $(u_{n}')$ for $A$ such that $[F,u_{n}']\rightarrow 0$ in norm. 
\end{proof}
\begin{prop}
\label{prop: Kasparovlift} 
Let $A,B$ be separable $C^{*}$-algebras. 
Any class in $KK(A,B)$ can be represented by an unbounded
Kasparov module $(\A,E_B,S)$ such that $\A$ admits an 
approximate unit $(u_{n})$ with $[S,u_{n}]\to 0$ in norm. 
\end{prop}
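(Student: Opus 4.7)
The plan is to assemble an unbounded representative by the lifting machinery of Theorem \ref{thm:firstlift}, using the technical theorem to control all relevant commutators at once. First, I would reduce to a bounded Kasparov module $(A,E_{B},F)$ with $F=F^{*}$, $1-F^{2}\geq 0$, and $[AE_{B}]=E_{B}$ (by \cite[Lemma 2.8]{Kas2} together with Remark \ref{essrmk}), and then use Lemma \ref{strictconvex} to obtain a sequential contractive approximate unit $(\tilde u_{k})\subset A$ with $\|[F,\tilde u_{k}]\|\to 0$. Pick any countable bounded approximate unit $(\tilde v_{n})$ for the separable ideal $J_{F}$ generated by $\kK(E_{B})$ and $1-F^{2}$, and a countable bounded total subset $\{a_{i}\}\subset A$.

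Next, I would apply Theorem \ref{thm: technical} with $\B=\End^{*}_{B}(E_{B})$, $\K=\kK(E_{B})$, $\A=A$ and $\J=J_{F}$; the hypotheses $AJ_{F}\subset\kK$, $\kK\subset J_{F}$, and $A\kK=\kK$ hold by the definition of a Kasparov module and essentiality. This produces $(v_{n})\in\mathscr{C}(\tilde v_{n})$ and $(u_{k})\in\mathscr{C}(\tilde u_{k})$ obeying the ten estimates 1)--10) for a chosen $\epsilon\in(0,1)$. Writing $d_{n}:=v_{n+1}-v_{n}$, the construction from the proof of Theorem \ref{thm:equivs} (the implication $1)\Rightarrow 2)$) then produces a positive self-adjoint unbounded multiplier $c=\sum_{n\geq 1}\epsilon^{-n}d_{n}$ of $J_{F}$, and I set $\ell:=(1+c)^{-1/2}\in J_{F}$, which is positive and strictly positive.

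I would then verify the three conditions of Definition \ref{admlift}: estimate 1) of Theorem \ref{thm: technical} together with the telescoping argument in Theorem \ref{thm:equivs} yields $i[F,\ell]\leq C\ell^{2}$; by rescaling $\ell$ (equivalently, multiplying $c$ by a large constant) one arranges $\|(1-F^{2})^{1/2}\ell^{-1}\|<1$, using that $1-F^{2}\in J_{F}$ is dominated by $\ell^{-2}$; and estimates 2) and 5) give boundedness of $[\ell^{-1},a_{i}]$ and $[F,a_{i}]\ell^{-1}$ on the range of $\ell$. Thus $\ell$ is admissible, and Theorem \ref{thm:firstlift} delivers an unbounded Kasparov module $(\A,E_{B},\D)$ with $\D=\tfrac{1}{2}(F\ell^{-1}+\ell^{-1}F)$ representing the given class, where $\A$ is the closure in $\Lip(\D)$ of the $*$-algebra generated by the $a_{i}$.

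The remaining and most delicate task is to show that $(u_{k})$ is an approximate unit for $\A$ with $\|[\D,u_{k}]\|\to 0$. Expanding
\[
2[\D,u_{k}]=[F,u_{k}]\ell^{-1}+F[\ell^{-1},u_{k}]+[\ell^{-1},u_{k}]F+\ell^{-1}[F,u_{k}],
\]
each term must be shown to vanish in norm as $k\to\infty$. The main obstacle lies here: $\ell^{-1}$ is unbounded, so bounding $[\ell^{-1},u_{k}]$ and $\ell^{-1}[F,u_{k}]$ requires writing $\ell^{-1}$ through the functional calculus applied to the telescoping series for $c$, and splitting the sum over $n$ into ranges $n<k$ (handled by estimate 4), $n\geq k$ (handled by estimates 3, 7 and the exponential decay $\epsilon^{2n}$), together with the uniform bound $\|d_{n}[F,u_{k}]\|<\epsilon^{2k}$ from estimate 6). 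The combinatorics of the double exponential decay in $n$ and $k$ allow one to close the estimate; once $\|[\D,u_{k}]\|\to 0$ is established, Proposition \ref{prop:CDA-CDA-CDA} (together with estimates 8)--10) applied to the generators $a_{i}[\D,a_{j}]$) promotes $(u_{k})$ to an approximate unit in the Lipschitz topology of $\A$.
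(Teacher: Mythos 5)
Your proposal follows the paper's proof almost exactly: the same reduction to an essential module with $[F,u_k']\to 0$ via Lemma \ref{strictconvex}, the same application of Theorem \ref{thm: technical} with $\B=\End^{*}_{B}(E)$, $\K=\kK(E)$, $\J=J_{F}$, $\A=A$, the same telescoping multiplier $c=\sum\e^{-n}d_{n}$, admissibility plus Theorem \ref{thm:firstlift}, and the same splitting of the commutator series over $n\leq k$ and $n>k$ against estimates 3), 4), 6), 7), with 8)--10) handling $[S,u_{k}a_{i}-a_{i}]\to 0$. The one deviation is your normalisation $\ell:=(1+c)^{-1/2}$: the paper takes $\ell^{-1}=c$ outright, so that $[\ell^{-1},u_{k}]=\sum\e^{-n}[d_{n},u_{k}]$ and $[F,u_{k}]\ell^{-1}=\sum\e^{-n}[F,u_{k}]d_{n}$ are literally the series the technical theorem controls term by term. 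With your square root, $[(1+c)^{1/2},u_{k}]$ is not a telescoping series, so ``functional calculus applied to the telescoping series'' does not compute as written; you would need the integral formula for fractional powers (as in the proof of Theorem \ref{thm:firstlift}) to reduce $[(1+c)^{1/2},u_{k}]$, $[(1+c)^{1/2},a_{i}]$ and $i[F,(1+c)^{-1/2}]\leq C(1+c)^{-1}$ to the already-established bounds on $[c,u_{k}]$, $[c,a_{i}]$ and $[F,c]$. That detour does close, but it is avoidable extra work; dropping the square root recovers the paper's argument verbatim.
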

\begin{proof} 
Let $(A, E_{B}, F)$ be a Kasparov module for which the 
$A$ representation is essential. 
%
Let $(u_k')$ be an approximate unit  for $A$ such that $[F,u_{k}']\rightarrow 0$ in norm,
as in Lemma \ref{strictconvex}. 
Let $v_{n}'$ be a 
contractive approximate unit for $J_{F}$. Applying Theorem \ref{thm: technical} with 
$\B=\End^{*}_{B}(E)$, $\K=\kK(E)$, $\J=J_{F}$, $\A=A$, we obtain approximate 
units $(v_{n})$ for $J_{F}$ and $(u_{n})$ for $A$ with the properties $1)$-$10)$ 
of Theorem \ref{thm: technical}.
Recalling that $d_{n}:=v_{n+1}-v_{n}$, we define
\[
\ell^{-1}=c:=\sum_{n=0}^{\infty}\e^{-n}d_{n},
\]
which is an unbounded multiplier of the ideal $J_{F}$. By 2) of Theorem \ref{thm: technical}, 
$[\ell^{-1},a]$ is bounded on $\im \ell$ for a dense set of $a\in A$, and by 5), 
$[F,a]\ell^{-1}$ is also bounded on $\im \ell$
for the same dense set of $a\in A$. By 1) $[F,\ell^{-1}]$ is bounded on 
$\im \ell$ and  since $(1-F^{2})^{\frac{1}{2}}\in J_{F}$ we may also 
assume $(1-F^{2})^{\frac{1}{2}}\ell^{-1}$ to be bounded of norm $<1$. 
Thus $\ell$ is admissible and by Theorem $\ref{admlift}$ the operator 
$S:=\frac{1}{2}(F\ell^{-1}+\ell^{-1}F)$ lifts $(E_{B},F)$ to an unbounded Kasparov module.

Thus it only remains to check that $[S,u_{k}]\rightarrow 0$ and that 
$[S, u_{k}a_{i}-a_{i}]\xrightarrow{k} 0$ for all $i$.
The properties $3),4), 6) $ and $7)$ now imply that 
\[
\begin{split} 
\| [F\ell^{-1},u_{k}]\|&=\|F [\ell^{-1},u_{k}]+[F,u_{k}]\ell^{-1}\| 
 =\|\sum_{n=0}^{\infty}F\varepsilon^{-n}[d_{n},u_{k}] +\varepsilon^{-n}[F,u_{k}]d_{n}\| \\
&\leq \| \sum_{n\leq k}F\varepsilon^{-n}[d_{n},u_{k}] +\varepsilon^{-n}[F,u_{k}]d_{n}\| 
 +\|  \sum_{n>k}F\varepsilon^{-n}[d_{n},u_{k}] +\varepsilon^{-n}[F,u_{k}]d_{n}\| .
 \end{split}
 \]
 Now applying properties 4) and 6) to the first term and properties 3) and 7) to the second we estimate
 \[
 \begin{split}
\| [F\ell^{-1},u_{k}]\|&\leq \sum_{n\leq k}2\varepsilon^{2k-n} 
+ \sum_{k>n} 2\varepsilon^{n} 
=\varepsilon^{k}(\sum_{n\leq k}2\varepsilon^{n}+\sum_{n=1}^{\infty}2\varepsilon^{n})
\leq C\varepsilon^{k},
\end{split}
\]
and thus $\lim_{k\to\infty}[S,u_{k}]\rightarrow 0.$ 
Lastly, observe that by 8) we have $\|u_{k}a_{i}-a_{i}\|<\e^{2k}$ 
whenever $k>i$. Then for $k>i$ we can estimate
\[
\begin{split} 
\| [ F\ell^{-1},u_{k}a_{i}-a_{i} ] \|&\leq \| [F,u_{k}a_{i}-a_{i}]\ell^{-1}\|+\|F[\ell^{-1},u_{k}a_{i}-a_{i}]\| \\ 
&\leq \sum \e^{-n}\|F[d_{n},u_{k}a_{i}-a_{i}]\|+\e^{-n}\|[F,u_{k}a_{i}-a_{i}]d_{n}\| \\
&\leq \sum_{n\leq k} \e^{-n}\| F[d_{n},u_{k}a_{i}-a_{i}]\|+ \e^{-n}\|[F,u_{k}a_{i}-a_{i}]d_{n}\| \\ 
&\qquad + \sum_{n>k} \e^{-n}\|F[d_{n},u_{k}a_{i}-a_{i}]\|+\e^{-n}\|[F,u_{k}a_{i}-a_{i}]d_{n}\|\\
&\leq \sum_{n\leq k}C\e^{2k-n}+\sum_{n>k} C\e^{n+k}\quad\qquad{\textnormal{by 8), 9) and 10)}}\\
&\leq 2C\e^{k},
\end{split}
\]
and thus $[S,u_{k}a_{i}-a_{i}]\xrightarrow{k} 0$ for all $i$. 
\end{proof}
In the case of Fredholm modules, the absence of technicalities 
related to complementability of submodules allow for a better 
version of the above proposition. For an $A$-Fredholm module $(A,\H, F)$, we write $p$ for 
the projection onto $[\pi(A)\H]$, also note that $[F,p]\in J$ since
\[
[F,p]\pi(a)=[F,p\pi(a)]-p[F,\pi(a)]=[F,\pi(a)]-p[F,\pi(a)].
\]
\begin{lemma}
\label{h} 
Let $A$ be a separable $C^{*}$-algebra, $(A,\H,F)$ be a 
Fredholm module with $F=F^*$ and $F^2\leq 1$. Let $p$ be the 
projection onto $[\pi(A)\H]$. There exists $h\in J$ such that:\\
1) $h$ has dense range;\\
2) $[pFp,h]=0$.
\end{lemma}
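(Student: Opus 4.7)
The plan is to produce $h$ as a positive, dense-range compact operator lying in the commutant of $G := pFp$, exploiting the fact that $\kK(\mathcal{H}) \subset J$. Since $F = F^*$ and $F^2 \leq 1$, $G$ is bounded self-adjoint with $\|G\| \leq 1$, so $(e^{itG})_{t \in \mathbb{R}}$ is a strongly continuous one-parameter unitary group. My candidate for $h$ will be the ergodic mean of a strictly positive trace-class operator under conjugation by this group, which automatically commutes with $G$.

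Concretely, I would start with any strictly positive trace-class operator $h_0 \in \kK(\mathcal{H}) \subset J$, e.g.\ $h_0 = \sum_{n \geq 1} 2^{-n} |\psi_n\rangle\langle \psi_n|$ for some orthonormal basis of the separable Hilbert space $\mathcal{H}$. Then I would define
$$
h := \lim_{T \to \infty} \frac{1}{2T} \int_{-T}^{T} e^{itG} h_0 e^{-itG} \, dt,
$$
the limit taken in the Hilbert--Schmidt norm. The existence of this limit is the main technical input: it follows from the mean ergodic theorem applied to the strongly continuous unitary group $\alpha_t(X) := e^{itG}Xe^{-itG}$ acting on the Hilbert space $\mathcal{L}^2(\mathcal{H})$ of Hilbert--Schmidt operators, whereby the Cesaro averages converge to the orthogonal projection of $h_0$ onto the fixed-point subspace $\{X \in \mathcal{L}^2(\mathcal{H}) : [G,X] = 0\}$.

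To finish, I would verify the two conclusions. The operator $h$ is Hilbert--Schmidt, hence compact, so $h \in \kK(\mathcal{H}) \subset J$; the identity $[G,h] = 0$ holds by construction; positivity of $h$ descends from positivity of each partial average because HS-convergence implies weak operator convergence and positivity is weakly closed. For dense range, I would compute, for each $\xi \neq 0$,
$$
\langle h\xi, \xi \rangle = \lim_{T \to \infty} \frac{1}{2T} \int_{-T}^{T} \langle h_0\, e^{-itG} \xi,\, e^{-itG} \xi\rangle \, dt,
$$
where the integrand is continuous and strictly positive in $t$, since $h_0$ is strictly positive and $e^{-itG}$ is unitary. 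This forces the Cesaro average to be strictly positive, giving $\ker h = \{0\}$ and so dense range.

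The principal obstacle I expect is the application of the mean ergodic theorem to $\alpha_t$ on Hilbert--Schmidt operators, which requires verifying strong continuity of $\alpha_t$ in the HS norm—a standard reduction to finite-rank operators via density—and then quoting the classical mean ergodic theorem for one-parameter unitary groups on a Hilbert space. Compared with a direct functional-calculus approach (which is obstructed because an arbitrary $f(G)$ need not lie in $J$), the ergodic mean has the advantage of producing an element guaranteed to live in the compact ideal while still commuting exactly, rather than only approximately, with $G$.
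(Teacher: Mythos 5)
There is a genuine gap here, and in fact the strategy cannot work in general. Your ergodic mean $h$ is, as you say, the orthogonal projection of $h_0$ onto the fixed-point space $\{X\in\mathcal{L}^2(\H):[G,X]=0\}$ of Hilbert--Schmidt operators commuting with $G=pFp$ -- but this space can be $\{0\}$, in which case $h=0$. Indeed, if $h$ is a positive \emph{injective compact} operator commuting with $G$, then the finite-rank spectral projections $E_{[\e,\infty)}(h)$ commute with $G$ and increase strongly to $1$, exhibiting $\H$ as the closure of an increasing union of finite-dimensional $G$-invariant subspaces; hence $G$ must be diagonalizable. Nothing in the hypotheses forces this: for the Fredholm module of $A=C_0(\R)$ on $\H=L^2(\R)$ with $F=D(1+D^2)^{-1/2}$, $D=-i\,d/dx$, one has $p=1$ and $F$ has purely absolutely continuous spectrum, so its commutant contains no nonzero compact operator and your $h$ vanishes. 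This is also precisely where your dense-range verification fails: the Ces\`aro average $\frac{1}{2T}\int_{-T}^{T}f(t)\,dt$ of a continuous, strictly positive function need not have a strictly positive limit as $T\to\infty$ (take $f(t)=e^{-t^2}$), so strict positivity of the integrand only yields $\langle h\xi,\xi\rangle\geq 0$. Your implicit appeal to almost periodicity of $t\mapsto\langle h_0e^{-itG}\xi,e^{-itG}\xi\rangle$ is only valid when $G$ has pure point spectrum.

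The structural point being missed is that $J$ is strictly larger than $\kK(\H)$, and the lemma genuinely needs a non-compact element: the operator $1-F^2$ lies in $J$ but typically not in $\kK(\H)$, and it is exactly what handles the continuous-spectrum part of $F$. The paper's proof reduces to $p=1$, splits $\H=[\im(1-F^2)]\oplus\ker(1-F^2)$ (a decomposition preserved by $F$), and sets $h=(1-F^2)+k+FkF$ with $k$ strictly positive compact on $\ker(1-F^2)$. The summand $1-F^2$ commutes with $F$ and has dense range on the first summand; an averaging idea of the kind you propose is only needed on $\ker(1-F^2)$, where $F^2=1$, the unitary group $e^{itF}=\cos t+iF\sin t$ is $2\pi$-periodic, and the ergodic mean of $e^{itF}ke^{-itF}$ degenerates to the elementary two-point average $\tfrac12(k+FkF)$, which already commutes exactly with $F$. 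So your mechanism is sound only on the unitary part of $F$ and must be combined with $1-F^2$ to cover the rest; restricted to $\kK(\H)$ alone it cannot produce the required $h$.
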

\begin{proof} 
Since the module $(\H,pFp)$ is a compact perturbation of 
$(\H,F)$, and $1-p\in J$, it suffices to construct $h\in\bB(p\H)$, since 
$h+(1-p)\in\bB(\H)$ then has the desired properties. 
Thus, we replace $\H$ with $p\H=[\pi(A)\H]$ and $F$ with $pFp$, 
so we may assume $p=1$.  The operator 
$1-F^{2}$ is in $J$ and $\H$ splits as 
\[ 
\H=[\textnormal{Im}(1-F^{2})]\oplus\ker(1-F^{2}).
\]
The operator $F$ respects this decomposition since $F$ is self-adjoint. Choose a 
strictly positive $k\in \kK(\ker(1-F^{2}))$, and consider 
$k+FkF:\ker(1-F^{2})\rightarrow \ker(1-F^{2})$. This 
element commutes with $F|_{\ker(1-F^{2})}$ since $F^{2}=1$ on this subspace. Now define
\[
h:=(1-F^{2})+k+FkF\in J,
\]
which commutes with $F$. Moreover $h\geq 0$ since $1-F^{2}\geq 0$ and 
$h$ has dense range because it is an orthogonal sum of elements with 
dense range in their respective subspaces. 
\end{proof}
Using this particular $h$, one can lift the Fredholm module 
directly to a self-adjoint element, and one does not need to assume 
that the representation is essential.
\begin{defn}
\label{compatible}
Let $A,\,B,\,C$ be separable $C^{*}$-algebras 
$(A, E_{B},F)$ an essential $(A,B)$ Kasparov module 
and  $(\B, F_{C},T)$ an essential unbounded Kasparov module
such that $\B$ has bounded approximate unit.

Suppose we are given a complete projective $\B$-submodule 
$\mathcal{E}_{\B}\subset E_{B}$ with 
Grassmann connection 
$\nabla:\mathcal{E}^{\nabla}_\B\rightarrow E\hotimes_{B}\Omega^{1}_{T}$ 
so that  $1\otimes_{\nabla}T$ is the associated self-adjoint 
regular operator on $E\hotimes_{B}F_C$.
The pair $(\mathcal{E}_{\B},\nabla)$  is \emph{compatible} with $(A,E_B,F)$ if \\ 
1) $F\otimes 1, (1-F^{2})^{\frac{1}{2}}\otimes 1 \in \Lip(1\otimes_{\nabla}T)$; \\
2) there are bounded total subsets $\{a_{i}\}\subset A$ and $\{c_{j}\}\subset J_{F}$ 
such that for all $i,j$, the elements 
$a_{i},c_{j}\in\Lip(1\otimes_{\nabla}T)$; \\
3) there is an approximate unit $(u_{k})\subset A$ for which 
\[\lim_{k\to \infty}\|[1\otimes_{\nabla}T,u_{k}]\|\rightarrow 0,\quad \lim_{k\rightarrow\infty}\|[F,u_{k}]\|\rightarrow 0,\textnormal{  
and for all } i, \lim_{k\rightarrow\infty}\|[1\otimes_{\nabla}T, u_{k}a_{i}-a_{i}]\|=0;\]
4) there is an 
approximate unit $(w_{n})$ for $J_{F}$ such that 
$\lim_{n\rightarrow \infty}\|[1\otimes_{\nabla}T, w_{n}]\|=0$ 
and for all $j$, $\|[1\otimes_{\nabla}T, w_{n}c_{j}-c_{j}]\|\rightarrow 0$.
\end{defn}

The proof of the next result brings together our various technical innovations. 
First, the characterisation of our strongest form of completeness
from Theorem \ref{thm:equivs} is present to ensure 
that the operator $[s,t](s\pm i)^{-1}$ is bounded. Our version of Kasparov's 
technical theorem, Theorem \ref{thm: technical}, is used only for $C^{*}$-algebras, 
but we use quasicentrality for a differentiable algebra $\J$ in precisely one place,  
to ensure that $[F,\ell^{-1}]$ is not just bounded, 
but even in $\Lip(1\otimes_{\nabla}T)$. 
\begin{thm}
\label{Liplift}
Let $A,\,B$ be separable $C^{*}$-algebras, $\mathcal{B}$ a differentiable
algebra of an unbounded Kasparov module $(\B, F_{C},T)$ and $(A,E_B,F)$ a bounded 
$(A,B)$ Kasparov 
module. If $E_B$ admits a compatible 
complete projective submodule $\mathcal{E}_{\B}\subset E_{B}$, 
then $(A, E_{B},F)$ can be lifted to a correspondence $(\A,\mathcal{E}_\B,S,\nabla)$ 
for $(\B, F_{C},T)$.
\end{thm}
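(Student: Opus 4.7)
The strategy is to run the construction of Proposition \ref{prop: Kasparovlift} inside the Lipschitz algebra of the connection operator $1\otimes_{\nabla}T$, so that the resulting admissible multiplier is simultaneously a complete multiplier in the sense of Definition \ref{Lipmult}. By Proposition \ref{repend}, the algebra $\End^{*}_{\mathcal{B}}(\mathcal{E}^{\nabla})$ is a differentiable algebra sitting inside $\Lip(1\otimes_{\nabla}T)$, and the compatibility assumptions in Definition \ref{compatible} guarantee that $F$, $(1-F^{2})^{\frac{1}{2}}$, the total subsets $\{a_{i}\}\subset A$, $\{c_{j}\}\subset J_{F}$, and the relevant approximate units of $A$ and $J_{F}$ all realise as elements of this Lipschitz algebra.

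First I would set up the ambient data for the technical theorem: let $\mathfrak{B}$ be the unitisation of a separable subalgebra of $\End^{*}_{\mathcal{B}}(\mathcal{E}^{\nabla})$ containing $F$, $\{a_{i}\}$, $\{c_{j}\}$, and the approximate units $(u_{k}')$, $(w_{n}')$ supplied by Definition \ref{compatible}. Take $\mathcal{K}=\kK(\mathcal{E}^{\nabla})$, $\mathcal{J}= J_{F}\cap \mathfrak{B}$, and $\mathcal{A}=A\cap \mathfrak{B}$. Conditions 3) and 4) of Definition \ref{compatible} say that the hypotheses of Theorem \ref{thm: technical} are met: in particular $\lim_{k}\|[F,u_{k}]\|=0$ and $\lim_{k}\|[1\otimes_{\nabla}T,u_{k}]\|=0$, so the quasicentrality step takes place in the $C^{*}$-closure of $\mathfrak{B}$ but propagates Lipschitz-norm estimates via properties 1)--10).

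Having obtained sequential approximate units $(v_{n})\subset J_{F}$ and $(u_{k})\subset A$ satisfying the ten convergence properties \emph{in the Lipschitz norm of $1\otimes_{\nabla}T$}, set $d_{n}:=v_{n+1}-v_{n}$ and
\[
\ell^{-1}:=c:=\sum_{n=1}^{\infty}\epsilon^{-n}d_{n}.
\]
The argument of Theorem \ref{thm:equivs} shows $c$ is a positive self-adjoint unbounded multiplier of $J_{F}$; since every partial-sum estimate made there now uses the Lipschitz bounds from Theorem \ref{thm: technical}, $c$ is in fact a \emph{complete} multiplier in the sense of Definition \ref{Lipmult}. Properties 1), 2) and 5) of Theorem \ref{thm: technical} then verify that $\ell$ is admissible in the sense of Definition \ref{admlift}, with each of $[\ell^{-1},a_{i}]$, $[F,a_{i}]\ell^{-1}$ and $(1-F^{2})^{\frac{1}{2}}\ell^{-1}$ bounded. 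Theorem \ref{thm:firstlift} then gives that $S:=\tfrac{1}{2}(F\ell^{-1}+\ell^{-1}F)$ is self-adjoint, regular and lifts $(A,E_{B},F)$ to an unbounded Kasparov module.

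To finish, take $\mathcal{A}$ to be the closure in the Lipschitz norm of $S$ of the algebra generated by the products $u_{k}a_{i}$, and let $\nabla$ be the Grassmann connection on $\mathcal{E}_{\mathcal{B}}$. Conditions 1) and 2) of Definition \ref{correspondence} are built into the construction since $\mathcal{E}_{\mathcal{B}}$ is complete projective and $\mathcal{A}\subset \End^{*}_{\mathcal{B}}(\mathcal{E}^{\nabla})\cap \Lip(S)$ by construction. The resolvent estimate $a(S\pm i)^{-1}\in \kK_{\mathcal{B}}(\mathcal{E}^{\nabla})$ in condition 3) reduces via the identity $(S\pm i)^{-1}\sim \ell(F\ell^{-1}\ell\pm i\ell)^{-1}$ to local compactness of the original Kasparov module together with the fact that $\ell\in J_{F}$. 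The condition 4) on $\nabla$ and $S$ is the statement that $[\nabla,S](S\pm i)^{-1}$ is cb into $E\hotimes_{B}\Omega^{1}_{T}$; since $S$ was built entirely out of operators in $\Lip(1\otimes_{\nabla}T)$, this commutator reduces to a finite combination of bounded commutators of $1\otimes_{\nabla}T$ with $F$, $\ell^{-1}$ and resolvent factors, and hence is bounded.

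The principal obstacle is the first step: justifying that Theorem \ref{thm: technical} can be applied in a way that the quasicentrality estimates hold in the Lipschitz norm and not merely in the $C^{*}$-norm. This is exactly what Definition \ref{compatible} has been engineered to provide — the Lipschitz-norm vanishing of $[1\otimes_{\nabla}T,u_{k}]$ and $[1\otimes_{\nabla}T,w_{n}]$, together with membership of $F$, $\{a_{i}\}$, $\{c_{j}\}$ in $\Lip(1\otimes_{\nabla}T)$, provide precisely the bounded approximate units and totals needed to apply Theorem \ref{thm: technical} inside the Lipschitz algebra. A secondary point to verify carefully is that the series defining $\ell^{-1}$ converges in the strict topology of $\End^{*}_{\mathcal{B}}(\mathcal{E}^{\nabla})$ rather than just of $\End^{*}_{B}(E)$; this follows from the spectral invariance statement of Proposition \ref{lipmult} applied to the multiplier algebra of $\mathcal{K}=\kK(\mathcal{E}^{\nabla})$.
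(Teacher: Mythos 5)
Your overall strategy is recognisably the one the paper intends, but the first and central step --- applying Theorem \ref{thm: technical} wholesale inside $\Lip(1\otimes_{\nabla}T)$ with $\K=\kK(\E^{\nabla})$, $\J=J_{F}\cap\mathfrak{B}$, $\A=A\cap\mathfrak{B}$, so that all ten estimates hold in the Lipschitz norm --- is not justified, and this is exactly where the difficulty of the theorem sits. Theorem \ref{thm: technical} requires, among other things, that $[F,\A]\subset\K$ and that $\A\J,\J\A\subset\K$ with $\A\K=\K\A=\K$, as inclusions of closed subalgebras of the ambient operator algebra, i.e.\ in the Lipschitz topology. Definition \ref{compatible} only places the elements $a_{i}$, $c_{j}$, $F$, $(1-F^{2})^{1/2}$ in $\Lip(1\otimes_{\nabla}T)$ and controls the commutators of the approximate units with $1\otimes_{\nabla}T$; it gives no control on $[1\otimes_{\nabla}T,[F,a_{i}]]$, so there is no reason for $[F,a_{i}]$ to lie in $\kK(\E^{\nabla})$ (compactness in the Lipschitz norm) rather than merely in $\kK(E_{B})$. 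Hence the hypotheses of the technical theorem fail in your Lipschitz setting, and properties such as 5) and 9), which need $[F,a_{i}]$ to lie in the ideal being averaged over, cannot be obtained in the Lipschitz norm as you claim.

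The paper's proof avoids this by splitting the argument in two. First, only Theorem \ref{Ped} (quasicentrality, not the full technical theorem) is applied to the single differentiable ideal $\J$ generated by the $c_{j}$, inside the operator algebra it generates together with $F$ --- this is legitimate because $F\J,\J F\subset\J$, so $\J$ genuinely is an ideal there in the Lipschitz topology --- yielding $\|[F,v'_{n+1}-v'_{n}]\|_{1\otimes_{\nabla}T}<\e^{2n}$ together with $\|[1\otimes_{\nabla}T,v'_{n}]\|<\e^{2n}$. This is the \emph{only} place differentiable quasicentrality enters, and its sole purpose is to make $[F,\ell^{-1}]$ an element of $\Lip(1\otimes_{\nabla}T)$ rather than just a bounded operator. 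Second, the full Theorem \ref{thm: technical} is then run purely at the $C^{*}$-level ($\B=\End^{*}_{B}(E)$, $\J=J_{F}$, $\A=A$, $\K=\kK(E_{B})$), starting from the net $(v'_{n})$; the key observation that the Lipschitz estimates survive this second pass is that they are stable under far-out convex combinations. Your proposal is missing both this two-step structure and the stability argument, and without them the assertions that $c$ is a complete multiplier and that $[1\otimes_{\nabla}T,S](S\pm i)^{-1}$ is bounded (condition 4) of Definition \ref{correspondence}, which in the paper requires the explicit resolvent identities expressing $(S\pm i)^{-1}$ through $(F\ell^{-1}\pm i)^{-1}$ and $(F\pm i\ell)^{-1}$, not a formal ``finite combination of bounded commutators'') are unsupported.
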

\begin{proof} 
Since we are given a compatible complete projective 
submodule $\mathcal{E}_{\B}\subset E_{B}$ we use the 
notation and data of Definition \ref{compatible}.
By Proposition \ref{repend} the elements $a_{i}$ and 
$c_{j}$ are in $\End^{*}_{\B}(\E^{\nabla})$.
Denote by $\A$ the closed subalgebra of $\End^{*}_{\B}(\E^{\nabla})$ 
generated by the $a_{i}$, by $\J$ the closed subalgebra of 
$\End^{*}_{\mathcal{B}}(\mathcal{E}^{\nabla})$ 
generated by the $c_{j}$, and let 
\[
\nabla:\mathcal{E}^{\nabla}_\B\rightarrow E\hotimes_{B}\Omega^{1}_{T},
\] 
be the Grassmann connection. 

We proceed in two steps. By hypothesis, 
there is a countable 
approximate unit  $(w_{n})$ 
 for $\J$ satsifying condition 
 4) of Definition \ref{compatible}. Also $F\J$, $\J F\subset \J$. By Theorem \ref{Ped} 
there is an $F$ quasi-central approximate unit $v_{n}'$ for $\J$ 
contained in the convex hull of $w_{n}$, and we set $d'_n=v_{n+1}'-v_n'$, and we can 
assume $\|[F,d'_{n}]\|_{\J}=\|[F,d'_{n}]\|_{1\otimes_{\nabla}T}<\e^{2n}$, cf. Theorem \ref{thm: technical}.
Since $(v_n')=\sum_{i=k_{n}}^{m_{n}}\theta_{i}w_{i}$ 
is built from far out convex combinations of $w_{n}$, 
this approximate unit will in particular again satisfy 
\[ 
[1\otimes_{\nabla} T, v_{n}']=\sum_{i=k_{n}}^{m_{n}}\theta_{i}[1\otimes_{\nabla}T,w_{i}]\rightarrow 0,
\] 
in norm. 
We may without loss of generality assume that 
$\|[1\otimes_{\nabla}T,v_{n}']\|<\e^{2n}$. Thus we have
\begin{equation}
\label{primaryestimates} 
\|[F,v_{n+1}'-v_{n}']\|_{1\otimes_{\nabla}T}<\e^{2n},\quad \|[1\otimes_{\nabla}T,v_{n}']\|<\e^{2n}.
\end{equation}

In particular $(v_{n}')$ is an approximate unit for the 
$C^{*}$-algebra $J_{F}$. By applying Theorem \ref{thm: technical} with 
$\B=\End^{*}_{B}(E)$, $\J=J_{F}$, $\A=A$, and $\K=\kK(E_B)$ we obtain the approximate 
unit $(v_{n})$ for $J_{F}$ with the properties $1)$-$10)$ of Theorem \ref{thm: technical}. 

It is important to notice that this can be achieved without 
losing the properties \eqref{primaryestimates}, because 
these are stable under far out convex combinations. 
Thus, by Theorem \ref{thm:equivs} 
we obtain the positive unbounded multiplier
\[
\ell^{-1}=c:=\sum \varepsilon^{-n}d_{n}, \qquad d_n=v_{n+1}-v_n
\]
for the algebra $\J$, with the property that 
$[1\otimes_{\nabla}T,\ell^{-1}]$ is bounded. Because the 
$(v_{n})$, as an approximate unit for the $C^{*}$-algebra 
$J_{F}$, have properties 1)-10) from Theorem \ref{thm: technical}, 
the argument from the proof of Proposition \ref{prop: Kasparovlift} 
can be repeated to see that the self-adjoint lift $S:=\frac{1}{2}(F\ell^{-1}+\ell^{-1}F)$ 
makes $(\A,E_{B},S)$ into an unbounded Kasparov module such that
$\A$ has an approximate unit with $[S,u_n]\to 0$ in norm. 
So 1) of Definition \ref{correspondence} is 
satisfied, and 2) is satisfied by the assumption that $\E_{\B}$ is 
complete and 3) by assumptions 2) and 3) of Definition \ref{compatible}. 
We now turn to proving 4) of Definition \ref{correspondence}.

It follows from the properties \eqref{primaryestimates} that 
$[F,\ell^{-1}]\in \Lip(1\otimes_{\nabla}T)$. To see this, observe that the finite sums 
$\sum_{n=0}^{k}\e^{-n}[F,d_{n}]$ preserve the domain of 
$1\otimes_{\nabla}T$, and using the properties \eqref{primaryestimates} we find that
\[ 
\|[1\otimes_{\nabla}T,[F,\ell^{-1}]]\|
=\|\sum_{n}\e^{-n}[1\otimes_{\nabla}T,[F,d_{n}]]\|\leq 
\sum_{n}\e^{-n}\|[1\otimes_{\nabla}T,[F,d_{n}]]\|\leq \sum \e^{n}<\infty.
\]
Now compute on $\Dom 1\otimes_{\nabla}T$
\[
\begin{split} 
[1\otimes_{\nabla}T, S](S\pm i)^{-1} &=[1\otimes_{\nabla}T,S](S\pm i)^{-1} 
=[1\otimes_{\nabla}T, F\ell^{-1}-[F,\ell^{-1}]](S\pm i)^{-1}\\
&=[1\otimes_{\nabla}T, F\ell^{-1}](S\pm i)^{-1}-[1\otimes_{\nabla}T,[F,\ell^{-1}]](S\pm i)^{-1},
\end{split}
\]
by which it suffices to show that $[1\otimes_{\nabla}T,F\ell^{-1}](S\pm i)^{-1}$ 
is bounded. Note that we may assume that $\|[F,\ell^{-1}]\|<1$ 
and thus that $F\ell^{-1}\pm i:\im\ell\to E_B$ is surjective 
and has adjointable inverse $(F\ell^{-1}\pm i)^{-1}$. Then by the resolvent equation
\[
(S\pm i)^{-1}=(F\ell^{-1}-\frac{1}{2}[F,\ell^{-1}]\pm i)^{-1}
=(F\ell^{-1} \pm i)^{-1}+\frac{1}{2}(F\ell^{-1}\pm i)^{-1}[F,\ell^{-1}](S\pm i)^{-1},
\]
and it suffices to show that $[1\otimes_{\nabla}T, F\ell^{-1}](F\ell^{-1}\pm i)^{-1}$ 
is bounded on $\Dom 1\otimes_{\nabla} T$. Then
\[
\begin{split}
[1\otimes_{\nabla}T, F\ell^{-1}](F\ell^{-1}\pm i)^{-1} 
& =[1\otimes_{\nabla}T, F]\ell^{-1}(F\ell^{-1}+i)^{-1}+F[1\otimes_{\nabla}T,\ell^{-1}](F\ell^{-1}\pm i)^{-1}\\
&=[1\otimes_{\nabla}T, F](F\pm i\ell)^{-1} + F[1\otimes_{\nabla}T,\ell^{-1}](F\ell^{-1}\pm i)^{-1},
\end{split}
\]
which is bounded on $\Dom 1\otimes_{\nabla}T$.
Therefore $(\A,\mathcal{E}_\B,S,\nabla)$ has the required properties.
\end{proof}
\subsection{Lifting Kasparov modules to correspondences}
\label{subsec:lift}
In view of Proposition \ref{Liplift}, in order to lift a pair of Kasparov modules 
$(A,E_{B},G_{1})$ and $(B,F_{C}, G_{2})$ so that we can 
construct their Kasparov product, we need to find an unbounded 
representative $(\B,F_{C}, T)$ such that $(A,E_{B},G_{1})$ admits a 
compatible pair $(\E_\B,\nabla)$ of a complete projective submodule and connection 
in the sense of Definition \ref{compatible}. We begin with a Lemma about a special subalgebra
of the multipliers of the linking algebra. Recall that the linking algebra $\mathcal{L}(E_B)$
of the $C^*$-module $E_B$ is the algebra of compact endomorphisms of $E_B\oplus B$. 
\begin{lemma}
\label{linkingA} 
Let $E_{B}$ be a $C^{*}$-module and $A\subset\End^{*}_{B}(E)$ 
an essential $C^{*}$-subalgebra with self-adjoint contractive approximate unit 
$(u_{n}^{A})$, and let $(u_n^B)$ be an approximate unit for $B$. Then the collection
of operators
\begin{equation}
\label{linking}
\mathcal{L}_{A}(E_{B}):=\left\{\begin{pmatrix} a +K & |e\rangle \\ \langle f | & b\end{pmatrix}: 
a\in A,\, K\in\kK(E_{B}),\  b\in B,\, e,\,f\in E_{B}\right\}\subset\End^{*}_{B}(E\oplus B),
\end{equation}
is a $C^{*}$-subalgebra containing the linking algebra $\mathcal{L}(E_{B})$ as an ideal and 
$u_{n}:=\big(\begin{smallmatrix} u_{n}^{A} & 0 \\ 0 & u_{n}^{B}\end{smallmatrix}\big)$ 
is an approximate unit for $\mathcal{L}_{A}(E_{B})$. 
The algebra $\mathcal{L}_{A^{+}}(E_{B^{+}})$ coincides 
with the unitisation $\mathcal{L}_{A}(E_{B})^{+}$.
\end{lemma}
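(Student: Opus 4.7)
The proof is essentially four direct verifications, none of which should present serious difficulty once the right organisation is chosen. I will treat the claims in the order they are stated.

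\textbf{Step 1: $\mathcal{L}_A(E_B)$ is a $*$-subalgebra.} The core observation is that $A + \kK(E_B)$ is itself a $C^*$-subalgebra of $\End^*_B(E)$. This follows from the standard fact that if $I$ is a closed two-sided ideal in a $C^*$-algebra and $A$ is a $C^*$-subalgebra of its idealiser, then $A + I$ is closed; here $\kK(E_B) \trianglelefteq \End^*_B(E)$ and $A \subset \End^*_B(E)$. Given this, I will compute the product of two general elements of $\mathcal{L}_A(E_B)$ directly. Using $T|e\rangle = |Te\rangle$ and $\langle f|T = \langle T^*f|$, the $(1,1)$-entry of the product is $(a_1+K_1)(a_2+K_2) + |e_1\rangle\langle f_2| \in A + \kK(E_B)$; the off-diagonal entries are again of the form $|\cdot\rangle$ and $\langle\cdot|$; and the $(2,2)$-entry is $\langle f_1, e_2\rangle + b_1 b_2 \in B$. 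The involution is obvious.

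\textbf{Step 2: $\mathcal{L}_A(E_B)$ is norm-closed.} A Cauchy sequence in $\mathcal{L}_A(E_B)$ has Cauchy entries: the $(1,1)$-entry converges in the closed subspace $A + \kK(E_B)$ of $\End^*_B(E)$, the off-diagonal entries $|e_n\rangle, \langle f_n|$ converge because $\||e\rangle\|_{\End^*_B(E\oplus B)}=\|e\|_{E_B}$, and the $(2,2)$-entry converges in $B$. So the limit has the required form.

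\textbf{Step 3: $\mathcal{L}(E_B)$ is an ideal.} Recognise $\mathcal{L}(E_B) = \kK(E \oplus B)$ as those elements with $(1,1)$-entry in $\kK(E_B)$. Multiplying the matrix forms and using that $\kK(E_B)$ is an ideal in $\End^*_B(E)$ (hence $A\kK(E_B), \kK(E_B)A \subset \kK(E_B)$), together with $|e\rangle\langle f| \in \kK(E_B)$, confirms both left and right absorption.

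\textbf{Step 4: $(u_n)$ is an approximate unit.} The key input is essentiality of $A$ on $E_B$: since $(u_n^A)$ is a contractive approximate unit for $A$ and $AE_B$ is dense in $E_B$, $u_n^A \to \Id_{E_B}$ strictly, which upgrades to norm convergence $u_n^A K \to K$ for $K \in \kK(E_B)$ via rank-one approximation. Combined with $u_n^A a \to a$ for $a \in A$, this handles the $(1,1)$-entry. For the $(1,2)$-entry, $u_n^A|e\rangle = |u_n^A e\rangle \to |e\rangle$ in operator norm. For the $(2,1)$-entry, note $u_n^B \langle f| = \langle f u_n^B|$ (for self-adjoint $u_n^B$), and $\|f u_n^B - f\|_{E_B} \to 0$ because $E_B$ is a nondegenerate $B$-module (by the standard computation $\|eu_n^B - e\|^2 = \|\langle e,e\rangle - u_n^B \langle e,e\rangle - \langle e,e\rangle u_n^B + u_n^B \langle e,e\rangle u_n^B\| \to 0$). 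The $(2,2)$-entry is trivial. Right-multiplication is symmetric.

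\textbf{Step 5: Unitisation.} An element of $\mathcal{L}_A(E_B)^+$ is of the form $\begin{pmatrix}a+K+\lambda & |e\rangle\\ \langle f| & b+\lambda\end{pmatrix}$ with $a\in A$, $K\in\kK(E_B)$, $b\in B$, $\lambda\in\C$, $e,f\in E_B$. Since $\kK(E_{B^+}) = \kK(E_B)$ and $E_{B^+} = E_B$ as sets, this is precisely $\mathcal{L}_{A^+}(E_{B^+})$, with the identifications $A^+ = A + \C\cdot 1$ and $B^+ = B + \C\cdot 1$.

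The only subtle point is the norm convergence $u_n^A K \to K$ in Step 4, but this is a well-known consequence of essentiality and the compactness of $K$; everything else is routine matrix bookkeeping.
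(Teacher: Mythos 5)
Your proof is correct and follows essentially the same route as the paper's (much terser) argument: closedness of $A+\kK(E_B)$ via the ideal property of $\kK(E_B)$ in $\End^*_B(E)$, direct matrix bookkeeping for the $*$-algebra structure and the ideal property of $\mathcal{L}(E_B)$, and essentiality of $A$ to upgrade $u_n^A\to\Id_E$ strictly to norm convergence $u_n^AK\to K$ on compacts. The only point deserving care is your Step 5: as literally defined by \eqref{linking}, the algebra $\mathcal{L}_{A^+}(E_{B^+})$ admits independent scalars, i.e. it contains $\diag(\lambda_1\Id_E,\lambda_2)$ for arbitrary $\lambda_1,\lambda_2\in\C$, whereas $\mathcal{L}_A(E_B)^+$ only adjoins multiples of $\Id_{E\oplus B}$; the two coincide only when $\diag(\Id_E,0)$ already lies in $\mathcal{L}_A(E_B)^+$ (e.g. when $\Id_E\in A+\kK(E_B)$), so your ``is precisely'' is not quite right in general. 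The paper's own proof dismisses this step as ``immediate'' and carries the same imprecision; for the subsequent applications (Lemma \ref{lem:connectionquasi} and Proposition \ref{singleliplift}) all that is used is that $\mathcal{L}_{A^+}(E_{B^+})$ is a unital $C^*$-algebra containing the relevant linking-type algebras as ideals, which both your argument and the paper's do establish.
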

\begin{proof} 
A quick computation shows that $\mathcal{L}_{A}(E_{B})$ $*$-algebra. 
Since $\kK(E_{B})$ is an ideal in $\End_{B}^{*}(E)$ it follows that 
$A+\kK(E_{B})$ is a $C^{*}$-algebra. Moreover, because 
$A$ is essential, $[AE_{B}]=E_{B}$, it follows that 
$u_{n}^{A}\langle e|=\langle u_{n}^{A}e| \to \langle e|$ and 
$|e\rangle u_{n}^{A}=|u_{n}^{A}e\rangle\to |e\rangle$ for all 
$e\in E_{B}$. Thus $u_{n}^{A}K\to K$ for all $K\in\kK(E_{B})$, 
and $u_{n}^{A}$ is approximate unit for $A+\kK(E_{B})$. 
Using the corresponding properties for $u_{n}^{B}$,  
the same argument shows that 
$\big(\begin{smallmatrix} u_{n}^{A} & 0 \\ 0 & u_{n}^{B}\end{smallmatrix}\big)$ 
is an approximate unit for $\mathcal{L}_{A}(E)$. The 
statement on the unitisation is immediate.
\end{proof}
We now recall a definition from \cite{CS}, where the notion of 
connection was introduced in the bounded picture of $KK$-theory.
\begin{defn}
\label{bddcon} 
Let $(B,F_{C}, G)$ be a $(B,C)$ 
Kasparov module and $E_{B}$ a $C^{*}$-module over $B$. 
An operator $G\in\End_{C}^{*}(E\otimes_{B}F_{C})$ is a 
$G_{2}$-\emph{connection} if for each $e\in E$ the operator 
\[
\left[\begin{pmatrix} G & 0 \\ 0 & G_{2}\end{pmatrix},
\begin{pmatrix} 0 & |e\rangle \\ \langle e|  & 0\end{pmatrix}\right] 
\in \End^{*}_{C}(E\hotimes_{B}F_{C}\oplus F_{C}),
\]
is compact.
\end{defn}
It is well known that $G_2$-connections always exist, by realising 
$E_B$ as a complemented submodule of $\mathcal{H}_{B^{+}}$ via 
$v:E_B\rightarrow \H_{B^{+}}$ and defining $G:=v^{*}\epsilon\diag(G_2)v=:v^*G_{2,\epsilon}v$. 
It is useful to observe that in $\End^{*}_{C}(E\hotimes_{B}F\oplus F)$ we have the identity
\[
\begin{pmatrix} G & 0 \\ 0& 0\end{pmatrix}
=\sum_{i\in\Z} \begin{pmatrix} 0 & | \gamma(x_{i})\rangle \\ 0 & 0\end{pmatrix}
\begin{pmatrix} 0 & 0 \\ 0& G_{2} \end{pmatrix}\begin{pmatrix} 0 & 0 \\ \langle x_{i} | & 0 \end{pmatrix},
\]
which can also be written as a matrix product
\[
\begin{pmatrix} 0 & | x_{i}\rangle \\ 0 & 0\end{pmatrix}_{i\in\Z}^{t}\cdot
\left(\epsilon\diag\begin{pmatrix} 0 & 0 \\ 0& G_{2} \end{pmatrix}\right)
\cdot\begin{pmatrix} 0 & 0 \\ \langle x_{i} | & 0 \end{pmatrix}_{i\in\Z},
\]
of a row, a diagonal matrix and a column.
\begin{lemma}
\label{lem:connectionquasi} 
Let $A\subset\End^{*}_{B}(E_{B})$ be an essential subalgebra 
with approximate unit $(\tilde{u}_{n})$ and $(B, F_{C}, G_{2})$ a 
Kasparov module. Let $(x_{i})_{i\in\Z}$ be a homogenous frame 
for $E_{B}$ with  stabilisation isometry $v:E_{B}\to\H_{B^{+}}$ 
and associated $G_{2}$-connection $G=v^{*}\epsilon\diag(G_2)v$. 
Then there is a $G$-quasicentral approximate unit $(u_{n})$ for 
$A$ contained in the convex hull $\mathscr{C}(\tilde{u}_{n})$: that is $[G,u_{n}]\to 0$ in norm.
\end{lemma}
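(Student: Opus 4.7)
The plan is to prove weak Banach-space convergence $[G,\tilde u_{n}\otimes 1_{F}]\to 0$ in $\End^{*}_{C}(E\hotimes_{B}F_{C})$ and then invoke Mazur's theorem to extract norm-convergent convex combinations. The key preliminary observation is that $(\tilde u_{n}\otimes 1)$ is \emph{simultaneously} a bounded approximate unit for $A\otimes 1_{F}$ and for the ideal $\kK(E\hotimes_{B}F_{C})$ inside $\End^{*}_{C}(E\hotimes_{B}F_{C})$. The latter uses essentiality of $A$ on $E_{B}$: from $[AE_{B}]=E_{B}$ one shows $\tilde u_{n}(e)\to e$ in norm for every $e\in E_{B}$, which via the elementary-tensor identity $(\tilde u_{n}\otimes 1)(e\otimes f)-e\otimes f=(\tilde u_{n}e-e)\otimes f$ and approximation by finite-rank operators yields $(\tilde u_{n}\otimes 1)K\to K$ in norm for every $K\in\kK(E\hotimes_{B}F_{C})$.

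With this in hand, I would form the $C^{*}$-algebra $\mathscr{D}:=C^{*}\bigl(A\otimes 1,\,G,\,\kK(E\hotimes_{B}F_{C})\bigr)\subset\End^{*}_{C}(E\hotimes_{B}F_{C})$, in which $\kK(E\hotimes_{B}F_{C})$ is automatically a closed two-sided ideal. Any continuous linear functional on $\End^{*}_{C}(E\hotimes_{B}F_{C})$ extends by Hahn--Banach and decomposes into states of $\mathscr{D}$, so it suffices to show $\sigma([G,\tilde u_{n}\otimes 1])\to 0$ for every state $\sigma$ on $\mathscr{D}$. In the associated GNS representation $(\pi_{\sigma},\H_{\sigma},v_{\sigma})$, Theorem~\ref{thm: idempotent} produces a strong limit $\pi_{\sigma}(\tilde u_{n}\otimes 1)\to q_{\sigma}$ to an idempotent whose range, by the double approximate-unit property above, equals both $[\pi_{\sigma}(A\otimes 1)\H_{\sigma}]$ and $[\pi_{\sigma}(\kK(E\hotimes_{B}F_{C}))\H_{\sigma}]$. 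Since $q_{\sigma}$ is the essential projection of the ideal $\pi_{\sigma}(\kK(E\hotimes_{B}F_{C}))$ inside $\pi_{\sigma}(\mathscr{D})$, an argument identical to that in Lemma~\ref{lem: qcomm} (using $\Nil\pi_{\sigma}(\kK)=(1-q_{\sigma})\H_{\sigma}$ and $\kK\cdot d\subset\kK$ for $d\in\mathscr{D}$) forces $q_{\sigma}$ to commute with all of $\pi_{\sigma}(\mathscr{D})$, and in particular with $\pi_{\sigma}(G)$. Strong convergence then gives $\sigma([G,\tilde u_{n}\otimes 1])\to\langle v_{\sigma},[\pi_{\sigma}(G),q_{\sigma}]v_{\sigma}\rangle=0$.

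Having established weak convergence $[G,\tilde u_{n}\otimes 1]\to 0$ in $\End^{*}_{C}(E\hotimes_{B}F_{C})$, I would apply Mazur's theorem to each tail $\{[G,\tilde u_{k}\otimes 1]:k\geq N\}$ to obtain $u_{N}\in\mathscr{C}(\tilde u_{k}:k\geq N)$ with $\|[G,u_{N}\otimes 1]\|<1/N$. Since the $u_{N}$ are far-out convex combinations of the original approximate unit, $(u_{N})$ remains a bounded approximate unit for $A$, and identifying $u_{N}$ with $u_{N}\otimes 1$ yields the sought $G$-quasicentral approximate unit in $\mathscr{C}(\tilde u_{n})$. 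The main obstacle I anticipate is the dual-role identification of $q_{\sigma}$ in the GNS picture, which hinges on $(\tilde u_{n}\otimes 1)$ being a norm (and not merely strong) approximate unit for the compacts $\kK(E\hotimes_{B}F_{C})$; this in turn rests decisively on the essentiality hypothesis $[AE_{B}]=E_{B}$ and on density of finite-rank operators in the compacts.
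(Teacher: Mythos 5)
Your overall strategy---prove weak convergence of $[G,\tilde u_{n}\otimes 1]$ to zero and then extract norm\--convergent convex combinations \`a la Mazur---is the Akemann--Pedersen scheme that also underlies Theorem~\ref{Ped}, and your preliminary observation that essentiality of $A$ makes $(\tilde u_{n}\otimes 1)$ a norm approximate unit for $\kK(E\hotimes_{B}F_{C})$ is correct. The gap is in the identification of the limit idempotent $q_{\sigma}$. Applying Theorem~\ref{thm: idempotent} to the algebra $A\otimes 1$, which actually contains the net $\tilde u_{n}\otimes 1$, gives $q_{\sigma}\H_{\sigma}=[\pi_{\sigma}(A\otimes 1)\H_{\sigma}]$, and the approximate-unit property for the compacts gives the inclusion $[\pi_{\sigma}(\kK(E\hotimes_{B}F_{C}))\H_{\sigma}]\subseteq q_{\sigma}\H_{\sigma}$. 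But the reverse inclusion, which is the one you use, fails in general: it would require each $\pi_{\sigma}(a\otimes 1)\xi$ to be approximable by vectors $\pi_{\sigma}(K)\eta$ with $K\in\kK(E\hotimes_{B}F_{C})$, i.e.\ $a\otimes 1\in\overline{\kK(E\hotimes_{B}F_{C})\cdot\mathscr{D}}=\kK(E\hotimes_{B}F_{C})$, which is false whenever $a\otimes 1$ is not compact on $E\hotimes_{B}F_{C}$ --- the generic situation, since $B$ need not act on $F_{C}$ by compacts. Concretely, any state $\sigma$ of $\mathscr{D}$ annihilating the ideal $\kK(E\hotimes_{B}F_{C})$ has $[\pi_{\sigma}(\kK)\H_{\sigma}]=\{0\}$ while $[\pi_{\sigma}(A\otimes 1)\H_{\sigma}]$ is typically nonzero. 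For such $\sigma$ your $q_{\sigma}$ is the essential idempotent of $A\otimes 1$, which is \emph{not} an ideal of $\mathscr{D}$ (one has $[G,A\otimes 1]\not\subset A\otimes 1$, and $[G,a\otimes 1]$ need not be compact for general $a\in A$), so no invariance argument forces $[\pi_{\sigma}(G),q_{\sigma}]=0$, and the key step $\sigma([G,\tilde u_{n}\otimes 1])\to 0$ is unproven.

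Note how the paper's proof sidesteps exactly this point: it never compares $A\otimes 1$ with the compacts on $E\hotimes_{B}F_{C}$. Instead it factors $G=|x\rangle\cdot\bigl(\epsilon\,\diag(G_{2})\bigr)\cdot\langle x|$ through the frame, observes that the middle diagonal factor commutes exactly with the scalar diagonal $\diag(u_{n},1)$, and reduces everything to quasicentrality with respect to the row and column operators $|x\rangle$, $\langle x|$, which live in $\End^{*}_{\mathcal{L}_{A^{+}}(E_{B^{+}})}(\H_{\mathcal{L}_{A^{+}}(E_{B^{+}})})$. There Theorem~\ref{Ped} applies, and its proof does establish the equality of essential subspaces you need --- but for the scalar diagonal copy of an honest ideal inside $\End^{*}_{\B}(\H_{\B})$, via the block decomposition of Lemma~\ref{lem: decompose} and the estimate in Lemma~\ref{lem: qcomm}. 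To repair your argument you would have to prove $[\pi_{\sigma}(G),q_{\sigma}]=0$ by some other route, and the natural way to do that is precisely to decompose $G$ along the frame, i.e.\ to fall back on the paper's construction.
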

\begin{proof} 
This will follow from a direct application of Theorem \ref{Ped}. 
Consider the algebra $\mathcal{L}_{A^{+}}(E_{B^{+}})$ and the ideal 
$\mathcal{L}_{A}(E_{B^{+}})$ as defined in \eqref{linking}, 
with its approximate unit $u_{n}'=\begin{pmatrix} \tilde{u}_{n} & 0 \\ 0 & 1\end{pmatrix}$. 
We view the row, respectively column,
\[
|x\rangle:=\begin{pmatrix} 0 & | x_{i}\rangle \\ 0 & 0\end{pmatrix}_{i\in\Z}^{t},\quad \langle x|
:=\begin{pmatrix} 0 & 0 \\ \langle x_{i} | & 0 \end{pmatrix}_{i\in\Z},
\]
as elements in $\End^{*}_{\mathcal{L}_{A^{+}}(E_{B^+})}(\H_{\mathcal{L}_{A^{+}}(E_{B^+})})$. 
By Theorem \ref{Ped}, the convex hull $\mathscr{C}(\tilde{u}_{n})$ contains an 
$\langle x|, |x\rangle$-quasicentral approximate unit 
$u_{n}''=\begin{pmatrix} u_{n} &0 \\ 0 & 1\end{pmatrix}$. 
Observe that 
\[
\left[ u_{n}''\cdot\textnormal{Id}_{\H_{\mathcal{L}_{A^{+}}}(E_{B^+})},\epsilon\cdot\textnormal{diag}
\begin{pmatrix} 0 & 0 \\0 & G_{2}\end{pmatrix}\right] =0.
\]
Writing $u_{n}''$ for $u_{n}''\cdot\textnormal{Id}_{\H_{\mathcal{L}_{A^{+}}}(E_{B^+})}$ 
when necessary, we can compute 
\begin{align*} 
\begin{pmatrix} [G,u_{n}] & 0 \\ 0 & 0 \end{pmatrix} 
&=\left[\begin{pmatrix} G & 0\\ 0 & 0\end{pmatrix} {}_, 
\begin{pmatrix} u_{n} & 0 \\ 0 & 1 \end{pmatrix}\right] \\
&=\left[\begin{pmatrix} 0 & |x\rangle \\ 0 & 0 \end{pmatrix}
\cdot \left(\epsilon\cdot\diag\begin{pmatrix} 0 & 0 \\ 0 & G_{2}\end{pmatrix}\right)
\cdot\begin{pmatrix} 0 & 0 \\ \langle x| & 0\end{pmatrix}{}_,  
\begin{pmatrix} u_{n} & 0 \\ 0 & 1 \end{pmatrix}\right] \\
&=\left[\begin{pmatrix} 0 & |x\rangle \\ 0 & 0 \end{pmatrix}
{}_,  
\begin{pmatrix} u_{n} & 0 \\ 0 & 1 \end{pmatrix}\right] 
 \left(\epsilon\cdot\diag\begin{pmatrix} 0 & 0 \\ 0 & G_{2}\end{pmatrix}\right)\cdot
 \begin{pmatrix} 0 & 0 \\ \langle x| & 0\end{pmatrix}
\\ & \quad\quad\quad\quad\quad\quad\quad +\begin{pmatrix} 0 & |x\rangle \\ 0 & 0 \end{pmatrix}
\cdot\left(\epsilon\cdot\diag\begin{pmatrix} 0 & 0 \\ 0 & G_{2}\end{pmatrix}\right)
\cdot \left[\begin{pmatrix} 0 & 0 \\ \langle x| & 0\end{pmatrix}{}_,  
\begin{pmatrix} u_{n} & 0 \\ 0 & 1 \end{pmatrix}\right]
\end{align*}
which converges to $0$ in norm by Theorem \ref{Ped}. 
Therefore $[G,u_{n}]$ converges to $0$ in norm.
\end{proof}
Given a $*$-homomorphism $B\to \End^{*}_{C}(F)$, the algebra 
$\End^{*}_{B}(E_{B}\oplus B)$ is naturally represented on the 
$C^{*}$-module $E\hotimes_{B}F\oplus F$. In particular, the 
linking-type algebras defined in Lemma \ref{linkingA} act on 
$E\hotimes_{B}F\oplus F$. We prepare the setting for the 
proof of our final lifting result, which is yet another application of Theorem \ref{thm: technical}.
Recall that given a bounded Kasparov module $(A,E_B,G_1)$, we define $J_{G_1}$ 
to be the $C^*$-algebra generated by $\kK(E_B)$ and ${\rm Id}_{E_B}-G_1^2$.
\begin{lemma}
\label{preparetoapplytechnical} 
Let $(A, E_{B}, G_{1})$ and $(B,F_{C},G_{2})$ be Kasparov modules 
and $(x_{i})_{i\in\Z}$ a homogenous frame for $E_{B}$ with  
stabilisation isometry $v:E_{B}\to\H_{B^{+}}$ and associated 
$G_{2}$-connection $G=v^{*}G_{2,\epsilon} v$. Write 
$\tilde{G}:=\big(\begin{smallmatrix}G & 0\\ 0 & G_{2}\end{smallmatrix}\big)$ 
and $\tilde{G}_{1}:=\big(\begin{smallmatrix} G_{1}\otimes 1 & 0\\ 0 &  0\end{smallmatrix}\big)$. 
As in Lemma \ref{linkingA} consider the algebras 
\[
A_{0}:=\mathcal{L}_{A}(E_{B}),\quad A_{1}:=\mathcal{L}_{J_{G_{1}}}(E_{B}).
\]
For $p=0,1$ define 
the $C^{*}$-algebras $\tilde{J}_{p}$ generated by $\kK(E_{B})\otimes 1\oplus \kK(F_{C})$,
$[\tilde{G},A_{p}], [\tilde{G}, \tilde{G}_{1}]$ and $1-\tilde{G}^{2}$ 
on $E\hotimes F_C\oplus F_C$. 
Let $\tilde{B}_{p}$ be the $C^{*}$-algebras generated by  
$\textnormal{Id}_{E\hotimes_{B}F\oplus F},$ $A_{p}$ and 
$\tilde{J}_{p}$. Let $K_{p}$ be the $C^{*}$-subalgebra of 
$\tilde{B}_{p}$ generated by $A_{p}\tilde{J}_{p}$ and $\tilde{J}_{p}A_{p}$.
Finally, define $J_p=K_p+\tilde{J}_p$. Then:\\
1) $A_{p}$ admits $\tilde{G}, G_{1}\otimes 1$-quasicentral 
approximate units $u_{n}^{p}$ of the form 
\[
(u_{n}^{0})=\begin{pmatrix}\tilde{u}_{n}^{A} & 0 \\ 0 & \tilde{u}_{n}^{B}\end{pmatrix}, \quad (u_{n}^{1})
=\begin{pmatrix}\tilde{w}_{n}^{1} & 0 \\ 0& \tilde{u}_{n}^{B}\end{pmatrix},
\] 
where $(\tilde{u}_{n}^{A})$, $(\tilde{u}_{n}^{B})$, $(\tilde{w}_{n}^{1})$ 
are approximate units for $A,B$ and $J_{G_{1}}$ respectively;\\
2) $J_{p}$ admits an approximate unit $(v^{p}_{n})=\big(\begin{smallmatrix}\tilde{v}_{n}^{p} & 0 \\ 0 & \tilde{w}_{n}^{2}\end{smallmatrix}\big)$ 
where $(\tilde{v}^{p}_{n})$ is an approximate unit for the algebra generated by 
$[G,A_{p}]$, $[G,G_{1}\otimes 1]$, $1-G^{2}$ and $\kK(E_{B})\otimes 1$ and  
$(\tilde{w}^{2}_{n})$ is an approximate unit for $J_{G_{2}}$;\\
3) $K_{p}$ is an ideal in $\tilde{B}_{p}$;\\
4) $A_{p}K_{p}=K_{p}A_{p}=K_{p}$, that is $K_{p}$ is $A_{p}$-essential;\\
5) $A_{p}J_{p}$, $J_{p}A_{p}\subset K_{p}\subset J_{p}$;\\
6) $[G,A_{p}]\subset K_{p}$.
\end{lemma}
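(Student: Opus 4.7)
The plan is to treat the six assertions in order, with (1)--(5) being essentially routine bookkeeping built on the structural results of the preceding subsections, and (6) being the technical heart that I would tackle last.

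For (1), I would apply Lemma \ref{lem:connectionquasi} to the essential inclusions $A,J_{G_{1}}\subset\End^{*}_{B}(E)$ equipped with the Kasparov module $(B,F_{C},G_{2})$ to obtain, in the convex hulls of $(\tilde{u}_{n}^{A})$ and $(\tilde{w}_{n}^{1})$, approximate units that are $G$-quasicentral. A further application of Theorem \ref{Ped} with the operator $G_{1}\otimes 1$ added to the quasicentrality list produces quasicentrality with respect to $G_{1}\otimes 1$; here the Kasparov hypothesis $[G_{1},A]\subset\kK(E_{B})$ is used so that the weak-asymptotic commutation arguments apply to this additional operator. The block-diagonal form of $u_{n}^{p}$ then follows from Lemma \ref{linkingA}. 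Part (2) is similar: since $J_{p}=K_{p}+\tilde{J}_{p}$, it suffices to produce block-diagonal approximate units for the two summands from approximate units for each of their generating $C^{*}$-subalgebras, exploiting the direct-sum structure on $E\hotimes_{B}F\oplus F$.

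Parts (3)--(5) are algebraic and flow from the definitions. By construction $[\tilde{G},A_{p}]\subset\tilde{J}_{p}$, so $A_{p}\tilde{J}_{p}$ and $\tilde{J}_{p}A_{p}$ lie in $K_{p}$ by fiat, which gives $A_{p}J_{p}\subset K_{p}$ and $J_{p}A_{p}\subset K_{p}$ once (2) is in hand. To show $K_{p}$ is an ideal in $\tilde{B}_{p}$ one checks closure under left and right multiplication by $A_{p}$ and $\tilde{J}_{p}$; the triple products $A_{p}\tilde{J}_{p}A_{p}$ are absorbed using the quasicentral approximate unit of (1), since $[\tilde{G},u_{n}^{p}]\to 0$ in norm allows one to move $u_{n}^{p}$ past elements of $\tilde{J}_{p}$ modulo terms already in $K_{p}$. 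The essentiality assertion $A_{p}K_{p}=K_{p}A_{p}=K_{p}$ is then a standard consequence of Cohen's factorisation theorem applied to $A_{p}$ acting with its approximate unit on $K_{p}$.

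The main obstacle is assertion (6). By the very definition of $\tilde{J}_{p}$ we have $[\tilde{G},a]\in\tilde{J}_{p}$ for each $a\in A_{p}$ for which the commutator is bounded (which includes the generators), and the aim is to refine this to $[\tilde{G},a]\in K_{p}$. Using the $\tilde{G}$-quasicentral approximate unit $(u_{n}^{p})$ from (1), the Leibniz identity gives
\[
[\tilde{G},a]\,u_{n}^{p}\;=\;[\tilde{G},au_{n}^{p}]-a[\tilde{G},u_{n}^{p}],
\]
and each $[\tilde{G},a]u_{n}^{p}$ lies in $\tilde{J}_{p}A_{p}\subset K_{p}$. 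The second term on the right tends to zero in norm by quasicentrality; to conclude that the left-hand side converges in norm to $[\tilde{G},a]$ I would exploit the linking-algebra structure, namely that $u_{n}^{p}$ restricts on the essential corner containing the range of $[\tilde{G},a]$ to an approximate unit for the relevant ideal, combined with the closedness of the derivation $[\tilde{G},\cdot]$ applied to the norm-convergent sequence $au_{n}^{p}\to a$. Closedness of $K_{p}$ then yields $[\tilde{G},a]\in K_{p}$, and restriction to the first corner gives $[G,a]\in K_{p}$. The delicate bookkeeping to verify the norm convergence of $[\tilde{G},a]u_{n}^{p}$ across all components of the linking algebras is where the hard work lies, and this is where the two forms of quasicentrality secured in (1) are jointly used.
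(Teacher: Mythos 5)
Your overall architecture matches the paper's: parts (1) and (2) rest on Lemma \ref{lem:connectionquasi} and Theorem \ref{Ped}, the block-diagonal form of the approximate units comes from Lemma \ref{linkingA} (with $G_{1}$- and $G_{2}$-quasicentrality extracted from $[G_{1},A]\subset\kK(E_{B})$, $[G_{2},B]\subset\kK(F_{C})$ as in Lemma \ref{strictconvex}), and (4)--(6) are driven by quasicentrality. However, three of your mechanisms do not close as stated. Your argument for (3) is circular: inserting $u_{n}^{p}$ into $aja'$ and moving it past $j$ reduces $aja'$ to $\lim (au_{n}^{p})(ja')\in A_{p}\cdot(\tilde{J}_{p}A_{p})=(A_{p}\tilde{J}_{p})A_{p}$, which is exactly the triple product you started with. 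The paper's observation is simpler and avoids this: $A_{p}^{2}$ is dense in $A_{p}$ and $\tilde{J}_{p}^{2}$ is dense in $\tilde{J}_{p}$, so for instance $aja'=\lim (aj')(j''a')\in (A_{p}\tilde{J}_{p})(\tilde{J}_{p}A_{p})\subset K_{p}$, and every product of a generator of $\tilde{B}_{p}$ with a generator of $K_{p}$ factors this way. For (2), you cannot "produce block-diagonal approximate units for the two summands" of $J_{p}=K_{p}+\tilde{J}_{p}$ and combine them; approximate units for two subalgebras do not yield one for their sum. The paper instead shows that a $\tilde{G}$-quasicentral approximate unit for $\tilde{J}_{p}$ is \emph{automatically} an approximate unit for $K_{p}$, via identities such as $v_{n}^{p}a[\tilde{G},b]=v_{n}^{p}[\tilde{G},ab]-v_{n}^{p}[\tilde{G},a]b$, and then verifies separately that the block-diagonal sequence handles the off-diagonal corner entries $[\tilde{G},|e\rangle]=\sum_{i}|x_{i}\rangle[G_{2},\langle x_{i},e\rangle]$ of $\tilde{J}_{p}$ by norm convergence of that frame series. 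Neither step appears in your sketch.

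Finally, you have inverted the difficulty profile of the lemma: (6) is its easiest part, not its technical heart. Since $\tilde{G}$ is a \emph{bounded} operator and $au_{n}^{p}\to a$ in norm, one has $[\tilde{G},au_{n}^{p}]-[\tilde{G},a]=\tilde{G}(au_{n}^{p}-a)-(au_{n}^{p}-a)\tilde{G}\to 0$ in norm automatically; no closedness of the derivation and no corner-by-corner analysis of the linking algebra is needed. Thus $[\tilde{G},a]u_{n}^{p}=[\tilde{G},au_{n}^{p}]-a[\tilde{G},u_{n}^{p}]\to[\tilde{G},a]$, each term lies in $\tilde{J}_{p}A_{p}\subset K_{p}$, and closedness of $K_{p}$ finishes the proof in one line (the paper runs the same computation with $u_{n}^{p}$ on the left). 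The genuinely delicate parts of the lemma are the ones your sketch passes over, namely the absorption of $K_{p}$ by the $\tilde{J}_{p}$ approximate unit and the off-diagonal frame series in (2).
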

\begin{proof}  
To prove $1)$, we show that both $A_{0}$ and $A_{1}$ 
admit approximate units $(u_{n}^{0})$, $(u_{n}^{1})$ that are quasicentral for 
$\tilde{G}$ and $\tilde{G}_{1}\otimes 1$.  
Since both $A$ and $J_{G_{1}}$ are essential on $E_{B}$, 
by Lemma \ref{lem:connectionquasi} there exist approximate 
units $(\tilde{u}_{n}^{A})$, $(\tilde{w}_{n}^{1})$ for $A$ and 
$J_{G_{1}}$, respectively,  that are quasicentral for $G$. 
Since $[G_{1},A]$, $[G_{1},J_{G_{1}}]\subset \kK(E_{B})$, the approximate units $(\tilde{u}_{n}^{A})$, 
$(\tilde{w}^{1}_{n})$ can be chosen $G_{1}$-quasicentral as well by Lemma~\ref{strictconvex}. 
For the same reason $B$ admits a $G_{2}$ quasicentral approximate unit 
$(\tilde{u}_{n}^{B})$. By Lemma \ref{linkingA} setting
\[
(u_{n}^{0}):=\begin{pmatrix} \tilde{u}_{n}^{A} & 0 \\ 0 & \tilde{u}_{n}^{B}\end{pmatrix}, 
\quad (u_{n}^{1}):=\begin{pmatrix} \tilde{w}_{n}^{1} & 0 \\ 0 & \tilde{u}_{n}^{B}\end{pmatrix},
\]
yields $\tilde{G}$, $G_{1}\otimes 1$-quasicentral approximate units for $A_{0}$ and $A_{1}$.

To prove $2)$ we first show that any $\tilde{G}$-quasicentral approximate unit $(v^{p}_{n})$ for $\tilde{J}_{p}$ is an approximate unit for $J_{p}$. Since $\tilde{J}_{p}$ is essential (it contains $\mathcal{L}(E_{B})$ ), existence of such $(v^{p}_{n})$ is guaranteed by Lemma \ref{lem:connectionquasi}. It is clear that $(v^{p}_{n})$ is an approximate unit for $\tilde{J}_{p}$ and $\tilde{J}_{p}A_{p}$. For $A_{p}\tilde{J}_{p}$ it suffices to show that
\[v_{n}^{p}a[\tilde{G},b]=v_{n}^{p}[\tilde{G},ab]-v_{n}^{p}[\tilde{G},a]b\to 0,\]
\[v_{n}^{p}a[\tilde{G},\tilde{G}_{1}]=[\tilde{G}, v_{n}^{p}a\tilde{G}_{1}]-[\tilde{G},v^{p}_{n}]a\tilde{G}_{1}-v^{p}_{n}[\tilde{G},a]\tilde{G}_{1}\to 0,\]
\[v_{n}^{p}a(1-\tilde{G}^{2})=v_{n}^{p}(1-\tilde{G}^{2})a+[\tilde{G},v^{p}_{n}][\tilde{G},a]-\tilde{G}v_{n}[\tilde{G},a]-v^{p}_{n}[\tilde{G},a]\tilde{G}\to 0,\]
which all follow from $\tilde{G}$-quasicentrality of $(v^{p}_{n})$.

 Now we proceed to the statement of 2). Consider the subalgebra of 
$\End^{*}_{C}(E\hotimes F)$ generated by 
$[G, A_{p}]$, $[G, G_{1}\otimes 1]$, $1-G^{2}$ 
and $\kK(E_{B})\otimes 1$, and choose a $G$-quasicentral approximate unit 
$(\tilde{v}_{n}^{p})$ for this algebra, as well as a $G_{2}$-quasicentral  approximate unit 
$(\tilde{w}^{2}_{n})$ for the algebra $J_{G_{2}}$. 
Then 
$(v^{p}_{n}):=\diag(\tilde{v}_{n}^{p},\tilde{w}^{2}_{n})$ is clearly an 
approximate unit for the diagonal entries of $J_{p}$. For the off-diagonal entries, 
it suffices to show that for all $e\in E_{B}$, $(v^{p}_{n})$ is a two-sided 
approximate unit for the operators
\begin{equation}
\label{conncomm}
\left[\begin{pmatrix} G &0 \\ 0 & G_{2}\end{pmatrix}{}_, 
\begin{pmatrix} 0 & |e\rangle \\ 0 & 0 \end{pmatrix}\right]=
\sum_{i\in\Z}\begin{pmatrix} 0 & |x_{i}\rangle [G_{2},\langle x_{i}, e\rangle ] \\ 0& 0\end{pmatrix}.
\end{equation}
Since $e\in E_{B}$, the series 
$\sum_{i\in\Z} [G_{2},\langle x_{i},e \rangle]^{*}[G_{2}\langle x_{i},e\rangle]$ 
is norm convergent in $\kK(F_C)\subset J_{G_2}$. Therefore $(v^{p}_{n})$ is a right 
approximate unit for the operator in Equation \eqref{conncomm}. 
On the other hand, since the series $\sum_{i\in\Z} |x_{i}\rangle [G_{2},\langle x_{i}, e\rangle ] $ 
is norm convergent and $(\tilde{v}_{n}^{p})$ is an approximate for 
$\kK(E_{B})\otimes 1$, it follows that $(v_{n}^{p})$ is a left approximate unit for 
operators of the form \eqref{conncomm}.

For $3)$ it is sufficient to observe that $A_{p}^2$ is dense in $A_p$ and $\tilde{J}_{p}^2$ is dense in $\tilde{J}_p$. 
Thus the subalgebra generated by $A_{p}\tilde{J}_{p}$, $\tilde{J}_{p}A_{p}$ is 
indeed a two-sided ideal in $\tilde{B}_{p}$.

For $4)$, to show that $A_{p}K_{p}=K_{p}A_{p}=K_{p}$ it suffices to show that 
$\tilde{J}_{p}A_{p}\subset A_{p}K_{p}$ and $A_{p}\tilde{J}_{p}\subset K_{p}A_{p}$. 
To show that $\tilde{J}_{p}A_{p}\subset A_{p}K_{p}$, it suffices to show that for all 
$a,b\in A_{p}$ we have 
\[
u_{n}^{p}[\tilde{G},a]b\to [\tilde{G},a]b, \quad u_{n}^{p}[\tilde{G},G_{1}\otimes 1]a
\to [\tilde{G},G_{1}\otimes 1]a,\quad u_{n}^{p}(1-\tilde{G}^{2})a\to (1-\tilde{G}^{2})a.
\] 
These limits all follow directly from $\tilde{G},G_{1}\otimes 1$ quasicentrality of $(u_{n}^{p})$. 

Property $5)$ is immediate from the definition of $K_{p}$ and $J_{p}$. 

Property $6)$ follows from the convergence 
$u_{n}^{p}[G,a]=[G,u_{n}^{p}a]-[G,u_{n}^{p}]a\to [G,a]$ for all 
$a\in A_{p}$, since $u_{n}^{p}[G,a]\in A_{p}J_{p}\subset K_{p}$.
\end{proof}

\begin{prop}
\label{singleliplift} 
Let $A,B,C$ be separable $C^{*}$-algebras, and let $(A,E_B,G_{1})$, 
$(B,F_{C},G_{2})$ be essential Kasparov modules.
Then $(B,F_{C},G_{2})$ can be lifted to an unbounded Kasparov module $(\B,F_{C},T)$ such
that $\B$ has an approximate unit $(u_n)$ with $[T,u_n]\to 0$ in norm, and moreover
$E_{B}$ admits a compatible complete projective 
$\B$-submodule $\mathcal{E}_{\B}\subset E_{B}$ in the sense of Definition \ref{compatible}.
\end{prop}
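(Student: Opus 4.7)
The strategy is to apply the refined technical theorem, Theorem \ref{thm: technical}, within the framework set up in Lemma \ref{preparetoapplytechnical}, and then invoke the admissible multiplier construction of Theorem \ref{thm:firstlift}. Working with the linking-type algebras $A_{p}$ and the combined operator $\tilde{G} = \diag(G, G_{2})$ is essential, because a single application of the technical theorem must simultaneously control the lift of $G_{2}$ on $F_{C}$ and the commutators with $1\otimes_{\nabla}T$ required by the compatibility Definition \ref{compatible}.

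First I would choose a homogeneous frame $(x_{i})_{i\in\Z}$ for $E_{B}$ with stabilisation isometry $v: E_{B} \to \H_{B^{+}}$, forming the projection $p = vv^{*}$ and the $G_{2}$-connection $G := v^{*}\epsilon\diag(G_{2})v$. The projective submodule $\mathcal{E}_{\B} := \{e \in E_{B} : (\langle x_{i}, e\rangle)_{i\in\Z} \in \Dom p\}$ produced by Proposition \ref{columnfinite} is the candidate compatible submodule. Then apply Theorem \ref{thm: technical} in the setting of Lemma \ref{preparetoapplytechnical}, using the final clause of that theorem to handle $p = 0$ and $p = 1$ simultaneously, with $F := \tilde{G}$ and with a total subset containing dense subsets of $A$ and $J_{G_{1}}$ together with $\tilde{G}_{1}$. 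This yields sequential bounded approximate units $(v_{n})$ for $J_{p}$ and $(u_{k})$ for $A_{p}$ satisfying the decay estimates 1)--10).

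Next I would define $\ell^{-1} = c := \sum_{n} \varepsilon^{-n}(v_{n+1} - v_{n})$, which by Theorem \ref{thm:equivs} is a positive unbounded multiplier with $[\tilde{G}, \ell^{-1}]$ bounded. Its $F_{C}$-component is an admissible element for $(B, F_{C}, G_{2})$ in the sense of Definition \ref{admlift}, so Theorem \ref{thm:firstlift} gives the unbounded Kasparov module $(\B, F_{C}, T)$ with $T := \frac{1}{2}(G_{2}\ell_{B}^{-1} + \ell_{B}^{-1}G_{2})$, where $\B$ is the closed subalgebra of $\Lip(T)$ generated by the $B$-components of $(u_{k})$ and the resolvents of $T$ as in Definition \ref{unitise}. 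The existence of an approximate unit $(u_{n}) \subset \B$ with $\|[T, u_{n}]\|_{\infty} \to 0$ then follows exactly as in Proposition \ref{prop: Kasparovlift}, using properties 3),4),6),7) of Theorem \ref{thm: technical}.

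The main obstacle will be verifying the four conditions of Definition \ref{compatible} for $\mathcal{E}_{\B}$. Completeness in the sense of Definition \ref{completemodule} uses that the frame-induced approximate unit $\chi_{n}$ is column-finite, and the required strict convergence $p[T_{\varepsilon}, vu_{n}v^{*}]p \to 0$ is obtained from far-out convex combinations of $\chi_{n}$ controlled by the commutator estimates of Theorem \ref{thm: technical}. Conditions 1)--2) of Definition \ref{compatible} reduce, via the identification $1\otimes_{\nabla}T = v^{*}\tilde{G}_{\varepsilon}v$ on $\im p$ from Lemma \ref{connv}, to boundedness of $[\tilde{G}, G_{1}\otimes 1]$, $[\tilde{G}, a_{i}]$ and $[\tilde{G}, c_{j}]$, which hold by construction of $\tilde{J}_{p}$. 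Conditions 3)--4), involving norm convergence of $[1\otimes_{\nabla}T, u_{k}]$, $[1\otimes_{\nabla}T, u_{k}a_{i} - a_{i}]$, $[1\otimes_{\nabla}T, w_{n}]$ and $[1\otimes_{\nabla}T, w_{n}c_{j} - c_{j}]$ to zero, follow by transferring the corresponding estimates for $\tilde{G}$ from Theorem \ref{thm: technical} and absorbing the $[p, \tilde{G}]$ correction terms. The delicate point throughout is the consistency between the $\B$-action on $\mathcal{E}_{\B}$ inherited from the $B$-component of $\ell$ and the frame structure governing the $E\hotimes_{B}F$-component; this consistency is precisely what the combined linking-algebra plus $G_{2}$-connection formalism of Lemma \ref{preparetoapplytechnical} is designed to deliver.
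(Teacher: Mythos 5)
Your architecture is the paper's: the linking algebras of Lemma \ref{preparetoapplytechnical}, one application of Theorem \ref{thm: technical} to $\A=A_0\oplus A_1$ with $F=\tilde G$, the multiplier $\ell^{-1}=\sum\e^{-n}d_n$ and the lift $T$, and the frame/Grassmann connection giving the candidate $\E_\B$. The genuine gap sits exactly where you defer to ``consistency \dots which the formalism is designed to deliver''. The estimates 1)--10) of Theorem \ref{thm: technical} control commutators of the $d_n$ with $\tilde G$, with $A_p$ and with the chosen approximate units, but they say nothing about commutators of the $d_n$ with the frame column $\langle x|=(\langle x_i|)_{i\in\Z}$ or with the matrix entries $\langle x_i,x_k\rangle$. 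Those are precisely what is needed to (i) show the frame is column finite for the differentiable algebra $\B\subset\Lip(T)$, so that $\E_\B$ is even a projective operator module over $\B$, and (ii) compare the connection operator $1\otimes_\nabla T=v^*\epsilon\diag(T)v$ with the operators $Gk_p^{-1}$ that the technical theorem actually controls. (Your identity ``$1\otimes_\nabla T=v^*\tilde G_\epsilon v$'' conflates the unbounded lift with the bounded connection $G=v^*G_{2,\epsilon}v$; the two relevant operators are $v^*G_{2,\epsilon}\ell^{-1}v$ and $Gk_p^{-1}$, and their difference is $|x\rangle G_{2,\epsilon}\bigl(\ell^{-1}\langle x|-\langle x|k_p^{-1}\bigr)$, which is bounded only if $[h_p^{-1},\langle x|]$ is --- and that is a priori an unbounded discrepancy, not a ``$[p,\tilde G]$ correction term''.) The paper secures this with additional conditions, in its numbering 13)--15), obtained by a further application of Theorem \ref{Ped}/\ref{Ped2}: one views $\langle x|$ and the columns $(\langle x_i,x_k\rangle)_{i\in\Z}$ as elements of $\End^{*}_{\mathcal{L}(E_{B})^{+}}(\H_{\mathcal{L}(E_{B})^{+}})$ and demands quasicentrality of the $d_n^p$ with respect to them. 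Once $1\otimes_\nabla T-\tilde T_p$ is known to be bounded, self-adjointness of $1\otimes_\nabla T$, completeness of $\E_\B$, membership of $G_1\otimes 1$, $a_i$, $c_j$ in $\Lip(1\otimes_\nabla T)$, and the norm convergences in 3)--4) of Definition \ref{compatible} all transfer from $\tilde T_p$; without it none of these transfers is justified.

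A second, smaller imprecision: conditions 1)--2) of Definition \ref{compatible} require bounded commutators with the unbounded operator $1\otimes_\nabla T$, not with $\tilde G$. Boundedness of $[\tilde G,G_1\otimes 1]$, $[\tilde G,a_i]$, $[\tilde G,c_j]$ is only the input (it is why these elements are placed in $\tilde J_p$, so that estimates 2) and 5) of Theorem \ref{thm: technical} control $[k_p^{-1},\cdot]$ and $[\tilde G,\cdot]k_p^{-1}$); the conclusion one needs concerns $[Gk_p^{-1},\cdot]$, and it passes to $1\otimes_\nabla T$ only through the bounded-perturbation step above.
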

\begin{proof} 
To prove the theorem, we have to lift $G_{2}$ to an unbounded 
representative $T$ with resolvent in $J_{G_{2}}$, such that $B$ 
admits a differentiable subalgebra with an approximate unit $(u_{n}^{B})$ 
such that $[T,u_{n}^{B}]\to 0$ in norm. Moreover, the lift 
$T$ should also satisfy Definition \ref{compatible}. 
This means we have to provide a column finite frame 
$(x_{i})_{i\in\Z}$ for $E_{B}$ 
such that the resulting submodule $\E_{\B}$ is complete, 
as well as satisfying properties 1)-4). These properties 
imply that, for $\nabla$ the Grassmann connection 
associated to the frame $(x_{i})_{i\in\Z}$, the operators 
$G_{1}$ and $(1-G_{1}^{2})^{\frac{1}{2}}$ are elements of 
$\Lip(1\otimes_{\nabla}T)$  and the algebras $A$ and 
$J_{G_{1}}$ admit differentiable subalgebras $\A$ and 
$\mathcal{J}_{G_{1}}$, generated by $(1-G_1^2)$ 
and $\kK(E_B)$, with approximate units $(u^{\A}_{n})$ and $(w_{n}^{\mathcal{J}})$ 
such that $\lim_{n\to\infty} [1\otimes_{\nabla}T,u^{\A}_{n}]
=\lim_{n\to\infty}[1\otimes_{\nabla}T,w_{n}^{\mathcal{J}}]=0$. 
In order to achieve this, we once again employ linking-type algebras.

Choose a stabilisation isometry $v:E_B\to \H_{B^+}$.
Consider the frame $(x_{i})_{i\in\Z}:=(v^{*}e_{i})_{i\in\Z}$ 
and the corresponding frame approximate unit 
$(\chi_{n}):=\sum |x_{i}\rangle\langle x_{i}|$. By 
Lemma \ref{lem:connectionquasi} there exists an approxmate unit 
$(w_{n})\in\mathscr{C}(\chi_{n})$ with $[G,w_{n}]\to 0$ in norm. 
We proceed with the notation introduced in 
Lemma \ref{preparetoapplytechnical}, and apply Theorem \ref{thm: technical} with 
\begin{align*}
\B=\tilde{B}_{0}\oplus\tilde{B}_{1},\quad \J
&=J_{0}\oplus J_{1},\quad F=\tilde{G}
=\begin{pmatrix} G & 0 \\ 0 & G_{2,\epsilon}\end{pmatrix},\quad 
\A=A_{0}\oplus A_{1}, \quad \K:=K_{0}\oplus K_{1}, 
\end{align*}
which by Lemma \ref{preparetoapplytechnical} satisfy 
$\A\J$, $\J\A\subset \K$, $\A\K=\K\A=\K$ and the algebra $\B$ is unital. 
Since the approximate units $(u_{n}^{0})$ for $A_0$ and $(w_{n}^{})$ for $\kK(E_B)$ are 
$\tilde{G}$-quasicentral, 
and $(v_{n}^{p})$ are approximate units for $J_{p}$, we can apply 
Theorem \ref{thm: technical} by setting 
$(u_{n}'):=(u_{n}^{0})\oplus (u_{n}^{1})$, $(v_{n}')=(v_{n}^{0})\oplus (v_{n}^{1})$. 
In doing so we obtain approximate units $(u_{n})$, $(v_{n})$ satisfying 
properties $1)-10)$ from Theorem \ref{thm: technical}. 
In addition to these properties, with $d_n^p=v_{n+1}^p-v_n^p$, we may also assume that:\\
\\
11) $\|d_{n}^{1}[\tilde{G}, \diag(w_{k},0)]\|<\epsilon^{2k}$ for all $n$;\\
12) $\|d_{n}^{1}[\tilde{G}, \diag(w_{k},0)]\|<\epsilon^{2k}$ for $n\geq k$;\\
13) $\|[d^{1}_{n},\diag(0,\langle x_{i}, x_{k}\rangle )_{i\in\Z}]\|<\epsilon^{2n}$ for $n\geq k$;\\
14) $\|d_{n}^{1}[\tilde{G}, \diag (0, \langle x_{i}, x_{k}\rangle)_{i\in\Z}]\|<\epsilon^{2n}$ for $n\geq k$;\\
15) $\left\|\left[d^{p}_{n}, \langle x| \right]\right\|<\epsilon^{2n}$.\\
\\
Properties $11)$ and $12)$ can be achieved because $\kK(E_{B})\otimes 1 \oplus 0\subset J_{1}$, so this only requires a further convexity argument when running the proof of Theorem \ref{thm: technical}. Properties $13)$, $14)$ and $15)$ are a direct appliciation of Theorem \ref{Ped} by viewing the columns $\langle x |$ and $(\langle x_{i},x_{k}\rangle)_{i\in\Z}$ as elements in $\End^{*}_{\mathcal{L}(E_{B})^{+}}(H_{\mathcal{L}(E_{B})^{+}})$.

The $(v_{n}^{p})$ so obtained
from Theorem \ref{thm: technical} define two unbounded multipliers on $E\hotimes F_C\oplus F_C$
\[
h^{-1}_{p}=\sum_{n}c^{n}d_{n}^{p}=\begin{pmatrix}k^{-1}_{p} & 0 \\ 0 &  \ell^{-1}\end{pmatrix},\quad
v^p_n=\begin{pmatrix} \tilde{v}^p_n & 0\\ 0 & w^{2}_{n} \end{pmatrix}, 
\quad d_n^p=v^p_{n+1}-v^p_n,
\]
which we use to lift $\tilde{G}$ in two ways. That we obtain the same $\ell^{-1}$ for $p=0,\,1$
follows from the form of $(v^p_n)$, that is, the approximate unit $(w^{2}_{n})$ for $J_{G_{2}}$ occurs in both lower right corners.

From the specific form of the $(u_{n}^{p})$ and $(v_{n}^{p})$, 
cf. Lemma \ref{preparetoapplytechnical} 2) and 3), it follows that we obtain 
new approximate units $(u_{n}^{A})$ for $A$, $(w^{1}_{n})$ for 
$J_{G_{1}}$ and $(v^{p}_{n})$ for the algebras generated by 
$[G,A_{p}]$, $[G,G_{1}\otimes 1]$, $1-G^{2}$ and $\kK(E_{B})\otimes 1$. 

This allows us to define unbounded lifts $\tilde{T}_{0}:=G k_{0}^{-1}$ and 
$\tilde{T}_{1}:=G k_{1}^{-1}$. One proves, as in Proposition \ref{prop: Kasparovlift},  
that $A$ admits a differentiable subalgebra $\A$ for which $(u_{n}^{A})$ is an 
approximate unit with $\lim [\tilde{T}_{0},u_{n}^{A}]\to 0$ in norm. 
For $J_{G_{1}}$ the same statement holds with respect to 
$\tilde{T}_{1}$. Moreover, $11)$ and $12)$ ensure that 
$[\tilde{T}_{1}, w_{n}]\to 0$ in norm as well, again with the same proof as 
Theorem \ref{prop: Kasparovlift}. Furthermore properties $13)$ and $14)$ 
guarantee that the columns $[T,\langle x_{i},x_{k}\rangle]_{i\in\Z}$ 
are elements of $\H_{\End^{*}_{C}(F)}$. That is, the frame 
$(x_{i})$ is column finite for $T$. 

It must be noted that 
our method does not allow us to obtain a uniform 
bound on the norms of these columns, and thus 
we are not able to produce a projection operator in $\End^{*}_{\B}(\H_{\B})$. Later, we will see
 that we do obtain a complete projective module.

We now compare the connection operator $1\otimes_{\nabla} T$ of the frame $(x_i)_{i\in\Z}$
to the operators $\tilde{T}_{p}$, $p=0,1$, which we have used to lift the bounded connection $G$.
Condition 15) guarantees that $[h^{-1}_{p},\langle x| ]$ 
is a bounded operator. Since 
\begin{equation}
\label{perturbcolumn}
\left[h^{-1}_{p},\langle x| \right]
=\begin{pmatrix} 0 & 0\\ \langle x_{i}|k_{p}^{-1}-\ell^{-1}\langle x_{i} | &0 \end{pmatrix}_{i\in\Z},
\end{equation}
it follows that $v\,\im k_{p}\subset \im \ell$ and 
$v^{*}\,\im \ell\subset \im k_{p}$. 
We wish to show that the difference
$\tilde{T}_{p}-1\otimes_{\nabla}T$ 
is bounded. To this end we  compute
\begin{equation}\label{perturbation}
v^{*}G_{2,\epsilon}\ell^{-1}v- Gk_{p}^{-1}
=|x\rangle G_{2,\epsilon}\ell^{-1}\langle x| -| x\rangle G_{2,\epsilon}\langle x | k_{p}^{-1} \\
=|x\rangle G_{2,\epsilon}(\ell^{-1}\langle x| -\langle x| k_{p}^{-1}),
\end{equation}

which is bounded by construction. Note that this 
implies the self-adjointness of $1\otimes_{\nabla} T$, 
and also that $\tilde{T}_{0}$ and $\tilde{T}_{1}$ have the same domain. 

Since $[\tilde{T}_{1},u_{n}]\to 0$ in norm and $u_{n}\to 1$ strictly on 
$E\hotimes_{B} F$, it follows that $[1\otimes_{\nabla} T, u_{n}]\to 0$ strictly, 
and so is a bounded sequence. 
Hence 
$
p[T_{\epsilon}, vu_{n}v^{*}] p=v[v^{*}T_{\epsilon}v, u_{n}]v^{*}\to 0
$
on a dense subspace of $\H_{B^+}\hotimes F$, and by boundedness of the sequence,
strictly on all of $\H_{B^{+}}\hotimes F$. Hence the frame $(x_{i})$ 
defines a complete projective submodule $\E_{\B}\subset E_{B}$, which also (and independently) 
proves the self-adjointness of $1\otimes_\nabla T$. 

Lastly, we must show that there are approximate units 
$(u_{n}^{\A})\in\mathscr{C}(u_{n}^{A})$ for $\A$ and 
$(w_{n})^{\mathcal{J}}\in\mathscr{C}(w^{1}_{n})$ for $J_{G_{1}}$
that satisfy
\begin{equation}
\label{lastrequirement}
\lim [1\otimes_{\nabla}T, u_{n}^{\A}]
=\lim [1\otimes_{\nabla} T, w_{n}^{\mathcal{J}}]\to 0,
\end{equation}
in norm. Observe that we can obtain the strict convergences of 
Equation \eqref{lastrequirement}, for both $(u_{n}^{A})$ and 
$(w^{1}_{n})$ converge strictly to $1$ on $E\hotimes_{B}F$ and the lifts 
$T_{0}$ and $T_{1}$ are bounded perturbations of $1\otimes_{\nabla}T$.

The column in Equation \eqref{perturbcolumn} is an element of 
$\End^{*}_{\tilde{B}_{p}}(\H_{\tilde{B}_{p}}, J_{p})$, because for each $i$ we have $\left[h^{-1}_{p},\langle x_{i}| \right]\in J_{p}$. Thus by Theorem \ref{Ped2} 
there exist $[h^{-1}_{p}, \langle x| ]$-quasicentral approximate 
units in the convex hull $\mathscr{C}(u_{n}^{p})$, which at the 
same time remain $\tilde{G}$-quasicentral. The resulting 
approximate units will again be of the form indicated in Lemma \ref{preparetoapplytechnical}, 1).
Now compute 
\begin{align*} 
\begin{pmatrix} [Gk_{p}^{-1}-v^{*}G_{2,\epsilon}\ell^{-1} v, u_{n}^{\A}] & 0 \\ 0& 0\end{pmatrix}&=\left[|x\rangle \begin{pmatrix} 0 & 0 \\ 0 & G_{2,\epsilon}\end{pmatrix}[h_{p}^{-1},\langle x| ] _{},\begin{pmatrix} u_{n}^{\A} & 0 \\ 0 & u_{n}^{B}\end{pmatrix}\right] \\
&= \left[| x\rangle \tilde{G} [h_{p}^{-1},\langle x | ],\begin{pmatrix} u_{n}^{\A} 
& 0 \\ 0 & u_{n}^{B} \end{pmatrix}\right] \to 0,
\end{align*}
and the same computation works for $(w_{n}^{\J})$.
\end{proof}
\begin{thm} 
\label{thm:best-lift}
Let $A,B,C$ be separable $C^{*}$-algebras, $x\in KK(A,B)$ and 
$y\in KK(B,C)$. There exists an unbounded Kasparov module $(\B,F_{C},T)$ 
representing $y$ and a correspondence $(\A,\E_\B,S,\nabla)$ for 
$(\B,F_{C},T)$ representing $x$. Consequently
$(\A,E\hotimes_{B}F_{C}, S\otimes 1 + 1\otimes_{\nabla}T)$ 
represents the Kasparov product $x\otimes_B y$.
\end{thm}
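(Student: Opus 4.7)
The plan is to combine the three major technical results already established in the paper: Proposition \ref{singleliplift}, Theorem \ref{Liplift}, and Theorem \ref{constructprod}. The theorem as stated is essentially a synthesis of these, so the proof should proceed by sequentially invoking them and verifying that the output of each feeds into the hypotheses of the next.

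First, I would select bounded Kasparov modules $(A,E_{B},G_{1})$ and $(B,F_{C},G_{2})$ representing $x$ and $y$ respectively. By Remark \ref{essrmk} (Kasparov's result \cite[Lemma 2.8]{Kas2}) we may assume both modules are essential, that is $[AE_{B}]=E_{B}$ and $[BF_{C}]=F_{C}$, and by standard manipulations we may also arrange that $G_{1}=G_{1}^{*}$, $1-G_{1}^{2}\geq 0$ (and likewise for $G_{2}$). This preparation ensures that the hypotheses of Proposition \ref{singleliplift} are satisfied.

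Next, applying Proposition \ref{singleliplift} to the pair $(A,E_{B},G_{1})$, $(B,F_{C},G_{2})$, I obtain an unbounded Kasparov module $(\B,F_{C},T)$ lifting $(B,F_{C},G_{2})$ (so representing $y$), together with a column finite frame defining a complete projective $\B$-submodule $\E_{\B}\subset E_{B}$ which is compatible with $(A,E_{B},G_{1})$ in the sense of Definition \ref{compatible}. This is precisely the input needed for Theorem \ref{Liplift}, which I then apply to lift $(A,E_{B},G_{1})$ to a correspondence $(\A,\E_{\B},S,\nabla)$ for $(\B,F_{C},T)$, with $\nabla$ the Grassmann connection associated to the defining frame of $\E_{\B}$. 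By construction the resulting unbounded Kasparov module $(\A,E_{B},S)$ represents the same $KK$-class as $(A,E_{B},G_{1})$, namely $x$.

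Finally, since a correspondence in the sense of Definition \ref{correspondence} has been constructed, Theorem \ref{constructprod} applies directly: the operator $S\otimes 1 + 1\otimes_{\nabla}T$ is self-adjoint and regular on $E\hotimes_{B}F_{C}$ (using Lemma \ref{anticomm} together with Theorem \ref{thm:bob} of the appendix), has locally $\A$-compact resolvent, and Kucerovsky's criterion \cite[Theorem 13]{Kucerovsky1} is verified as in \cite[Theorem 7.2]{KaLe}, so that $(\A,E\hotimes_{B}F_{C},S\otimes 1 + 1\otimes_{\nabla}T)$ represents $x\otimes_{B} y$. The only subtlety worth flagging is bookkeeping: one must keep track of the fact that the $\B$ constructed by Proposition \ref{singleliplift} has the bounded approximate unit with $[T,u_{n}]\to 0$ in norm required by Theorem \ref{Liplift}, but this is part of the conclusion of Proposition \ref{singleliplift} and requires no additional argument. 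Thus the theorem follows as a clean assembly of the earlier results, and the main conceptual work is already absorbed into Propositions \ref{singleliplift} and \ref{Liplift}.
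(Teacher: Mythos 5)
Your proposal is correct and follows exactly the paper's own argument: represent $x$ and $y$ by essential Kasparov modules, apply Proposition \ref{singleliplift} to produce the unbounded lift $(\B,F_C,T)$ together with a compatible complete projective submodule, then apply Theorem \ref{Liplift} to obtain the correspondence, with Theorem \ref{constructprod} yielding the product. The extra bookkeeping you flag (the approximate unit with $[T,u_n]\to 0$) is indeed part of the conclusion of Proposition \ref{singleliplift}, so no further argument is needed.
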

\begin{proof} 
First represent $x$ and $y$ by essential Kasparov modules.
By Proposition \ref{singleliplift}, for any pair of essential Kasparov modules, 
the second module can be lifted such that the first module
admits a compatible complete projective submodule. Now apply Theorem \ref{Liplift}.
\end{proof}
By the same method, and lifting simultaneously with $n+1$ Kasparov modules instead of 2, 
one can prove that for classes 
$x_{n},\dots , x_{0}$ with $x_{j}\in KK(A_{j+1},A_{j})$, one can find an unbounded 
Kasparov module $(\A_1,E_{A_0}, T_{0})$ representing $x_{0}$ and correspondences 
$(\A_{j+1},\E_{\A_j},T_{j},\nabla_{j})$ representing $x_{j}$ such that for each $1\leq j\leq n$,
$(\A_{j+1},\E_{\A_j},T_{j},\nabla_{j})$ is compatible with 
\[
\left (\bigotimes_{i=1}^{j}(\A_{i+1},\E_{\A_i},T_{i},\nabla_{i})\right)\otimes (\A_1,E_{A_0},T_{0}).
\]

\appendix
\section{Weakly anticommuting operators}
\begin{defn}[cf. \cite{KaLe2}]
\label{weaklyanti} 
Let $E_B$ be a graded $C^{*}$-module and $s,t$ odd self-adjoint 
regular operators such that for all $\lambda,\mu\in \R\setminus\{0\}$:\\
1) there is a core $X$ for $t$ such that $(s\pm \lambda i)^{-1}X\subset \Dom t$;\\
2) $t(s\pm \lambda i)^{-1}X\subset\Dom s$;\\
3) $[s,t](s\pm \lambda i)^{-1}$ is bounded on $X.$

Then we say that the pair $(s,t)$ \emph{weakly anticommutes}, 
or that $t$ \emph {anticommutes weakly with} $s$. Note that this relation is not symmetric
in $s$ and $t$, and that the graded commutator is defined on $\im (s+\lambda i)^{-1}X$.
\end{defn}
It was proved in \cite{KaLe2} that the sum of weakly 
anticommuting operators occurring in odd Kasparov products is self-adjoint and regular on 
$\Dom s\cap \Dom t$. Since we are concerned here with the general graded case, a few words are in order.
\begin{lemma} 
If $(s,t)$ is a weakly anticommuting pair then the operators
$(s\pm \lambda i)^{-1}$ preserve the domain of $t$ and 
$[s,t](s\pm \lambda i)^{-1}$ is bounded on $\Dom t$. Consequently
\begin{equation}\label{commdom}s \left( (s-\lambda i)^{-1}\Dom t\right)\subset \Dom t,\quad t\left(\im (s-\lambda i)^{-1}\Dom t\right)\subset \Dom s.\end{equation}
Therefore $[s,t]$ is defined on $(s-\lambda i)^{-1}\Dom t=\im (s-\lambda i)^{-1}(t-\mu i)^{-1}$.
\end{lemma}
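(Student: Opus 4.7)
The plan is to promote the three defining conditions of Definition \ref{weaklyanti}, which hold only on the core $X$, to all of $\Dom t$ by exploiting the closedness of $t$. The key tool is the algebraic identity
\[
t(s\pm\lambda i)^{-1}x + (s\mp\lambda i)^{-1}tx = (s\mp\lambda i)^{-1}[s,t](s\pm\lambda i)^{-1}x, \qquad x\in X,
\]
obtained by writing $s=(s\mp\lambda i)\pm\lambda i$ inside the middle resolvent on the right, exactly as in the computation appearing in the proof of Lemma \ref{anticomm}. Both sides make sense on $X$ by hypotheses (1) and (2). By hypothesis (3) the operator $[s,t](s\pm\lambda i)^{-1}$ is norm bounded on the dense subspace $X\subset E_B$, hence extends uniquely to a bounded adjointable operator on $E_B$, which I continue to denote by the same symbol. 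Consequently $(s\mp\lambda i)^{-1}[s,t](s\pm\lambda i)^{-1}$ is bounded on all of $E_B$.

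Now, given $y\in\Dom t$, I would choose a sequence $x_n\in X$ with $x_n\to y$ and $tx_n\to ty$, which exists because $X$ is a core for $t$. Applying $(s\pm\lambda i)^{-1}$ gives $(s\pm\lambda i)^{-1}x_n\to (s\pm\lambda i)^{-1}y$, and the identity above rearranged yields
\[
t(s\pm\lambda i)^{-1}x_n = (s\mp\lambda i)^{-1}[s,t](s\pm\lambda i)^{-1}x_n - (s\mp\lambda i)^{-1}tx_n,
\]
where the right-hand side converges in norm: the first summand by boundedness of the extended commutator, the second by boundedness of the resolvent. Closedness of $t$ then gives $(s\pm\lambda i)^{-1}y\in\Dom t$ and the same identity applied to $y$. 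This proves that $(s\pm\lambda i)^{-1}$ preserves $\Dom t$, and the global boundedness of $[s,t](s\pm\lambda i)^{-1}$ in particular yields boundedness on $\Dom t$.

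For the consequences in \eqref{commdom}, I would first note that $s(s-\lambda i)^{-1}=1+\lambda i(s-\lambda i)^{-1}$ maps $\Dom t$ to $\Dom t$ since both summands do, giving $s((s-\lambda i)^{-1}\Dom t)\subset\Dom t$. Second, the identity just established applied with $y\in \Dom t$ reads
\[
t(s-\lambda i)^{-1}y = (s+\lambda i)^{-1}[s,t](s-\lambda i)^{-1}y - (s+\lambda i)^{-1}ty \in \im(s+\lambda i)^{-1}=\Dom s,
\]
yielding the second inclusion. Therefore for $z=(s-\lambda i)^{-1}y$ with $y\in\Dom t$, one has $z\in\Dom s\cap\Dom t$, $sz\in\Dom t$ and $tz\in\Dom s$, so $[s,t]z=(st+ts)z$ is a well-defined element of $E_B$. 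The equality $(s-\lambda i)^{-1}\Dom t=\im(s-\lambda i)^{-1}(t-\mu i)^{-1}$ is immediate from $\Dom t=\im(t-\mu i)^{-1}$. The only real hurdle is the extension of $[s,t](s\pm\lambda i)^{-1}$ from $X$ to a globally defined bounded operator; this is handed to us by hypothesis (3), after which the entire argument reduces to a standard closedness-and-density manipulation.
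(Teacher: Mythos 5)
Your proof is correct and follows essentially the same route as the paper: the same resolvent identity $t(s\pm\lambda i)^{-1}+(s\mp\lambda i)^{-1}t=(s\mp\lambda i)^{-1}[s,t](s\pm\lambda i)^{-1}$ on the core $X$, followed by a core-plus-closedness argument (which the paper partly delegates to a citation of \cite[Proposition 2.1]{FMR}) and the same two algebraic identities to deduce \eqref{commdom}. The only cosmetic slip is calling the bounded extension of $[s,t](s\pm\lambda i)^{-1}$ ``adjointable''---boundedness is all that is needed here and all that is automatic from density of $X$ in $E_B$.
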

\begin{proof} 
For $x\in X$, the commutator $[t,(s\pm \lambda i)^{-1}]$ can  be expanded as
\[
\begin{split} 
[t,(s\pm \lambda i)^{-1}]x& =(t(s \pm \lambda i)^{-1}x+(s\mp \lambda i)^{-1}tx
=(s\mp \lambda i)^{-1}((s\mp \lambda i)t+t(s\pm \lambda i))(s\pm \lambda i)^{-1}x \\
&=(s\mp \lambda i)^{-1}[s,t](s\pm \lambda i)^{-1}x,
\end{split}
\]
and by 2) of Definition \ref{weaklyanti} this operator is bounded. 
Thus by \cite[Proposition 2.1]{FMR} $(s\pm \lambda i)^{-1}$ preserves the domain of $t$. Since $X$ is a core for $t$, for every $x\in \Dom t$ we can choose a sequence $x_{n}\in X$ such that $x_{n}\to x$ and $tx_{n}\to tx$. Then
\[\begin{split} t(s-\lambda i)^{-1}x_{n}&=-(s+\lambda i)^{-1}tx_{n}+ (s+\lambda i)^{-1}[s,t](s-\lambda i)^{-1} x_{n}\\ &\to -(s+\lambda i)^{-1}tx+ (s+\lambda i)^{-1}[s,t](s-\lambda i)^{-1} x\in\Dom s,\end{split}\]
which is a Cauchy sequence, so the limit equals $t(s-\lambda i)^{-1}x\in\Dom s$. The statement that $s \left( (s-\lambda i)^{-1}\Dom t\right)\subset \Dom t$ follows directly from the equality
\[s(s-\lambda i)^{-1}(t-\mu i)^{-1}=(t-\mu i)^{-1}+\lambda i(s-\lambda i)^{-1}(t-\mu i)^{-1}\] This proves \eqref{commdom}.
\end{proof}
\begin{lemma}
\label{epsilonlem} 
For $|\lambda | >1$ there is a constant $C$ such that 
$\|[s,t](s\pm \lambda i)^{-1}\|<C$. Thus, for $|\lambda|$ sufficiently large, 
we may assume $\epsilon:=\|(s\mp \lambda i)^{-1}[s,t](s\pm \lambda i)^{-1}\|<1$.
\end{lemma}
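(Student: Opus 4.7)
The plan is to reduce to the case $\lambda=1$ already handled by the weak anticommutation hypothesis, by writing $(s\pm\lambda i)^{-1}$ as a bounded perturbation of $(s\pm i)^{-1}$. More precisely, I would begin by setting $M:=\max\{\|[s,t](s+i)^{-1}\|,\|[s,t](s-i)^{-1}\|\}$, which is finite by condition 3) of Definition \ref{weaklyanti} together with the previous lemma, which extends boundedness from the core $X$ to all of $\Dom t$, and then to $E_B$ by density.

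The algebraic identity
\[
(s\pm i)(s\pm\lambda i)^{-1}=1+(\pm i\mp\lambda i)(s\pm\lambda i)^{-1},
\]
valid on $E_B$ since both sides are bounded, immediately yields
\[
\bigl\|(s\pm i)(s\pm\lambda i)^{-1}\bigr\|\leq 1+|1-\lambda|\,\bigl\|(s\pm\lambda i)^{-1}\bigr\|\leq 1+\frac{|1-\lambda|}{|\lambda|},
\]
using $\|(s\pm\lambda i)^{-1}\|\leq|\lambda|^{-1}$ from the fact that $s$ is self-adjoint and regular. For $|\lambda|>1$, the real quantity $|1-\lambda|/|\lambda|$ is uniformly bounded by $2$, so the right-hand side is at most $3$.

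Next, I would combine this with the identity
\[
[s,t](s\pm\lambda i)^{-1}=[s,t](s\pm i)^{-1}\cdot(s\pm i)(s\pm\lambda i)^{-1},
\]
which holds initially on the dense subspace $\Dom t$ (since $(s\pm\lambda i)^{-1}$ preserves $\Dom t$ by the previous lemma, and the factor $(s\pm i)(s\pm\lambda i)^{-1}$ preserves $\Dom t$ as well by the algebraic identity above), and extends to $E_B$ by density of the domain and boundedness of both factors. This gives the first assertion with $C:=3M$.

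For the second statement, I would simply combine the bound just obtained with $\|(s\mp\lambda i)^{-1}\|\leq|\lambda|^{-1}$ to get
\[
\bigl\|(s\mp\lambda i)^{-1}[s,t](s\pm\lambda i)^{-1}\bigr\|\leq \frac{3M}{|\lambda|},
\]
which is less than $1$ for all sufficiently large $|\lambda|$. There is no genuine obstacle here; the only mild bookkeeping point is the identification of the two expressions for $[s,t](s\pm\lambda i)^{-1}$ on a common dense core, which is guaranteed by the domain statement \eqref{commdom} of the preceding lemma.
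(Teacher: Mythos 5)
Your proof is correct and follows essentially the same route as the paper: both rest on the resolvent identity $(s\pm i)(s\pm\lambda i)^{-1}=1+(\pm i\mp\lambda i)(s\pm\lambda i)^{-1}$ (the paper writes it in the equivalent form $(s+\lambda i)^{-1}=(s+i)^{-1}-(s+i)^{-1}(\lambda-1)i(s+\lambda i)^{-1}$), the uniform bound $1+|1-\lambda|/|\lambda|$ for $|\lambda|>1$, and the factorisation $[s,t](s\pm\lambda i)^{-1}=[s,t](s\pm i)^{-1}\cdot(s\pm i)(s\pm\lambda i)^{-1}$, followed by $\|(s\mp\lambda i)^{-1}\|\leq|\lambda|^{-1}$ for the second claim. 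Your constant $3M$ is in fact slightly more careful than the paper's, since it handles $\lambda<-1$ (where $|\lambda-1|>|\lambda|$) uniformly.
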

\begin{proof} 
Let $C:=2\|[s,t](s+i)^{-1}\|$ and write
\[
(s+\lambda i)^{-1}=(s+i)^{-1}-(s+i)^{-1}(\lambda -1)i(s+\lambda i)^{-1}.
\]
Using this,  estimate 
\[
\begin{split} 
\|[s,t](s+\lambda i)^{-1}\| &\leq \|[s,t](s+i)^{-1}-[s,t](s+i)^{-1}(\lambda -1)i(s+\lambda i)^{-1}\| \\
& \leq \|[s,t](s+i)^{-1}\|+\|[s,t](s+i)^{-1}\|\|(\lambda -1)i(s+\lambda i)^{-1}\|  \\
&\leq\frac{C}{2}\left(1+\frac{|\lambda -1|}{|\lambda|} \right) 
<C,
\end{split}
\]
since $|\lambda | >1$. Consequently, using that 
$\|(s\pm \lambda i)^{-1}\|\leq\frac{1}{|\lambda |}$, we find that
\[ 
\|(s\mp \lambda i)^{-1}[s,t](s\pm \lambda i)^{-1}\|\leq \frac{C}{|\lambda|}<1,
\]
for $|\lambda|$ sufficiently large.
\end{proof}
\begin{thm}[cf.\cite{KaLe2}] 
\label{thm:bob}
If $(s,t)$ weakly anitcommutes, then $s+t$ is closed, self-adjoint and 
regular on $\Dom s\cap\Dom t$, and $\im (s\pm i)^{-1}(t\pm i)^{-1}$ is a 
core for $s+t$. The same holds for $s-t$.
\end{thm}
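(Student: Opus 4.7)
The plan is to prove self-adjointness and regularity of $s+t$ by a parametrix/Neumann-series argument, using Lemma \ref{epsilonlem} to make a suitable perturbation small.

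First, I would check that $s+t$ is a closed, densely defined, symmetric operator on $\Dom s\cap\Dom t$. By the preceding lemma, the resolvents $(s\pm\lambda i)^{-1}$ preserve $\Dom t$ and map into $\Dom s$, so $\im(s\pm\lambda i)^{-1}(t\pm\lambda i)^{-1}\subset \Dom s\cap\Dom t$. Density of this subspace in $E_B$ (since each resolvent has dense range) gives dense definition, and symmetry of $s+t$ is immediate from self-adjointness of $s$ and $t$. Closability follows from closedness of $s$ and $t$ on the common domain.

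Next, the key step is to establish surjectivity of $(s+t+\lambda i)$ on $\im(s+\lambda i)^{-1}(t+\lambda i)^{-1}$ for $|\lambda|$ sufficiently large. I would compute
\[
(s+t+\lambda i)(s+\lambda i)^{-1}(t+\lambda i)^{-1}=(t+\lambda i)^{-1}+t(s+\lambda i)^{-1}(t+\lambda i)^{-1},
\]
and rewrite $t(s+\lambda i)^{-1}$ using the identity $t(s+\lambda i)^{-1}=(s-\lambda i)^{-1}[s,t](s+\lambda i)^{-1}-(s-\lambda i)^{-1}t$ (valid on $\Dom t$, from the formula in the opening lemma of the appendix), together with $t(t+\lambda i)^{-1}=1-\lambda i(t+\lambda i)^{-1}$. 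After collecting terms and using the resolvent identity $(t+\lambda i)^{-1}-(s-\lambda i)^{-1}+\lambda i(s-\lambda i)^{-1}(t+\lambda i)^{-1}$ simplification, this rearranges to $1+K_\lambda$, where $K_\lambda$ is bounded and controlled by $\|(s-\lambda i)^{-1}[s,t](s+\lambda i)^{-1}\|$. By Lemma \ref{epsilonlem}, $\|K_\lambda\|$ can be made strictly less than $1$ by choosing $|\lambda|$ large. Hence $1+K_\lambda$ is invertible, and $(s+t+\lambda i)$ maps $\im(s+\lambda i)^{-1}(t+\lambda i)^{-1}$ \emph{surjectively} onto $E_B$; the analogous statement for $-\lambda$ is identical. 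In particular $(s+t\pm\lambda i)$ has dense range. Applying the characterisation of self-adjoint regular operators via dense range of $T\pm\mu i$ (\cite[Lemmas 9.7, 9.8]{Lance}, \cite[Prop 4.1]{KaLe2}) then yields that $\overline{s+t}$ is self-adjoint and regular.

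For the core statement, surjectivity of $(s+t+\lambda i)$ on $\im(s+\lambda i)^{-1}(t+\lambda i)^{-1}$ together with the bijectivity of $\overline{s+t}+\lambda i:\Dom\overline{s+t}\to E_B$ shows that this subspace is already all of $\Dom\overline{s+t}$ for large $|\lambda|$, hence certainly a core. Passing from $\lambda$ large to $\lambda=1$ is a standard graph-norm density argument: the resolvent identity relating $(s\pm i)^{-1}(t\pm i)^{-1}$ to $(s\pm\lambda i)^{-1}(t\pm\lambda i)^{-1}$ shows that both subspaces have the same graph-norm closure. Finally, $s-t$ weakly anticommutes with $s$ because the conditions of Definition \ref{weaklyanti} are invariant under $t\mapsto -t$ (the core $X$ and domain conditions are unchanged, and $[s,-t]=-[s,t]$ still satisfies condition~3), so the same argument applies verbatim.

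The main obstacle will be the parametrix computation itself: although the formal manipulations are elementary, the bookkeeping of the graded commutator identities on the correct domains must be done carefully, particularly in verifying that all intermediate expressions act between the right subspaces and in tracking the asymmetry between $(s\pm\lambda i)^{-1}$ and $(t\pm\lambda i)^{-1}$ inherent in the asymmetric nature of Definition \ref{weaklyanti}.
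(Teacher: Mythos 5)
There is a genuine gap at the heart of your argument: the claimed parametrix identity is false. Carrying out your own computation, with $t(s+\lambda i)^{-1}=-(s-\lambda i)^{-1}t+(s-\lambda i)^{-1}[s,t](s+\lambda i)^{-1}$ on $\Dom t$ and $t(t+\lambda i)^{-1}=1-\lambda i(t+\lambda i)^{-1}$, one finds
\begin{align*}
(s+t+\lambda i)(s+\lambda i)^{-1}(t+\lambda i)^{-1}
&=(t+\lambda i)^{-1}-(s-\lambda i)^{-1}+\lambda i(s-\lambda i)^{-1}(t+\lambda i)^{-1}\\
&\qquad+(s-\lambda i)^{-1}[s,t](s+\lambda i)^{-1}(t+\lambda i)^{-1},
\end{align*}
and the first three terms combine to $(s-\lambda i)^{-1}(s-t-\lambda i)(t+\lambda i)^{-1}$, not to the identity. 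No identity operator ever appears, so there is no decomposition $1+K_\lambda$ and no Neumann series. A concrete sanity check: take $s=\sigma_1\otimes D$ and $t=\sigma_2\otimes D$ on $\C^2\otimes\H$ with $D$ self-adjoint; then $(s+t+\lambda i)$ has norm of order $\|D\|\sqrt{2}$ on high-energy spectral subspaces while $(s+\lambda i)^{-1}(t+\lambda i)^{-1}$ has norm of order $\|D\|^{-2}$ there, so the product tends to $0$ rather than to $1$ as the energy grows. The same obstruction defeats the claimed passage from surjectivity to the core statement, since that too rests on invertibility of $1+K_\lambda$.

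What your computation actually produces is, up to signs and bounded corrections, exactly the operator $x=(t+\mu i)^{-1}-(s-\lambda i)^{-1}$ appearing in the paper's proof: the content of the factorisation
\[
x=\bigl(s+t+(\mu-\lambda)i-(s+\lambda i)^{-1}([s,t]-2\lambda\mu)\bigr)(s-\lambda i)^{-1}(t-\mu i)^{-1}
\]
is precisely that applying (a bounded perturbation of) $s+t$ to the doubly smoothed subspace yields a \emph{difference of resolvents}, not the identity plus a small operator. The genuinely hard step, which your proposal bypasses, is to prove that this difference of resolvents has dense range; the paper does this by expanding $xx^*$, absorbing the cross terms with the estimate $\|(s\mp\lambda i)^{-1}[s,t](s\pm\lambda i)^{-1}\|<1$ from Lemma \ref{epsilonlem}, and concluding that $xx^*$ dominates a strictly positive operator. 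Only then do the factorisations deliver dense range of $s+t\pm i$ on $\im(s\pm\lambda i)^{-1}(t\pm\mu i)^{-1}$, whence self-adjointness, regularity and the core statement. Your use of Lemma \ref{epsilonlem} is the right instinct, but it must enter through the positivity of $xx^*$, not through an invertibility-of-$1+K$ argument. (A secondary, smaller issue: closedness of $s+t$ on $\Dom s\cap\Dom t$ does not follow merely from closedness of $s$ and $t$ separately; it is part of what the localisation argument of \cite{KaLe2} establishes.)
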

\begin{proof}
It was shown in \cite{KaLe2} that the sum $s+t$ of such operators is closed, 
self-adjoint and regular on $\Dom s\cap \Dom t$ by a localisation argument. 
However, to get the statement on the core, we proceed by adapting the 
spectral argument given in \cite{Mes}. Choose $\lambda$ large enough as in Lemma \ref{epsilonlem}. The operators
\[
x:=(t+\mu i)^{-1}-(s-\lambda i)^{-1},\quad y:=x^{*}=(t-\mu i)^{-1}-(s+\lambda i)^{-1},
\]
have dense range. This follows because $xx^{*}$ is strictly positive by esitmating 
\[
\begin{split}
xx^{*}&=(\mu^{2}+t^{2})^{-1}+(\lambda^{2}+s^{2})^{-1}
-(t+\mu i)^{-1}(s+\lambda i)^{-1}-(s-\lambda i)^{-1}(t-\mu i)^{-1}\\
&=(\mu^{2}+t^{2})^{-1}+(\lambda^{2} +s^{2})^{-1}
-(t+\mu i)^{-1}(s+\lambda i)^{-1}([s,t]-2\lambda\mu) (s-\lambda i)^{-1}(t-\mu i)^{-1}\\
&=(\mu^{2}+t^{2})^{-1}+(\lambda^{2} +s^{2})^{-1}
+2\lambda\mu(t+\mu i)^{-1}(\lambda^{2}+s^{2})^{-1}(t-\mu i)^{-1}\\
&\quad\quad\quad\quad\quad\quad\quad 
-(t+\mu i)^{-1}(s+\lambda i)^{-1}[s,t](s-\lambda i)^{-1}(t-\mu i)^{-1}\\
&\geq (1-\epsilon)(\mu^{2}+t^{2})^{-1}+(\lambda^{2}+s^{2})^{-1} 
+2\lambda\mu(t+\mu i)^{-1}(\lambda^{2}+s^{2})^{-1}(t-\mu i)^{-1},
\end{split}
\]
using \ref{commdom} and Lemma \ref{epsilonlem}. 
Since the latter operator is strictly positive, so is $xx^{*}$ by 
\cite[Corollary 10.2]{Lance}. The same holds for $x^{*}x$.
The rest of the proof now follows by using the factorisations
\[
x=(s+t+(\mu-\lambda)i-(s+\lambda i)^{-1}([s,t]-2\lambda\mu))(s-\lambda i)^{-1}(t-\mu i)^{-1},
\]
\[
y=(s+t+(\lambda -\mu)i-(s-\lambda i)^{-1}([s,t]-2\lambda \mu))(s+\lambda i)^{-1}(t+\mu i)^{-1},
\]
as in \cite[Theorem 6.18]{Mes}, and applying \cite[Lemma 6.1.7]{Mes}. 
The statement for $s-t$ is obtained by observing that $(s,-t)$ weakly anticommutes.
\end{proof}

\end{document}